\newcommand{\gfrac}{\mathfrak{g}}
\newcommand{\hfrac}{\mathfrak{h}}
\newcommand{\Acal}{{\mathcal{A}}}
\newcommand{\Ccal}{{\mathcal{C}}}
\newcommand{\Fcal}{{\mathcal{F}}}
\newcommand{\Lcal}{{\mathcal{L}}}
\newcommand{\DD}{\mathbb{D}}
\newcommand{\EE}{\mathbb{E}}
\newcommand{\NN}{\mathbb{N}}
\newcommand{\RR}{\mathbb{R}}
\renewcommand{\SS}{\mathbb{S}}
\newcommand{\ZZ}{\mathbb{Z}}
\newcommand{\sym}{\operatorname{Sym}}
\newcommand{\Ann}{\operatorname{Ann}}
\newcommand{\SO}{\operatorname{SO}}
\newcommand{\area}{\operatorname{Area}}
\newcommand{\cotan}{\operatorname{cotan}}
\newcommand{\rank}{\operatorname{rank}}
\newcommand{\Endpoint}{{\mathfrak{E}\operatorname{p}}}
\newcommand{\Op}{{\mathcal{O}p}}
\newcommand{\len}{{\operatorname{len}}}
\newcommand{\std}{{\operatorname{std}}}
\newcommand{\Heis}{{\operatorname{Heis}}}
\newcommand{\wavefront}{{\operatorname{WF}}}
\newcommand{\sgn}{{\operatorname{sgn}}}
\newcommand{\Vertical}{{\operatorname{Vert}}}
\newtheorem*{theorem*}{Theorem}
\newtheorem{proposition}{Proposition}[section]
\newtheorem{lemma}[proposition]{Lemma}
\newtheorem{corollary}[proposition]{Corollary}
\newtheorem{definition}[proposition]{Definition}
\theoremstyle{remark}
\newtheorem{remark}[proposition]{Remark}
\newtheorem{question}[proposition]{Question}
\newtheorem{example}[proposition]{Example}
\newtheorem{assumption}[proposition]{Assumption}
\numberwithin{equation}{section}
\begin{document}

\title{Introducing Sub-Riemannian and sub-Finsler Billiards}
\subjclass[2020]{Primary: 53C17. Secondary: 53D25, 37C83}
\date{\today}

\thanks{The second author would like to thank Aaron Gootjes-Dreesbach for providing comments on the first version of the paper. The first author is supported by Deutsche Forschungsgemeinschaft (DFG) under Germany's Excellence Strategy EXC-2181/1 - 390900948 (the Heidelberg STRUCTURES Excellence Cluster), as well as by SFB/TRR 191 ``Symplectic Structures in Geometry, Algebra and Dynamics'' funded by the DFG. The second author is supported by the NWO 016.Veni.192.013 grant.}

\keywords{sub-Riemannian, billiards, sub-Finsler, horizontal curves, geodesics, gliding orbits}

\author{Lucas Dahinden}
\address{Lucas Dahinden, Universität Heidelberg, Mathematisches Institut, Mathematikon, Im Neuenheimer Feld 205, 69120 Heidelberg, Germany}
\email{ldahinden@mathi.uni-heidelberg.de}

\author{\'Alvaro del Pino}
\address{\'Alvaro del Pino, Utrecht University, Department of Mathematics, Budapestlaan 6, 3584 Utrecht, The Netherlands}
\email{a.delpinogomez@uu.nl}

\begin{abstract}
We define billiards in the context of sub-Finsler Geometry. We provide symplectic and variational (or rather, control theoretical) descriptions of the problem and show that they coincide. We then discuss several phenomena in this setting, including the failure of the reflection law to be well-defined at singular points of the boundary distribution, the appearance of gliding and creeping orbits, and the behavior of reflections at wavefronts.

We then study some concrete tables in $3$-dimensional euclidean space endowed with the standard contact structure. These can be interpreted as planar magnetic billiards, of varying magnetic strength, for which the magnetic strength may change under reflection. For each table we provide various results regarding periodic trajectories, gliding orbits, and creeping orbits.
\end{abstract}

\maketitle
\setcounter{tocdepth}{1}
\tableofcontents



%

\section{Introduction} \label{sec:introduction}

Given a smooth manifold $M$, a \textbf{distribution} $\xi \subset TM$ is a (vector) sub-bundle of the tangent bundle. The dimension of its fibres is called the \textbf{rank}, which we denote by $\rank(\xi)$. Distributions possess many local differential invariants that measure their non-involutivity with respect to the Lie bracket. Namely, we define the Lie flag as the sequence of $C^\infty$-modules of vector fields:
\[  \Gamma^{(1)}(\xi) \subset  \Gamma^{(2)}(\xi) \subset \cdots  \subset \Gamma^{(i)}(\xi) \subset \cdots \]
where $\Gamma(\xi) := \Gamma^{(1)}(\xi)$ is the space of vector fields tangent to $\xi$ and $\Gamma^{(i+1)}(\xi) := [\Gamma^{(i)}(\xi),\Gamma(\xi)]$ is the space of vector fields that are $C^\infty$ combinations of $i$ Lie brackets with entries in $\Gamma(\xi)$. If there exists an $i_0$ such that $\Gamma^{(i_0)}(\xi) = \Gamma(TM)$, we say that $\xi$ is \textbf{bracket-generating}.

The bracket-generating condition states that any infinitesimal motion is a combination of motions tangent to $\xi$. This statement has a global analogue: A classic theorem of Chow and Rashevskii states that any two points in a bracket-generating manifold $(M,\xi)$ can be connected by a curve tangent to $\xi$ (often called an \textbf{horizontal curve}). With this result at hand we find it natural to study horizontal curves that minimise distance in the following sense.


A \textbf{sub-Finsler manifold} $(M,\xi,f)$ is a manifold endowed with a bracket-generating distribution and a fibrewise Finsler norm $f: \xi \to \RR$. We allow $f$ to be asymmetric. As a special case, one could equip $\xi$ with a Riemannian metric $g$ and choose $f(v) := \sqrt{g(v,v)}$. The resulting structure $(M,\xi,g)$ is called a \textbf{sub-Riemannian manifold}. A sub-Finsler manifold naturally carries a (non-reversible) metric, called the \textbf{Carnot-Carath\'eodory metric}, given by the infimum of the lengths of the connecting horizontal paths:
\[ d(x,y) := \inf\, \{ \textrm{length}_f(\gamma) \mid \dot\gamma\in \xi, \gamma(0)=x, \gamma(1)=y \}. \]
In order to measure the length, it is necessary to impose some regularity on $\gamma$. All throughout the text, we will choose $\gamma$ to be Lipschitz and therefore almost everywhere differentiable with uniformly bounded differential.

As stated above, one can then study geodesics for this metric. It turns out that this (almost!) works as in the usual Riemannian setting: there is a set of well-behaved geodesics that admits a cotangent description (namely, they are projections of Hamiltonian orbits of the Legendre dual of $f$). We restrict our attention to these. The geodesics that are left out are the so-called \emph{strict abnormals}  \cite{ASr,BFPR,Mon2,Z}: understanding them better is a central question in sub-Riemannian geometry (but that we will mostly ignore). 

\begin{remark}
The Legendre dual of $f$ is a Hamiltonian $T^*M \to \RR$ that is invariant under translation by the annihilator of $\xi$. The theory of more general Hamiltonians of this form is to our knowledge still unexplored. \hfill$\triangle$
\end{remark}

A \textbf{sub-Finsler billiard table} is a closed equidimensional submanifold $U \subset (M,\xi,f)$ with boundary (and possibly corners). Much like in classical billiards, one can pose questions about periodic trajectories, integrability, caustics, and gliding orbits. Our goal in this paper is to settle some foundational questions about the theory and to work out some concrete examples showcasing exotic behaviors with respect to classical billiards. Above all, we hope to convince the reader that this is a natural setup with many intriguing open questions.

\subsection{Results and structure}

In order to study sub-Finsler geodesics and billiard trajectories we need some background from Geometric Control Theory. Section \ref{sec:controlTheory} provides an overview of all the tools we need, as well as sketches of proof of many key statements. In Section \ref{sec:sub-FinslerSystems} we introduce sub-Finsler manifolds and discuss how the general theory applies to them.

In Section~\ref{sec:billiardFlow} we introduce sub-Finsler billiard dynamics and discuss some of their properties. Using a symplectic viewpoint (Subsection~\ref{ssec:symplecticViewpoint}), we show how one may define the billiard problem (in particular, the reflection law) in terms of the canonical dynamics of a certain piecewise hypersurface in cotangent space naturally associated to the sub-Finsler norm. In Proposition~\ref{prop:ReflectionIsSymplectomorphism} we show that the reflection law defines, away from the singularities, a symplectomorphism on the space of sub-Finsler geodesics. We then study (Subsection \ref{ssec:variationalViewpoint}) the dynamics in control theoretical terms (i.e., the sub-Finsler analogue of the usual variational approach), showing that the reflection law arises naturally from a minimisation problem. Both definitions agree, which we show in Proposition~\ref{prop:sympIsVariational}.

In this Section we include as well a discussion of the gliding phenomenon for orbits (Subsection~\ref{sssec:glide}). The desired claim is that gliding orbits are geodesics of the sub-Finsler structure in the boundary of the table. In Proposition~\ref{prop:boundaryGeodesic} we prove this assuming $C^1$-convergence. The converse statement (every boundary geodesic is a limit of billiard trajectories), is studied in some concrete cases in Section \ref{sec:specialCase}.

In Section \ref{sec:specialCase} we study several billiard tables in the standard contact structure in $\RR^3$, endowed with the standard flat metric lifted from the plane. When the table is a cylinder with circular base (Subsections \ref{ssec:verticalWalls} and \ref{ssec:standardCylinder}), the billiard system is integrable and we can provide a complete characterisation of the periodic, gliding, and creeping orbits. We then look at tables where the boundary consists of one (Subsection \ref{sec:horizontalPlane}) or two disjoint planes (Subsection \ref{ssec:horizontalBand}), or a finite cylinder with planar top and bottom boundaries (Subsection \ref{ssec:finiteCylinder}), providing some partial results about their dynamics. A takeaway message of these computations is that complicated behaviors can appear as geodesics approach a singularity between the distribution and the boundary of the table.

\section{An overview of control theory} \label{sec:controlTheory}

In this paper we are interested in sub-Finsler geodesics, which we study using a control theoretical approach. In this Section we provide some general background that we will specialise to the sub-Finsler setting in Section \ref{sec:sub-FinslerSystems}.

\emph{Disclaimer:} In order to make the article accessible to researchers in Symplectic Geometry and Dynamics, we have decided to present many key ideas from Control Theory in a fair amount of detail. In doing so, we have emphasised a geometric viewpoint, which borrows from the excellent reference \cite{AS}. Control theorists should feel free to jump ahead to the next Section.

\subsection{Control systems}

\begin{definition} \label{def:controlSystem}
Given a smooth manifold $M$, a \textbf{control system} over $M$ is:
\begin{itemize}
\item A locally trivial fibre bundle $C \to M$ whose fibres are manifolds (possibly with boundary or corners).
\item A bundle map $\rho: C \to TM$ called the \textbf{anchor}.
\end{itemize}

Let $I \subset \RR$ be a (possibly infinite) interval. A \textbf{control} of $C$ is an $I$-family of sections
\[ (u_t: U_t \quad\to\quad C)_{t \in I} \]
that is smooth in space $M$, $L^\infty$ in time $I$, and whose open domain $U_t \subset M$ varies smoothly in $t$.
\end{definition}
We write $C^\infty(M,C)$ for the sheaf (over $M$) of autonomous controls and $\Gamma(M \times I, C)$ for the sheaf (over $M \times I$) of time-dependent controls. Note that if both $M$ and $C$ have non-trivial topology it may be the case that $C^\infty(M,C)$ has no global sections (which is the reason why we must work with smoothly varying domains for the controls). 

Given a control system, we can look at its solution curves, and at the points that can be reached with them:
\begin{definition}
A Lipschitz curve $\gamma: [0,T] \to M$ is \textbf{admissible} if there is a control $u_t$ such that $\dot\gamma(t) = (\rho \circ u_t)(\gamma(t))$ almost everywhere.

Given a point $p \in M$, we define its \textbf{attainable sets} at times exactly $T \in [0,\infty)$, at most $T$, and at infinity:
\begin{align*}
\Acal_T(p) & \quad:= \quad \{q \in M \mid \exists \gamma\text{ admissible } \gamma(0)=p,\gamma(T)=q\}, \\
\Acal_{\leq T}(p) &\quad := \quad \cup_{t \in [0,T]} \Acal_t(p), \\
\Acal_{\leq \infty}(p) &\quad := \quad \cup_{t \in [0,\infty)} \Acal_t(p).
\end{align*}
\end{definition}
\begin{remark}
Observe that $\Acal_{\leq t_1}(p) \subset \Acal_{\leq t_2}(p)$ holds for $t_1 \leq t_2$, but the analogous inclusion for attainable sets at a concrete time is not necessarily true (for instance it fails for systems involving a drift term). \hfill$\triangle$
\end{remark}

\begin{remark}
The boundary of the attainable set is a subset of the \textbf{wavefront}, cf.~Definition~\ref{def:wavefront}. \hfill$\triangle$
\end{remark}

\subsubsection{On our assumptions on $C$}

Write $\Vertical(C) \to C$ for the fibre tangent bundle of C. At an element $u \in C$ lying over $p \in M$, we can consider the vector space $\Vertical_u C = T_uC_p$. In the presence of boundary/corners, not all directions in $\Vertical_u C$ are meaningful as appropriate variations of the control. This leads us to define the \textbf{inward-pointing tangent space} $\Vertical^- C$ to be the collection of vectors that are realised as $\dot\gamma(0)$ by a short piece of smooth curve $\gamma:[0,\varepsilon)\to C$ tangent to a fibre. We remark that its fibres are corners in each $\Vertical_u C$, and thus \emph{cones} (i.e., an $\RR^+$-invariant subsets).

If we take sections, a similar phenomenon occurs. $C^\infty(M,C)$ is a Fr\'echet manifold if $C$ is without boundary. In the general setting, it is still Fr\'echet (as can be shown by adding to $C$ a boundary collar, effectively extending it to a fibration by smooth open manifolds), but has corners of arbitrarily large codimension.

\subsubsection{Linearising}

Given a control system $\rho: C \to TM$ and a control $(u_t)_{t \in [0,T]}$, we want to describe the tangent space $T_u\Gamma(M \times I, C)$ to $\Gamma(M \times I, C)$ at $u$ (i.e., the space of infinitesimal variations). 
\begin{lemma}
Infinitesimal variations of $(u_t: U_t \subset M \to C)_{t \in I}$ correspond to sections
\[ (v_t: U_t \subset M \to \Vertical_{u_t}C)_{t \in I} \]
that are smooth in $M$ and $L^\infty$ in $I$.
\end{lemma}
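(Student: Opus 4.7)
The plan is to prove both directions — that every infinitesimal variation is such a vertical section, and that every such section is realised by some variation — exploiting the fact that the space of controls inherits a Fr\'echet manifold structure (modulo corners) from the fibration $C \to M$. Both directions reduce to a fibrewise chain rule applied pointwise in $(t,p)$, once the correct notion of regularity is pinned down.

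For the forward direction, I would take a smooth one-parameter family $s \mapsto u^s$ of controls with $u^0 = u$, and differentiate at $s=0$ to obtain the candidate $v_t(p) := \partial_s u^s_t(p)|_{s=0}$. Since each $u^s_t(p)$ lies in the fibre $C_p$ for all $s$, the curve $s \mapsto u^s_t(p)$ is tangent to $C_p$, so $v_t(p) \in \Vertical_{u_t(p)} C$. Smoothness of $v_t$ in $p$ and $L^\infty$-dependence on $t$ follow directly from the corresponding regularity of the family $u^s$ combined with uniform local bounds on the $s$-derivative.

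For the backward direction, I would fix an auxiliary fibrewise smooth structure, concretely a fibrewise exponential map $\exp^{\mathrm{fib}}\colon \Vertical C \to C$ built from any vertical Riemannian metric on $C$ (or, equivalently, from local trivialisations of $C \to M$ patched with a partition of unity). Given a vertical section $(v_t)_t$ as in the statement, I define
\[ u^s_t(p) \;:=\; \exp^{\mathrm{fib}}_{u_t(p)}\!\bigl(s\cdot v_t(p)\bigr) \]
for $s$ in a small neighbourhood of $0$. One then checks that $u^s$ is a control (smooth in $p$, $L^\infty$ in $t$), that $u^0 = u$, and that $\partial_s u^s|_{s=0} = v$. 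This shows every such $v$ is tangent to a genuine curve of controls.

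The main obstacle I expect is regularity bookkeeping rather than any geometric difficulty: one must verify that the pointwise exponential preserves the smooth-in-space, $L^\infty$-in-time regularity class on all of $I \times U_t$, and — in the presence of boundary or corners in the fibres — that ``infinitesimal variation'' is interpreted so that $v_t(p)$ lies in the inward-pointing cone $\Vertical^-_{u_t(p)} C$ whenever $u_t(p)$ belongs to a boundary stratum of $C_p$. In that case the construction above only produces a half-variation $s \in [0,\varepsilon)$, and the forward direction correspondingly yields only inward-pointing vectors, so the identification should be read as one between inward-pointing vertical sections and one-sided variations. Once these regularity and corner-compatibility issues are addressed, the lemma follows.
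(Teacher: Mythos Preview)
Your proposal is correct and follows essentially the same approach as the paper: the paper's proof also uses differentiation $v_t = \partial_s V_{s,t}$ for one direction and a fibrewise exponential map to realise a given $v_t$ as a variation for the other. Your version is more detailed about regularity and the boundary/corner situation, but the core argument is identical.
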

\begin{proof}
Given such a $v_t$, we can use the exponential map (for some metric in $C$) to produce a variation, i.e., a family of controls $V_{s,t}: U_t \to C$ with $V_{0,t} = u_t$ and $\partial_s V_{s,t} = v_t$. Conversely, given $V_{s,t}$ we recover the infinitesimal variation as $v_t = \partial_s V_{s,t}$.
\end{proof}

We may now say that a variation at $u = (u_t)_{t \in I}$ is \textbf{inward-pointing} if it takes values in $\Vertical_{u_t}^-C$. The subsheaf of inward-pointing variations based at $u$ is denoted by $T_u^-\Gamma(M \times I,C)$. Its sections form a cone, but not necessarily a vector space.

We can \emph{linearise} our control system along the time-dependent control $u$. We think of $\Vertical_u^-C$ as a bundle over $M \times I$ that is only $L^\infty$ in time (since it arises as the pullback by an $L^\infty$-map of an actual vector bundle over $C \times I$):
\begin{definition} \label{def:linearisedAnchor}
The \textbf{linearised anchor map} along $u$ is the composition:
\[ \Vertical_u^-C \to T_{\rho(u)}TM\times I \to TM\times I, \]
where the first arrow is the differential $d_u\rho$ and the second arrow is given by the tautological identification between the vector spaces $T_{\rho(u)}T_qM$ and $T_qM$.
\end{definition}
We will use bundles and control systems that are only $L^\infty$ in time in subsequent subsections as well. In order not to clutter the text, further details regarding these non-autonomous setting can be found in Appendix \ref{appendix:nonAutonomous}.

\subsection{The endpoint map} \label{ssec:endpoint}

Given two points $p,q \in M$ and a control system $\rho: C \to TM$, we want to look at the admissible curves that connect $p$ with $q$, particularly those that do so in the least amount of time. The following definition and subsequent discussion clarifies their nature:
\begin{definition}
The \textbf{endpoint map} of $\rho: C \to TM$ at time $T$ with basepoint $p$ is the smooth map:
\begin{align*}
\Endpoint_p^T: \quad \Gamma(M \times [0,T], C) \quad\to\quad & M \\
                    u_t                         \quad\to\quad & \gamma(T),
\end{align*}
where $\gamma: [0,T] \to M$ is the (unique) Lipschitz solution with control $u_t$ and initial point $p$.

Similarly, the \textbf{endpoint map} of $\rho: C \to TM$ for time at most $T$ reads:
\begin{align*}
\Endpoint_p^{\leq T}: \quad \Gamma(M \times [0,T], C) \times [0,T] \quad\to\quad & M \\
                    (u_t,t_0)                         \quad\to\quad & \gamma(t_0).
\end{align*}
\end{definition}

\begin{assumption}
The map $\Endpoint_p^T$ (resp. $\Endpoint_p^{\leq T}$) is of interest when we look at attainable sets $\Acal_T(p)$ with arrival time exactly $T$ (resp. attainable sets $\Acal_{\leq T}(p)$ with arrival at most $T$). In the sub-Finsler setting there is no difference between $\Acal_T(p)$ and $\Acal_{\leq T}(p)$, as we shall see. For this reason, and in order to simplify the discussion, we henceforth focus on $\Acal_T(p)$ and $\Endpoint_p^T$. \hfill$\triangle$
\end{assumption}

\begin{remark}
Recall that $\Gamma(M \times [0,T], C)$ is a sheaf and some controls are not defined over the whole of $M$. As such, a control may not actually produce a solution curve over the complete interval $[0,T]$. It follows that the endpoint map is only defined for a subset of the controls. This is a technical point that makes no difference in practice. \hfill$\triangle$
\end{remark}

Assuming that $C$ is without boundary, the sheaf of controls $\Gamma(M \times [0,T], C)$ takes values in Fr\'echet manifolds. One may then ask whether the subspace of controls connecting $p$ with $q$ (i.e., the fibre of $\Endpoint_p^T$ over $q$) is also a manifold. We are thus led to studying whether the differential $d_u\Endpoint_p^T$ of the endpoint map at a control $(u_t)_{t \in [0,T]}$ is surjective.

If $C$ has boundary/corners, it is still meaningful to look at infinitesimal variations of the control, but these should be inward-pointing. Due to this, we concentrate on:
\begin{definition}
The \textbf{infinitesimal endpoint map} at $u$, starting at $p$, and at time $T$ is the restriction:
 \[d_u^-\Endpoint_p^T \quad:=\quad d_u\Endpoint_p^T|_{T_u^-\Gamma(M \times [0,T],C)}. \]
\end{definition}

\begin{lemma} \label{lem:infEndpointMap}
Let $u_t$ be a control with $\gamma$ its solution starting at $p$ and $\phi_t$ its flow. Then, the infinitesimal endpoint map is given by the expression:
\begin{align*}
d_u\Endpoint_p^T: \quad T_u^-\Gamma(M \times [0,T],C) &\quad\to\quad  T_{\gamma(T)}M, \\
d_u\Endpoint_p^T(v) &\quad=\quad 
\int_0^T d\phi_{t \to T} \circ d\rho \circ v(t)dt \end{align*}
\end{lemma}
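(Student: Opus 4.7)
The plan is to use the standard variation-of-parameters formula (Duhamel's principle) for the linearisation of a non-autonomous ODE with respect to a parameter.

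First, I realise the infinitesimal variation $v$ as a genuine variation: pick a family $V_{s,t}: U_t \to C$, smooth in $s$ and $L^\infty$ in $t$, with $V_{0,t} = u_t$ and $\partial_s V_{s,t}|_{s=0} = v(t)$. For $v$ inward-pointing one may arrange $s \in [0,\varepsilon)$, as in the lemma preceding Definition \ref{def:linearisedAnchor}. Let $\gamma_s:[0,T] \to M$ be the Lipschitz solution of
\[ \dot\gamma_s(t) = \rho\bigl(V_{s,t}(\gamma_s(t))\bigr), \qquad \gamma_s(0)=p, \]
which exists and depends smoothly on $s$ by the Caratheodory/Gronwall theory for time-dependent ODEs (deferred to Appendix \ref{appendix:nonAutonomous}). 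By definition of the endpoint map, $d_u\Endpoint_p^T(v) = \partial_s \gamma_s(T)|_{s=0}$.

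Next, set $w(t) := \partial_s \gamma_s(t)|_{s=0} \in T_{\gamma(t)}M$. Differentiating the defining ODE in $s$ at $s=0$ yields, for a.e.\ $t$, the linear inhomogeneous equation
\[ \dot w(t) \;=\; d_{\gamma(t)}(\rho\circ u_t)(w(t)) \;+\; (d\rho\circ v(t))(\gamma(t)), \qquad w(0)=0, \]
where the first summand comes from varying the basepoint and the second from varying the control (the latter using Definition \ref{def:linearisedAnchor} to identify the vertical derivative with a vector in $T_{\gamma(t)}M$).

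The key observation is that the homogeneous equation $\dot w = d(\rho\circ u_t)(w)$ is precisely the linearisation of the flow $\phi_t$ of $\rho\circ u_t$ along $\gamma$; its fundamental solution from time $s$ to time $t$ is therefore $d\phi_{s\to t}$. Variation of parameters then gives
\[ w(T) \;=\; \int_0^T d\phi_{t\to T}\cdot (d\rho\circ v(t))(\gamma(t))\, dt \;=\; \int_0^T d\phi_{t\to T}\circ d\rho\circ v(t)\, dt, \]
which is the claimed formula.

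There is no deep obstacle here; the only care required concerns low regularity in time. Because $u_t$ and $v(t)$ are merely $L^\infty$ in $t$, one must work with Caratheodory solutions and justify smooth dependence on the parameter $s$ by Gronwall estimates, so that the exchange $\partial_s \dot\gamma_s = \partial_t \partial_s \gamma_s$ is valid almost everywhere. These are routine facts, reviewed in the appendix on non-autonomous systems.
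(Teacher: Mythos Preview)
Your proof is correct: linearising the defining ODE in the parameter $s$ and applying Duhamel/variation of parameters is the standard route to this formula, and you handle the $L^\infty$-in-time issues appropriately.

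The paper's argument is mathematically the same content but packaged geometrically. Rather than writing down the linearised ODE for $w(t)$ and invoking variation of parameters, the paper pushes the entire control system forward by the flow $\phi_{t\to T}$, so that the reference trajectory $\gamma$ becomes the constant curve at $q=\gamma(T)$ and all computations take place in the single tangent space $T_qM$; the integral then appears directly as the endpoint of the pushed-forward variation. This is of course exactly variation of parameters in disguise (conjugating by the fundamental solution is what ``pushing forward by the flow'' does). The payoff of the paper's phrasing is that the pushforward control system $(D,\rho_t)$ and the sets $E_t = d\phi_{t\to T}\circ d\rho(\Vertical^-_{u_t}C_{\gamma(t)})$ are reused verbatim in the proofs of Propositions \ref{prop:PSP} and \ref{prop:PMP}, where one needs the full cone of reachable directions at $q$, not just a single variation. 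Your approach is cleaner as a standalone proof of this lemma; the paper's approach amortises effort across the section. One small point: the appendix you cite discusses non-autonomous control systems and the time-dependent sub-Finsler reflection law, but does not actually review Carath\'eodory existence or Gronwall; you may want to point to \cite{AS} instead.
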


Here we use the notation $\phi_{t \to T}$ to denote the flow between time $t$ and $T$. 

\begin{proof}
Using the flow $\phi_t$ we act on $(C,\rho)$ by pushforward,  producing a new control system. This transformation identifies each solution of $u_t$ with a constant trajectory, allowing us to carry out all computations in a single tangent space. Let us elaborate.

Let $\nu$ be an admissible curve of $(C,\rho)$. The velocity vector of the pushforward trajectory $\phi_{t \to T} \circ \nu(t)$ reads:
\[ (\phi_{t \to T} \circ \nu)'(t) = d_{\nu(t)}\phi_{t \to T}(\nu'(t) - \rho \circ u_t \circ \nu(t)), \]
showing that, if $\nu$ was a solution of a control $v_t$, it is now a solution of the pushforward control
\[ (\phi_{t \to T})_*v_t(q) := d_{\phi_t(q)}\phi_{t \to T} \circ (\rho \circ  v_t - \rho \circ u_t)(\phi_{t \to T}^{-1}(q)). \]
This leads us to define a time-dependent control system:
\[ (D(q,t) := C(\phi_{t \to T}^{-1}(q)), \quad \rho_t(d) := d\phi_{t \to T} \circ (\rho \circ d - \rho \circ  u_t)). \]
We remark that this system is only $L^\infty$ in time and, in particular, as a bundle over $M \times [0,T]$, $D$ is not necessarily smooth. The pushforward $(\phi_{t \to T})_*$ provides a 1-to-1 correspondence between $(C,\rho)$ and $(D,\rho_t)$ for admissible curves, controls, and infinitesimal variations of controls.

In particular, the solution $\gamma$ of $u_t$ is mapped to the constant curve $t \to \gamma(T) = q$. The variations of the latter are $L^\infty$-maps $[0,T] \to T_qM$ with image in the time dependent family of subsets:
\[ E_t := d\rho_t(\Vertical_{(\phi_{t\to T})_*u_t}^-D_{q,t}) = d\phi_{t \to T} \circ d\rho\, (\Vertical^-_{u_t}C_{\gamma(t)}). \]
The image of such a variation under the differential of the endpoint map of $(D,\rho_t)$ is given by integrating in $t$. Translating back to $C$ yields the claim.
\end{proof}
\begin{remark}
The reader should note that the image of the infinitesimal endpoint map at $u$ remains the same if we replace $C$ by its linearisation at $u$ (as in Definition \ref{def:linearisedAnchor}). \hfill$\triangle$
\end{remark}

 \subsubsection{Critical curves}

As stated before, we are interested in those curves at which the endpoint map fails to be submersive:
\begin{definition}
Fix a point $p \in M$ and a time $T$. A control $(u_t)_{t \in [0,T]}$ is:
\begin{itemize}
\item \textbf{critical} if $d_u^-\Endpoint_p^T$ is not surjective.
\item \textbf{minimising} if $\gamma(T) \in \partial\Acal_T(p)$, where $\gamma: [0,T]$ is the solution of $u$ with initial point $p$.
\item \textbf{minimising locally in time} if it is minimising over any sufficiently small interval.
\item a \textbf{local minimiser} if there is no control $(v_t)_{t \in [0,T']}$, $T' < T$, $L^\infty$-close to $u$, such that the solution $\nu$ of $v$ starting at $p$ satisfies $\nu(T') = \gamma(T)$.
\end{itemize}
\end{definition}

In the proof of Lemma \ref{lem:infEndpointMap} we already used implicitly:
\begin{lemma}
Let $(u_t)_{t \in [0,T]}$ be a control with solution $\gamma$ and let $(v_t)_{t \in [0,T]}$ be another control satisfying $u_t(\gamma(t)) = v_t(\gamma(t))$. Then, $u_t$ is critical (at $\gamma$) if and only if $v_t$ is critical.
\end{lemma}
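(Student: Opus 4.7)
The plan is to show that the image of the infinitesimal endpoint map $d_u^-\Endpoint_p^T$ depends only on the values of $u_t$ along the solution curve $\gamma$. Since $u_t$ and $v_t$ agree there by hypothesis, the two infinitesimal endpoint maps will have the same image, and hence surjectivity will hold for one precisely when it holds for the other.

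First I would observe that $v_t$ produces the same solution curve $\gamma$ starting at $p$, because the Cauchy problems
\[ \dot\gamma(t) = \rho(u_t(\gamma(t))) \quad\text{and}\quad \dot\gamma(t) = \rho(v_t(\gamma(t))) \]
coincide along $\gamma$ by assumption, and both are solved by the same Lipschitz curve. In particular, the flow factor $\phi_{t \to T}$ appearing in Lemma \ref{lem:infEndpointMap} is literally the same for $u$ and $v$.

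Next I would unpack the formula in Lemma \ref{lem:infEndpointMap}. An infinitesimal inward-pointing variation $\tilde v \in T_u^-\Gamma(M \times [0,T],C)$ is an $L^\infty$-time family of sections with $\tilde v(t,q) \in \Vertical^-_{u_t(q)}C$, and the only information about $\tilde v$ entering the integrand is its restriction to the curve $\gamma$, namely the cone of vectors
\[ \tilde v(t,\gamma(t)) \in \Vertical^-_{u_t(\gamma(t))}C. \]
Because $u_t(\gamma(t)) = v_t(\gamma(t))$, this cone is identical to $\Vertical^-_{v_t(\gamma(t))}C$, and similarly $d\rho$ evaluated at these points agrees. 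Moreover, any such $L^\infty$ family of cone vectors along $\gamma$ can be extended (via a partition of unity or by choosing any smooth extension into a neighbourhood) to a genuine inward-pointing variation of $u$, and similarly of $v$; so the set of admissible integrands $\{d\phi_{t\to T}\circ d\rho\circ \tilde v(t,\gamma(t))\}$ is the same for both.

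Combining these observations, the images
\[ \operatorname{im}\bigl(d_u^-\Endpoint_p^T\bigr) \quad\text{and}\quad \operatorname{im}\bigl(d_v^-\Endpoint_p^T\bigr) \]
consist of exactly the same integrals, and therefore coincide as subsets of $T_{\gamma(T)}M$. Criticality at $\gamma$ is failure of surjectivity of this map onto $T_{\gamma(T)}M$, so $u_t$ is critical if and only if $v_t$ is. The only mildly delicate point, and the one I would spend most care on, is the extension step: verifying that every $L^\infty$ family of inward-pointing vectors defined only along $\gamma$ arises from a genuine variation of the control over a neighbourhood. This is a routine local extension argument once a tubular neighbourhood of $\gamma$ is fixed, but it is what makes the argument into a true statement about variations of sections rather than about pointwise tangent data.
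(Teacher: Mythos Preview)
Your overall plan---show that the image of $d_u^-\Endpoint_p^T$ depends only on the values $u_t(\gamma(t))$---is exactly right, and it is essentially what the paper has in mind. But there is a genuine gap in the execution.

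The problematic step is the claim that ``the flow factor $\phi_{t\to T}$ appearing in Lemma~\ref{lem:infEndpointMap} is literally the same for $u$ and $v$.'' This is false. The flow $\phi_{t\to T}$ is the flow of the time-dependent vector field $\rho\circ u_t$ on all of $M$, not just along $\gamma$. The hypothesis $u_t(\gamma(t))=v_t(\gamma(t))$ only guarantees that the two flows agree \emph{as maps on the single trajectory} $\gamma$, i.e.\ $\phi^u_{t\to T}(\gamma(t))=\phi^v_{t\to T}(\gamma(t))=\gamma(T)$. What enters the formula in Lemma~\ref{lem:infEndpointMap} is the \emph{differential} $d\phi_{t\to T}$ at $\gamma(t)$, and this solves the variational equation along $\gamma$, whose coefficient is the spatial derivative of $\rho\circ u_t$ at $\gamma(t)$. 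That derivative depends on the $1$-jet of $u_t$ along $\gamma$, not just its values there, and it is in general different for $u$ and $v$. So the two fundamental solutions $\Psi^u(T,s)=d\phi^u_{s\to T}|_{\gamma(s)}$ and $\Psi^v(T,s)$ need not coincide.

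The lemma is still true, but the missing idea is that the discrepancy between the two linearised flows can be absorbed into the choice of variation. In coordinates, the linearised equation reads $\dot\delta = A^u\delta + d\rho\cdot w_t(\gamma(t))$, and one checks that $A^u - A^v = d\rho\cdot(\partial_x u_t - \partial_x v_t)$ has image contained in $d\rho(\Vertical_{u_t(\gamma(t))}C)$. Hence any solution of the $u$-linearisation is simultaneously a solution of the $v$-linearisation with a modified inhomogeneous term still lying in the admissible cone, and vice versa. Equivalently, the endpoint map factors through the restriction $u\mapsto (t\mapsto u_t(\gamma^u(t)))$, and this restriction is a submersion; since $u$ and $v$ have the same image under it, the ranges of the two differentials agree. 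Your extension argument at the end is part of this, but it does not by itself repair the flow claim.
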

Do note that the same conclusion does not follow if we instead assume the weaker condition $\rho \circ u_t(\gamma(t)) = \rho \circ v_t(\gamma(t))$. 

We then speak of admissible curves $\gamma$ being \textbf{critical}/\textbf{minimising} when they are solutions of critical/minimising controls. Since we considered the restricted linearisation of the endpoint map, it follows that:
\begin{lemma}
A local minimiser is critical.
\end{lemma}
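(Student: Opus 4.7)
The plan is to prove the contrapositive: if $u$ is not critical, then $u$ is not a local minimiser. Since $d_u^-\Endpoint_p^T$ takes values in the finite-dimensional space $T_{\gamma(T)}M$ and its image is a convex cone (being the image of the convex cone $T_u^-\Gamma(M\times[0,T],C)$ under a linear map), surjectivity forces this image cone to coincide with the whole tangent space. By a standard positive-spanning argument, I can therefore select finitely many inward-pointing variations $w_1,\dots,w_k$ such that the vectors $X_i := d_u\Endpoint_p^T(w_i)$ positively span $T_{\gamma(T)}M$ and, in particular, admit coefficients $s_i^*>0$ with $\dot\gamma(T) = \sum_i s_i^*\,X_i$.

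Next I would assemble a finite-dimensional auxiliary model. Using an exponential chart on $\Gamma(M\times[0,T],C)$ (as in the proof of Lemma~\ref{lem:infEndpointMap}), the variations $w_i$ can be integrated into a smooth family of controls $s\rightsquigarrow u^s$, $s\in [0,\infty)^k$, with $u^0=u$ and $\partial_{s_i}|_{s=0}u^s = w_i$; positive combinations of inward-pointing vectors are again inward-pointing, so $u^s$ is an admissible control for all $s$ in the orthant. I then define
\[ G:[0,\infty)^k\times[0,T]\to M,\qquad G(s,\tau):=\Endpoint_p^{T-\tau}(u^s). \]
Lemma~\ref{lem:infEndpointMap} and the chain rule give, along the ray $(s,\tau)=\alpha(s^*,1)$, the expansion $G(\alpha s^*,\alpha)=\gamma(T)+O(\alpha^2)$, because the first-order contribution $\alpha\sum_i s_i^*X_i$ of the control deformation exactly cancels the first-order time shift $-\alpha\dot\gamma(T)$.

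Finally I would upgrade this approximate solution to an exact one via the Implicit Function Theorem. At $(\alpha s^*,\alpha)$ every coordinate is strictly positive, so locally the cone constraints are inactive and $G$ is a smooth map defined on an open set. Its differential there is a small perturbation of the surjective linear map $(\dot s,\dot\tau)\mapsto\sum_i\dot s_i X_i-\dot\tau\dot\gamma(T)$, and hence is itself surjective for $\alpha$ small. The quantitative form of IFT then produces $(s^\sharp,\tau^\sharp)$ at distance $O(\alpha^2)$ from $(\alpha s^*,\alpha)$ with $s_i^\sharp>0$, $\tau^\sharp\in(0,T)$, and $G(s^\sharp,\tau^\sharp)=\gamma(T)$. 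The control $u^{s^\sharp}$, defined on $[0,T-\tau^\sharp]$, is $L^\infty$-close to $u$ by choosing $\alpha$ small, and reaches $\gamma(T)$ in time strictly less than $T$, contradicting the local minimality of $u$.

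The main obstacle is bookkeeping rather than analytic: one must verify that the exponential parametrisation $s\mapsto u^s$ indeed sends the orthant $[0,\infty)^k$ into the cone of admissible controls (so that all intermediate candidates remain legal) and that $G$ is smooth enough through the corners of $\Gamma(M\times[0,T],C)$ for the IFT invocation to apply. Once these regularity points are settled, the finite-dimensional reduction described above applies without further difficulty.
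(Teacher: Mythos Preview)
The paper does not actually prove this lemma: it simply writes ``Since we considered the restricted linearisation of the endpoint map, it follows that'' and states the claim. Your argument is a correct and detailed unpacking of what is left implicit there; the finite-dimensional reduction via positively spanning inward variations followed by an implicit-function step is exactly the right mechanism.

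One technical point you do not flag, however, deserves attention. Because controls are only $L^\infty$ in time, the curve $\tau\mapsto\gamma(T-\tau)$ is merely Lipschitz, so $G$ need not be $C^1$ in $\tau$, and the first-order cancellation $G(\alpha s^*,\alpha)=\gamma(T)+O(\alpha^2)$ presupposes that $\dot\gamma(T)$ exists and that a Taylor expansion in $\tau$ is available. A cleaner way to organise the same idea is to freeze $\tau=\alpha$ and work only with the smooth map $H_\alpha:s\mapsto\Endpoint_p^{T-\alpha}(u^s)$. For small $\alpha$ its differential at $s=\alpha\mathbf{1}$ is a perturbation of the surjective map $(\dot s_i)\mapsto\sum_i\dot s_i X_i$, so the image of the cube $\prod_i[0,2\alpha]$ under $H_\alpha$ contains a ball of radius $c\alpha$ about $H_\alpha(\alpha\mathbf{1})$. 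Since the $w_i$ may be rescaled (inward-pointing variations form a cone) to make $c$ exceed the Lipschitz constant of $\gamma$, and since $|H_\alpha(\alpha\mathbf{1})-\gamma(T)|$ is controlled by that Lipschitz constant times $\alpha$ plus $O(\alpha^2)$, this ball contains $\gamma(T)$. This avoids any differentiation in $\tau$ and sidesteps the issue entirely.
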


\subsection{Filippov's theorem}

A central question in Control Theory is whether the infimal time of arrival between two points is realised by a minimising curve. The following statement \cite[Subsection 10.3]{AS} gives a sufficient criterion (and one can easily produce examples where the conclusions fail to hold if the compactness or convexity assumptions are dropped):
\begin{proposition}
Let $\rho: C \to TM$ be a control system with fibrewise compact and convex image. Then:
\begin{itemize}
\item The attainable sets of $C$ are compact.
\item Assume additionally that $\Acal_\infty(p)=M$ and let $T = \inf \{t \mid q\in\Acal_t(p)\}$. Then, there exists a (possibly not unique) admissible curve connecting $p$ with $q$ with arrival time $T$.
\end{itemize}
\end{proposition}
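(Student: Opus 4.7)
The plan is to establish both parts through an Arzelà--Ascoli compactness argument for admissible curves, in which convexity of the fibres of $\rho(C)$ is the essential ingredient that allows a weak limit of velocities to remain an admissible velocity. Fix a background Riemannian metric on $M$ purely for the purpose of estimates. Fibrewise compactness combined with local triviality of $C\to M$ implies that for any compact $K\subset M$ the bundle $C|_K$ is compact, so $\rho(C|_K)\subset TM|_K$ is compact with some uniform velocity bound $L_K$. For any sequence of admissible curves $\gamma_n:[0,T]\to M$ with $\gamma_n(0)=p$, a standard continuation/exit-time argument then forces the images $\gamma_n([0,T])$ to lie in a common compact set $K^{\ast}$ depending only on $T$ and on the growth of $\rho(C)$ near $p$, and each $\gamma_n$ is $L_{K^\ast}$-Lipschitz.

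For the first part, take $q_n\in\Acal_T(p)$ realised by such $\gamma_n$ with $\gamma_n(T)=q_n$. Arzelà--Ascoli delivers a subsequence $\gamma_n\to\gamma$ uniformly, with $\gamma$ Lipschitz. Writing $u_n(t):=u_{n,t}(\gamma_n(t))$ so that $\dot\gamma_n(t)=\rho(u_n(t))$, the velocities form a bounded sequence in $L^\infty([0,T];TM)$ (read in a trivialisation near $\gamma$) and, after passing to a further subsequence, converge weakly-$\ast$ to some $V$; uniform convergence of $\gamma_n$ identifies $V=\dot\gamma$ distributionally. Mazur's lemma combined with convexity and upper semicontinuity of the set-valued map $x\mapsto\rho(C_x)$ then yields $\dot\gamma(t)\in\rho(C_{\gamma(t)})$ for almost every $t$. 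A measurable selection theorem (e.g.\ Kuratowski--Ryll-Nardzewski) produces a measurable, hence $L^\infty$, control $u_t$ along $\gamma$ with $\rho(u_t(\gamma(t)))=\dot\gamma(t)$, which can be extended to a local control in the sense of Definition~\ref{def:controlSystem}. Thus $\gamma$ is admissible and $q=\gamma(T)\in\Acal_T(p)$, proving compactness.

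For the second part, pick $t_n\searrow T$ with admissible $\gamma_n:[0,t_n]\to M$ joining $p$ to $q$. Reparametrising by $\tilde\gamma_n(s):=\gamma_n(st_n/T)$ gives Lipschitz curves on $[0,T]$ with $\dot{\tilde\gamma}_n(s)\in(t_n/T)\,\rho(C_{\tilde\gamma_n(s)})$. Since $t_n/T\to 1$, the same Arzelà--Ascoli plus Mazur argument produces a uniform limit $\gamma:[0,T]\to M$ with $\gamma(0)=p$, $\gamma(T)=q$, and $\dot\gamma(s)\in\rho(C_{\gamma(s)})$ almost everywhere, and measurable selection provides the required admissible control.

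The main obstacle is the passage from weak-$\ast$ convergence of the velocities to a pointwise admissibility condition on $\dot\gamma$: without convexity of the fibres of $\rho(C)$ one only obtains $\dot\gamma(t)\in\overline{\mathrm{conv}}(\rho(C_{\gamma(t)}))$, which can be strictly larger than $\rho(C_{\gamma(t)})$ and produces the standard counterexamples showing that both conclusions may fail in the non-convex setting. Checking measurability of the set-valued map $t\mapsto C_{\gamma(t)}\cap\rho^{-1}(\dot\gamma(t))$ before applying Kuratowski--Ryll-Nardzewski is a secondary technicality, handled by the continuity of $\gamma$ and the measurability of $\dot\gamma$.
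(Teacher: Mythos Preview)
Your argument is correct and follows the standard Filippov route. The paper itself does not prove this Proposition directly---it simply cites \cite[Subsection 10.3]{AS}---but immediately afterwards proves an ``equivalent claim'' (Lemma~\ref{lem:LipschitzLimit}) whose proof you should compare against.

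The difference lies in how convexity is used to show the limit velocity is admissible. You take the functional-analytic route: weak-$\ast$ convergence of velocities in $L^\infty$, then Mazur's lemma to land in the closed convex hull, then a measurable selection theorem to recover a control. The paper's Lemma~\ref{lem:LipschitzLimit} is more elementary and avoids both Mazur and Kuratowski--Ryll-Nardzewski: it works directly with difference quotients, observing that $\gamma_i(t+h)-\gamma_i(t)=\int_t^{t+h}\gamma_i'(s)\,ds$ lies in $\mathrm{ConvexHull}\bigl(\bigcup_{s\in[t,t+h]}\rho(C)_{\gamma_i(s)}\bigr)$, passes to the $C^0$-limit, and then lets $h\to 0$ to conclude $\gamma_\infty'(t)\in\rho(C)_{\gamma_\infty(t)}$ at points of differentiability. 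This bypasses weak-$\ast$ machinery entirely and never needs to produce a global control section---the paper's Definition of admissibility only requires $\dot\gamma\in\rho(C)$ pointwise a.e. Your approach buys you a cleaner handle on the non-compact case (the paper's Lemma assumes $M$ compact, whereas you sketch a continuation argument), and your reparametrisation trick for the second bullet is a nice touch the paper does not spell out.
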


We provide now an equivalent claim (with its corresponding proof), since we will need it later on for our discussion about billiards:
\begin{lemma} \label{lem:LipschitzLimit}
Let $M$ be compact. Let $\rho: C \to TM$ be a control system with fibrewise compact and convex image. Then:
\begin{itemize}
    \item any sequence of admissible curves $\{\gamma_i\}_{i=1,2,\cdots}$ has a subsequence converging in $C^0$ to a Lipschitz curve $\gamma_\infty$.
    \item $\gamma_\infty$ is an admissible curve.
\end{itemize} 
\end{lemma}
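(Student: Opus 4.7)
The statement splits cleanly into two tasks: extracting a $C^0$-limit (an Arzel\`a--Ascoli step), and then showing admissibility of the limit (a closure-of-trajectories step that is where all the work lies).

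\emph{Step 1: Uniform Lipschitz bound and extraction.} Fix an auxiliary Riemannian metric on $M$ and the induced sup-norm on $TM$. Since $C \to M$ is a locally trivial bundle with fibrewise compact $\rho$-image and $M$ is compact, a finite trivialising cover together with continuity of $\rho$ yields a constant $L$ with $\|\rho(u)\| \leq L$ for all $u \in C$. Every admissible curve $\gamma$ then satisfies $\|\dot\gamma(t)\| \leq L$ almost everywhere and is thus $L$-Lipschitz. Applied to $\{\gamma_i\}$ on the compact target $M$, Arzel\`a--Ascoli provides a subsequence (still denoted $\gamma_i$) converging in $C^0$ to a curve $\gamma_\infty$ which inherits the bound $L$, so $\gamma_\infty$ is Lipschitz.

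\emph{Step 2: Weak-$\ast$ limit of derivatives.} After choosing a finite atlas for $M$, each $\dot\gamma_i$ can be regarded (piecewise in time) as an element of the unit ball in $L^\infty([0,T],\RR^n)$. By Banach--Alaoglu we may pass to a further subsequence so that $\dot\gamma_i \rightharpoonup^\ast w$ for some $w \in L^\infty$. Testing against indicator functions of intervals and using $\gamma_i \to \gamma_\infty$ in $C^0$ identifies $w = \dot\gamma_\infty$ almost everywhere.

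\emph{Step 3: The differential inclusion passes to the limit.} This is the main obstacle, and is where convexity of the fibres of $\rho(C)$ is used. The goal is to show $\dot\gamma_\infty(t) \in \rho(C_{\gamma_\infty(t)})$ for almost every $t$. By Mazur's lemma, on any fixed interval the weak-$\ast$ limit $\dot\gamma_\infty$ is a strong $L^1$-limit of a sequence of finite convex combinations of the $\dot\gamma_j$ with $j$ arbitrarily large; along a subsubsequence such combinations converge pointwise almost everywhere. Fix $t$ in the full-measure set where this holds and where $\dot\gamma_j(t) \in \rho(C_{\gamma_j(t)})$ for all $j$. Since $\gamma_j(t) \to \gamma_\infty(t)$, continuity of $\rho$ and fibrewise compactness give upper hemicontinuity of the set-valued map $p \mapsto \rho(C_p)$; concretely, every $\dot\gamma_j(t)$ for $j$ large lies within any prescribed neighbourhood of the closed convex set $\rho(C_{\gamma_\infty(t)})$. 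Convexity makes this neighbourhood closed under convex combinations, and taking the neighbourhood to shrink gives $\dot\gamma_\infty(t) \in \rho(C_{\gamma_\infty(t)})$.

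\emph{Step 4: Measurable selection.} It remains to produce a control realising $\dot\gamma_\infty$. The set-valued map
\[ t \mapsto \rho^{-1}(\dot\gamma_\infty(t)) \cap C_{\gamma_\infty(t)} \]
has nonempty closed values by Step 3 and is measurable because $\rho$, $\gamma_\infty$ and $\dot\gamma_\infty$ are. A standard measurable selection theorem (Kuratowski--Ryll-Nardzewski) supplies a measurable $\tilde u: [0,T] \to C$ with $\rho(\tilde u(t)) = \dot\gamma_\infty(t)$ and $\tilde u(t) \in C_{\gamma_\infty(t)}$. Extending $\tilde u$ to an $L^\infty$ section $u_t^\infty$ defined on a tubular neighbourhood of $\gamma_\infty$ (using local trivialisations of $C$ and a partition of unity in time) yields a control in the sense of Definition~\ref{def:controlSystem} whose solution through $\gamma_\infty(0)$ is $\gamma_\infty$. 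This verifies admissibility.
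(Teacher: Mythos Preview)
Your proof is correct and follows the standard differential-inclusions route (Banach--Alaoglu for weak-$\ast$ compactness of derivatives, Mazur to upgrade to strong convergence of convex combinations, upper hemicontinuity of $p\mapsto\rho(C_p)$, then measurable selection). The paper proceeds more elementarily and bypasses Steps~2--3 entirely: rather than extracting a weak limit of $\dot\gamma_i$, it observes that the finite increment $\gamma_i(t+h)-\gamma_i(t)=\int_t^{t+h}\dot\gamma_i$ lies in the convex hull of $\bigcup_{s\in[t,t+h]}\rho(C)_{\gamma_i(s)}$, passes to the limit using only $C^0$ convergence of the $\gamma_i$ and closedness of these hulls, and then lets $h\to 0$ at a differentiability point of $\gamma_\infty$ to land in $\rho(C)_{\gamma_\infty(t)}$. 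This avoids any functional-analytic machinery and makes the role of convexity transparent at the level of finite averages. Your approach, by contrast, is the textbook Filippov-type argument; it is heavier but more modular, and your explicit measurable-selection step (which the paper leaves implicit) is a genuine addition given that the paper's definition of admissibility requires a control that is a smooth-in-space section. One small technical remark: Mazur's lemma applies to weak convergence, so when you invoke it after a weak-$\ast$ limit in $L^\infty$ you are implicitly using that the bounded sequence also converges weakly in $L^1$ (or $L^2$) on the finite interval; this is immediate since $L^\infty\subset L^1$ there, but worth saying.
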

\begin{proof}
Since $\rho(C)$ is fibrewise compact, all admissible curves are Lipschitz with the same Lipschitz constant. Compactness of $M$ shows that any sequence of curves is $C^0$-bounded. Arzela-Ascoli then tells us that there is a converging subsequence and that the limit $\gamma_\infty$ is Lipschitz (with the same constant).

For the second claim, we work locally in time and space: We pick some $t \in I$ in the domain of all our curves and a little ball $B$ containing $\gamma_\infty(t) \in M$. Using the identification $TB \cong B \times \RR^n$ we can regard all $\rho(C)_p$, $p \in B$, as subsets of $\RR^n$. Then, we observe that the inclusion holds:
\[ \gamma_i(t+h)-\gamma_i(t) = \int_t^{t+h} \gamma_i'(s)ds \quad\in\quad \textrm{ConvexHull}\left(\bigcup_{t \leq s \leq t+h} \, \rho(C)_{\gamma_i(s)}\right), \]
since $\gamma'_i(s) \in \rho(C)_{\gamma_i(s)}$. By taking the limit we deduce:
\[ \gamma_\infty(t+h)-\gamma_\infty(t) \quad\in\quad \textrm{ConvexHull}\left(\bigcup_{t \leq s \leq t+h} \, \rho(C)_{\gamma_\infty(s)}\right), \]
which implies that, if $t$ is a point of differentiability:
\[ \gamma_\infty'(t) \in \textrm{ConvexHull}\left(\rho(C)_{\gamma_\infty(t)}\right) = \rho(C)_{\gamma_\infty(t)}. \]
\end{proof}

The same idea allows one to prove that trajectories of a control system can be used to approximate trajectories of the control system given by its convex hull. This is the celebrated \emph{relaxation theorem} of Filippov, developed independently by M. Gromov under the name of \emph{convex integration}:
\begin{proposition} \label{prop:Filippov}
Let $\rho: C \to TM$ be a control system with fibrewise compact image. Let $D \subset TM$ be the fibrewise convex hull of $\rho(C)$. Then, the attainable sets of $D$ are the closures of the attainable sets of $C$.
\end{proposition}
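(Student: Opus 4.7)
The strategy is to prove the two inclusions $\Acal_T(p)\subseteq \Acal^D_T(p)$ and $\Acal^D_T(p)\subseteq \overline{\Acal_T(p)}$ separately, where $\Acal^D_T(p)$ denotes temporarily the attainable set of $D$. The first is immediate: since $\rho(C)\subseteq D$ fibrewise, every $C$-admissible curve is $D$-admissible, so $\Acal_T(p)\subseteq \Acal^D_T(p)$. Moreover, the fibrewise convex hull of a fibrewise compact subset of a finite-dimensional vector bundle is fibrewise compact, so $D$ itself satisfies the hypotheses of Lemma~\ref{lem:LipschitzLimit}; applied locally, the lemma shows $\Acal^D_T(p)$ is closed, and taking closures yields $\overline{\Acal_T(p)}\subseteq \Acal^D_T(p)$.

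For the reverse inclusion, let $q\in \Acal^D_T(p)$ and pick a $D$-admissible $\gamma:[0,T]\to M$ with $\gamma(0)=p$, $\gamma(T)=q$. The plan is to construct $C$-admissible curves $\gamma_n$ with $\gamma_n(0)=p$ and $\gamma_n(T)\to q$ by a zigzag approximation (the classical \emph{convex integration} trick). Partition $[0,T]$ into intervals $I_i = [t_i,t_{i+1}]$ of length $\delta = T/n$. In a local chart around $\gamma(t_i)$, the averaged velocity
\[
v_i \;:=\; \frac{1}{\delta}\int_{I_i}\dot\gamma(s)\,ds
\]
lies, up to an $o(1)$ error as $\delta\to 0$ coming from continuity of the fibres of $D$, inside $D_{\gamma(t_i)}$. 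By Carath\'eodory's theorem, $v_i$ is then a convex combination $\sum_j \lambda_{i,j} w_{i,j}$ of at most $\dim M+1$ vectors $w_{i,j}\in\rho(C)_{\gamma(t_i)}$. Choose local $C$-controls $u_{i,j}$ whose anchor is $w_{i,j}$ at $\gamma(t_i)$, subdivide $I_i$ into consecutive sub-intervals of lengths $\lambda_{i,j}\delta$, and steer $\gamma_n$ on each sub-interval by the corresponding $u_{i,j}$.

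By construction $\gamma_n$ and $\gamma$ have the same average velocity on each $I_i$ up to an $o(1)$ term, so a Gronwall-type estimate yields $\|\gamma_n-\gamma\|_{C^0}\to 0$; in particular $\gamma_n(T)\to q$. The main technical obstacle is making the per-interval error uniform in $i$: one must use that the fibres $\rho(C)_x$ (and hence also the local sections $u_{i,j}$) can be chosen continuously in $x$, and that the whole construction takes place inside a fixed precompact neighborhood where speeds are uniformly bounded. With these ingredients in place, the argument is a direct adaptation of the classical Filippov relaxation theorem.
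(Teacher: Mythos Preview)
The paper does not actually prove this proposition: it is stated as the classical relaxation theorem of Filippov (equivalently, Gromov's convex integration), with only the remark that ``the same idea'' as in Lemma~\ref{lem:LipschitzLimit} applies. Your sketch follows exactly this standard route---the easy inclusion plus closedness of $\Acal^D_T(p)$ via Lemma~\ref{lem:LipschitzLimit}, and the hard inclusion via the zigzag (Carath\'eodory plus Gronwall) construction---so it is in line with what the paper has in mind and is a correct outline of the classical argument.

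One small point worth tightening: Lemma~\ref{lem:LipschitzLimit} as stated assumes $M$ compact, and your ``applied locally'' is a bit telegraphic. The clean way to justify closedness of $\Acal^D_T(p)$ is to observe that all $D$-admissible curves from $p$ on $[0,T]$ have uniformly bounded speed and therefore remain in a fixed compact neighbourhood of $p$, so Lemma~\ref{lem:LipschitzLimit} applies to that neighbourhood. Similarly, in the zigzag step the controls $u_{i,j}$ must be evaluated along $\gamma_n$ rather than at $\gamma(t_i)$; you acknowledge this under ``the main technical obstacle'', and the fix is indeed the uniform continuity of $x\mapsto\rho(C)_x$ on that compact set.
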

Do note that $D$ can be realised as the image of $C \times C \times [0,1]$ under the convex combination of two copies of $\rho$, so it fits into our formalism. These results tell us that, from a control theory perspective, we may as well restrict our attention to control systems that are fibrewise convex (but possibly non-smooth and non-strictly convex).

\subsection{The Pontryagin maximum principle}

We will now provide a cotangent characterisation of critical and minimising curves. As we shall explain later, this generalises the interpretation of the Riemannian (co)geodesic flow as a Hamiltonian flow. We refer the reader to \cite[Chapter 12]{AS}.

\subsubsection{Cotangent lifts} \label{sssec:cotangentLift}

Let $(u_t)_{t \in [0,T]}: M \to C$ be a control. Its image $(\rho \circ u_t)_{t \in [0,T]}$ under the anchor map is a time-dependent vector field, which can thus be regarded as a fibrewise linear, time-dependent Hamiltonian:
\begin{align*}
H_{u_t}: \quad T^*M \quad\to\quad & \RR \\
\alpha \quad\to\quad & H_{u_t}(\alpha) := \alpha(\rho \circ u_t).
\end{align*}
\begin{definition} \label{def:cotangentLiftA}
The time-dependent Hamiltonian vector field $X_{u_t}$ corresponding to the Hamiltonian functions $H_{u_t}$ is said to be a \textbf{cotangent lift} of $u_t$.
\end{definition}
Do note that $X_{u_t}$ is smooth in space, but only measurable in time. Similarly:
\begin{definition} \label{def:cotangentLiftB}
Let $\gamma:[0,T]\to M$ be an admissible curve with control $u_t$ and initial point $p \in M$. A curve $\lambda:[0,T] \to T^*M$ is a \textbf{cotangent lift} of $\gamma$ if:
\begin{itemize}
\item $\lambda(t) \neq 0$,
\item $\pi \circ \lambda(t) = \gamma(t)$,
\item $\dot\lambda(t) = X_{u_t}(\lambda(t))$.
\end{itemize}
\end{definition}
I.e., the lifted curve $\lambda$ is a momentum for $\gamma$. 

\subsubsection{Cotangent characterisation of critical curves} \label{sssec:PSP}

Let us introduce some notation: A covector $\lambda \in V^*$ \textbf{supports} a subset $A \subset V$ if $\lambda|_A \leq 0$.

Fix a point $p \in M$ and a critical control $(u_t)_{t \in [0,T]}$ with solution $\gamma(t)$. Due to criticality, the image of $d_u^-\Endpoint_p^T$ is not the whole of $T_{\gamma(T)}M$. It is a cone, and we shall see that is is a convex set. From this, it follows that there exists a covector $\lambda(T) \in T_{\gamma(T)}^*M$ of support, i.e., no infinitesimal variation of the control allows us to move in the codirection $\lambda(T)$.

The covector $\lambda(T)$ can be pulled back using the flow of $u_t$, yielding a cotangent lift $\lambda: [0,T] \to T^*M$ of $\gamma$. Then, criticality can be read in terms of the lift $\lambda$ as follows:
\begin{proposition} \label{prop:PSP}
Fix a point $p \in M$ and a control $u: M \times [0,T] \to C$. Then, the following conditions are equivalent:
\begin{itemize}
\item $u$ is critical,
\item $u|_{[0,t]}$ is critical for all $t \in [0,T]$,
\item there exists a cotangent lift $\lambda:[0,T] \to T^*M$ such that, for almost all $t \in [0,T]$, the $1$-form $\rho^*\lambda(t)$ supports the inward-pointing tangent space $\Vertical_{u_t(\gamma(t))}^-C$.
\end{itemize}
\end{proposition}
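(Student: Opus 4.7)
The plan is to use the explicit formula
\[ d_u\Endpoint_p^T(v)=\int_0^T d\phi_{t\to T}\circ d\rho\circ v(t)\,dt \]
from Lemma \ref{lem:infEndpointMap} as the bridge between the three conditions, combined with a Hahn--Banach separation applied to the image of $d_u^-\Endpoint_p^T$. That image is a convex cone in $T_{\gamma(T)}M$: it is $\RR^+$-invariant by rescaling $v$, and closed under addition because the fibres of $\Vertical^-C$ are convex cones and the formula above is linear in $v$.

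For (1)$\Leftrightarrow$(2), the direction (2)$\Rightarrow$(1) is trivial at $t=T$. For the converse, suppose $u|_{[0,s]}$ were non-critical for some $s<T$. Restricting the formula above to variations supported in $[0,s]$ yields $d_u\Endpoint_p^T(v)=d\phi_{s\to T}\bigl(d_{u|_{[0,s]}}\Endpoint_p^s(v)\bigr)$, and since $d\phi_{s\to T}$ is a linear isomorphism, this already forces $d_u^-\Endpoint_p^T$ to be surjective, contradicting (1).

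For (1)$\Rightarrow$(3), criticality means the convex cone $\im(d_u^-\Endpoint_p^T)\subset T_{\gamma(T)}M$ is a proper subset, so Hahn--Banach furnishes a nonzero supporting covector $\lambda(T)\in T^*_{\gamma(T)}M$. I would define $\lambda(t):=\phi_{t\to T}^*\lambda(T)$; a direct coordinate computation (or the observation that the Hamiltonian flow of the fibrewise linear Hamiltonian $H_{u_t}$ of Definition \ref{def:cotangentLiftA} is exactly the cotangent lift of the flow of $\rho\circ u_t$) shows $\lambda$ is a cotangent lift of $\gamma$ in the sense of Definition \ref{def:cotangentLiftB}. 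Plugging into the formula and moving $d\phi_{t\to T}$ to the covector side, for every $v\in T_u^-\Gamma(M\times[0,T],C)$,
\[ 0\geq \lambda(T)\bigl(d_u^-\Endpoint_p^T(v)\bigr)=\int_0^T \lambda(T)\bigl(d\phi_{t\to T}\circ d\rho\circ v(t)\bigr)\,dt=\int_0^T (\rho^*\lambda(t))(v(t))\,dt. \]
To convert this integral inequality into a pointwise support condition, I would pick a countable family $\{w^n\}$ of measurable sections of $\Vertical^-C$ along $(t,u_t(\gamma(t)))$ that is fibrewise dense, apply the inequality to variations of the form $v(t)=\chi_E(t)w^n(t)$ for arbitrary measurable $E\subset[0,T]$, and conclude $(\rho^*\lambda(t))(w^n(t))\leq 0$ at every joint Lebesgue point. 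Density of $\{w^n(t)\}$ in the fibre and continuity of $\rho^*\lambda(t)$ on that cone then upgrades this to the full support condition on $\Vertical^-_{u_t(\gamma(t))}C$ for almost every $t$.

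The implication (3)$\Rightarrow$(1) is the same computation read in reverse: the a.e.\ support condition for $\rho^*\lambda(t)$ integrates to show that $\lambda(T)$ supports $\im(d_u^-\Endpoint_p^T)$, and $\lambda(T)\neq 0$ forces the image into a proper half-space, so $u$ is critical. The main obstacle, in my view, is the a.e.\ upgrade in (1)$\Rightarrow$(3): one needs to extract a pointwise conclusion from an integral inequality while handling measurability of the time-dependent inward-pointing cones $\Vertical^-_{u_t(\gamma(t))}C$; everything else is either a transparent linear-algebra manipulation or a direct reading of the formula from Lemma \ref{lem:infEndpointMap}.
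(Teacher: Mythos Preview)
Your proposal is correct and follows essentially the same approach as the paper's sketch: both produce a supporting covector $\lambda(T)$ by convexity of the image cone, pull it back via the flow to obtain the cotangent lift $\lambda(t)$, and then pass from the integral inequality to the almost-everywhere support condition. The paper phrases this last step in terms of \emph{needle variations} and the notion of \emph{Lebesgue vectors} (vectors in $E_t$ admitting $L^\infty$ approximations concentrating at $t$), using them to identify the image of the endpoint map as $\textrm{ConvexHull}\bigl(\cup_{s\in[0,t]}\textrm{Lebesgue}(E_s)\bigr)$; your characteristic-function test variations $\chi_E(t)w^n(t)$ against a dense family are the same device in different clothing, and you correctly flag the measurability of the time-dependent cones as the only genuine technical obstacle.
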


If $C$ has no boundary/corners, the last condition means that the linear function $\rho^*\lambda(t): C_{\gamma(t)} \to \RR$ has a critical point at $u_t(\gamma(t))$.

\begin{proof}[Sketch of proof]
Let $\phi_t$ be the flow of $u_t$. As in Lemma \ref{lem:infEndpointMap}, we translate the problem to the time-dependent control system
\[ (D(q,t) := C(\phi_{t \to T}^{-1}(q)), \quad \rho_t(d) := d\phi_{t \to T} \circ (\rho \circ d - \rho \circ u_t)). \]
In order to study $\gamma$, we look at its pushforward, the constant function $t \to q = \gamma(T)$. We determined that its variations are $L^\infty$-maps:
\[ t \in [0,T] \quad\to\quad E_t := d\rho_t(\Vertical_{(\phi_{t\to T})_*u_t}^-D_{q,t}) = d\phi_{t \to T} \circ d\rho\, (\Vertical^-_{u_t}C_{\gamma(t)}) \subset T_qM. \]

Let us denote $\EE_t$ for the image of the endpoint map of $(D,\rho_t)$ at some intermediate time $t \in [0,T]$. Morally, we want to claim that:
\[ \EE_t = \textrm{ConvexHull}\left(\cup_{s \in [0,t]} \,E_s \right). \]
The idea behind this identity is to take \emph{needle variations}, i.e., sequences of variations whose support concentrates at a particular $s$, varying the endpoint exactly in a direction contained in $E_s$. By taking the support of these variations to be very small time regions, we can add them, effectively producing any convex combination.

There is a caveat in this argument: The variations we consider are $L^\infty$ in time, so the infinitesimal directions at a concrete $t$ are irrelevant. We should then define: A vector $w \in E_t$ is \emph{Lebesgue} if there is an $L^\infty$-section $v: \Op(t) \to T_qM$ such that $v(s) \in E_s$ and $\int_{t-\delta}^{t+\delta} w- v(s) ds = O(\delta)$. I.e., these are the vectors admitting needle approximations. The correct statement reads then:
\[ \EE_t = \textrm{ConvexHull}\left(\cup_{s \in [0,t]} \, \textrm{Lebesgue}\left(E_s\right)\right). \]
From this identity, we deduce:
\begin{itemize}
    \item the constant curve $q$ is critical if and only if there exists a covector $\lambda \in T^*_qM$ supporting $\EE_T$.
    \item $\lambda$ also supports all prior images $\EE_t$, $t \in [0,T]$, and thus the sets $\EE_t$ are non-decreasing in $t$.
    \item $\lambda$ supports $E_t$ for almost all $t$.
\end{itemize}
Translating these statements back to $C$ provides a complete description of the image of the infinitesimal endpoint map and concludes the proof.
\end{proof}

We are thus lead to define:
\begin{definition}
Let $(u_t)_{t \in [0,T]}$ be a control with solution $\gamma$. Then, a cotangent lift $\lambda$ of $\gamma$ is said to be \textbf{critical}, if it satisfies the third property in Proposition \ref{prop:PSP}.
\end{definition}

\subsubsection{The Pontryagin maximum principle}

In the case of minimisers, the previous result can be strengthened to yield the Pontryagin maximum principle (PMP):
\begin{proposition}[PMP] \label{prop:PMP}
Fix a point $p \in M$ and a control $u: M \times [0,T] \to C$ with solution $\gamma$ starting at $p$. If $\gamma$ is a minimiser, there exists a cotangent lift $\lambda:[0,T] \to T^*M$ satisfying:
\[ H_{u_t}(\lambda(t)) = \max_{\rho(v) \in C_{\gamma(t)}} \lambda(t)(v). \]
\end{proposition}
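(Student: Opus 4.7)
The plan is to mirror the strategy behind Proposition \ref{prop:PSP}, replacing the infinitesimal variations of Definition \ref{def:linearisedAnchor} by \emph{needle variations} that perturb $u_t$ by a finite jump on a vanishingly small time interval. Since a minimiser is critical, Proposition \ref{prop:PSP} already supplies a cotangent lift $\lambda$ whose pullback supports $\Vertical^-_{u_t(\gamma(t))}C$, but this is only an infinitesimal constraint; the extra content of the maximum principle is that the same covector $\lambda(t)$ must separate $\rho(u_t)$ from \emph{every} value $\rho(v)$, $v \in C_{\gamma(t)}$, not just from those produced by smoothly deforming $u_t$ inside $\Vertical^-$.

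Concretely, I would fix a Lebesgue point $s$ of $u_t$ and any $\tilde v \in C_{\gamma(s)}$, extend $\tilde v$ to a smooth local section $v$ of $C$ near $\gamma(s)$, and for small $\varepsilon>0$ consider the needle
\[ u^\varepsilon_t := \begin{cases} v & t \in [s,s+\varepsilon], \\ u_t & \text{otherwise}. \end{cases} \]
Applying the same push-forward trick as in Lemma \ref{lem:infEndpointMap} (transport everything by $\phi_{t \to T}$ so that $\gamma$ becomes the constant curve $q=\gamma(T)$), standard Gronwall estimates would yield
\[ \gamma^\varepsilon(T) = \gamma(T) + \varepsilon\, w_{s,v} + o(\varepsilon), \qquad w_{s,v} := d\phi_{s\to T}\bigl(\rho(v(\gamma(s))) - \rho(u_s(\gamma(s)))\bigr). \]
Concatenating finitely many non-overlapping needles with widths $\alpha_i\varepsilon$, $\alpha_i \geq 0$, the first-order term adds linearly, so up to a uniform implicit-function estimate the endpoints sweep out, to leading order in $\varepsilon$, every vector of the convex cone $\Kcal_T := \textrm{ConvexHull}\bigl(\bigcup_{s,v} \RR_{\geq 0}\cdot w_{s,v}\bigr) \subset T_qM$.

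The next step is to exploit $q \in \partial \Acal_T(p)$: a Brouwer-type open mapping argument (as in \cite[Chapter 12]{AS}) forbids $\Kcal_T$ from covering any full neighbourhood of $0$, so as a proper convex cone it admits a supporting functional $\lambda(T) \neq 0$. Transporting backwards along the Hamiltonian flow of $X_{u_t}$ produces the lift $\lambda(t) = \phi_{t \to T}^*\lambda(T)$, and the inequality $\lambda(T)(w_{s,v})\leq 0$ unfolds, via the cotangent transport, to
\[ \lambda(s)(\rho(v)) \leq \lambda(s)(\rho(u_s)) = H_{u_s}(\lambda(s)) \]
for almost every $s$ and every $v \in C_{\gamma(s)}$, which is the desired maximisation. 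The two delicate points I expect to consume most of the work are, first, the uniform ODE estimate ensuring that convex combinations of needles really are realised, to leading order, by admissible perturbations (which forces one to be careful about how the widths $\alpha_i\varepsilon$ scale jointly across the Lebesgue points); and second, the topological step ruling out $\Kcal_T = T_qM$, since $q \in \partial\Acal_T(p)$ is only a set-theoretic statement and upgrading it into a supporting-hyperplane conclusion requires a genuine openness argument on the multi-needle endpoint map.
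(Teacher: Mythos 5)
Your proposal follows essentially the same route as the paper's own sketch: push the system forward by the flow $\phi_{t\to T}$ so that $\gamma$ becomes the constant curve at $q=\gamma(T)$, use needle variations at Lebesgue points to show that the (convex cone generated by the) displacement vectors $d\phi_{s\to T}\bigl(\rho(C_{\gamma(s)})-\rho(u_s(\gamma(s)))\bigr)$ lies in the tangent cone of the attainable set at $q$, extract a supporting covector from $q\in\partial\Acal_T(p)$, and transport it backwards to obtain the lift satisfying the maximisation. The two "delicate points" you flag (uniform realisability of convex combinations of needles, and the openness argument upgrading the boundary condition to a supporting hyperplane) are precisely the steps the paper also leaves at sketch level, so the proposal is correct and matches the paper's argument.
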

\begin{proof}[Sketch of proof]
As before, we act on the system by pushforward using the flow $\phi_t$ of $u_t$. However, instead of linearising (i.e., considering infinitesimal variations of the constant function $t \to q = \gamma(T)$), we look directly at the pushforward system $(D,\rho_t)$. Namely, we define:
\[ B_t := \rho_t(D_{q,t}) = d\phi_{t \to T} (\rho\, (C_{\gamma(t)}) - \rho\, (u_t(\gamma(t)))) \subset T_qM. \]
If $C$ is not fibrewise linear (we invite the reader to visualise $C$ as being convex), the sets $B_t$ contain more information than the sets $E_t$ used in Proposition \ref{prop:PSP}.

Given the $(D,\rho_t)$-attainable set $\Acal_T(q)$, we define $T\Acal_T(q)$ as the union of all limits $\lim_{a \to q} b\frac{q-a}{|q-a|}$ with $a \in \Acal_T(q)$ (for some metric in $M$) and $b \in \RR^+$. Even though the attainable set may not be smooth, this serves as a linear approximation of it at $q$. The point now is that, since $\gamma$ is a minimiser, there exists a covector $\lambda$ in $q$ supporting $T\Acal_T(q)$.

We claim that any covector $\lambda \in T_q^*M$ supporting $T\Acal_T(q)$ supports also the set:
\[ A_T := \cup_{t \in [0,T]} \textrm{Lebesgue}(B_t), \]
and, in particular, for almost every $t$, $\lambda|_{B_t}$ attains a maximum at zero. We will prove this using needle variations. We remark that, unlike the variations used in Proposition \ref{prop:PSP}, which were actual infinitesimal variations, the ones we will use now are not.

Indeed: Suppose $v \in T_qM$ is a Lebesgue vector of some $B_t$. Let $v(s) \in B_s$ be a family satisfying $\lim_{\delta \to 0} \int_{t-\delta}^{t+\delta} v(s) ds = v$. This means that we can produce a control $u^\delta$ that agrees with $u$ everywhere except in an interval of size $2\delta$, where it is instead given by the family $v(s)$. When we integrate this control, it will produce a trajectory whose endpoint is roughly $2\delta v$. As such, by taking $\delta \to 0$ we obtain a sequence of controls producing motion in the direction of $v$. Our assumptions then imply that $\lambda$ must support $v$, concluding the claim for the pushforward system. Translating back to $C$ concludes the proof.
\end{proof}

\begin{definition}
Let $u_t$ be a control with solution $\gamma$. A cotangent lift $\lambda$ of $\gamma$ is said to \textbf{satisfy PMP} if it satisfies the property in the statement of Proposition \ref{prop:PMP}.
\end{definition}
In general, a curve may have a lift satisfying PMP but not be minimising due to the higher order behaviour of the control system at $u$.

\begin{remark}
Observe that the sequence $u^\delta$ used in the proof of Proposition \ref{prop:PMP} is bounded and differs from $u$ only in an interval of size $2\delta$. As such, $u^\delta \to u$ in the $L^p$-topology, as long as $p < \infty$. It follows that local minimisers (in $L^p$, $p < \infty$) satisfy PMP as well. \hfill$\triangle$
\end{remark}
\begin{remark} \label{rem:Linfinity}
The previous reasoning does not apply to $L^\infty$ since, using the notation from the proof, $|u^\delta - u| = |v|$ for all $\delta$. Instead, if $u$ is a local minimiser in the $L^\infty$-topology, we have to restrict our attention to controls that a.e. approach $u$. This implies that we only see the control system locally around $u$. We can use the same argument to then prove that there is a cotangent lift $\lambda$ such that $\lambda|_C$ attains a local (instead of global) maximum at $u$ for almost all values of $t$. \hfill$\triangle$
\end{remark}

\subsection{Maximised Hamiltonians and abnormal curves} \label{ssec:maximisedHamiltonians}

We explained before how a control system may be lifted to the cotangent bundle in a Hamiltonian manner. We have now the tools to take this further and show that (under certain assumptions) there is a single Hamiltonian system whose solutions project down to minimisers (but not all minimisers arise in this manner, as we shall see). First we observe:
\begin{corollary}\label{Cor:preservedQuantity}
Let $u_t$ be a control with solution $\gamma$ and cotangent lift $\lambda$ satisfying PMP. Then, the function $t \to H_{u_t}(\lambda(t))$ is constant.
\end{corollary}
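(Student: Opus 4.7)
The plan is to use the \emph{maximised Hamiltonian}
\[ \tilde H(\alpha) \;:=\; \max_{v \in C_{\pi(\alpha)}} \alpha(\rho(v)), \]
an autonomous, convex, locally Lipschitz function on $T^*M$. Since PMP gives $H_{u_t}(\lambda(t)) = \tilde H(\lambda(t))$ for a.e.\ $t$, it suffices to prove that $\tilde H\circ\lambda$ is constant.

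The generator $X_{u_t}$ is uniformly bounded, so $\lambda$ is Lipschitz; composing with the locally Lipschitz $\tilde H$ shows that $\tilde H \circ \lambda$ is absolutely continuous. It is therefore enough to check that its derivative vanishes almost everywhere. At any $t$ where $\tilde H$ is differentiable at $\lambda(t)$ — a full-measure set by Rademacher's theorem — the maximising $v$ is unique (non-uniqueness being the only obstruction to smoothness of a sup of affine functions), and by PMP it must equal $u_t(\gamma(t))$, so $d_{\lambda(t)}\tilde H = d_{\lambda(t)}H_{u_t}$. Combined with $\dot\lambda(t) = X_{H_{u_t}}(\lambda(t))$, this yields
\[ (\tilde H\circ\lambda)'(t) \;=\; d_{\lambda(t)}\tilde H\bigl(X_{H_{u_t}}\bigr) \;=\; d_{\lambda(t)}H_{u_t}\bigl(X_{H_{u_t}}\bigr) \;=\; 0 \]
by antisymmetry of the symplectic form, and absolute continuity upgrades this a.e.\ vanishing to constancy.

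The main obstacle is the lack of pointwise smoothness: $u_\bullet$ is only $L^\infty$ in $t$ and $\tilde H$ only convex-Lipschitz, which prevents a direct appeal to the non-autonomous Hamiltonian identity $\frac{d}{dt}H_{u_t}(\lambda(t)) = (\partial_t H_{u_t})(\lambda(t))$. The maximised-Hamiltonian/Rademacher detour above is what makes the argument rigorous. In a smoother setting one could alternatively interpret the right-hand side of the non-autonomous identity as the $s$-derivative at $s=t$ of the fibrewise function $s\mapsto \lambda(t)(\rho(u_s(\gamma(t))))$, which by PMP is maximised at $s=t$; the first-order condition then gives the vanishing directly.
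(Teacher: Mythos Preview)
There is a genuine gap in your main argument. Rademacher's theorem gives differentiability of $\tilde H$ at almost every point of $T^*M$, but the image of the curve $\lambda$ has measure zero in $T^*M$: nothing prevents $\lambda$ from lying entirely in the non-differentiability locus of $\tilde H$. Concretely, if $C$ is a polytope then $\tilde H$ is piecewise linear with singular set a union of hypersurfaces, and a bang--bang extremal---precisely what PMP produces in that setting---has $\lambda(t)$ sitting on those hypersurfaces for intervals of positive length. So the claim that $\{t:\tilde H\text{ differentiable at }\lambda(t)\}$ has full measure does not follow from Rademacher, and the chain-rule step collapses.

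Your final paragraph is in fact the paper's proof: differentiate $H_{u_t}(\lambda(t))$ directly, split into the spatial term $dH_{u_t}(X_{u_t})=0$ plus the control-variation term $\lambda(t)(\rho(\dot u_t))$, and kill the latter by the first-order maximality condition (the paper cites Proposition~\ref{prop:PSP}). The paper is admittedly just as informal about the $L^\infty$-in-$t$ regularity of $u$, dismissing it with ``the general argument is similar''. Your detour through $\tilde H$ \emph{would} be a genuine improvement in the sub-Finsler case treated later, since there $\tilde H=H_f$ is smooth away from $\Ann(\xi)$ and a normal lift avoids that set---but in the generality of the Corollary it does not, as stated, close the gap you set out to close.
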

\begin{proof}
Whenever we can differentiate in $t$, it follows from the Cartan identity that:
\[ \dfrac{d}{dt}H_{u_t}(\lambda(t)) = dH_{u_t}(\dot\lambda(t)) + \dot H_{u_t}(\lambda(t)) = dH_{u_t}(X_{u_t}) + \lambda(t)(\rho(\dot u_t)) = 0, \]
where the second term vanishes due to Proposition \ref{prop:PSP}. The general argument is similar.
\end{proof} 

Then, Proposition \ref{prop:PMP} suggests us to introduce:
\begin{definition}
The \textbf{maximised Hamiltonian} defined by the control system $\rho: C \to TM$ is
\begin{align*}
H_\rho: \quad T^*M              \quad\to\quad & \RR \\
\lambda    \quad\to\quad & H_\rho(\lambda) := \max_{v \in \rho(C_q)} \lambda(v).
\end{align*}
\end{definition}
We note that $H_\rho$ is positively homogeneous (of degree one) and only depends on the convex hull of $\rho(C)$. In general, it is not smooth.

\subsubsection{Characteristics}

We now focus on the cotangent region in which $H_\rho$ is smooth. Before we continue, let us recall some notation:
\begin{definition}
Let $\Sigma \subset (T^*M,\omega)$ be a smooth submanifold. Its \textbf{characteristic foliation} is $\Fcal_\Sigma := \ker(\omega|_\Sigma)$. Vectors and submanifolds of $\Sigma$ are said to be \textbf{characteristic} if they are tangent to $\Fcal_\Sigma$.
\end{definition}
The rank of the $2$-form $\omega|_\Sigma$ may vary from point to point, so we think of $\Fcal_\Sigma$ as a singular distribution. Whenever $\omega|_\Sigma$ has constant rank locally, $\Fcal_\Sigma$ is indeed involutive due to the fact that $\omega$ is closed (hence our usage of the word ``foliation''). We recall:
\begin{lemma}
The Hamiltonian trajectories of $H_\rho$, up to reparametrisation, are in correspondence with the characteristic trajectories of its level sets.
\end{lemma}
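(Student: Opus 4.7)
The plan is to give the standard pointwise comparison between the Hamiltonian vector field $X_{H_\rho}$ and the line field $\Fcal_{\Sigma_c}$. Since the statement is about the region where $H_\rho$ is smooth and has non-vanishing differential (so that its level sets are honest smooth hypersurfaces), I would first restrict to the open set $U \subset T^*M$ where this holds, fix a regular value $c$, and work on $\Sigma_c := H_\rho^{-1}(c) \cap U$. Note that on $U$, $H_\rho$ is positively homogeneous of degree one, so $dH_\rho(\lambda) \neq 0$ there and all non-zero level sets are smooth.

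For one inclusion, I would use the defining property $\iota_{X_{H_\rho}}\omega = -dH_\rho$. Antisymmetry of $\omega$ forces $dH_\rho(X_{H_\rho}) = -\omega(X_{H_\rho}, X_{H_\rho}) = 0$, so $X_{H_\rho}$ is tangent to $\Sigma_c$; and then for any $Y \in T\Sigma_c$ we have $(\omega|_{\Sigma_c})(X_{H_\rho},Y) = -dH_\rho(Y) = 0$, proving that $X_{H_\rho}$ lies in the characteristic kernel $\Fcal_{\Sigma_c}$.

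For the converse inclusion, at each $p \in \Sigma_c$ the annihilator of $T_p\Sigma_c$ inside $T_p^*(T^*M)$ is the line generated by $dH_\rho(p)$. If $V \in T_p\Sigma_c$ is characteristic, meaning $\omega(V,Y) = 0$ for all $Y \in T_p\Sigma_c$, then $\iota_V\omega$ vanishes on $T_p\Sigma_c$ and must therefore be a scalar multiple of $dH_\rho(p)$. Non-degeneracy of $\omega$ then yields $V = \alpha\, X_{H_\rho}(p)$ for some $\alpha \in \RR$. Hence $\Fcal_{\Sigma_c}$ is the rank-one distribution spanned pointwise by $X_{H_\rho}$, and the integral curves of $X_{H_\rho}$ coincide with characteristic curves of $\Sigma_c$ up to reparametrisation, as claimed.

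There is no real obstacle here; the argument is purely linear-algebraic at each point. The only thing one has to be careful about is to stay inside the smooth locus of $H_\rho$, which is exactly the framing of the lemma. Outside this locus one would have to reinterpret $X_{H_\rho}$ as a multivalued object or work with the differential inclusion defined by the maximised Hamiltonian, but that falls outside the scope of the statement.
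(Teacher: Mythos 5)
Your argument is correct and is precisely the standard linear-algebraic identification that the paper relies on: indeed, the paper states this lemma with ``We recall:'' and supplies no proof at all, treating it as a known fact. Both inclusions are handled properly (tangency of $X_{H_\rho}$ to the level set and the fact that $\iota_V\omega$ must be a multiple of $dH_\rho$ for a characteristic $V$), and your care in restricting to regular level sets inside the smooth locus matches the framing used throughout Subsection 2.5.
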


Particularising this to our setting, we have the following:
\begin{lemma}
Let $\gamma$ be an admissible curve with lift $\lambda$ contained in the region where $H_\rho$ is smooth. Then, the following statements are equivalent:
\begin{itemize}
    \item[i. ] $\lambda$ satisfies PMP.
    \item[ii. ] $\lambda$ is a characteristic curve in a level set of $H_\rho$.
\end{itemize}
\end{lemma}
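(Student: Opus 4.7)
The equivalence hinges on an envelope theorem argument. In the smooth region of $H_\rho$, the defining maximum $H_\rho(\mu) = \max_{v \in \rho(C_{\pi(\mu)})} \mu(v)$ is attained at a unique point $v^*(\mu)$ that varies smoothly in $\mu$. First-order optimality then gives $d_\mu H_\rho = d_\mu H_{u^*}$ for any control $u^*$ with $\rho(u^*(\pi(\mu))) = v^*(\mu)$, since the implicit $\mu$-dependence of the maximiser contributes zero to first order. Equivalently, the Hamiltonian vector fields agree at $\mu$: $X_{H_\rho}(\mu) = X_{H_{u^*}}(\mu)$.

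For the implication (i) $\Rightarrow$ (ii): PMP asserts $H_{u_t}(\lambda(t)) = H_\rho(\lambda(t))$ for almost every $t$, so in the smooth region $\rho(u_t(\gamma(t)))$ must coincide with the unique maximiser $v^*(\lambda(t))$. The envelope identity then yields $\dot\lambda(t) = X_{H_{u_t}}(\lambda(t)) = X_{H_\rho}(\lambda(t))$, so $\lambda$ is a Hamiltonian trajectory of $H_\rho$. In particular $H_\rho$ is constant along $\lambda$ (consistent with Corollary~\ref{Cor:preservedQuantity}), so $\lambda$ lies in a level set $\{H_\rho = c\}$ and, by the preceding Lemma, is characteristic there.

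For the implication (ii) $\Rightarrow$ (i): Characteristicity means $\dot\lambda(t) \in \operatorname{span}(X_{H_\rho}(\lambda(t)))$ at each point. Together with the cotangent lift equation $\dot\lambda = X_{H_{u_t}}(\lambda)$, this forces $X_{H_{u_t}}(\lambda) = \beta(t) X_{H_\rho}(\lambda)$ for some scalar $\beta(t)$. Projecting to $TM$ via $d\pi$, and using $d\pi(X_{H_\rho}) = \partial_p H_\rho = v^*(\lambda)$ (from the envelope identity) together with $d\pi(X_{H_{u_t}}) = \rho(u_t(\gamma))$, one obtains $\rho(u_t(\gamma(t))) = \beta(t) v^*(\lambda(t))$. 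The parametrisation of $\lambda$ is pinned down by the cotangent lift equation, and that of the Hamiltonian flow of $H_\rho$ by the identity $d\pi(X_{H_\rho}) = v^*(\lambda)$; matching the two forces $\beta \equiv 1$. Hence $\rho(u_t) = v^*(\lambda)$, and $H_{u_t}(\lambda) = \lambda(v^*(\lambda)) = H_\rho(\lambda)$, which is PMP.

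The crux of the proof is the envelope identity $X_{H_\rho}(\mu) = X_{H_{u^*}}(\mu)$, which reduces the dynamics of the (in general nonsmooth) maximised Hamiltonian to that of the linear Hamiltonian defined by the maximising control. The main subtlety is the parametrisation-matching in (ii) $\Rightarrow$ (i), where one has to ensure that the proportionality constant $\beta$ is actually $1$; this is essentially a consistency check between the cotangent lift equation and the Hamiltonian flow of $H_\rho$ at the level of their projections to $TM$.
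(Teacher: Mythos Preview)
Your envelope identity $X_{H_\rho}(\mu) = X_{H_{u^*}}(\mu)$ is exactly what the paper encodes in the sentence ``Hamiltonian orbits are determined by first jet data'': since $H_\rho - H_{u_t} \geq 0$ everywhere and vanishes along $\lambda$, the curve $\lambda$ consists of minima, so $dH_\rho = dH_{u_t}$ there. Your direction (i) $\Rightarrow$ (ii) is correct and matches the paper's.

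The gap is in (ii) $\Rightarrow$ (i), specifically your claim that ``matching parametrisations forces $\beta \equiv 1$''. If ``characteristic curve'' is read as an unparametrised integral curve of the characteristic line field, this is false: take $M = \RR^n$ Euclidean, $\gamma$ a straight line run at half speed with constant control $u_t \equiv \tfrac12 v^*$ interior to the unit disc, and $\lambda$ the cotangent lift with $\lambda(0)$ the unit covector dual to $v^*$. Then $\dot\lambda = \tfrac12 X_{H_\rho}(\lambda)$ is tangent to the characteristic foliation, yet $H_{u_t}(\lambda) = \tfrac12 H_\rho(\lambda)$ and PMP fails. There is nothing to ``match'': the lift equation determines $\dot\lambda$ completely, and here it is $\tfrac12 X_{H_\rho}$.

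The paper handles (ii) $\Rightarrow$ (i) by writing ``take as control any $u_t$ maximising $\lambda(u_t)$''. Unpacked, this amounts to reading (ii) as $\dot\lambda = X_{H_\rho}(\lambda)$ with the Hamiltonian parametrisation (consistent with the preceding Lemma identifying characteristic curves with Hamiltonian trajectories). Under that reading, projecting gives $\dot\gamma = d\pi(X_{H_\rho}(\lambda)) = v^*(\lambda)$, and since $\dot\gamma = \rho(u_t(\gamma))$ by admissibility, the given control already satisfies $\rho(u_t(\gamma)) = v^*(\lambda)$; PMP is then immediate. So $\beta = 1$ is built into the intended meaning of (ii), not something to be derived from a parametrisation-matching argument.
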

\begin{proof}
By definition, the inequality $H_\rho - H_{u_t} \geq 0$ holds for any control $u_t$. Additionally, the equality $H_\rho - H_{u_t} = 0$ along a curve $\lambda$ with control $u_t$ is equivalent to PMP. If $H_\rho - H_{u_t}$ vanishes along $\lambda$, it follows that $\lambda$ is a Hamiltonian trajectory of $H_\rho$ if and only if it is a trajectory of $H_{u_t}$. This is because Hamiltonian orbits are determined by first jet data. As such, Condition (i) implies (ii). The converse follows by taking as control any $u_t$ maximising $\lambda(u_t)$.
\end{proof}

\subsubsection{Abnormal curves}

That is, some of the curves that are minimising (to first order) admit a nice cotangent description in terms of $H_\rho$. Some others do not, leading us to define:
\begin{definition}
A minimiser $\gamma$ is:
\begin{itemize}
    \item \textbf{abnormal}, if it has a cotangent lift $\lambda$ satisfying PMP and contained in the non-smooth locus of $H_\rho$.
    \item \textbf{strictly abnormal}, if all its lifts satisfying PMP are in the non-smooth locus of $H_\rho$.
    \item \textbf{normal}, otherwise.
\end{itemize}
\end{definition}
In particular, strict abnormals are not projections of characteristics of $H_\rho$. Understanding the nature of strictly abnormal minimisers is one of the central questions in sub-Riemannian Geometry. Here, we concentrate on the normal minimisers.

\section{Sub-Finsler control systems} \label{sec:sub-FinslerSystems}

Recall that a \textbf{sub-Finsler manifold} is a triple $(M,\xi,f)$  where $M$ is a smooth manifold, $\xi$ is a bracket-generating  distribution and $f:\xi\to\RR$ is a fibrewise  Finsler norm. 

\begin{example}[(Sub-Finsler) Carnot groups]
Let $\gfrac$ be a (finite-dimensional) Lie algebra with subspace $\hfrac$ such that iterated Lie brackets of vectors in $\hfrac$ generate $\gfrac$. It follows that $\gfrac$ is nilpotent and can be endowed with a filtration:
\[ \gfrac_1 := \hfrac \subset \gfrac_2 := \hfrac \oplus [\hfrac,\hfrac] \subset \cdots. \]
The group law in the corresponding simply connected Lie group integrating it can be described explicitly, in terms of the structure constants of $\gfrac$, using the Campbell-Baker-Hausdorff formula. In particular, the underlying space of the group is the vector space $\gfrac$.

Given any Lie group $G$ with Lie algebra $\gfrac$, we can left-translate $\hfrac$ to yield a left-invariant distribution $\xi_\hfrac$; the same can be done for the subsequent entries in the filtration $\gfrac_i$. The associated Lie flag satisfies then $\Gamma^{(i)}(\xi_\hfrac) = \Gamma(\xi_{\gfrac_i})$. That is, the behavior of $\xi_\hfrac$ under Lie brackets is completely encoded in the pair $(\gfrac,\hfrac)$.

Lastly, we equip $\hfrac$ with a Finsler norm $f: \hfrac \to \RR$. Extending it by left-invariance to $\xi_\hfrac$ yields a sub-Finsler manifold $(G,\xi_\hfrac,f)$. These are commonly referred to as \textbf{Carnot groups}. \hfill$\triangle$
\end{example}

We will now discuss how the results from the previous Section may be adapted to this setting.

\subsection{Distributions as control systems}

Before we introduce costs, we can think of a distribution $(M,\xi)$ as a control system $\xi \subset TM$ in which the anchor is just the inclusion. If $\xi$ is bracket-generating, every attainable set is the whole manifold and the infimal time of arrival between any two points is zero (by increasing the speed of any given curve).

In the literature the following notation is commonly used:
\begin{definition}
Critical horizontal curves of $(M,\xi)$ are also called \textbf{singular}\footnote{They are called singular because they are critical points of the endpoint map and thus singularities of the space of admissible curves with given endpoints. We decided to use the word \emph{critical} instead in the general case in order to avoid confusion (particularly since we will look at critical curves for both $\xi$ and $(\xi,f)$).}.
\end{definition}

If we try to define the maximised Hamiltonian associated to $\xi$, we see that it should vanish along $\Ann(\xi)$ and be infinity everywhere else. This is equivalent to the fact that singular curves must have cotangent lifts contained in $\Ann(\xi)$. This can be further refined to prove:
\begin{proposition}[L. Hsu~\cite{H}]
Let $\gamma: [0,T] \to M$ be a horizontal curve in $(M,\xi)$. Then, $\gamma$ is singular if and only if there exists a lift $\lambda: [0,T] \to \Ann(\xi) \setminus M$ that is a characteristic of $\Ann(\xi)$. 
\end{proposition}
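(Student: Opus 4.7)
The plan is to derive Hsu's theorem from the Pontryagin stationary principle (Proposition~\ref{prop:PSP}) applied to the control system $(C,\rho) = (\xi, \mathrm{inclusion})$, and then to match the resulting cotangent-lift condition with the characteristic condition for $\Ann(\xi)$.

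First I would specialise Proposition~\ref{prop:PSP} to $\rho: \xi \hookrightarrow TM$. Since $\xi$ is a vector bundle (without boundary), the inward-pointing cone $\Vertical^-_{u_t(\gamma(t))}\xi$ coincides with the whole fibre $\xi_{\gamma(t)}$, so a covector $\alpha \in T^*_{\gamma(t)}M$ supports $\Vertical^-$ if and only if $\alpha|_{\xi_{\gamma(t)}} = 0$, i.e., $\alpha \in \Ann(\xi)_{\gamma(t)}$. Hence Proposition~\ref{prop:PSP} becomes: $\gamma$ is singular if and only if it admits a nowhere-vanishing cotangent lift $\lambda$ with $\lambda(t) \in \Ann(\xi)$ for almost every, and hence by continuity every, $t$.

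For the forward direction, I would then verify that such a lift is characteristic for $\Ann(\xi)$. Since $\lambda$ stays in $\Ann(\xi)$, the velocity $\dot\lambda$ lies in $T\Ann(\xi)$ a.e. For any $W \in T_{\lambda(t)}\Ann(\xi)$ one computes
\[ \omega_{\lambda(t)}(\dot\lambda, W) \;=\; \omega(X_{H_{u_t}}, W) \;=\; -dH_{u_t}(W) \;=\; 0, \]
because the fibrewise linear function $H_{u_t}(\beta) = \beta(u_t)$ vanishes identically on $\Ann(\xi)$ (as $u_t$ is a section of $\xi$), so its differential annihilates $T\Ann(\xi)$. Thus $\dot\lambda \in \ker(\omega|_{T\Ann(\xi)})$, making $\lambda$ a characteristic curve of $\Ann(\xi)$.

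For the converse I would explicitly compute $\ker(\omega|_{T\Ann(\xi)})$ at a nonzero $\alpha \in \Ann(\xi)_x$. Choosing local defining $1$-forms $\xi = \bigcap_k \ker\theta^k$ and canonical coordinates $(x^i, p_i)$ on $T^*M$, a direct calculation shows that every characteristic vector at $\alpha$ projects to a vector lying in $\xi_x$, and is of the form $X_{H_X}(\alpha)$ for some $X \in \xi_x$ satisfying the compatibility $\alpha([X,Y]) = 0$ for all $Y \in \Gamma(\xi)$; this compatibility ensures tangency to $\Ann(\xi)$ and encodes the non-involutivity of $\xi$ through the terms $d\theta^k|_{\xi\times\xi}$. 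Consequently a characteristic curve $\lambda: [0,T] \to \Ann(\xi)\setminus M$ projects to a horizontal curve $\gamma = \pi\circ\lambda$ with control $u_t$ determined by $\rho(u_t(\gamma(t))) = \dot\gamma(t)$, and the forward direction of Proposition~\ref{prop:PSP} certifies that $\gamma$ is singular.

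The main technical step is the coordinate computation identifying $\ker(\omega|_{T\Ann(\xi)})$, and in particular verifying that the horizontal component of every characteristic vector lies inside $\xi$ rather than in some larger subspace. The zero section has to be excluded throughout because $\omega|_{T\Ann(\xi)}$ degenerates there and the characteristic direction becomes ill-defined, matching the statement $\Ann(\xi)\setminus M$.
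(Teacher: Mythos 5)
Your proposal is correct and follows essentially the same route as the paper: both reduce the statement to the cotangent characterisation of critical curves (Proposition~\ref{prop:PSP}, which forces the lift into $\Ann(\xi)$ since the support condition on a full vector space is a vanishing condition) and then identify the resulting condition with the characteristic foliation of $\Ann(\xi)$ via the same local computation of $\ker(\omega|_{T\Ann(\xi)})$. Your observation that $H_{u_t}$ vanishes identically on $\Ann(\xi)$, so that $X_{H_{u_t}}$ is automatically $\omega$-orthogonal to $T\Ann(\xi)$, is a mildly cleaner way to obtain the forward implication than the paper's moving-frame bracket computation, but the substance is the same.
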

\begin{proof}[Sketch of proof]
As in Proposition \ref{prop:PSP}, we pushforward the system by the flow $\phi_t$ of the control $u_t$ generating $\gamma$. In this case, the image control system at the endpoint of the curve reads:
\[ E_t := (\phi_{t \to T})_*\xi_{\gamma(t)} \in T_{\gamma(T)}M. \]
The curve $\gamma$ is singular if and only if these spaces do not span $T_{\gamma(T)}M$. Identically, there should be a codirection $\lambda(T)$ supporting the span of the $E_t$. We write $\lambda(t) := \phi_{t \to T}^*\lambda(T)$.

Since $\gamma$ is an interval, we can fix a moving frame $X_1, \cdots, X_k$ of $\xi$ in a neighbourhood of $\gamma$. Then:
\[ d\phi_{t \to T}X_i(\gamma(t)) = X_i(\gamma(T)) - \int_T^t d\phi_{s \to T}[u_s,X_i](\gamma(s)) ds \]
which implies that $\lambda(t)$ must also annihilate each $[u_t,X_i](\gamma(t))$. Since both $u_t$ and $X_i$ take values in $\xi$, one may show that the following expression holds
\[ \lambda([u_t,X_i]) = -d\lambda(u_t,X_i), \]
for any extension of $\lambda(t)$ to a local $1$-form annihilating $\xi$. Identically, $\iota_{u_t}d\lambda|_\xi = 0$. 

Fix now a local coframe $\alpha_i$ of the annihilator $\Ann(\xi)$. Then, at the covector $\lambda$ (which we still extend to a local form annihilating $\xi$), the tautological $2$-form reads:
\[ \omega|_{\Ann(\xi)} =  \sum_i da_i \wedge \alpha_i + d\lambda, \]
where the $(a_i)$ are the fibre coordinates of $\lambda$ dual to the coframe. From this expression it immediately follows that any characterstic vector of $\omega|_{\Ann(\xi)}$ must be a lift of a vector in $\xi$. Furthermore, given a vector field $v$ tangent to $\xi$, its cotangent lift $\tilde v$ is given by the transport equation $\tilde v = \Lcal_v \lambda = \iota_v d\lambda$. We see that $\tilde v$ is tangent to the annihilator if and only if $\iota_v d\lambda|_\xi = 0$. Applying this reasoning to $u_t$ allows us to conclude.
\end{proof}

\subsection{Sub-Finsler costs}

Lastly, we focus on the main objects of this paper, horizontal curves with bounded speed:
\begin{definition}\label{def:sub-FinslerControl}
Given $(M,\xi,f)$ sub-Finsler, the associated \textbf{sub-Finsler control system} is given by the unit disc:
\[ \rho: \{v \in \xi \mid f(x) \leq 1\} \quad\to\quad TM. \]
\end{definition}
We will abuse notation and still refer by $(M,\xi,f)$ to the control system given by its unit disc. Observe that what we have effectively done is take the control system $(M,\xi)$ and refine it to a second control system $(M,\xi,f)$ that takes into account the cost $f$.

The unit disc is compact and convex and $\Acal_\infty(p)=M$ holds for $(M,\xi,f)$ due to the bracket-generating condition on $\xi$. As such, Proposition \ref{prop:Filippov} applies, proving that minimisers always exist between any two given points.

\begin{remark}
The sub-Finsler problem is extremely natural: Following Filippov, one should focus on control systems with convex and compact image. Then, one would study first the simplified situation in which the linear spaces spanned by the control have constant rank (i.e., they form a distribution), and the image of the control is a smooth subset. The remaining assumptions of sub-Finsler are that the zero vector is in the image of the control and that convexity is strict. \hfill $\triangle$
\end{remark}

\begin{remark} \label{rem:CarnotCaratheodory}
We defined the Carnot-Caratheodory distance $d$ to be the infimum of lengths of horizontal paths connecting two points. The previous discussion says that the topology defined by $d$ is the standard topology of $M$. However, assuming $\xi \neq TM$, $(M,d)$ is not bilipschitz equivalent to any Riemannian structure on $M$, cf.~\cite{P16}. This follows from the fact that its Hausdorff dimension in terms of $d$ is larger than its dimension as a manifold.

This can be intuitively seen as follows: In order to move in the direction $[X,Y] \notin \xi$, where $X,Y \in \Gamma(\xi)$, we have to construct an horizontal curve $\gamma$ that loops around the plane $\langle X, Y \rangle$ (effectively mimicking the Lie bracket geometrically). As such, displacement in $[X,Y]$ amounts to the signed area that $\gamma$ bounds in the $\langle X, Y \rangle$ plane. The latter is quadratic on the length of $\gamma$, suggesting that the sub-Riemannian distance along $[X,Y]$ should behave like a square root of the usual one. This will be apparent, for $3$-dimensional contact structures, from the discussion in Section \ref{sec:specialCase}. \hfill$\triangle$
\end{remark}

\subsection{Cotangent viewpoint} \label{sssec:cotangentsub-Finsler}

We now particularise the cotangent discussion to the sub-Finsler setting. Recall that the dual norm of $f$ is given by the expression:
\begin{align*}
H_f: T^*M \quad\to\quad& \RR \\
H_f(\lambda) \quad := \quad & \max_{v \in \xi, f(v) \leq 1} \lambda(v).
\end{align*}
Note that $H_f$ factors through the projection $T^*M \to \xi^*$ and, as such, it is degenerate and not a finsler norm. Indeed, $H_f$ is invariant under translations along the annihilator $\Ann(\xi)$ and is strictly convex on any of its complements. We can summarise the situation as follows:
\begin{lemma} \label{lem:maximisedHamiltonian}
Given a sub-Finsler control system $(M,\xi,f)$:
\begin{itemize}
\item Its maximised Hamiltonian is $H_f$. It is smooth away from $H_f^{-1}(0) = \Ann(\xi)$.
\item A curve is critical if and only if it has a cotangent lift that is a characteristic of $H_f^{-1}(c)$, $c \geq 0$.
\item A minimising curve is abnormal if and only if it is singular for $\xi$, i.e. it has a lift that is a characteristic of $\Ann(\xi)$.
\end{itemize}
\end{lemma}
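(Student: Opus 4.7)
\emph{Proof plan.} For the first bullet I would proceed by directly unpacking the definitions. Since the anchor of the sub-Finsler control system is the inclusion of the closed unit disc $\{v\in\xi_q : f(v)\leq 1\}$ into $TM$, the maximised Hamiltonian tautologically reads $H_\rho(\lambda)=\max_{v\in\xi_q,\,f(v)\leq 1}\lambda(v)=H_f(\lambda)$. Next I would note that $H_f$ depends only on $\lambda|_\xi$ and hence factors through the projection $T^*M\to\xi^*$; it vanishes precisely on $\Ann(\xi)$, and Legendre duality applied fibrewise to the smooth strongly convex norm $f$ yields smoothness off this zero locus.

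For the second bullet my plan is to apply Proposition~\ref{prop:PSP} and to split into two cases depending on whether a supporting lift $\lambda$ lies in $\Ann(\xi)$ or not. If $\lambda(t)\notin\Ann(\xi)$, the inward-pointing cone $\Vertical^-_{u_t(\gamma(t))}C$ can only be supported by $\rho^*\lambda(t)$ when $u_t(\gamma(t))$ lies on the unit sphere and realises $\max_{f(v)\leq 1}\lambda(t)(v)=H_f(\lambda(t))$; this is the PMP condition, and the earlier lemma identifying Hamiltonian characteristics with PMP lifts in the smooth locus then yields that $\lambda$ is characteristic of the level set $H_f^{-1}(c)$ for some $c>0$. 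If instead $\lambda$ lies entirely in $\Ann(\xi)$, the support condition becomes vacuous on $\xi_{\gamma(t)}$, so the same $\gamma$ is critical for the unrestricted control system $(M,\xi)$, and Hsu's theorem then forces $\lambda$ to be a characteristic of $\Ann(\xi)=H_f^{-1}(0)$. Both implications reverse: characteristics of $H_f^{-1}(c)$ with $c>0$ satisfy PMP and are thus critical by the boundary case of Proposition~\ref{prop:PSP}, while characteristics of $\Ann(\xi)$ produce trivial supports and are critical via the interior case.

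The third bullet then follows almost as a corollary. By definition $\gamma$ is abnormal iff it admits a PMP lift in the non-smooth locus of $H_f$, which the first bullet identifies as $\Ann(\xi)$. Since any lift contained in $\Ann(\xi)$ automatically satisfies PMP for any admissible control (as $\lambda|_\xi\equiv 0$ maximises trivially on the unit disc), the condition reduces to the existence of a non-vanishing cotangent lift in $\Ann(\xi)$ which, by the $c=0$ case above together with Hsu, is exactly the statement that $\gamma$ is singular for $\xi$.

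The step I expect to require the most care is bookkeeping the distinction between critical points of the endpoint maps of the two distinct control systems $(M,\xi,f)$ (unit-disc control) and $(M,\xi)$ (full distribution), and arguing that a lift $\lambda\in\Ann(\xi)$ witnessing criticality in one automatically witnesses it in the other. This compatibility is not tautological, but it follows from the observation that $\lambda|_\xi\equiv 0$ supports every subcone of $\xi_{\gamma(t)}$, be it a half-space cut out by the unit sphere or the entire fibre; cleanly establishing this bridge is what ties Proposition~\ref{prop:PSP} for the sub-Finsler system together with Hsu's characterisation for the bare distribution.
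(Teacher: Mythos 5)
Your proposal is correct and follows exactly the route the paper intends: the lemma is stated there without proof, as a summary obtained by specialising Proposition~\ref{prop:PSP}, the lemma identifying PMP lifts with characteristics in the smooth locus, Hsu's theorem, and the definition of abnormality to the unit-disc control system, which is precisely what you assemble. The only detail worth pinning down is that your case split (supporting lift in $\Ann(\xi)$ versus not) is global in $t$ rather than pointwise; this follows from the conservation of $H_{u_t}(\lambda(t))$ as in Corollary~\ref{Cor:preservedQuantity}, since that quantity is $0$ in the first case and strictly positive in the second.
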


In particular, the characteristic curves contained in $H_f^{-1}(0)$ do not depend on $f$, just on $\xi$. Still, a singular curve of $(M,\xi)$ may fail to be minimising and thus fail to be abnormal for $(M,\xi,f)$. A lot of the research in sub-Riemannian Geometry has to do with detecting abnormals.

In this paper we will embrace the Hamiltonian viewpoint, disregard abnormals altogether, and thus work solely with the characteristics of $H_f^{-1}(c)$, $c > 0$. Homogeneity says that we can focus on the unit cylinder:
\begin{definition}\label{def:cogeodesicFlow}
We will refer to the Hamiltonian flow of $H_f$ at the level set $H_f = 1$ as the \textbf{sub-Finsler (co)geodesic flow} of $(M,\xi,f)$.
\end{definition}

Using the fact that the level sets of $H_f$ are convex in any complement of $\Ann(\xi)$ we deduce:
\begin{lemma} \label{lem:correspondingVector}
Let $\lambda$ be a Hamiltonian flowline in $\{H_f = 1\}$. Then $(\pi \circ \lambda)'(t)$ is the unique vector $v \in \xi$ maximising the evaluation $\lambda(t)(v)$.
\end{lemma}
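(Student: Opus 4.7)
The plan is to recognise the claim as the standard Legendre duality between a positively $1$-homogeneous convex function and its subdifferential. The first step is to write the Hamiltonian flow of $H_f$ in Darboux coordinates $(q,p)$ on $T^*M$: Hamilton's equations give $\dot q = \partial_p H_f$, so
\[
(\pi \circ \lambda)'(t) \;=\; \partial_p H_f\big|_{\lambda(t)} \;\in\; T_{\pi(\lambda(t))}M.
\]
Because $H_f$ is invariant under translation along $\Ann(\xi)$, this fibre derivative automatically lies in $\xi_{\pi(\lambda(t))}$. By Lemma~\ref{lem:maximisedHamiltonian}, $H_f$ is smooth on $\{H_f>0\} \supset \{H_f=1\}$, so the expression $\partial_p H_f|_{\lambda(t)}$ is a single, well-defined vector.

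The second step is to reinterpret $H_f(\mu)$, for $\mu \in T_q^*M$, as the support function of the fibrewise $f$-unit disc $D_q := \{v \in \xi_q : f(v) \leq 1\}$, which is precisely Definition~\ref{def:sub-FinslerControl}. The standard fact that the fibrewise subdifferential of such a support function at $\mu$ equals the exposed face $F(\mu) := \{v \in D_q : \mu(v) = H_f(\mu)\}$ then identifies $\partial_p H_f|_\mu$ --- whenever $H_f$ is smooth at $\mu$ --- with the unique vector in $F(\mu)$. Uniqueness can either be read off from this smoothness or, dually, from strict convexity of $D_q$, which is guaranteed by $f$ being a genuine Finsler norm on $\xi$ (combined with the fact that $\lambda(t)|_{\xi} \neq 0$ on $\{H_f = 1\}$).

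Applying this to $\mu = \lambda(t)$ with $H_f(\lambda(t)) = 1$ concludes: $(\pi \circ \lambda)'(t) = \partial_p H_f|_{\lambda(t)}$ is the unique maximiser of $v \mapsto \lambda(t)(v)$ on $D_{\pi(\lambda(t))}$. The main obstacle, such as it is, lies not in any calculation but in carefully verifying that the bundle-wise, fibrewise convex-analytic subdifferential of the support function $H_f$ really does agree with the Hamiltonian-mechanical fibre derivative $\partial_p H_f$; once this identification is granted, uniqueness is automatic and the claim follows.
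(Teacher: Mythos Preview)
Your proof is correct and is essentially the approach the paper gestures at: the paper gives no explicit proof, only the one-line remark preceding the lemma that it follows from convexity of the level sets of $H_f$ in any complement of $\Ann(\xi)$. Your argument via Hamilton's equations and the support-function/subdifferential identification is exactly the standard way to make that remark precise, so there is nothing to add.
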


\subsubsection{Annihilator-invariant Hamiltonians}

Discarding strict abnormals motivates us to define:
\begin{definition}
Let $(M,\xi)$ be a bracket-generating distribution. A smooth function $H: T^*M \to \RR$ is a \textbf{$\xi$-Hamiltonian} if it is fibrewise invariant under translations along $\Ann(\xi)$.
\end{definition}
In particular, their level sets are non-compact contact manifolds whenever $\xi \neq TM$ (do note that they are transverse to the Liouville vector field). A remarkable open question is whether a Floer-style theory can be developed for this class of Hamiltonians, effectively bringing Symplectic Topology techniques into sub-Riemannian Geometry.

\section{Definition of the billiard flow} \label{sec:billiardFlow}

Up to this point we have been working with $(M,\xi,f)$ complete without boundary. Our goal in this Section is to introduce the sub-Finsler billiard problem, both from a symplectic and a variational perspectives. We will then show that the two agree with one another. This correspondence is already well-known for Riemannian, Finsler~\cite{AKO}, and Lorentz billiards~\cite{KT}, where it has deep consequences.

\subsection{The table}

We consider a closed subset $U\subseteq M$ with smooth boundary $\partial U$; this will be our billiard table. We denote $\xi_\partial := T\partial U \cap \xi$, which we call the \textbf{boundary distribution}\footnote{In 3-dimensional Contact Topology literature this is often called the \textbf{characteristic foliation}. We have opted to avoid this terminology to minimise confusion.}.

Since we do not assume transversality of $\partial U$ with respect to $\xi$, $\xi_\partial$ may have singularities. We will discuss what this entails for the reflection law in Subsection \ref{sssec:degenerateReflections}. We denote their complement by $\partial U^\circ$; in this region, $f$ restricts to a finsler norm on $\xi_\partial$. We note that $\xi_\partial$ may not be bracket-generating and, as such, the triple $(\partial U^\circ,\xi_\partial,f)$ may not be a sub-Finsler manifold. However, one can apply the exact same reasoning we used in the previous Section and produce an associated (co)geodesic flow; see Subsection \ref{sssec:boundaryGeodesics}. For instance, if $\xi_\partial$ is involutive, this will define the usual leafwise cogeodesic flow.

It is a well-known theme in Billiards that geodesics of the boundary may appear in the compactification of the space of billiard trajectories. We will explore this in Subsection \ref{sssec:glide}.

\subsection{The symplectic viewpoint} \label{ssec:symplecticViewpoint}

Let $H_f: T^*M \to \RR$ be the maximised Hamiltonian associated to $(M,\xi,f)$. We write $S_f := H_f^{-1}(1)$ for the unit cotangent cylinder associated to $H_f$. Similarly, we write $D_f := H_f^{-1}((-\infty,1])$ for its interior.

As explained in Subsection \ref{sssec:cotangentsub-Finsler}, once we disregard strict abnormals, sub-Finsler geodesics in the interior of the table correspond to projections of characteristic flow lines of $S_f|_U$. It is natural to extend this symplectic definition of the dynamics to encode the reflection law as well:
\begin{definition}
Consider the sub-Finsler billiard table $(U,\xi,f)$. Its \textbf{microlocal realisation} is the piecewise manifold $B_f := S_f|_U \cup D_f|_{\partial U} \subset T^*M$, endowed with the characteristic dynamics of each piece.
\end{definition}
We recall that characteristic curves in $S_f|_U$ have a preferred parametrisation as Hamiltonian orbits of $H_f$; this corresponds to the parametrisation by arclength of their projections. Similarly, a function $G: M \to \RR$ with $\partial U$ as a regular level set may be lifted to $T^*M$ to produce a preferred Hamiltonian parametrisation of the characteristics of $D_f|_{\partial U}$; one needs to choose $G$ with an appropriate orientation, see Remark \ref{rem:choiceG} below. Thus:
\begin{definition}\label{def:billiardCharacteristic}
Let $I \subset \RR$ be an interval, possibly infinite. A \textbf{billiard characteristic} $\lambda: I \to B_f$ is a Lipschitz characteristic curve with $\dot\lambda$ equal to the Hamiltonian vector fields $X_{H_f}$ or $X_G$ almost everywhere.
\end{definition}

We can then define:
\begin{definition}\label{def:billiadTrajectory}
A curve $\gamma: I \to U$ is an \textbf{unreduced billiard trajectory} if it is the projection of a billiard characteristic.

A curve $\gamma: I \to U$ is a \textbf{(reduced) billiard trajectory} if it is a reparametrisation by arclength of an unreduced billiard trajectory.
\end{definition}

Since $S_f$ is the unit cylinder, it follows that:
 \begin{lemma}
Let $\lambda: [a,b] \to B_f$ be a billiard characteristic. Then:
\[ \len(\pi \circ \lambda) = \int_a^b \lambda^* \lambda_\std, \]
where $\lambda_\std$ denotes the standard Liouville 1-form in cotangent space. That is, the action of $\lambda$ is the length of $\pi \circ \lambda: [a,b] \to U$ (which is also the length of the corresponding reduced billiard trajectory).
\end{lemma}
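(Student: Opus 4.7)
The plan is to exploit the piecewise nature of a billiard characteristic and treat the two kinds of pieces separately. By Definition \ref{def:billiardCharacteristic}, $\lambda$ is Lipschitz and, away from a discrete set of transition times, its derivative equals either $X_{H_f}$ (on $S_f|_U$) or $X_G$ (on $D_f|_{\partial U}$). Since both the length functional $\gamma \mapsto \int f(\dot\gamma)\, dt$ and the action integral $\int \lambda^*\lambda_\std$ are additive over subintervals and unaffected by sets of measure zero, it suffices to verify the identity on each type of piece.

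First I would handle the interior pieces, i.e., Hamiltonian flowlines of $H_f$ inside $S_f$. On such a piece, $\lambda^*\lambda_\std$ evaluated on $\partial_t$ gives $\lambda(t)\bigl((\pi\circ\lambda)'(t)\bigr)$ by definition of the Liouville form. Lemma \ref{lem:correspondingVector} identifies $(\pi\circ\lambda)'(t)$ with the unique $v\in \xi_{\pi\lambda(t)}$ that maximises $\lambda(t)(\cdot)$ subject to $f(\cdot)\leq 1$, and by definition of $H_f$ this maximum equals $H_f(\lambda(t))=1$. Hence $\lambda^*\lambda_\std = dt$ on such intervals. A brief homogeneity argument rules out $f(v)<1$: if it were, one could rescale $v$ by $1/f(v)>1$ and increase $\lambda(t)(v)$ beyond $1$, contradicting maximality. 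Thus $f((\pi\circ\lambda)')=1$ almost everywhere on interior pieces, so the length over a subinterval $[c,d]$ equals $d-c$ and matches the action.

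Next I would treat the boundary pieces, on which $\dot\lambda = X_G$ for some function $G$ with $\partial U$ as regular level set, lifted via $G\circ \pi$. Because $G\circ\pi$ is constant along cotangent fibres, its Hamiltonian vector field is purely vertical, i.e., $d\pi(X_G)=0$. Therefore $\pi\circ\lambda$ is constant along such pieces, so both its length and the integrand $\lambda^*\lambda_\std = \lambda(d\pi(\dot\lambda))\,dt$ vanish identically on these intervals. Note that continuity of $\pi\circ\lambda$ at transition times follows from the same observation: a boundary segment may change the momentum but not the base point, gluing the interior segments into one Lipschitz curve on which the length is well-defined and almost-everywhere horizontal.

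The main potential obstacle is bookkeeping rather than substance: one should verify that the piecewise description is exhaustive and that the set of transition times has measure zero (immediate from $B_f$ being a finite union of smooth pieces meeting along a codimension-one corner), so that the additive decomposition above is legitimate. Summing the interior contributions and the (vanishing) boundary contributions then yields $\int_a^b \lambda^*\lambda_\std = \int_a^b f\bigl((\pi\circ\lambda)'\bigr)\,dt = \len(\pi\circ\lambda)$, and since length is reparametrisation invariant this is also the length of the reduced billiard trajectory.
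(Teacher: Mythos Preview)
Your proof is correct and matches the paper's approach: the paper does not give an explicit proof but simply states ``Since $S_f$ is the unit cylinder, it follows that\ldots'', and your argument is precisely the unpacking of that sentence---on $X_{H_f}$ pieces the integrand is $H_f(\lambda)=1$ by Lemma~\ref{lem:correspondingVector}, while on $X_G$ pieces both sides vanish because $d\pi(X_G)=0$. One small remark: you need not assume the transition times are discrete, since the definition only guarantees the a.e.\ dichotomy $\dot\lambda\in\{X_{H_f},X_G\}$; but your integrals are computed pointwise a.e., so the measurable decomposition $[a,b]=A\cup B\cup N$ (with $N$ null) already suffices.
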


\subsubsection{Reflections}

The characteristic flow in the piece $D_f|_{\partial U}$ is the one responsible for the reflection law. We readily compute:
\begin{lemma} \label{lem:sympReflectionLaw}
The characteristic foliation of $D_f|_{\partial U}$ is given by the lines parallel to $\Ann(T\partial U)$.
\end{lemma}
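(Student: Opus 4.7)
The plan is to reduce the computation to that of the characteristic foliation of $T^*M|_{\partial U}$, which is a standard coisotropic submanifold of $(T^*M,\omega)$, and then read off the characteristic line explicitly in canonical coordinates.

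First, I would observe that $D_f|_{\partial U}$ is an open submanifold-with-boundary of $T^*M|_{\partial U}$: the only restriction imposed by $D_f$ is the inequality $H_f\leq 1$, which cuts out an open region together with its frontier $S_f|_{\partial U}$. Since the characteristic foliation $\ker(\omega|_\Sigma)$ only depends on the ambient embedding, the characteristic distribution of $D_f|_{\partial U}$ coincides pointwise with that of $T^*M|_{\partial U}$. So it suffices to identify the latter.

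Next, pick a local defining function $g:\Op(\partial U)\to\RR$ with $\partial U=g^{-1}(0)$ as a regular level set, and set $G:=g\circ\pi:T^*M\to\RR$. Then $T^*M|_{\partial U}=G^{-1}(0)$ as a regular level set, so its characteristic foliation is spanned by the Hamiltonian vector field $X_G$. In canonical Darboux coordinates $(q_i,p_i)$ with $\omega=\sum dp_i\wedge dq_i$, the function $G$ depends only on the base variables, and one directly computes
\[ X_G \;=\; -\sum_i \frac{\partial g}{\partial q_i}\,\frac{\partial}{\partial p_i}, \]
i.e.\ $X_G$ is purely vertical, and at each point $\lambda\in T^*M$ over $q\in\partial U$ it equals the vertical lift of the covector $-d_q g\in T_q^*M$.

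Finally, since $\partial U=g^{-1}(0)$, the covector $d_q g$ annihilates $T_q\partial U$ and spans the one-dimensional subspace $\Ann(T_q\partial U)$. Under the canonical identification of the fiber of $T^*M$ with $T_q^*M$ itself, this exhibits the characteristic line through $\lambda$ as parallel to $\Ann(T_q\partial U)$, proving the claim. The only subtlety worth pausing over is the purely formal one of checking that characteristic foliations behave well under restriction to an open subdomain cut out by a transverse inequality, which follows because the 2-form being restricted is the same.
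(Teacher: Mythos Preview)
Your proof is correct and follows essentially the same approach as the paper: both arguments identify the characteristic foliation with the Hamiltonian vector field of the lifted defining function $G=g\circ\pi$, and observe that this is fibrewise translation by $dg\in\Ann(T\partial U)$. Your version simply spells out the reduction to $T^*M|_{\partial U}$ and the coordinate computation of $X_G$ in more detail than the paper's one-line proof.
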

\begin{proof}
The claim follows from the fact that the Hamiltonian flow of $G$ is simply fibrewise translation along $dG$, which annihilates $T\partial U$.
\end{proof}

We now introduce some notation. Let $\lambda: [a,b] \to D_f|_{\partial U}$ be a billiard characteristic with $\lambda(a) \neq \lambda(b) \in S_f|_{\partial U}$. Suppose that we can continue $\lambda$ forwards (from $a$) and backwards (from $b$) in time by following the sub-Finsler cogeodesic flow on $U$ for a short time.

\begin{definition}
We say that:
\begin{itemize}
    \item the \textbf{reflection law} $\lambda(a) - \lambda(b) \in \Ann(T\partial U)$ holds.
    \item $\lambda|_{[a,b]}$ is a \textbf{reflection} with \textbf{ingoing momentum} $\lambda(a)$ and \textbf{outgoing momentum} $\lambda(b)$. 
\end{itemize}
\end{definition}
As one would expect, reflections have zero action and project to constant curves on $\partial U$.

\subsubsection{Degeneracy of reflections} \label{sssec:degenerateReflections}

Suppose $\lambda: (-1,0] \to B_f$ is a billiard characteristic with $\pi \circ \lambda(0) \in \partial U$  and $\pi \circ \lambda|_{(-1,0)}$ a characteristic of $S_f$. We want to determine the outgoing momentum $\lambda_{\rm out}$ from the ingoing momentum $\lambda(0)$.

It may be the case that the line $L$ through $\lambda(0)$ parallel to $\Ann(T\partial U)$  is transverse to $S_f|_{\partial U}$. Therefore, there exists a unique momentum $\lambda_{\rm out}$, other than $\lambda(0)$, contained in $L \cap S_f|_{\partial U}$. The reflection segment in-between the two may be concatenated with $\lambda$, and then we can append the characteristic of $S_f$ starting at $\lambda_{\rm out}$. We say that this is a \textbf{non-degenerate reflection}.

When this reasoning does not apply, the reflection dynamics may be ill-defined; we call these \textbf{degenerate reflections}. Let us break down what may happen.

The first possibility is that $L$ is contained in $S_f$. This is the case if and only if $\xi\subseteq T\partial U$, i.e., at singular points of the boundary distribution $\xi_\partial$. In this case, the reflection law is not defined, as there is no well--defined outgoing momentum. We discuss, in a concrete example, the limit behavior of the billiard flow near a singular point in Subsection \ref{sssec:critical}.

The second possibility is that $L$ intersects $S_f$ only at $\lambda(0)$, i.e., it has an outer tangency with $D_f$. Then we distinguish two cases: Maybe $\pi \circ \lambda$ can be continued beyond $0$ as a sub-Finsler geodesic of $U$. Then we ignore the reflection point, which becomes an inner tangency of the billiard trajectory with the boundary.

The alternative is that the curve cannot be continued as a geodesic, but it may continue as a \emph{glide orbit}. In the classical setting, these are curves tangent to $\partial U$ that arise as limits of sequences of trajectories with progressively more reflections of decreasing angle; the story in the sub-Finsler case is more involved. The treatment of glide orbits is important, since they provide a completion of the space of billiard curves. We discuss them in some detail in Subsection \ref{sssec:glide}.

\subsubsection{The space of sub-Finsler geodesics}

Degeneracies of the reflection law can be best understood using the semilocal structure of the space of geodesics along $\partial U$.

Recall that, already in the Riemannian setting, the space of geodesics (with any natural topology) is in general extremely complicated due to long term dynamical behavior; for instance, it is often not Hausdorff. Despite of this, the Hamiltonian description of the cogeodesic flow (also in our sub-Finsler setting), implies that locally, the space of (non strictly-abnormal) geodesics can be obtained by symplectic reduction from the unit cylinder $S_f \subset T^*M$.

Given the hypersurface $\partial U \subset M$, we can study the geodesics in a germ of neighbourhood $\Op(\partial U) \supset \partial U$. Let $\gamma$ be a germ of geodesic with $\gamma(0) \in  \partial U$ a transverse (and thus unique) intersection. If we denote by $\lambda$ the corresponding momentum germ, we can identify $\gamma'(0)$ with $\lambda(0)$ using Lemma \ref{lem:correspondingVector}. With this in mind we introduce the notation:
\begin{definition}\label{def:outwardInward}
A covector $p \in (T^*M \setminus \Ann(\xi))|_{\partial U}$ is:
\begin{itemize}
    \item \textbf{tangent} if the unique vector $v \in \xi$ maximising $p(v)$ is tangent to $\partial U$.
    \item \textbf{(strictly) outward pointing} if the unique vector $v \in \xi$ maximising $p(v)$ is (strictly) outward pointing.
    \item \textbf{(strictly) inward pointing} if the unique vector $v \in \xi$ maximising $p(v)$ is (strictly) inward pointing.
\end{itemize}
We write $S^0_f|_{\partial U}$, $S^+_f|_{\partial U}$, and $S^-_f|_{\partial U}$ for the subspaces of covectors in $S_f|_{\partial U}$ that are tangent, strictly outward, and strictly inward, respectively.
\end{definition}

Applying reduction we deduce:
\begin{proposition}\label{prop:ReflectionIsSymplectomorphism}
The following statements hold:
\begin{itemize}
    \item $S^\pm_f|_{\partial U}$ is a smooth fibre bundle over $\partial U^\circ$, the set of regular points of the boundary distribution.
    \item $S^\pm_f|_{\partial U}$ is a symplectic submanifold of $(T^*M, \omega_\std)$.
    \item The space of outward pointing geodesics along $\partial U$ is symplectomorphic to $S^+_f|_{\partial U}$.
    \item The space of inward pointing geodesics along $\partial U$ is symplectomorphic to $S^-_f|_{\partial U}$.
    \item The reflection law from Lemma~\ref{lem:sympReflectionLaw} defines a symplectomorphism between $S^+_f|_{\partial U}$ and $S^-_f|_{\partial U}$.
\end{itemize}
\end{proposition}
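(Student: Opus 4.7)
The plan is to deduce the whole proposition from symplectic reduction applied twice: once to the coisotropic hypersurface $S_f|_U \subset T^*M$ (to realise the spaces of outward/inward geodesics as reductions of $S^\pm_f|_{\partial U}$), and once to the coisotropic submanifold $T^*M|_{\partial U} \subset T^*M$ (whose characteristic foliation implements the reflection law via Lemma~\ref{lem:sympReflectionLaw}). The common thread is a single transversality computation, based on Lemma~\ref{lem:correspondingVector}, identifying the projection $v(\lambda) \in \xi$ of the relevant characteristic vector.

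First I would establish smoothness. Away from $\Ann(\xi)$, the function $H_f$ is smooth, so $S_f|_{\partial U} = H_f^{-1}(1) \cap T^*M|_{\partial U}$ is cut out cleanly as a codimension-$2$ submanifold of $T^*M$. The maximising vector $\lambda \mapsto v(\lambda)$ of Lemma~\ref{lem:correspondingVector} is smooth in $\lambda$, and at points $p \in \partial U^\circ$ the condition $v(\lambda) \notin T_p\partial U$ is open. Hence $S^\pm_f|_{\partial U}$ is an open subset of $S_f|_{\partial U}$, and the fibrewise structure of $T^*M|_{\partial U} \to \partial U$ restricts to make $S^\pm_f|_{\partial U} \to \partial U^\circ$ a locally trivial fibre bundle of the correct dimension.

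For the symplectic claim and the identification with geodesic spaces, the key observation is transversality. The characteristic foliation of $S_f|_U$ is generated by $X_{H_f}$, whose projection to $M$ is precisely $v(\lambda)$. At any $\lambda \in S^\pm_f|_{\partial U}$, $v(\lambda)$ is strictly transverse to $\partial U$, hence $X_{H_f}$ escapes $T^*M|_{\partial U}$. Thus $S^\pm_f|_{\partial U}$ is a submanifold of $S_f|_U$ of the correct dimension $2n-2$ transverse to the characteristic line field. This makes $\omega_\std$ restrict to a symplectic form on $S^\pm_f|_{\partial U}$, and coisotropic reduction of $S_f|_U$ realises the local quotient as the symplectomorphism between $S^\pm_f|_{\partial U}$ and the space of outward/inward geodesic germs along $\partial U$: each transverse crossing uniquely determines, via its momentum, an element of $S^+_f|_{\partial U}$ or $S^-_f|_{\partial U}$, and conversely.

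The reflection symplectomorphism now proceeds analogously, but using the coisotropic $T^*M|_{\partial U}$ whose characteristic foliation is $\Ann(T\partial U)$ by Lemma~\ref{lem:sympReflectionLaw}. Inside it, $D_f|_{\partial U}$ is the open region bounded by $S_f|_{\partial U}$, so a non-degenerate characteristic segment is a line parallel to $\Ann(T\partial U)$ with both endpoints on $S_f|_{\partial U}$. Writing $G$ for a local defining function of $\partial U$ with $dG$ spanning $\Ann(T\partial U)$, the pairing $dH_f(X_G) = X_{H_f}(G) = dG(v(\lambda))$ measures precisely the outward normal component of $v(\lambda)$; this is nonzero on $S^\pm_f|_{\partial U}$, showing both transversality of $S^\pm_f|_{\partial U}$ to the characteristics of $D_f|_{\partial U}$ and that the two endpoints of a non-degenerate characteristic have opposite signs, i.e.\ lie one in $S^+_f|_{\partial U}$ and one in $S^-_f|_{\partial U}$. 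Coisotropic reduction of $T^*M|_{\partial U}$ then identifies the reflection map, which moves along characteristics of $D_f|_{\partial U}$, as a symplectomorphism between the two transversals. The main conceptual difficulty, rather than any one computation, is keeping straight that two different coisotropic reductions are at play simultaneously and that the degenerate reflections described in Subsection~\ref{sssec:degenerateReflections} are automatically excluded once one restricts to the locus where $v(\lambda)$ is strictly transverse to $\partial U$.
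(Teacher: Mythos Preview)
Your proposal is correct and follows essentially the same approach as the paper: both hinge on the transversality of $X_{H_f}$ and of $\Ann(T\partial U)$ to $S^\pm_f|_{\partial U}$, which you package as two separate coisotropic reductions while the paper condenses into a single symplectic splitting $\omega_\std = \omega_\std|_{TS^\pm_f|_{\partial U}} \oplus \omega_\std|_{\Ann(\partial U) \oplus \langle X_{H_f}\rangle}$. The mathematical content is identical; your version is slightly more explicit about which reduction is responsible for which item, whereas the paper's splitting handles the symplectic property and the reflection symplectomorphism in one stroke.
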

\begin{proof}
The first claim is immediate. For the second and fifth claims we recall that the tautological symplectic form $\omega_\std$ admits the following splitting along $S^+_f \partial U$:
\[ \omega_\std|_{TT^*M|_{S^+_f|_{\partial U}}} = \omega_\std|_{T^*S^+_f|_{\partial U}} \oplus \omega_\std|_{\Ann(\partial U) \oplus \langle X_{H_f}\rangle}. \]
Indeed, $\Ann(\partial U)$ is symplectically orthogonal to $S^+_f \partial U$ because the latter fibres over $\partial U$. Similarly, $X_{H_f}$ is linearly independent from $\Ann\partial U$ and annihilates $S^+_f \partial U$ because the latter is part of a level set of $H_f$. The claims then follow because the reflection law is precisely given by displacement along $\Ann(T\partial U)$. The third and fourth claims follow from the discussion above.
\end{proof}
The Proposition describes only non-degenerate reflections, pointing out that the reflection law may be ill-defined when transversality fails.

\begin{remark}\label{rem:choiceG}
The preferred function $G$,  which has $\partial U$ as a regular level set, should satisfy that the flow of $X_G$ goes from $S^+_f|_{\partial U}$ to $S^-_f|_{\partial U}$, realising the symplectomorphism between the two. I.e. the reflection takes outward pointing covectors to inward pointing ones. This yields a sign constraint for $dG$. \hfill$\triangle$
\end{remark}

\subsubsection{A cotangent look at the boundary} \label{sssec:boundaryGeodesics}

Continuing with the language introduced in the previous Subsection, and in order to set up some notation, we now look at the subspace of tangent vectors. Observe first that $S^0_f|_{\partial U}$ is not a smooth bundle, since its fibres change dimension at the singularities of $\xi_\partial$. We therefore focus on its restriction to the regular part of the boundary distribution:
\begin{lemma}
The subspace $S^0_f|_{\partial U^\circ}$ is a smooth fibre bundle over $\partial U^\circ$. It is contactomorphic to the unit cylinder of the restricted structure $(\partial U^\circ,\xi_\partial,f)$.
\end{lemma}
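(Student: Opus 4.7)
The plan is to identify $S^0_f|_{\partial U^\circ}$ with the unit cylinder $S^\partial_f := \{H_f^\partial = 1\} \subset T^*\partial U^\circ$ of the restricted sub-Finsler structure by restricting covectors, and then to show that this identification matches the Liouville $1$-forms on both sides, yielding the contactomorphism. Here $H_f^\partial$ denotes the maximised Hamiltonian of $(\partial U^\circ,\xi_\partial,f)$, constructed exactly as in Subsection~\ref{sssec:cotangentsub-Finsler}.

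First I would analyse the restriction map $r: T^*M|_{\partial U^\circ} \to T^*\partial U^\circ$, $p \mapsto p|_{T\partial U^\circ}$. Its fibres are affine lines parallel to $\Ann(T\partial U)$; fix locally a non-vanishing section $\alpha$. For $q \in T^*\partial U^\circ$, set $h_q(t) := H_f(q + t\alpha)$. Convexity of $H_f$ makes $h_q$ convex in $t$, and $h_q'(t) = \alpha(v_t)$ where $v_t \in \xi$ is the unique maximiser given by Lemma~\ref{lem:correspondingVector}. This derivative vanishes exactly when $v_t \in \ker\alpha \cap \xi = \xi_\partial$, i.e.~when $q + t\alpha$ is tangent in the sense of Definition~\ref{def:outwardInward}; at such a critical point one reads off $h_q(t) = q(v_t) = H_f^\partial(q)$. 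Regularity of $\xi_\partial$ at the basepoint means $\xi \not\subset T\partial U$, so $\alpha|_\xi \neq 0$; combined with strict convexity of $f$, this forces $h_q$ to be strictly convex and hence to have a unique critical point. Consequently $r$ restricts to a bijection $r: S^0_f|_{\partial U^\circ} \to S^\partial_f$.

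Strict convexity also gives $\partial_t^2 h_q > 0$ at the critical point, so the implicit function theorem applied to $\partial_t H_f(q + t\alpha) = 0$ produces a smooth function $t(q)$ on $S^\partial_f$. Then $\sigma(q) := q + t(q)\alpha$ is a smooth inverse to $r|_{S^0_f|_{\partial U^\circ}}$. This simultaneously shows that $S^0_f|_{\partial U^\circ}$ is a smooth submanifold of $T^*M|_{\partial U^\circ}$ and that it is a smooth fibre bundle over $\partial U^\circ$ via $\pi_M$, since $S^\partial_f \to \partial U^\circ$ is such a bundle (a Finsler unit sphere bundle) and $r$ is base-preserving.

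For the contact structure: given $V \in T_p T^*M|_{\partial U^\circ}$, $d\pi_M(V) \in T\partial U^\circ$, whence $p(d\pi_M(V)) = r(p)(d\pi_M(V))$. This identity translates to $r^*\lambda_\std = \lambda_\std|_{T^*M|_{\partial U^\circ}}$, and restricting to $S^0_f|_{\partial U^\circ}$ shows that the diffeomorphism $r$ matches the Liouville $1$-form on $S^\partial_f$ with that on $S^0_f|_{\partial U^\circ}$. Each is a contact form on its respective unit cylinder, because positive homogeneity of $H_f$ and $H_f^\partial$ makes the Liouville vector field transverse to the level set $\{=1\}$. I expect the main obstacle to be bookkeeping the non-degeneracy conditions underlying the strict convexity argument, namely ensuring that $q + t(q)\alpha$ avoids $\Ann(\xi)$ so that $H_f$ is smooth and strictly convex there; this is precisely guaranteed by regularity of $\xi_\partial$ together with $H_f(q + t(q)\alpha) = 1 \neq 0$.
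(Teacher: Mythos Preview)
Your proof is correct and follows the same core idea as the paper: identify $S^0_f|_{\partial U^\circ}$ with the boundary unit cylinder via restriction of covectors. The paper's proof is a single sentence declaring this identification ``tautological'', whereas you have carefully verified bijectivity (via the strict convexity of $H_f$ transverse to $\Ann(\xi)$, which holds precisely because $\xi_\partial$ is regular), smoothness (via the implicit function theorem), and the matching of Liouville forms. Your argument is the honest content behind the paper's tautology, so there is nothing to correct; if anything, your treatment of the non-degeneracy bookkeeping at the end (ensuring $H_f \neq 0$ along the section $\sigma$) makes explicit exactly what the paper is sweeping under the rug.
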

\begin{proof}
The claim is tautological: the identification between the two follows by regarding the tangent covectors in $S^0_f|_{\partial U^\circ}$ as covectors in the boundary.
\end{proof}
In general, $X_f$ will not be tangent to $S^0_f|_{\partial U^\circ}$. Identically, the geodesic flow in $U$ is not necessarily tangent to $\partial U$. However, just like in Riemannian geometry, the boundary geodesic flow differs from the ambient geodesic flow by a projection (along the normal acceleration) making the latter tangent:
\begin{lemma} \label{lem:normalCurvature}
There is a unique vector field $\tilde X_f$ in $S^0_f|_{\partial U^\circ}$ satisfying 
\[ \tilde X_f = X_f \quad  \rm{mod}\,\Ann(T\partial U). \]
Additionally, under the identification provided by the previous Lemma, $\tilde X_f$ is the Hamiltonian vector field, along the unit level set, of the restriction of $H_f$ to the cotangent bundle of $U^\circ$.
\end{lemma}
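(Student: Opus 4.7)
The plan is to work in local cotangent coordinates around a regular boundary point, use strict convexity of $H_f$ transverse to $\Ann(\xi)$ to pin down $\tilde X_f$, and then identify it with the boundary cogeodesic flow by a chain-rule computation modelled on symplectic reduction along the coisotropic hypersurface $T^*M|_{\partial U^\circ}$.

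First, I would fix cotangent coordinates $(x_1,\dots,x_{n-1},x_n;p_1,\dots,p_n)$ with $\partial U=\{x_n=0\}$, so that $\Ann(T\partial U)$ is fibrewise spanned by $dx_n$ and the corresponding vertical direction in $T(T^*M)$ is $\partial_{p_n}$. By Lemma~\ref{lem:correspondingVector} the fibrewise gradient $\nabla_p H_f(\alpha)$ is the unique $\xi$-maximiser of $\alpha$, so tangency in the sense of Definition~\ref{def:outwardInward} amounts to $\partial_{p_n} H_f = 0$, and hence
\[ S^0_f|_{\partial U^\circ} \;=\; \{x_n=0,\ H_f=1,\ \partial_{p_n} H_f = 0\}. \]
Writing a candidate as $\tilde X_f = X_f + c\,\partial_{p_n}$, tangency to the first two defining equations holds automatically on $S^0_f|_{\partial U^\circ}$, so the only new condition is $(X_f + c\,\partial_{p_n})(\partial_{p_n} H_f) = 0$. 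This pins down $c$ uniquely because at a regular point $dx_n \notin \Ann(\xi)$, and $H_f$ is strictly convex on any complement of $\Ann(\xi)$ by Lemma~\ref{lem:maximisedHamiltonian}, forcing $\partial^2_{p_n p_n} H_f > 0$.

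For the Hamiltonian identification I would apply the implicit function theorem (using the same positivity) to solve $\partial_{p_n} H_f = 0$ locally as $p_n = P(x',p')$, with $x'=(x_1,\dots,x_{n-1})$ and $p'=(p_1,\dots,p_{n-1})$. The symplectic reduction $\pi_\partial(x',0,p',p_n) = (x',p')$ sends the graph of $P$ diffeomorphically onto an open set in $T^*\partial U^\circ$ and restricts to the contactomorphism $S^0_f|_{\partial U^\circ}\to S^\partial_f$ of the previous lemma, where $S^\partial_f$ denotes the unit cylinder of the maximised Hamiltonian $H^\partial_f$ associated with $(\partial U^\circ,\xi_\partial,f)$. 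The crucial observation is that on the graph of $P$ the $\xi$-maximiser lies in $\xi_\partial$, so maximising over $\xi$ coincides with maximising over $\xi_\partial$, giving the pointwise identity
\[ H_f(x',0,P(x',p'),p') \;=\; H^\partial_f(x',p'). \]
Differentiating this in $x_i$ and in $p_i$ for $i<n$, the chain-rule contributions multiplying $\partial_{x_i} P$ and $\partial_{p_i} P$ drop out because $\partial_{p_n} H_f = 0$ on the graph. Thus $\partial_{x_i} H_f(\alpha) = \partial_{x_i} H^\partial_f(\beta)$ and $\partial_{p_i} H_f(\alpha) = \partial_{p_i} H^\partial_f(\beta)$ along $S^0_f|_{\partial U^\circ}$, where $\beta=\pi_\partial(\alpha)$. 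Since $d\pi_\partial$ annihilates $\partial_{p_n}$ and is the identity on the remaining coordinate vectors, $d\pi_\partial(\tilde X_f) = d\pi_\partial(X_f) = X_{H^\partial_f}$, which is the claim once transported back through the contactomorphism.

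The main obstacle is the central observation of the previous paragraph: the single equation $\partial_{p_n} H_f = 0$ plays three distinct roles simultaneously — cutting out the graph $p_n = P(x',p')$, characterising tangency, and collapsing the chain-rule contributions in $H_f(x',0,P,p') = H^\partial_f(x',p')$ so that first derivatives match on $S^0_f|_{\partial U^\circ}$. Once this is in place, the rest is a short local-coordinate computation using only the smoothness of $H_f$ away from $\Ann(\xi)$, which is guaranteed here because $H_f = 1 > 0$ on $S^0_f|_{\partial U^\circ}$.
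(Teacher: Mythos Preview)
Your proof is correct and follows essentially the same approach as the paper, just unpacked into explicit local coordinates: the paper argues the first claim by a dimension count (the normal bundle of $S^0_f|_{\partial U^\circ}$ inside $S_f|_{\partial U^\circ}$ is the line $\Ann(T\partial U)$, which is exactly your transversality condition $\partial^2_{p_n p_n} H_f \neq 0$), and dispatches the second claim in one line by invoking symplectic reduction of $T^*M$ along the level set of $G$, which your chain-rule computation with $H_f(x',0,P(x',p'),p')=H^\partial_f(x',p')$ verifies explicitly. One small caution: $H_f$ is degree-one homogeneous, so it is not literally strictly convex on a complement of $\Ann(\xi)$ --- its fibrewise Hessian also degenerates in the radial direction --- but at a point of $S^0_f|_{\partial U^\circ}$ the covector $dx_n$ lies neither in $\Ann(\xi)$ nor in the radial direction (otherwise the maximising vector could not be tangent), so your needed inequality $\partial^2_{p_n p_n} H_f>0$ still holds.
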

\begin{proof}
At points in $S^0_f|_{\partial U^\circ}$, the projection of $X_f$ to the base manifold is, by definition, tangent to $\partial U$. Again by definition, $X_f$ is tangent to $S_f$. Then, the normal bundle of $S^0_f|_{\partial U^\circ}$ within $S_f|_{\partial U^\circ}$ is spanned by $\Ann(T\partial U)$, proving the first claim. The second claim follows by recalling that the cotangent bundle of $\partial U$ is obtained from $T^*M$ by reduction of a regular level set of $G$. 
\end{proof} 

\subsubsection{Gliding and creeping orbits} \label{sssec:glide}

We now tackle the phenomenon of gliding. We think of these as elements in some compactification of the space of billiard trajectories so, more generally, we define:
\begin{definition}
A curve $\gamma: I \to U$ is a \textbf{limit trajectory} if it is a $C^0_{loc}$-limit of (reduced) billiard trajectories. If $\gamma(I)\subseteq\partial U$, then we call it a \textbf{gliding trajectory}.
\end{definition}

We first observe:
\begin{lemma}\label{lem:GlidingAreAdmissible}
The following statements hold:
\begin{itemize}
    \item Limit trajectories are Lipschitz and tangent to the distribution $\xi$ almost everywhere.
    \item Gliding trajectories are tangent to the boundary distribution $\xi_\partial$ almost everywhere.
    \item If $U$ is compact, any sequence of billiard trajectories has a $C^0_{loc}$--convergent subsequence.
\end{itemize}

\end{lemma}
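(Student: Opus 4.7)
The plan is to reduce everything to the Filippov-type convergence result of Lemma~\ref{lem:LipschitzLimit}, applied to the sub-Finsler control system $C = \{v \in \xi \mid f(v) \leq 1\} \to TU$ of Definition~\ref{def:sub-FinslerControl}. The crucial preliminary observation is that a reduced billiard trajectory $\gamma: I \to U$ is an admissible curve of this control system. Indeed, by Definition~\ref{def:billiadTrajectory}, $\gamma$ is obtained by reparametrising by arclength the projection $\pi \circ \lambda$ of a billiard characteristic $\lambda: J \to B_f$. Along the $S_f|_U$--pieces, $\pi \circ \lambda$ follows the sub-Finsler geodesic flow, whose velocity lies in $\xi$ with $f = 1$ by Lemma~\ref{lem:correspondingVector} and the homogeneity of $H_f$. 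Along the $D_f|_{\partial U}$--pieces the projection is constant, so reparametrising by arclength collapses these reflection intervals to single points. Consequently $\gamma$ is Lipschitz on $I$, with $\dot\gamma \in \xi$ and $f(\dot\gamma) = 1$ almost everywhere, i.e. it is an admissible curve of the control system $C$ whose velocity a.e. even lies in the compact convex set $C$.

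For the third bullet, I would now invoke the compactness assertion inside Lemma~\ref{lem:LipschitzLimit}: the family of reduced billiard trajectories into the compact manifold $U$ is uniformly Lipschitz (with respect to any auxiliary Riemannian metric, since $f$ and any such metric are Lipschitz--equivalent on $\xi$ over $U$) and $C^0$--bounded, and Arzel\`a--Ascoli yields a subsequence converging in $C^0_{\mathrm{loc}}$ to a Lipschitz curve $\gamma_\infty$. For the first bullet, the second half of Lemma~\ref{lem:LipschitzLimit} directly applies: since each $\gamma_i$ is admissible for the fibrewise compact convex control system $C$, the $C^0_{\mathrm{loc}}$--limit $\gamma_\infty$ is itself admissible, so $\dot\gamma_\infty(t) \in C_{\gamma_\infty(t)} \subset \xi_{\gamma_\infty(t)}$ at every point of differentiability.

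The second bullet is then immediate: a gliding trajectory $\gamma_\infty$ takes values in $\partial U$, so wherever $\dot\gamma_\infty(t)$ exists it lies in $T_{\gamma_\infty(t)}\partial U$; combined with $\dot\gamma_\infty(t) \in \xi$ a.e. from the first bullet, we obtain $\dot\gamma_\infty(t) \in T\partial U \cap \xi = \xi_\partial$ almost everywhere.

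The only delicate point is the preliminary observation: one has to be convinced that the reflection intervals really disappear under reparametrisation by arclength and do not create pathological behaviour at accumulation points of reflection times. This is however guaranteed by the fact that the arclength function along an unreduced trajectory is continuous and monotone, and that reflections contribute zero length; accumulations of reflections just correspond to points where arclength is locally constant on intervals, which are discarded by the reparametrisation. Once this is in hand, the proof is a clean invocation of Lemma~\ref{lem:LipschitzLimit} and the inclusion $\xi \cap T\partial U = \xi_\partial$.
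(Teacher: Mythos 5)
Your proof is correct and follows essentially the same route as the paper: all three bullets are reduced to Lemma~\ref{lem:LipschitzLimit} applied to the sub-Finsler control system, with the second bullet obtained by intersecting $\xi$ with $T\partial U$. The only difference is that you spell out the preliminary observation that reduced billiard trajectories are admissible curves (reflections project to constant curves and are collapsed by arclength reparametrisation), a point the paper's proof leaves implicit.
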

\begin{proof}
The last claim follows from Lemma \ref{lem:LipschitzLimit}. The Lemma also says that any $C^0_{loc}$-convergent sequence of admissible orbits of $\xi$ converges to a Lipschitz trajectory still tangent to $\xi$. Similarly, gliding trajectories must have velocity in $T\partial U\cap\xi=\xi_\partial$.
\end{proof}

\begin{remark} \label{rem:convergingToConstant}
Length is lower semi-continuous in $C^0_{loc}$-topology; this has to do with the fact that the unit ball is the convex hull of the unit sphere. In fact, any nontrivial sub-Riemannian manifold has sequences of geodesics (defined over $\RR$) converging in $C^0_{loc}$ to constant curves. We already hinted at this in Remark \ref{rem:CarnotCaratheodory}: The reader can picture a sequence of curves looping progressively faster in $\xi$ in order to travel transversely to it. Since the transverse displacement corresponds roughly to an area bounded in $\xi$, it has a different scale than the length. As we take the limit, the ratio of vertical displacement to length goes to zero. More explicitly, such a sequence can be produced by fixing a sequence of starting momenta $\lambda_i(0)$ diverging fibrewise in the unit cylinder; a concrete example is given in Remark \ref{rem:convergingToConstantExample}. \hfill$\triangle$
\end{remark}

We are interested in proving the following folklore theorem in Billiards: Gliding trajectories follow the geodesic flow in the boundary. The main problem when tackling this result is that the number of reflections increases as we take a sequence of trajectories converging to the boundary; this needs to be controlled quantitatively. This may be approached in two different ways: We may embrace the classical interpretation of billiards and think of them as piecewise trajectories; then the goal is showing that the change in direction at the reflections corresponds in the limit to the curvature of the boundary. Alternatively, we follow the microlocal viewpoint and work with sequences of cotangent lifts. These lifts are parametrised differently than their projections, since the reflection segments following $X_G$ contribute to the parametrisation. This is especially problematic as we take limits, since reflection points will become dense.

We can strengthen our definition of convergence to avoid these problems:
\begin{definition}
    A limit trajectory $\gamma: I\to U$ is a \textbf{$C^1$-limit trajectory} if, for some $C^0$-approximating sequence $\gamma_i$, the corresponding billiard characteristics $\lambda_i$ converge in $C^0_{loc}$ to a cotangent lift $\lambda$ of $\gamma$. If $\gamma(I)\subseteq\partial U$, then we call it a \textbf{$C^1$-gliding trajectory}.
\end{definition}
\begin{remark}
The curves $\gamma_i$ are piecewise smooth, and each smooth segment lifts to a segment of the corresponding $\lambda_i$. According to Lemma \ref{lem:correspondingVector}, the latter encodes the velocity of the former. In particular, $\lambda_i$ may be considered as an incarnation of the first jet of $\gamma_i$ in which the first derivative is made continuous by adding the reflection segments. It is for this reason that we call these $C^1$-limits. \hfill$\triangle$
\end{remark}

We note:
\begin{lemma}
$C^1$-limit trajectories disjoint from $\partial U$ are billiard trajectories. In particular, they are smooth.
\end{lemma}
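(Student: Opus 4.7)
Let $\gamma:I\to U$ be a $C^1$-limit trajectory with $\gamma(I)\cap\partial U=\emptyset$. By definition there is a sequence of billiard trajectories $\gamma_i$ with $\gamma_i\to\gamma$ in $C^0_{loc}$ whose billiard characteristics $\lambda_i:I\to B_f$ converge in $C^0_{loc}$ to a cotangent lift $\lambda:I\to T^*M$ of $\gamma$. My plan is to show that on any compact subinterval $J\subset I$, eventually no reflections occur, so that the $\lambda_i|_J$ are honest Hamiltonian orbits of $X_{H_f}$ on $S_f$; then standard ODE continuity forces the limit $\lambda|_J$ to satisfy the same equation, making $\gamma|_J$ a billiard trajectory in the sense of Definition~\ref{def:billiadTrajectory}.

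The first step is localisation. Fix a compact $J\subset I$. Since $\gamma(J)$ is compact and disjoint from the closed set $\partial U$, we have $d(\gamma(J),\partial U)>\varepsilon$ for some $\varepsilon>0$. Uniform convergence on $J$ then yields an index $i_0$ such that $\gamma_i(J)\subset\{x\in U\mid d(x,\partial U)>\varepsilon/2\}$ for all $i\geq i_0$. Because reflection pieces of a billiard characteristic lie in $D_f|_{\partial U}$ and project to constant curves on $\partial U$, the absence of points of $\partial U$ in $\gamma_i(J)$ forces $\lambda_i|_J$ to lie entirely in $S_f|_{U\setminus\partial U}$ and to be driven by the smooth vector field $X_{H_f}$.

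For the second step I pass to the limit. On the open region where $H_f$ is smooth (namely $T^*M\setminus\Ann(\xi)$, by Lemma~\ref{lem:maximisedHamiltonian}), $X_{H_f}$ is a smooth vector field. The $C^0_{loc}$-limit $\lambda$ takes values in this region because its projection $\gamma$ stays in $U^\circ$ and because $S_f$ is closed in $T^*M\setminus\Ann(\xi)$. From the integral identity
\[
\lambda_i(t)=\lambda_i(t_0)+\int_{t_0}^{t}X_{H_f}(\lambda_i(s))\,ds,\qquad t,t_0\in J,\ i\geq i_0,
\]
and uniform convergence $\lambda_i\to\lambda$, continuity of $X_{H_f}$ on a neighbourhood of the compact set $\lambda(J)$ lets me pass to the limit under the integral to obtain $\lambda(t)=\lambda(t_0)+\int_{t_0}^{t}X_{H_f}(\lambda(s))\,ds$. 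Hence $\dot\lambda=X_{H_f}(\lambda)$ on $J$, so $\lambda|_J$ is a billiard characteristic (with no reflection segments), and $\gamma|_J=\pi\circ\lambda|_J$ is a billiard trajectory. Exhausting $I$ by compact subintervals gives the result on all of $I$; smoothness of $\gamma$ follows from smoothness of $X_{H_f}$ away from $\Ann(\xi)$.

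I do not expect a real obstacle here: the content of the statement is just that limits of smooth Hamiltonian flow lines, once the reflection pieces are excluded by a positive-distance argument, are again Hamiltonian flow lines. The one point worth flagging is why the stronger $C^1$ hypothesis matters: for a generic $C^0$-limit the approximating cotangent lifts need not converge, and Remark~\ref{rem:convergingToConstant} shows that $C^0$-limits of geodesics can be arbitrarily degenerate (even constant). The assumption that the $\lambda_i$ converge is exactly what upgrades the argument to control of first derivatives and lets the ODE limit step go through.
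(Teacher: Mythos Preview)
Your proof is correct and follows the same line as the paper's: once the limit trajectory is disjoint from $\partial U$, a positive-distance argument shows the approximating trajectories eventually have no reflections on any compact subinterval, so they are ordinary Hamiltonian orbits of $X_{H_f}$, and a $C^0$-limit of such orbits is again a Hamiltonian orbit. The paper compresses this into a single sentence (``A limit of Hamiltonian orbits is a Hamiltonian orbit''), whereas you spell out the localisation and the integral-equation passage to the limit; your closing remark on why the $C^1$ hypothesis is needed is a useful addition but not part of the paper's argument.
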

\begin{proof}
Given a $C^1$-limit disjoint from the boundary, we deduce that it is a limit of billiard trajectories disjoint from the boundary. As such, these are simply normal geodesics and thus projections of Hamiltonian orbits of $X_f$. A limit of Hamiltonian orbits is a Hamiltonian orbit, proving the claim.
\end{proof}

We then show that a $C^1$-converging sequence of billiard characteristics has reflection angles going to zero. Recall from Definition~\ref{def:outwardInward} that $S^0_f|_{\partial U}$ is the space of covectors tangent to $\partial U$. Even though it plays no role in the proof, be aware that it fails to be smooth at the singularities of $\xi_\partial$.
\begin{lemma} \label{lem:reflectionsGoToZero}
The limit characteristic lift of a non-constant $C^1$-gliding orbit takes values in $S_f^0|_{\partial U}$.
\end{lemma}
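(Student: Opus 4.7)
I will combine a Hamiltonian argument (forcing the limit onto the unit cylinder $S_f|_{\partial U}$) with the velocity identification provided by Lemma \ref{lem:correspondingVector}. The rough dichotomy is: at points where $\gamma$ is moving, the lift must sit on $S_f|_{\partial U}$ and be tangent; at points where $\gamma$ is momentarily at rest, the lift is frozen at a value inherited from the moving regime.

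First I would note that $B_f=S_f|_U\cup D_f|_{\partial U}$ is closed in $T^*M$ and contains every $\lambda_i$, so the $C^0_{loc}$-limit $\lambda$ lies in $B_f$; combined with $\pi\circ\lambda=\gamma\subset \partial U$, this forces $\lambda(I)\subseteq D_f|_{\partial U}$. Let $A\subseteq I$ be the set where $\gamma$ is not locally constant. Suppose $t_0\in A$ with $H_f(\lambda(t_0))<1$. By continuity of $H_f\circ\lambda$ and uniform convergence, there is an open interval $J\ni t_0$ on which $H_f\circ\lambda_i<1$ for all large $i$. A billiard characteristic alternates between non-reflection arcs (where $\lambda_i\in S_f$, so $H_f=1$) and reflection segments in $D_f|_{\partial U}$; hence each $\lambda_i|_J$ must lie entirely in the interior of a single reflection segment. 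By Lemma \ref{lem:sympReflectionLaw} such a segment has constant base projection, so $\pi\circ\lambda_i|_J$ is constant, and passing to the limit contradicts $t_0\in A$. Therefore $\lambda(A)\subseteq S_f|_{\partial U}$.

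For the tangency: at any $t_0\in A$ where $\gamma$ is differentiable, Lemma \ref{lem:correspondingVector} identifies $\dot\gamma(t_0)$ with the unique unit-ball maximizer of $\lambda(t_0)$ in $\xi_{\gamma(t_0)}$. Since $\dot\gamma(t_0)\in T\partial U\cap\xi=\xi_\partial$ by Lemma \ref{lem:GlidingAreAdmissible}, this maximizer is tangent, so $\lambda(t_0)\in S^0_f|_{\partial U}$. Lipschitz regularity of $\gamma$ gives differentiability a.e.\ on $A$, and continuity of $\lambda$ together with closedness of $S^0_f|_{\partial U}$ inside $D_f|_{\partial U}$ extend the conclusion to all of $A$. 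Finally, on $I\setminus A$ every connected component is an open interval on which $\gamma$ is locally constant; by Definition \ref{def:cotangentLiftB} with control determined by $\rho(u_t)=\dot\gamma=0$, we get $X_{u_t}=0$ and hence $\dot\lambda=0$, so $\lambda$ is constant on each such component. Non-constancy of $\gamma$ ensures that each such component has a boundary point in $\overline{A}$, where by continuity $\lambda\in S^0_f|_{\partial U}$; the constant value on the whole component therefore also lies in $S^0_f|_{\partial U}$.

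The main obstacle is the symplectic step above, where the crucial input is that two consecutive reflection segments of a single $\lambda_i$ must be separated by a non-reflection arc along which $H_f=1$. Once this alternation is invoked, any subinterval on which $H_f\circ\lambda_i<1$ is forced to live strictly inside one reflection segment and hence to project to a single point, which propagates to constancy of $\gamma$ in the limit and contradicts membership in $A$.
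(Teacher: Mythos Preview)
Your argument that $\lambda(A)\subseteq S_f|_{\partial U}$ is sound and rather nice: forcing $H_f\circ\lambda_i<1$ on an interval traps $\lambda_i$ inside a single reflection segment and hence freezes the projection. However, the two remaining steps have genuine gaps.

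For the tangency step, you invoke Lemma~\ref{lem:correspondingVector} to identify $\dot\gamma(t_0)$ with the unit-ball maximiser of $\lambda(t_0)$. That lemma concerns Hamiltonian flowlines of $H_f$, and you have not shown that the limit $\lambda$ is one; the $\lambda_i$ alternate between $X_{H_f}$ and $X_G$, and a $C^0$-limit of such curves need not follow $X_{H_f}$ anywhere. Without this, there is no link between $\dot\gamma(t_0)$ and the maximiser of $\lambda(t_0)$, so the conclusion $\lambda(t_0)\in S^0_f$ does not follow. Similarly, in the final step you appeal to Definition~\ref{def:cotangentLiftB} to deduce $\dot\lambda=0$ on intervals where $\gamma$ is constant. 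But $\lambda$ is only defined as the $C^0_{loc}$-limit of the $\lambda_i$; the phrase ``cotangent lift'' in the definition of $C^1$-gliding merely means $\pi\circ\lambda=\gamma$, not that $\lambda$ satisfies the transport equation of Definition~\ref{def:cotangentLiftB} for any control. In fact $\lambda$ could perfectly well drift along $X_G$ on such intervals (since $d\pi(X_G)=0$), landing in the interior of $D_f|_{\partial U}$ and hence outside $S^0_f\subset S_f$.

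The paper closes both gaps at once by first using Lemma~\ref{lem:LipschitzLimit} (a Filippov-type argument) to show that the limit $\lambda$ is itself admissible for the control system $\textrm{ConvexHull}(X_f,X_G)$ on $B_f$. This supplies the missing differential inclusion $\dot\lambda\in\textrm{ConvexHull}(X_f,X_G)$ almost everywhere. One then argues directly: since $\lambda$ must remain in $D_f|_{\partial U}$, any excursion away from $S^0_f$ is forced to follow $X_G$ along a characteristic line of $D_f|_{\partial U}$; such a line exits $D_f$ unless it is tangent to $S_f$, and the tangency locus is precisely $S^0_f$. Your first step could serve as an alternative entry into this argument, but you still need the differential inclusion for $\lambda$ to finish.
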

\begin{proof}
    Let $\lambda_i: I\to B_f$ be a sequence of billiard characteristics converging in $C^0_{loc}$ to a curve $\lambda: I\to B_f|_{\partial U}$. The projection $\gamma:I\to \partial U$ of $\lambda$ is an (unreduced) $C^1$-gliding trajectory, which is the $C^0_{loc}$-limit of the projections $\gamma_i := \pi \circ \lambda_i$. Denote by $\bar\gamma$ and $\bar\gamma_i$ the corresponding reduced trajectories. By Lemma~\ref{lem:LipschitzLimit}, $\bar\gamma$ is an admissible curve of $(\partial U,\xi_\partial, f)$.
    
    Recall that the microlocal realisation consists of the piecewise smooth parts $B_f = S_f\cup D_f|_{\partial U}$. In this piecewise space, we define a control system $\Ccal \subset TB_f$ consisting of the vector fields $X_f$ on $S_f$, $X_G$ on $D_f|_{\partial U}$, and $\textrm{ConvexHull}(X_f,X_G)$ on the intersection $S_f|_{\partial U}$. Each of the $\lambda_i$ is an admissible curve of $\Ccal$. Even though $B_f$ is piecewise, we can apply Lemma~\ref{lem:LipschitzLimit} to it by working locally in $T^*M$. We deduce that the limit curve $\lambda$ is an admissible curve of $\Ccal$.
    
    Since $\gamma$ is contained in $\partial U$, we must have $\lambda(t)\in D_f|_{\partial U}$ for all $t \in I$. Suppose for the sake of absurdity that $\lambda(t_0) \notin S_f^0|_{\partial U}$ for some $t_0$. Then, for some maximal positive interval $t_0 \in I_0 \subsetneq I$, the curve $\lambda(I_0)$ is a characteristic line of $D_f|_{\partial U}$. Unless the endpoints of $I_0$ are in $S_f^0|_{\partial U}$, $\lambda$ will escape $\partial U$. However, $S_f^0|_{\partial U}$ is precisely the set of points in $D_f|_{\partial U}$ in which the corresponding characteristic line has an outer tangency, yielding a contradiction.
\end{proof}
We refer the reader to \cite{AO} for the  analogous statement in the Finsler setting. Finally, we prove the desired folklore theorem for $C^1$-gliding trajectories. 
\begin{proposition}\label{prop:boundaryGeodesic}
    A $C^1$-gliding trajectory is a (possibly singular) reparametrisation of a boundary geodesic.
\end{proposition}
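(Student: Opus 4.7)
The plan is to combine Lemma~\ref{lem:reflectionsGoToZero} with the splitting of $X_f$ at tangent covectors provided by Lemma~\ref{lem:normalCurvature}. Let $\gamma:I\to\partial U$ be a $C^1$-gliding trajectory with approximating billiard trajectories $\gamma_i$ and characteristic lifts $\lambda_i\to\lambda$. Following the proof of Lemma~\ref{lem:reflectionsGoToZero}, each $\lambda_i$ is admissible for the control system $\Ccal\subset TB_f$ given by $X_f$ on $S_f$, $X_G$ on $D_f|_{\partial U}$, and their convex hull on $S_f|_{\partial U}$. Lemma~\ref{lem:LipschitzLimit} applied locally in $T^*M$ shows that $\lambda$ is also admissible for $\Ccal$, and Lemma~\ref{lem:reflectionsGoToZero} places $\lambda(I)\subset S_f^0|_{\partial U}$.

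Next, I would restrict to the open subset $J\subset I$ where $\gamma$ lies in $\partial U^\circ$, so that $\lambda|_J$ takes values in the smooth bundle $S_f^0|_{\partial U^\circ}$. At each point $p\in S_f^0|_{\partial U^\circ}$, Lemma~\ref{lem:normalCurvature} yields a decomposition $X_f(p)=\tilde X_f(p)+c(p)\,X_G(p)$, where $\tilde X_f$ is tangent to $S_f^0|_{\partial U^\circ}$ and $c(p)\in\RR$ (here we use that $X_G$ generates $\Ann(T\partial U)$ by Lemma~\ref{lem:sympReflectionLaw}). An admissible velocity $aX_f(p)+bX_G(p)\in\Ccal_p$, with $a,b\geq 0$ and $a+b=1$, then equals $a\tilde X_f(p)+(ac(p)+b)X_G(p)$, and tangency to $S_f^0|_{\partial U^\circ}$ reduces to the scalar equation $ac(p)+b=0$. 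With the sign convention for $G$ fixed in Remark~\ref{rem:choiceG}, one expects $c$ to be non-positive on the portion of $S_f^0|_{\partial U^\circ}$ reached as a limit of incoming billiard characteristics; this equation then cuts out a one-parameter ray of admissible tangent velocities, all of them non-negative multiples of $\tilde X_f$.

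Consequently, wherever $\dot\lambda(t)$ exists on $J$ it must satisfy $\dot\lambda(t)=s(t)\,\tilde X_f(\lambda(t))$ for some measurable $s(t)\geq 0$. The second part of Lemma~\ref{lem:normalCurvature} identifies $\tilde X_f$ with the Hamiltonian vector field of the restricted sub-Finsler structure on the unit cylinder of $(\partial U^\circ,\xi_\partial,f)$. Hence $\lambda|_J$ is a (possibly singular) non-negative reparametrisation of a boundary cogeodesic, and $\gamma|_J=\pi\circ\lambda|_J$ is a (possibly singular) reparametrisation of a boundary geodesic of $(\partial U^\circ,\xi_\partial,f)$. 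Iterating over the connected components of $J$ and handling the measure-zero set $\gamma^{-1}(\partial U\setminus\partial U^\circ)$ by admissibility produces the full statement.

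The step I expect to be the main obstacle is the sign bookkeeping in the second paragraph: one must confirm that $c(p)$ is indeed non-positive in the regime reached by the limiting characteristics, so that the tangency equation $ac+b=0$ admits non-trivial convex solutions yielding a positive multiple of $\tilde X_f$, rather than being either inconsistent or producing motion in the reverse direction along the boundary geodesic. This should follow from the geometric picture that, at a tangent covector, $X_f$ and $X_G$ have $\Ann(T\partial U)$-components of opposite sign precisely because $X_G$ (by construction, cf.\ Remark~\ref{rem:choiceG}) is the vector carrying outgoing covectors to incoming ones.
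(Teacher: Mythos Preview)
Your approach is essentially identical to the paper's: both combine Lemma~\ref{lem:reflectionsGoToZero} (to place $\lambda$ in $S_f^0|_{\partial U}$) with Lemma~\ref{lem:LipschitzLimit} (to get $\dot\lambda\in\textrm{ConvexHull}(X_f,X_G)$ a.e.) and then invoke Lemma~\ref{lem:normalCurvature} to identify the unique convex combination tangent to $S_f^0|_{\partial U^\circ}$ with a positive multiple of the boundary cogeodesic field $\tilde X_f$.

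The one place where you diverge is in treating the sign of $c(p)$ as an obstacle requiring independent geometric verification. This is unnecessary, and the paper's proof simply sidesteps it. The point is that you have already established, \emph{before} introducing $c$, two facts: $\dot\lambda(t)$ lies in the convex segment $\{aX_f+(1-a)X_G:a\in[0,1]\}$ a.e., and $\dot\lambda(t)$ is tangent to the smooth manifold $S_f^0|_{\partial U^\circ}$ a.e.\ on $J$ (because $\lambda$ is Lipschitz with values there). Hence the tangency equation $ac+b=0$ with $a+b=1$, $a,b\geq 0$, \emph{does} have a solution at a.e.\ $t$---namely the actual coefficients of $\dot\lambda(t)$. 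Solving, $a=1/(1-c)\in(0,1]$ forces $c(\lambda(t))\leq 0$ and $\dot\lambda(t)=a(t)\tilde X_f(\lambda(t))$ with $a(t)>0$. So the sign condition is a \emph{consequence} of what you already know, not an additional hypothesis to check; your worry about inconsistency or reverse motion cannot materialise. The paper phrases this by simply noting that Lemma~\ref{lem:normalCurvature} gives a unique $a(t)$ making the combination tangent, and that this forces $\dot\lambda$ to follow the boundary geodesic flow.
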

\begin{proof}
We use the same setup and notation as in the previous Lemma \ref{lem:reflectionsGoToZero}. Our reasoning showed that  $\lambda(t) \in S_f^0|_{\partial U}$ for all $t \in I$ and thus $\dot\lambda\in \textrm{ConvexHull}(X_f,X_G)$. Identically, $\dot\lambda(t)$ may be written as a linear combination $a(t)X_f + (1-a(t))X_G$ almost everywhere. Do note that $a$ is non-zero almost everywhere.

Since $d\pi(X_G)=0$, we have that 
\[ \dot\gamma = d\pi(\dot\lambda(t)) = a(t) d\pi(X_f(\lambda(t))) \in T\partial U. \]
This means, by definition of $X_f$, that $\lambda(t)$ attains its maximum, over the spaces $\{v\in\xi|f(v)\leq 1\}$ and $\{v\in\xi_\partial|f(v)\leq 1\}$, at $\frac 1{a(t)}\dot\gamma \in \xi_\partial$. Here, we realise that $a(t) = f(\dot\gamma(t))$. Reparametrising $\gamma$ by unit speed, we obtain a curve $\tilde\gamma$ in $(\partial U, \xi_\partial, f)$ such that $\lambda$ is the $f$-dual of $\dot\gamma$.

According to Lemma \ref{lem:normalCurvature}, away from the singularities of $\xi_\partial$, there is a single value $a(t)$ such that the combination $a(t)X_f + (1-a(t))X_G$ is in fact tangent to $S_f^0|_{\partial U}$. We can interpret the component $(1-a(t))X_G$ as the normal acceleration that holds the curve on $\partial U$. This precisely makes the curve follow the boundary geodesic flow.
 \end{proof}

\begin{remark}
This proof applies as well to other billiard settings, like the Riemannian billiard or Finsler billiard, as they are special cases of the problem we study (with $\xi=TM$). \hfill $\triangle$
\end{remark}

In Subsection \ref{subsubsec:creeping} we will analyse the gliding phenomenon in detail on a particularly symmetric example. We will make the curious observation that, in the examined examples, gliding orbits and $C^1$-gliding orbits agree up to reparametrisation.

\subsubsection{Creeping orbits}

In Subsection \ref{subsubsec:creeping} we will also look at instances of:
 \begin{definition}
 A curve $\gamma: \RR \to \partial U$ is a \textbf{creeping trajectory} if there is a sequence $\gamma_i$ of billiard trajectories and real numbers $a_i$ such that $\gamma(t)$ is the $C^0$-limit of $\lim_{i \to \infty} \gamma_i(a_i t)$.
 \end{definition}
 The class of creeping orbits is sub-Finsler in nature and displays interesting behaviors that have no classical analogue. In Remarks \ref{rem:convergingToConstantExample} and \ref{rem:diffOperators} we discuss the fact that, in the contact case, Reeb orbits are creeping orbits.

\subsection{The variational approach} \label{ssec:variationalViewpoint}

We now study the reflection law from a variational/control theoretical viewpoint. Some of this discussion is not particular to the sub-Finsler setting, and applies to more general control systems $\rho: C \to TM$. In Appendix~\ref{appendix:nonAutonomous} we extend this discussion to non-autonomous sub-Finsler billiards.

In the following, a domain with smooth boundary $U \subset M$ still serves as the table.

\subsubsection{Diamond control systems}

Let $q_1,q_2 \in U \setminus \partial U$ be two points in the interior of the table. A once reflected curve from $q_1$ to $q_2$ should (infinitesimally) minimise the length among admissible curves that go from $q_1$ to $\partial U$ and then to $q_2$. A subtle point is that the time of arrival at the boundary is an unknown. The way to proceed is to define a new control problem addressing this.

\begin{definition}\label{def:diamond}
We define the \textbf{diamond control system} associated to $\rho: C \to TM$ to be 
\[ \rho_{M \times M}: C_{M \times M} \to TM \times TM \]
where
\begin{itemize}
\item $C_{M \times M} := C \times C \times [0,1]$,
\item $\rho_{M \times M}(u_1,u_2,s) := (s\rho(u_1),-(1-s)\rho(u_2))$.
\end{itemize}
Its image can be expressed as follows:
\[ \{(s v, -(1-s)w) \in TM \times TM \mid s \in [0,1], v,w \in \rho(C) \}. \]
\end{definition}

\begin{remark}
The diamond of a strictly convex system is convex but not strictly convex. In particular, the diamond of a sub-Finsler system is not sub-Finsler.\hfill$\triangle$
\end{remark}

Consider $\gamma: [0,t_1+t_2] \to M$, an admissible curve from $\gamma(0) = q_1$ to $\gamma(t_1+t_2) = q_2$ with $\gamma(t_1) \in \partial M$ the only point in the boundary. Passing to the diamond control system, we can define a curve $\gamma_{M \times M}: [0,t_1+t_2] \to M \times M$:
\[ \gamma_{M \times M}(t) := \left(\gamma|_{[0,t_1]}\left(t\dfrac{t_1}{t_1+t_2}\right),\gamma|_{[t_1,t_1+t_2]}\left((t_1+t_2-t)\dfrac{t_2}{t_1+t_2}\right)\right) \]
connecting the point $(q_1,q_2)$ with the diagonal of the boundary $\Delta_{\partial M} := \{(q,q)\mid q\in\partial M \}$. By construction, $\gamma_{M \times M}$ is admissible for the diamond system (this is why the control in the second factor of the diamond is reversed) with $s\equiv \frac{t_1}{t_1+t_2}$. We remark:

\begin{lemma}\label{rectangle}
$\gamma_{M \times M}$ is part of an infinite dimensional family $\Fcal_\gamma$ of admissible curves with fixed endpoints, all of which have the same length and the same projections to each factor (up to reparametrisation).
\end{lemma}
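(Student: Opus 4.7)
The plan is to exhibit $\Fcal_\gamma$ by exploiting the freedom encoded in the $[0,1]$ factor of the diamond control system: intuitively, $s$ tells us how to distribute the ``clock'' between the two factors, and any measurable choice of $s(t)$ (subject to the right integral constraint) yields a new admissible curve that traces the same two projections at different speeds, yet with the same total $f$-length.

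Concretely, I will parametrise $\Fcal_\gamma$ by measurable functions $s:[0,t_1+t_2]\to[0,1]$ satisfying the normalisation
\[
\int_0^{t_1+t_2} s(\tau)\,d\tau \;=\; t_1.
\]
Given such an $s$, set $\sigma_1(t):=\int_0^t s(\tau)\,d\tau$ and $\sigma_2(t):=t-\sigma_1(t)$; these are monotone surjections $[0,t_1+t_2]\to[0,t_1]$ and $[0,t_1+t_2]\to[0,t_2]$, respectively. The candidate family member is
\[
\gamma^s_{M\times M}(t) \;:=\; \bigl(\gamma(\sigma_1(t)),\; \gamma(t_1+t_2-\sigma_2(t))\bigr),
\]
which starts at $(q_1,q_2)$ at time $0$ and arrives at $(\gamma(t_1),\gamma(t_1))\in\Delta_{\partial M}$ at time $t_1+t_2$. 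The original curve $\gamma_{M\times M}$ corresponds to the constant choice $s\equiv t_1/(t_1+t_2)$, up to the linear reparametrisations built into Definition~\ref{def:diamond}.

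To check admissibility, let $u$ be a control on $M$ that generates $\gamma$ and consider the diamond control $(u_1,u_2,s(t))$, where $u_1$ encodes $u$ on the first factor and $u_2$ encodes the time-reversed $u$ on the second. Differentiating gives the velocity $(\sigma_1'(t)\dot\gamma(\sigma_1(t)),-\sigma_2'(t)\dot\gamma(t_1+t_2-\sigma_2(t)))$, and the identities $\sigma_1'=s$ and $\sigma_2'=1-s$ make this agree almost everywhere with $\rho_{M\times M}(u_1,u_2,s(t))$. Positive homogeneity of $f$ together with the change of variables formula then yields $\int_0^{t_1+t_2} f(\sigma_i'(t)\dot\gamma(\cdots))\,dt = \text{length}_f(\gamma|_{\text{piece } i})$, and summing over $i=1,2$ recovers $\text{length}_f(\gamma)$, independently of $s$. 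The two projections are, by construction, monotone reparametrisations of $\gamma|_{[0,t_1]}$ and of $\gamma|_{[t_1,t_1+t_2]}$ run backwards.

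Finally, the space of measurable $s:[0,t_1+t_2]\to[0,1]$ subject to the single integral constraint above is infinite-dimensional, so $\Fcal_\gamma$ is as well. The only real subtlety is the bookkeeping verification that the curve produced by $s$ is indeed admissible for the diamond with a genuine choice of controls $(u_1,u_2)$, but this is immediate once one observes that the diamond anchor is precisely built to absorb a non-constant time rescaling of each factor through $s$; once this is set up, homogeneity of $f$ does the rest.
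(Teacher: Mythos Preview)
Your proof is correct and is essentially the same construction as the paper's: the paper packages the family as paths $\tau=(\tau_1,\tau_2):[0,t_1+t_2]\to[0,t_1]\times[0,t_2]$ that are coordinatewise monotone with unit $\ell^1$-speed, while you parametrise by the derivative $s=\tau_1'$ (so $\sigma_i=\tau_i$); the integral constraint $\int s=t_1$ is exactly the endpoint condition $\tau(t_1+t_2)=(t_1,t_2)$. Your phrasing has the minor advantage of making the role of the $[0,1]$ factor in the diamond anchor explicit, but the underlying idea is identical.
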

\begin{proof}
Consider the map with rectangular domain:
\begin{align*}
\Gamma: R = [0,t_1] \times [0,t_2] \quad\to\quad & M \times M \\
 (\tau_1,\tau_2) \quad\to\quad& (\gamma(\tau_1),\gamma(t_1+t_2-\tau_2))
\end{align*}

From $\Gamma$ we define elements in $\Fcal_\gamma$ by choosing a time function $\tau:[0,t_1+t_2]\to R$ that is monotone in each coordinate, has unit speed with respect to the norm $|d\tau_1|+|d\tau_2|$, and connects $(0,0)$ to $(t_1,t_2)$. Indeed, all curves $\nu \in \Fcal_\gamma$ are described as $\Gamma\circ\tau$ for some such $\tau$ and from construction they have the same length. In particular, $\gamma_{M\times M}$ is parametrised by the diagonal of $R$. It is clear that $\gamma$ can be uniquely recovered from any of them by projection and reparametrisation.
\end{proof}

Minimality is preserved when we pass to the diamond reformulation:
\begin{lemma}\label{lem:diamondMinimizer}
The following two conditions are equivalent:
\begin{itemize}
\item $\gamma$ is a minimiser of $\rho: C \to TM$ among curves from $q_1$ to $q_2$ and touching the boundary of $U$ (at an indeterminate time).
\item $\gamma_{M \times M}$, or any of the other curves in $\Fcal_\gamma$, is a minimiser of the diamond system among curves from $(q_1,q_2)$ to $\Delta_{\partial M}$.
\end{itemize}
\end{lemma}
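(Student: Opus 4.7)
The plan is to establish a time-preserving bijection between admissible curves of the two problems, which transfers minimality in either direction. One direction is already at hand: Lemma \ref{rectangle} shows that an admissible original-system curve $\gamma: [0, t_1+t_2]\to M$ from $q_1$ to $q_2$ touching $\partial U$ at $\gamma(t_1)$ lifts to the family $\Fcal_\gamma$ of admissible diamond curves from $(q_1, q_2)$ to $\Delta_{\partial U}$, all of the same arrival time $t_1+t_2$. So to prove the lemma it suffices to build the inverse: from any admissible diamond curve $\nu:[0,T]\to M\times M$ with $\nu(0)=(q_1,q_2)$ and $\nu(T)\in\Delta_{\partial U}$, we want to produce an admissible original-system curve from $q_1$ to $q_2$ touching $\partial U$, of total duration $T$. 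Combining the two directions yields the equivalence of minimisation problems, and hence the statement.

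Concretely, write $\nu=(\nu_1,\nu_2)$ with control $(u_1(t),u_2(t),s(t))$, so that $\dot\nu_1(t)=s(t)\rho(u_1(t))$ and $\dot\nu_2(t)=-(1-s(t))\rho(u_2(t))$. Define the absolutely continuous reparametrisations
\[
\sigma_1(t) := \int_0^t s(\tau)\,d\tau, \qquad \sigma_2(t) := \int_0^t (1-s(\tau))\,d\tau.
\]
On the set where $s\in(0,1)$, the function $\sigma_1$ is strictly increasing and its inverse is Lipschitz; set $\tilde\nu_1(\sigma):=\nu_1(\sigma_1^{-1}(\sigma))$, a curve of velocity $\rho(u_1\circ\sigma_1^{-1})$, hence admissible in $(C,\rho)$ from $q_1$ to $p:=\nu_1(T)=\nu_2(T)\in\partial U$, of duration $\sigma_1(T)$. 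Doing the same (after time-reversal) on $\nu_2$ yields an admissible $\tilde\nu_2$ from $p$ to $q_2$ of duration $\sigma_2(T)$. Concatenation produces an admissible curve from $q_1$ to $q_2$ touching $\partial U$ at $p$, with total duration $\sigma_1(T)+\sigma_2(T)=T$. Thus any diamond competitor beating $\gamma_{M\times M}$ would yield an original competitor beating $\gamma$, and conversely.

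The main technical obstacle is the behaviour on the exceptional sets $\{s=0\}$ and $\{s=1\}$, where the reparametrisations degenerate. On $\{s=0\}$, $\nu_1$ is constant, so that interval of time only contributes motion to $\nu_2$; it simply does not appear in $\tilde\nu_1$. Symmetrically on $\{s=1\}$. Since $\sigma_1$ and $\sigma_2$ are absolutely continuous and sum to the identity up to constants, excising these intervals from the corresponding factor and reparametrising the complement preserves admissibility and the total time budget $\sigma_1(T)+\sigma_2(T)=T$. A secondary subtlety is that $\nu$ might hit $\Delta_{\partial U}$ before $T$, but the construction only uses $\nu(T)\in\Delta_{\partial U}$, and minimisers are by definition first-arrival curves so this does not affect the comparison. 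With these points checked, both inequalities between the minimum times follow, and the minimality equivalence is established.
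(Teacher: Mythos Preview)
Your proof is correct and follows essentially the same approach as the paper: establish a time-preserving correspondence between admissible curves of the two problems via projection and concatenation. The paper's version is much terser (it simply asserts that projection recovers an admissible curve parametrised over the same interval), whereas you carefully work out the reparametrisations $\sigma_1,\sigma_2$ needed to undo the $s(t)$-scaling and handle the degenerate sets $\{s=0\}$, $\{s=1\}$ that the paper leaves implicit.
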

\begin{proof}
First note that one can recover a curve $\gamma$ from the corresponding $\gamma_{M \times M}$ by projection. By construction, one is admissible if and only if the other one is admissible. Additionally both are parametrised by the same interval. This concludes the claim.
\end{proof}

This motivates the following definition:
\begin{definition}\label{def:variationalBilliardTrajectory}
A \textbf{variational billiard trajectory} is an admissible curve $\gamma:I\to U$ satisfying:
\begin{itemize}
    \item It is piecewise smooth, and its points of non-smoothness form a discrete set $A \subset I$.
    \item In the complement of $A$, $\gamma$ is a minimiser (possibly only locally in time).
    \item given $t \in A$, it holds that $\gamma(t) \in \partial U$ and there is an $\varepsilon>0$ such that $\gamma|_{[t-\varepsilon,t+\varepsilon]}$ satisfies the conditions in Lemma~\ref{lem:diamondMinimizer}.
\end{itemize}
\end{definition}

\subsubsection{Reflection laws for control systems}

Following the prior discussion, we should look at control problems in which the endpoints are allowed to vary within a submanifold. We recall:
\begin{proposition} \label{prop:PMPboundary}
Let $\eta: D \to TW$ be a control system. Let $V_1,V_2 \subset W$ be smooth submanifolds. Suppose $\nu: [0,T] \to W$ minimises the arrival time between $V_1$ and $V_2$. Then, there exists a cotangent lift $\lambda: [0,T] \to W$ satisfying PMP, $\dot\lambda(0) \in \Ann(V_1)$, and $\dot\lambda(T) \in \Ann(V_2)$.
\end{proposition}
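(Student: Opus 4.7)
The plan is to reduce the statement to the standard Pontryagin maximum principle (Proposition~\ref{prop:PMP}) by encoding the freedom to slide the endpoints along $V_1$ and $V_2$ as additional ``cost-free'' infinitesimal variations, and then extracting the transversality conditions from a single separation argument. I will interpret $\dot\lambda(0)\in\Ann(V_1)$ as $\lambda(0)\in\Ann(T_{\nu(0)}V_1)$, as is standard for endpoint transversality.

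First I would pushforward the system by the flow $\phi_t$ of the control $u$ generating $\nu$, following the recipe used in the proofs of Propositions~\ref{prop:PSP} and \ref{prop:PMP}. This collapses $\nu$ to the constant curve $t\mapsto q := \nu(T)$ and brings all relevant infinitesimal displacements into the single tangent space $T_qW$. In this picture, the needle-variation construction already yields a subset $\EE_T\subseteq T_qW$ whose Lebesgue-regularised convex hull encodes the admissible displacements of the endpoint arising from control variations, as per the proof of Proposition~\ref{prop:PMP}.

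Next I would enlarge $\EE_T$ to a larger cone $\Kcal\subseteq T_qW$ that accounts for the two extra freedoms afforded by the boundary conditions. Namely, I add the linear subspace $d_{\nu(0)}\phi_T(T_{\nu(0)}V_1)$, which corresponds to freely perturbing the initial point along $V_1$ (with no cost in time), together with the ray generated by $-\dot\nu(T)$, which corresponds to truncating the trajectory slightly earlier and represents the possibility of decreasing the arrival time. Minimality of the arrival time between $V_1$ and $V_2$ then translates into the statement that $\Kcal$ cannot contain any direction lying in the relative interior of $T_{\nu(T)}V_2$, for otherwise one could concatenate a Lebesgue needle variation, an initial perturbation along $V_1$, and a small motion of the target along $V_2$ to produce an admissible curve connecting $V_1$ to $V_2$ in time strictly less than $T$. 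Convex separation then provides a nonzero $\lambda(T)\in T^*_qW$ that (i) supports $\EE_T$, (ii) annihilates $T_{\nu(T)}V_2$, and (iii) annihilates $d_{\nu(0)}\phi_T(T_{\nu(0)}V_1)$. Setting $\lambda(t):=\phi_{t\to T}^{*}\lambda(T)$ produces the desired cotangent lift: property (i) is exactly PMP once translated back through the pushforward, property (ii) is the final transversality $\lambda(T)\in\Ann(T_{\nu(T)}V_2)$, and property (iii) rewrites as $\lambda(0)\in\Ann(T_{\nu(0)}V_1)$.

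The main obstacle is making the separation step rigorous, because $\EE_T$ and hence $\Kcal$ are in general neither closed nor convex. This is precisely the technical subtlety already confronted in the proof of Proposition~\ref{prop:PMP}, and it is handled there by replacing $\EE_T$ with $\mathrm{ConvexHull}(\cup_{s\in[0,T]}\,\mathrm{Lebesgue}(B_s))$ and running the needle approximations with shrinking support. I would adapt that argument verbatim to $\Kcal$: the two additional summands are genuine linear subspaces (so they cause no convexity issue), and the time-decrease ray is built in exactly as in the classical PMP argument. Once the separating covector is produced, translating back to $C$ via the pushforward $(\phi_{t\to T})^{*}$ is automatic and gives all three conclusions simultaneously.
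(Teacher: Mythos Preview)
The paper does not actually prove Proposition~\ref{prop:PMPboundary}: it introduces the statement with the phrase ``We recall'' and treats it as a standard fact from control theory, so there is no in-paper proof to compare against. Your sketch is the standard derivation of the transversality conditions and is entirely in the spirit of the pushforward/needle-variation arguments the paper does spell out for Propositions~\ref{prop:PSP} and~\ref{prop:PMP}; in that sense your proposal is exactly the kind of argument the authors are implicitly invoking. One small refinement: when you say ``$\Kcal$ cannot contain any direction lying in the relative interior of $T_{\nu(T)}V_2$'', the cleaner formulation is that the convex cone $\Kcal$ and the linear subspace $T_{\nu(T)}V_2$ are separated by a hyperplane; since $T_{\nu(T)}V_2$ and $d_{\nu(0)}\phi_T(T_{\nu(0)}V_1)$ are linear, the separating covector automatically annihilates both, and its nonpositivity on the time-decrease ray $-\dot\nu(T)$ gives the usual sign condition $H_{u_T}(\lambda(T))\geq 0$, which you may as well record explicitly.
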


We will apply this to our diamond system, where $V_1=\{(q_1,q_2)\}$ and $V_2=\Delta_{\partial U}$. Note that
\[ \Ann T\Delta_{\partial U} = \{(\lambda_1,\lambda_2) \mid \lambda_1+\lambda_2 \in \Ann T\partial U\}. \]

\begin{proposition}\label{prop:reflectionLaw}
Let $\gamma: [0,t_1+t_2] \to U$ be admissible, connecting $\gamma(0) = q_1$ to $\gamma(t_1+t_2) = q_2$, and $\gamma(t_1) \in \partial U$ a point in the boundary.

Then, $\gamma$ is a minimiser among such curves if and only if there is a cotangent lift $\lambda: [0,t_1+t_2] \to T^*U$ such that the pieces $\lambda|_{[0,t_1]}$ and $\lambda|_{[t_1,t_1+t_2]}$ satisfy PMP and such that
\begin{equation} \label{eq:reflectionLaw}
\lambda(t_1^-) - \lambda(t_1^+) \in \Ann(\partial U),
\end{equation}
where $t_1^-$ denotes the limit as we approach $t_1$ from the left and $t_1^+$ is the limit from the right.
\end{proposition}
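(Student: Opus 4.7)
The plan is to translate the reflection-law condition into a boundary-value PMP statement for the diamond control system, and then unpack what that boundary condition means for the original trajectory.

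First I would invoke Lemma \ref{lem:diamondMinimizer} to pass from $\gamma$ to the associated diamond curve $\gamma_{M\times M}: [0,t_1+t_2]\to M\times M$. Under this correspondence, $\gamma$ is a minimiser among the curves under consideration if and only if $\gamma_{M\times M}$ is a minimiser of the diamond system $\rho_{M\times M}: C_{M\times M}\to T(M\times M)$ among admissible curves from $V_1 = \{(q_1,q_2)\}$ to the diagonal submanifold $V_2 = \Delta_{\partial U}$. I would then apply the boundary-constrained PMP of Proposition~\ref{prop:PMPboundary} with these choices of $V_1$ and $V_2$. This produces a cotangent lift $\tilde\lambda = (\lambda_1,\lambda_2): [0,t_1+t_2]\to T^*M\times T^*M$ of $\gamma_{M\times M}$ satisfying PMP in the diamond system, with endpoint condition $\tilde\lambda(t_1+t_2)\in \Ann T\Delta_{\partial U}$. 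As noted just before the proposition, this annihilator equals $\{(\mu_1,\mu_2)\mid \mu_1+\mu_2\in \Ann T\partial U\}$, and the initial condition at $V_1$ is vacuous since $V_1$ is a point.

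The heart of the proof is to re-package $(\lambda_1,\lambda_2)$ as a cotangent lift of $\gamma$ itself. Because the diamond anchor is $(s\rho(u_1),-(1-s)\rho(u_2))$, PMP for $\tilde\lambda$ forces a simultaneous maximisation of $\lambda_1$ along $\rho(C)$ and of $-\lambda_2$ along $\rho(C)$, and (since $s\in(0,1)$ is itself a control, fixed in $\Fcal_\gamma$ at $s=t_1/(t_1+t_2)$) the equality $H_\rho(\lambda_1)=H_\rho(-\lambda_2)$ on the common Hamiltonian level. After reparametrising time by $s$ and $1-s$ on the two factors, $\lambda_1$ becomes a PMP cotangent lift of $\gamma|_{[0,t_1]}$, and $-\lambda_2$, run in reverse time, becomes a PMP cotangent lift of $\gamma|_{[t_1,t_1+t_2]}$. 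These glue to a lift $\lambda$ of $\gamma$ with
\[
\lambda(t_1^-)=\lambda_1(t_1+t_2),\qquad \lambda(t_1^+)=-\lambda_2(t_1+t_2),
\]
so that the endpoint annihilator condition translates precisely into $\lambda(t_1^-)-\lambda(t_1^+)\in \Ann T\partial U$, i.e.\ \eqref{eq:reflectionLaw}.

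For the converse implication, given a $\lambda$ satisfying PMP on each piece together with \eqref{eq:reflectionLaw}, the same dictionary run backwards produces a lift $(\lambda_1,\lambda_2)$ of $\gamma_{M\times M}$ that satisfies PMP for the diamond system and meets the annihilator condition at $\Delta_{\partial U}$; combined with the fact that $\gamma$ being minimising between $q_1,q_2$ (with one boundary touch) is equivalent to $\gamma_{M\times M}$ being minimising between $V_1$ and $V_2$, this yields the claim in the sub-Finsler setting where PMP is sharp for normal minimisers. The step I expect to be the main obstacle is the bookkeeping of reparametrisations and sign conventions in matching $(\lambda_1,\lambda_2)$ with a single lift $\lambda$: the second factor of the diamond runs backwards in time and carries an overall minus sign in the anchor, and it is exactly this minus sign (together with the correct interpretation of $\lambda_2(t_1+t_2)$ as $-\lambda(t_1^+)$) that turns the sum $\lambda_1+\lambda_2$ appearing in $\Ann T\Delta_{\partial U}$ into the difference $\lambda(t_1^-)-\lambda(t_1^+)$ on the right-hand side of the reflection law.
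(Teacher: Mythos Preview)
Your proposal is correct and follows essentially the same route as the paper: pass to the diamond system via Lemma~\ref{lem:diamondMinimizer}, apply Proposition~\ref{prop:PMPboundary} with $V_1=\{(q_1,q_2)\}$ and $V_2=\Delta_{\partial U}$, project the resulting lift $\lambda_{M\times M}$ to each factor, and track the minus sign coming from the time reversal in the second component. Your discussion of the bookkeeping is in fact more explicit than the paper's terse version. One small remark: the equality $H_\rho(\lambda_1)=H_\rho(-\lambda_2)$ that you extract from the $s$-control is not needed for this proposition and is precisely what the paper isolates as the content of the subsequent Proposition~\ref{prop:sympIsVariational}; you are free to include it here, but be aware it is doing extra work beyond what the statement requires.
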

\begin{proof}
The curve $\gamma$ is a minimiser if and only if $\gamma_{M \times M}$ is a minimiser. This is equivalent to being able to find a cotangent lift $\lambda_{M \times M}$ satisfying PMP with $\dot\lambda_{M \times M} \in \Ann(\Delta_{\partial U})$.

The curve $\lambda_{M \times M}$ defines a cotangent lift $\lambda_1$ of $\gamma|_{[0,t_1]}$ (resp.\ a lift $\lambda_2$ of $\gamma|_{[t_1,t_1+t_22]}(t_1+t_2-t)$) by evaluating on vectors parallel to the first (resp. second) factor and reparametrising suitably. These satisfy PMP only if $\lambda_{M \times M}$ does. Lastly, the endpoint condition for $\lambda_{M \times M}$ implies that $\lambda_1(t_1) - \lambda_2(t_2)$ annihilates the boundary. Note that the minus sign comes from the fact that $\lambda_{M\times M}$ reverses the second path.
\end{proof}

We say that Equation~(\ref{eq:reflectionLaw}) is the (control theoretical) \textbf{reflection law} that variational billiard trajectories must satisfy. As we saw already using the symplectic formalism, for a general control system, the reflection law may not uniquely determine the outgoing momentum $\lambda(t_1^+)$ from the ingoing momentum $\lambda(t_1^-)$.

\subsubsection{The reflection law in the sub-Finsler setting}

We particularise the discussion to the sub-Finsler setting:
\begin{lemma}
Let $(M,\xi,f)$ be a sub-Finsler manifold. Then, the associated diamond control system is given by the unit disc of the system
\[ (M \times M, \xi \oplus \xi, f \oplus \widetilde f), \]
where $\widetilde f(w) = f(-w)$.
\end{lemma}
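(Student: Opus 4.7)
The plan is to unpack both sides of the claimed equality and show they coincide as subsets of $T(M\times M) = TM \oplus TM$. First, I would interpret $f \oplus \widetilde f$ as the fibrewise sum $(f \oplus \widetilde f)(a,b) := f(a) + f(-b)$; positive homogeneity, positivity, and fibrewise convexity are inherited term-by-term from $f$, so this is a (possibly asymmetric) Finsler norm on $\xi \oplus \xi$. Next, I would verify that $\xi \oplus \xi$ is bracket-generating on $M \times M$: the Lie flag of $\xi \oplus \xi$ splits as a direct sum along the two factors, and each summand exhausts the corresponding tangent bundle by the bracket-generating hypothesis on $\xi$. Thus $(M \times M,\, \xi \oplus \xi,\, f \oplus \widetilde f)$ is genuinely a sub-Finsler manifold, and by Definition~\ref{def:sub-FinslerControl} its associated control system has as its image the unit disc
\[ U := \{(a,b) \in \xi \oplus \xi : f(a) + f(-b) \leq 1\} \subset T(M \times M). \]

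Then I would unpack the diamond control system attached to $(M,\xi,f)$ using Definition~\ref{def:diamond}: since the sub-Finsler anchor is simply inclusion of the unit disc, the image of $\rho_{M\times M}$ in $T(M \times M)$ is
\[ \Delta := \{(sv, -(1-s)w) : s \in [0,1],\ v,w \in \xi,\ f(v) \leq 1,\ f(w) \leq 1\}. \]
Because two control systems with the same image in $TM$ define the same admissible curves, minimisers and maximised Hamiltonian, the statement of the lemma reduces to the set-theoretic equality $\Delta = U$.

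For $\Delta \subseteq U$, positive homogeneity of $f$ gives immediately
\[ f(sv) + f\bigl(-(-(1-s)w)\bigr) = s f(v) + (1-s) f(w) \leq 1. \]
For the reverse inclusion $U \subseteq \Delta$, given $(a,b) \in U$ I would pick $s$ in the interval $[f(a),\, 1 - f(-b)] \subset [0,1]$, which is nonempty precisely because $f(a) + f(-b) \leq 1$, and set $v := a/s$ and $w := -b/(1-s)$; homogeneity of $f$ then yields $f(v), f(w) \leq 1$, while $(sv, -(1-s)w) = (a,b)$ by construction. The edge cases $a = 0$ (force $s = 0$ and pick any $v$ with $f(v)\leq 1$) and $b = 0$ (force $s = 1$ and pick any $w$) are handled in the obvious way. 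I do not anticipate any real obstacle: the entire content of the lemma is the bookkeeping observation that positive homogeneity of $f$ converts the $s$-parametrisation built into the diamond into the sum in $f \oplus \widetilde f$.
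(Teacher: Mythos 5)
Your verification is correct and is exactly the computation the paper leaves implicit (the lemma is stated without proof there): both inclusions between the image of the diamond anchor and the set $\{(a,b): f(a)+f(-b)\leq 1\}$ follow from positive homogeneity precisely as you write, and your choice $s\in[f(a),\,1-f(-b)]$ with the edge cases $a=0$, $b=0$ handles the reverse inclusion cleanly. One small correction: you should not conclude that $(M\times M,\xi\oplus\xi,f\oplus\widetilde f)$ is ``genuinely a sub-Finsler manifold.'' The paper explicitly remarks that the diamond of a sub-Finsler system is \emph{not} sub-Finsler, because the $\ell^1$-type sum $f(a)+f(-b)$ is convex but not strictly convex --- its unit sphere is the boundary of a diamond with corners along the unit spheres of the two factors. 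This does not affect the content of the lemma, which is only the set-theoretic identification of the image of the diamond anchor with that unit disc (and, as you note, admissible curves and the maximised Hamiltonian depend only on this image), but the phrase as written contradicts the paper's own remark and the standing requirement that a Finsler norm be strictly convex.
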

It is worth pointing out that the unit sphere of $f \oplus \bar f$ in $\xi\oplus\xi$ is not a smooth sphere, but the boundary of a diamond. As such, it has corners, which are precisely the unit spheres in each factor. 
\begin{proposition}\label{prop:sympIsVariational}
The symplectic and control theoretical reflection laws are equivalent for sub-Finsler billiards. This means that reduced billiard trajectories are exactly the variational billiard trajectories.
\end{proposition}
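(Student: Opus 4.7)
The plan is to compare the two notions piece by piece: piecewise smooth geodesic segments in $U$ together with a jump condition at each reflection on $\partial U$. Between reflections both definitions single out projections of cogeodesic flowlines of $H_f$ on the unit cylinder $S_f$; at reflection points both impose that the jump of the cotangent lift lie in $\Ann(T\partial U)$ with both endpoints on $S_f$. Once these correspondences are set up, the equivalence is essentially tautological.

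First I would verify the interior agreement. A reduced symplectic billiard trajectory is by construction the arclength reparametrisation of the projection of a Hamiltonian flowline of $X_{H_f}$ on $S_f$. A variational billiard trajectory is, on each smooth piece, a local minimiser of the sub-Finsler control system; by Lemma~\ref{lem:maximisedHamiltonian} (discarding strict abnormals, which are abnormal in both formulations) such a curve admits a cotangent lift that is a characteristic of a positive level set of $H_f$, and homogeneity of $H_f$ lets us rescale the lift to lie on $S_f$. Conversely, the unit-speed parametrisation of the projection of any $X_{H_f}$-orbit on $S_f$ is a local minimiser. This recovers the standard correspondence between PMP and the sub-Finsler cogeodesic flow.

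Next I would turn to the reflection points. The symplectic law (Lemma~\ref{lem:sympReflectionLaw}) requires the jump to take place along the characteristic foliation of $D_f|_{\partial U}$, which is the line field parallel to $\Ann(T\partial U)$. The variational law (Proposition~\ref{prop:reflectionLaw}) requires $\lambda(t_1^-) - \lambda(t_1^+) \in \Ann(T\partial U)$. These are literally the same condition. Arclength parametrisation forces $\lambda(t_1^\pm) \in S_f$, and fibrewise convexity of $D_f$ then guarantees that the segment between the two covectors sits inside $D_f|_{\partial U}$, hence is precisely a characteristic arc interpolating them; conversely any such characteristic arc provides a valid variational reflection by Proposition~\ref{prop:reflectionLaw}. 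The correct orientation (outward ingoing covector, inward outgoing covector in the sense of Definition~\ref{def:outwardInward}) is forced by the fact that $\gamma$ lies in $U$ immediately before and after $t_1$, matching the sign convention of Remark~\ref{rem:choiceG}.

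The main, though mild, obstacle is to check that the correspondence between ingoing and outgoing covectors is single-valued on each side, i.e.\ that the line through $\lambda(t_1^-)$ parallel to $\Ann(T\partial U)$ meets $S_f|_{\partial U}$ in exactly the pair $\{\lambda(t_1^-),\lambda(t_1^+)\}$. This follows from fibrewise strict convexity of the unit disc of $(M,\xi,f)$ on any complement of $\Ann(\xi)$, provided $\Ann(T\partial U)\cap\Ann(\xi)=0$, i.e.\ on the regular part $\partial U^\circ$ of the boundary distribution. This is the same regime in which Proposition~\ref{prop:ReflectionIsSymplectomorphism} identifies the reflection with a symplectomorphism, and it is the implicit setting of the proposition; degenerate reflections, where this uniqueness fails, are handled separately in Subsection~\ref{sssec:degenerateReflections}.
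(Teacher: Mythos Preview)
Your argument has a genuine gap at the reflection step. You write that ``arclength parametrisation forces $\lambda(t_1^\pm)\in S_f$'', but this is precisely the nontrivial content of the proposition, not a consequence of parametrisation. The variational reflection law in Proposition~\ref{prop:reflectionLaw} hands you a \emph{single} cotangent lift coming from PMP applied to the diamond system; its two halves $\lambda_1,\lambda_2$ each satisfy PMP for the original system and are tied together by the jump condition $\lambda_1(t_1)-\lambda_2(t_2)\in\Ann(T\partial U)$. Since PMP lifts are only determined up to a positive scalar, you cannot rescale the two halves independently to land on $S_f$ without destroying the jump condition. What you need is that $H_f(\lambda_1)=H_f(\lambda_2)$ \emph{before} any rescaling, so that a single global rescaling puts both on $S_f$ while preserving the jump.

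The paper extracts this equality from the geometry of the diamond: using the freedom in Lemma~\ref{rectangle} one arranges the terminal velocity $\gamma_{M\times M}'(T)$ to lie in the smooth part of the diamond boundary, along the segment $\{s(\gamma_1'(t_1),0)+(1-s)(0,-\gamma_2'(t_2))\mid s\in[0,1]\}$. PMP then forces $\lambda_{M\times M}(T)$ to annihilate the direction of this segment, which unwinds to $\lambda_1(\gamma_1'(t_1))=\lambda_2(\gamma_2'(t_2))$, i.e.\ $H_f(\lambda_1)=H_f(\lambda_2)$. This step---exploiting the extra $s$-parameter in the diamond to equate the two energies---is the heart of the proof, and it is exactly what your sketch skips over. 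The rest of your outline (interior agreement, matching of the jump conditions, the convexity remark on uniqueness) is correct and aligns with the paper.
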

\begin{proof}
In this setting, Equation~(\ref{eq:reflectionLaw}) coincides with Lemma~\ref{lem:sympReflectionLaw}, but this does not completely determine a reflection law. What is left to show is that the reflection law defined by the diamond control system preserves the dual norm:
\[H_f(\lambda(t_1^+))=H_f(\lambda(t_1^-)),\]
which in the description of the microlocal realisation was achieved automatically by construction. If this is true, then the ingoing and outgoing momentum lie on the same energy surface of $H_f$, which leads to the desired equivalence.

First, we use Lemma~\ref{rectangle} to choose a minimising curve whose terminal velocity does not vanish in either component, which is equivalent to $\tau_1'(T)\neq 0 \neq \tau_2'(T)$. This means that the velocity of arrival \[\gamma_{M\times M}'(T)=(\tau_1'\gamma_1'(t_1),-\tau_2'\gamma_2'(t_2))\]
lies in a non-singular part of the image of the diamond, c.f.\ Definition~\ref{def:diamond}.
Let $\lambda_{M\times M}=(\lambda_1,\lambda_2)$ be a cotangent lift of $\gamma_{M\times M}$ satisfying PMP. We know that $\lambda_{M\times M}(T)$ supports the image of the diamond at $\gamma_{M\times M}'(T)$. In particular, \[\gamma_{M\times M}'(T)\in \{ s(\gamma_1'(t_1),0) +(1-s)(0,-\gamma_2'(t_2))\mid s\in[0,1]\}\subseteq \partial \rho_{M\times M}(C_{M\times M}), \]
and therefore we deduce that $\lambda_{M\times M}$ annihilates the tangent space of 
\[ \{ s(\gamma_1'(t_1),0) +(1-s)(0,-\gamma_2'(t_2))\mid s\in[0,1]\}. \]
Then, the fact that $\lambda_i$ also supports $\rho(C)$ at $\gamma_i'(t_i)$ translates to
\[H_f(\lambda_1)=\lambda_1(\gamma_1'(t_1))=\lambda_2(\gamma_2'(t_2))=H_f(\lambda_2),\] 
which is the desired property.
\end{proof}

\subsection{Reflection through minimisation}

We close this Section with a couple of examples in which the reflection law follows from the minimising properties of geodesics. In this Subsection we assume the sub-Finsler norm to be reversible, i.e., $f(v)=f(-v)\;\forall v$.

\subsubsection{Balls and Ellipsoids}

The first examples in this direction are spheres and ellipsoids:
\begin{definition}
Fix a reversible sub-Finsler manifold $(M,\xi,f)$ and a positive number $T$. Then, the \textbf{ball} of radius $T$ and center $q \in M$ is:
\[ B(q,T) := \{x\mid d(x,q)\leq T\} \]
The \textbf{ellipsoid} with focal points $q_1,q_2 \in M$ and length $T$ is:
\[ E(q_1,q_2,T) := \{x\mid d(x,q_1)+d(x,q_2)\leq T\} = \bigcup_{t \in [0,T]}  B_t(p)\cap  B_{T-t}(q). \]
\end{definition}
In the Riemannian setting, one would require $T$ to be smaller than the injectivity radius. In the reversible sub-Finsler setting, pathological behaviors can happen even under that assumption. It is known that the boundary of the ball is never smooth \cite[Theorem 1]{Agr} and geodesics of length $T$ starting at $q$ need not reach the boundary of $B(q,T)$ (regardless of how small $T$ is). It is not even known in full generality if a sufficiently small sub-Finsler ball is a topological ball (see~\cite[Section 10.3]{M}).

\begin{lemma}
Let $\gamma: [0,T] \to B(q,T)$ be a minimiser of length $T$ connecting $q$ to a smooth point $\gamma(T) \in \partial B(q,T)$, regular for the boundary distribution. Then:
\begin{itemize}
    \item $\gamma$ is a normal geodesic.
    \item $\gamma$ is invariant under reflection at $p$.
\end{itemize}
\end{lemma}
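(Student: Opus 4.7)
The plan is to extract from the Pontryagin Maximum Principle (Proposition~\ref{prop:PMP}) a cotangent lift $\lambda:[0,T]\to T^*M$ of $\gamma$ whose endpoint $\lambda(T)$ supports the attainable set $\Acal_T(q)=B(q,T)$ at $p:=\gamma(T)$. Since $p$ is by hypothesis a smooth point of $\partial B(q,T)$, the supporting half-space is uniquely the one cut out by the tangent hyperplane $T_p\partial B(q,T)$, so the supporting covector is determined up to positive rescaling and lies in the one-dimensional annihilator $\Ann(T_p\partial B(q,T))$. Thus $\lambda(T)$ is a non-zero scalar multiple of any generator of $\Ann(T_p\partial B(q,T))$.

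For the first bullet I would invoke the regularity of the boundary distribution at $p$, which says exactly $\xi_p\not\subset T_p\partial B(q,T)$. This means no non-zero element of $\Ann(T_p\partial B(q,T))$ can lie in $\Ann(\xi)$, so $\lambda(T)\notin\Ann(\xi)$ and hence $H_f(\lambda(T))>0$. By Corollary~\ref{Cor:preservedQuantity} the maximised Hamiltonian is constant along the lift, so the whole $\lambda$ avoids the singular locus $H_f^{-1}(0)=\Ann(\xi)$; by Lemma~\ref{lem:maximisedHamiltonian} this is exactly the condition for $\gamma$ to be normal.

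For the second bullet I would apply Lemma~\ref{lem:sympReflectionLaw} together with Proposition~\ref{prop:ReflectionIsSymplectomorphism}: the outgoing momentum $\lambda_{\mathrm{out}}$ differs from $\lambda(T)$ by a vector in $\Ann(T_p\partial B(q,T))$ and lies on the same energy level $\{H_f=H_f(\lambda(T))\}$. Since $\lambda(T)$ already generates the one-dimensional annihilator, the affine line $\lambda(T)+\Ann(T_p\partial B(q,T))$ is simply $\mathbb{R}\cdot\lambda(T)$. Reversibility of $f$ yields $H_f(c\lambda(T))=\lvert c\rvert\,H_f(\lambda(T))$, so the level set intersects this line in exactly two points $\pm\lambda(T)$, and the non-trivial reflection produces $\lambda_{\mathrm{out}}=-\lambda(T)$. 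Using Lemma~\ref{lem:correspondingVector} and reversibility one more time, the unique velocity dual to $-\lambda(T)$ is $-\gamma'(T)$, so the geodesic issued from $p$ with momentum $-\lambda(T)$ is the time-reversal of $\gamma$, giving the claimed invariance.

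The main obstacle is the first step: exhibiting a PMP lift whose endpoint actually annihilates the tangent hyperplane to the sphere. This is a transversality statement that follows from the supporting hyperplane construction in the proof sketch of Proposition~\ref{prop:PMP} applied to the attainable set $B(q,T)$, but it crucially uses the smoothness of the sphere at $p$ to rule out higher-codimension ambiguities; the regularity of the boundary distribution is then what rules out strict-abnormal pathologies and upgrades the conclusion to normality.
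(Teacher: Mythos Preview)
Your argument is correct and reaches the same conclusions as the paper, but by a genuinely different route. The paper argues in the reverse order: it first observes that, by reversibility, the concatenation of $\gamma$ with its time-reversal $\widetilde\gamma(t)=\gamma(T-t)$ is a minimiser for the problem of going from $q$ to $\partial B(q,T)$ and back to $q$, so the variational reflection law (Proposition~\ref{prop:reflectionLaw}) applies and forces $\lambda(T)-(-\lambda(T))=2\lambda(T)\in\Ann(T_p\partial B(q,T))$; normality then follows from regularity exactly as in your argument. You instead extract the conormality of $\lambda(T)$ directly from the supporting-hyperplane step inside the proof of the PMP (using smoothness of the sphere at $p$ to pin down the supporting covector uniquely), deduce normality first, and only afterwards compute the reflection explicitly via the symplectic description. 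Your route is more elementary in that it sidesteps the diamond construction entirely, but it leans on a detail from the \emph{proof} of Proposition~\ref{prop:PMP} rather than its statement; a cleaner packaging of your first step would be to invoke Proposition~\ref{prop:PMPboundary} with $V_1=\{q\}$ and $V_2$ a smooth neighbourhood of $p$ in the sphere, which yields $\lambda(T)\in\Ann(T_p\partial B(q,T))$ directly. The paper's route, on the other hand, has the virtue of exhibiting the Lemma as an immediate application of the reflection machinery from Subsection~\ref{ssec:variationalViewpoint}.
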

\begin{proof}
Due to reversibility, both $\gamma$ and its reverse $\widetilde\gamma(t) := \gamma(T-t)$ satisfy PMP. If $\lambda$ is a lift of $\gamma$ satisfying PMP, then $\widetilde\lambda(t) := -\lambda(T-t)$ lifts $\widetilde\gamma$ and also satisfies PMP. Further, the concatenation of $\gamma$ with $\widetilde\gamma$ also satisfies PMP for the control problem of going from $q$ to itself with one reflection. This implies that Equation (\ref{eq:reflectionLaw}) must hold, allowing us to deduce that $\lambda(T)$ reflects to $-\lambda(T)$, which is equivalent to $\lambda(T)$ annihilating the sphere. From this, and using that $\gamma(T)$ is a regular point, we deduce that $\lambda(T)$ cannot annihilate $\xi$, proving that $\gamma$ is normal.
\end{proof}

In symmetric situations it may be the case that $\gamma$ can be continued over the time interval $[T,3T]$, first going back to $q$ and then reaching again the boundary $\partial B(q,T)$. If this second point of contact is also regular, this yields a 2-bounce orbit. In particular:
\begin{corollary}
Let $(M,\xi,g)$ be a Carnot group. Then, any minimising curve between the identity and the smooth, regular locus of the unit sphere extends to a 2-bounce orbit.
\end{corollary}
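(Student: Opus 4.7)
The plan is to apply the preceding lemma twice—once at $\gamma(T)$ and once at a second boundary point reached by Hamiltonian continuation through the identity $e$—bridging the two applications by the Carnot symmetries. Let $T$ denote the length of $\gamma:[0,T]\to B(e,T)$, which by assumption minimises from $e$ to a smooth, regular point $\gamma(T)\in\partial B(e,T)$. The lemma says $\gamma$ is a normal geodesic whose cotangent lift $\lambda$ flips to $-\lambda(T)$ at the reflection, so the billiard trajectory continues on $[T,2T]$ as $t\mapsto \gamma(2T-t)$ and returns to $e$ at time $2T$ with lift $-\lambda(0)$.

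Because $e$ is an interior point of $B(e,T)$, no reflection occurs there and the billiard trajectory extends past $e$ by the Hamiltonian flow of $H_f$. Reversibility of $f$ gives $H_f(-\lambda)=H_f(\lambda)$, and one checks that $\lambda(t)\mapsto -\lambda(-t)$ is then a symmetry of the Hamiltonian equations. Letting $\hat\gamma:[-T,T]\to G$ denote the Hamiltonian extension of $\gamma$ (which exists by completeness of the sub-Finsler Carnot group), the base projection of the billiard trajectory at $t=3T$ is $\hat\gamma(-T)$. It then remains to verify that $\hat\gamma(-T)\in\partial B(e,T)$ is a smooth, regular boundary point; a second application of the preceding lemma will then furnish the second reflection and complete the 2-bounce orbit.

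For that verification I would use the Carnot dilation $\delta_{-1}$, defined on $\gfrac$ by $\delta_{-1}|_{\gfrac_i}=(-1)^i\id$: it is a Lie group automorphism fixing $e$ and, since $\delta_{-1}|_{\hfrac}=-\id$ preserves the reversible norm $f$, it is a sub-Finsler isometry. Thus $\delta_{-1}\gamma$ is a minimising geodesic from $e$ of length $T$ whose endpoint $\delta_{-1}(\gamma(T))$ is a smooth, regular point of $\partial B(e,T)$. The main obstacle is that $\gamma^{*}(s):=\hat\gamma(-s)$ and $\delta_{-1}\gamma$ share the initial horizontal velocity $-\dot\gamma(0)$ but in general have different initial momenta on $\Ann(\xi)$, so one cannot identify the two geodesics directly. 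Overcoming this requires combining the $\delta_{-1}$-isometry with the time-reversal symmetry $\lambda(t)\mapsto -\lambda(-t)$ of $H_f$ to argue that the backward cut time of $\hat\gamma$ at $e$ agrees with the forward one, from which $d(e,\hat\gamma(-T))=T$ follows; smoothness and regularity of the second reflection point then transfer from $\gamma(T)$ via the same symmetries.
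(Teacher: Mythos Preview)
Your approach differs from the paper's: where you reach for the Carnot dilation $\delta_{-1}$ (a genuine sub-Riemannian isometry preserving $(\xi,f)$), the paper invokes group inversion, which is only an isometry of the Carnot--Carath\'eodory distance and does not preserve $\xi$. The paper's two-line argument simply asserts that the curve ``extends to a minimiser $\gamma:[-1,1]$ with $\gamma(-1)=\gamma(1)^{-1}$'' and that this endpoint is smooth because inversion preserves the sphere. You are more scrupulous in recognising that the Hamiltonian backward extension is a specific normal geodesic determined by the momentum $-\lambda(0)$, and that one must actually show it is minimising and lands at a smooth regular boundary point.

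The obstacle you flag is genuine, but your proposed resolution does not close it. What you ultimately need is $t_{\rm cut}(-\lambda(0))\geq T$ together with smoothness and regularity of $\hat\gamma(-T)$, and you hope to obtain this from an isometry (or a combination of symmetries) relating the orbit through $\lambda(0)$ to the one through $-\lambda(0)$. Neither $\delta_{-1}$ nor its composition with time-reversal does this: at the identity the cotangent lift $\Delta_{-1}$ acts on $\gfrac_i^*$ by $(-1)^i$, so it agrees with $-\id$ only on the odd layers, and fibrewise negation then flips every sign once more. Concretely, in the Heisenberg group the isotropy at $e$ acts on $T^*_eG$ as $O(2)$ on $\hfrac^*$ and by the determinant character on $\gfrac_2^*$; since $-\id\in\SO(2)$ has determinant $+1$, no isometry sends a generic $\lambda_0$ to $-\lambda_0$. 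The equality of forward and backward cut times that you want does hold in Heisenberg---the cut time there is $2\pi/|a_0|$, depending only on $|a_0|$---but this comes from the transitive $\SO(2)$-action on the unit sphere in $\hfrac^*$, not from the symmetries you invoke. For a general Carnot group that rotational symmetry is absent, and you would need a separate argument to show that $\gamma|_{[-T,0]}$ is minimising and that $\gamma(-T)$ lies in the smooth, regular locus of the sphere; the phrase ``via the same symmetries'' does not supply one.
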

\begin{proof}
Let $\gamma: [0,1] \to U$ be the geodesic in question. Since the inverse map of the group is an isometry, our curve extends to a minimiser $\gamma: [-1,1] \to U$ with $\gamma(-1) = \gamma(1)^{-1}$, which must also be a point of smoothness, since the inversion identifies the unit sphere with itself. The claimed $2$-bounce orbit consists of $\gamma$ concatenated with its reversal.
\end{proof}

One can similarly prove:
\begin{lemma}
Let $q$ be a point in which $\partial E(q_1,q_2,T)$ is smooth. Let $\gamma_1$ be a geodesic of length $t$ from $q_1$ to $q$ such that $q \in \partial B(q_2,T-t)$. Then $\gamma_1$ reflects to a geodesic $\gamma_2$ connecting $q$ to $q_2$.
\end{lemma}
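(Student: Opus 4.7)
The plan is to mimic the argument used for the ball in the preceding lemma, replacing the ``single-bounce return to the center'' structure by a ``bounce between the two foci'' structure, and reading the reflection law out of the variational formulation.

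First, I would invoke Filippov (Proposition~\ref{prop:Filippov}, or Lemma~\ref{lem:LipschitzLimit} applied to the compact unit disc of $f$) to produce a minimising horizontal curve $\gamma_2\colon[0,T-t]\to M$ from $q$ to $q_2$, whose length equals $d(q,q_2)=T-t$ by assumption. I would then form the concatenation $\gamma := \gamma_1\cdot\gamma_2\colon[0,T]\to M$, a horizontal curve from $q_1$ to $q_2$ of total length exactly $T$ which passes through $q\in\partial E(q_1,q_2,T)$. A quick triangle-inequality check shows that for any $s\in[0,t]$ the point $\gamma_1(s)$ satisfies
\[
d(q_1,\gamma_1(s))+d(\gamma_1(s),q_2)\;\leq\; s+(t-s)+(T-t)\;=\;T,
\]
so $\gamma_1$ lies in the closed ellipsoid, and symmetrically for $\gamma_2$. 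Thus $\gamma\subseteq \overline{E(q_1,q_2,T)}$ and meets $\partial E(q_1,q_2,T)$ at the interior time $t$.

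Next, I would establish that $\gamma$ is a minimiser among admissible curves from $q_1$ to $q_2$ that touch $\partial E(q_1,q_2,T)$. Indeed, any such curve passes through some $x\in\partial E$ and therefore has length at least $d(q_1,x)+d(x,q_2)=T$, a bound that $\gamma$ achieves. This is exactly the minimisation problem to which Proposition~\ref{prop:reflectionLaw} applies (with $U=E(q_1,q_2,T)$ as the table): the proposition yields a cotangent lift $\lambda\colon[0,T]\to T^*M$ of $\gamma$ whose restrictions $\lambda|_{[0,t]}$ and $\lambda|_{[t,T]}$ both satisfy PMP and that obeys
\[
\lambda(t^-)-\lambda(t^+)\;\in\;\Ann(T_q\partial E).
\]
By Proposition~\ref{prop:sympIsVariational}, this is precisely the symplectic reflection law of Lemma~\ref{lem:sympReflectionLaw}, so the restriction $\lambda|_{[0,t]}$ is a PMP lift of $\gamma_1$ whose endpoint covector reflects, under the boundary distribution of $\partial E$ at $q$, onto the initial covector of a PMP lift of $\gamma_2$. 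Hence $\gamma_2$ is the reflection of $\gamma_1$ at $q$, as claimed.

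The main obstacle I anticipate is the uniqueness/well-definedness of the reflection at $q$: for the conclusion ``$\gamma_1$ reflects to $\gamma_2$'' to be meaningful in the sense of Subsection~\ref{sssec:degenerateReflections}, one needs the line through $\lambda(t^-)$ parallel to $\Ann(T_q\partial E)$ to meet $S_f$ transversely, i.e.\ $q$ must be a regular point of the boundary distribution $\xi_{\partial E}$ (an implicit hypothesis, as in the analogous lemma for balls). Under that assumption the outgoing momentum produced by Proposition~\ref{prop:reflectionLaw} is the unique one compatible with the incoming momentum $\lambda(t^-)$, and $\gamma_2$ is determined unambiguously by $\gamma_1$ via reflection. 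A secondary, but essentially routine, technical point is to verify that $\gamma$ meets $\partial E$ only at $t$ (or at least that additional contact points are inner tangencies, so that Proposition~\ref{prop:reflectionLaw} still applies at the intended reflection time); this is automatic when $\partial E$ is smooth and regular at $q$ by a local transversality argument.
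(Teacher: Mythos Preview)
Your proposal is correct and follows exactly the approach the paper intends: the paper writes only ``One can similarly prove'' after the ball lemma, and your argument is precisely that analogue---concatenate $\gamma_1$ with a minimiser $\gamma_2$ from $q$ to $q_2$, observe that the concatenation minimises among once-reflected paths from $q_1$ to $q_2$ touching $\partial E$, and read off the reflection law via Proposition~\ref{prop:reflectionLaw}. Your observations about the implicit regularity hypothesis at $q$ and the containment $\partial E\subset\{d(q_1,\cdot)+d(\cdot,q_2)=T\}$ are the right checks to make.
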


\begin{question}
If a geodesic in the ball or ellipsoid connects the center or a focal point to a smooth regular point of the boundary and is \emph{not} length minimizing, can one say where it is reflected? \hfill$\triangle$
\end{question}

\subsubsection{Attainable sets and wavefronts}

The key fact we exploited for the ball is that it is the set of points that can be reached from a given point in time at most $T$. We now generalise this. We introduce the notation:
\begin{definition}\label{def:wavefront}
Let $N \subset (M,\xi,f)$ be a smooth submanifold and $\phi_f^T$ the sub-Finsler cogeodesic flow at time $T$. Then:
\begin{itemize}
    \item its \textbf{attainable set} $\Acal_T(N)$ at time $T$ is the set of points that can be reached from $N$ using an admissible curve defined over the time interval $[0,T]$.
    \item its \textbf{(normal) wavefront} $\wavefront_T(N)$ is the set of points $\pi \circ \phi_f^T(\Ann(TN) \cap S_f)$, i.e., the points that can be reached by following the normal geodesics with initial momentum annihilating $TN$ and of unit length.
\end{itemize}
\end{definition}
Note that in our definition of wavefront we are once again embracing the Hamiltonian viewpoint and disregarding strict abnormals. We first remark:
\begin{lemma}
Let $p\in \partial\Acal_T(N)$ be smooth and regular for the restriction of $\xi$. Then, any geodesic $\gamma$ of length $T$ connecting $\gamma(0) \in N$ with $\gamma(T) = p$ reflects to another geodesic of length $T$ connecting $p$ to $N$. 
\end{lemma}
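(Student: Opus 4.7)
I would realise the reflected geodesic as the time-reversal $\tilde\gamma(t):=\gamma(T-t)$ and verify that the concatenation $\gamma\cdot\tilde\gamma$ solves a bouncing minimisation problem whose table is $U=\Acal_T(N)$; the variational reflection law then forces the cotangent lift of $\gamma$ at $p$ to annihilate $T_p\partial\Acal_T(N)$, and from here the symplectic reflection law identifies $\tilde\gamma$ with the reflected curve.

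First I would observe that $p\in\partial\Acal_T(N)$ translates into $d(N,p)=T$, so the given $\gamma$ realises the infimum distance from $N$ to $p$; by reversibility of $f$ the same is true for $\tilde\gamma$, going from $p$ to $q:=\gamma(0)\in N$. For any competing curve from $q$ to $q$ through some $p'\in\partial\Acal_T(N)$ the length is at least $2d(q,p')\geq 2d(N,p')=2T$, so $\gamma\cdot\tilde\gamma$ is a length-$2T$ minimiser of the bouncing problem with endpoints free in $N$. Applying the control-theoretical reflection law (Proposition~\ref{prop:reflectionLaw}), augmented by Proposition~\ref{prop:PMPboundary} to accommodate $V_1=V_2=N$, produces a cotangent lift $\mu:[0,2T]\to T^*M$ of $\gamma\cdot\tilde\gamma$ with PMP on each half, endpoint conditions $\mu(0),\mu(2T)\in\Ann(TN)$, and reflection condition $\mu(T^-)-\mu(T^+)\in\Ann(T_p\partial\Acal_T(N))$. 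Since $\gamma$ is normal, its characteristic lift $\lambda$ on $S_f$ is the unique PMP lift at unit speed, so $\mu|_{[0,T]}=\lambda$; analogously $\mu|_{[T,2T]}$ is the characteristic lift of $\tilde\gamma$, which by reversibility equals $t\mapsto -\lambda(2T-t)$. Substituting yields the central identity $2\lambda(T)\in\Ann(T_p\partial\Acal_T(N))$.

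To finish, I would appeal to the symplectic picture. Regularity of the boundary distribution at $p$ is equivalent to $\Ann(T_p\partial\Acal_T(N))\cap\Ann(\xi)=\{0\}$, so $\lambda(T)\notin\Ann(\xi)$ and $H_f(\lambda(T))=1$; reversibility of $f$ then gives $H_f(-\lambda(T))=1$, and hence $\Ann(T_p\partial\Acal_T(N))\cap S_f=\{\pm\lambda(T)\}$. Lemma~\ref{lem:sympReflectionLaw} applied to the table $U=\Acal_T(N)$ selects $-\lambda(T)$ as the outgoing momentum; its $S_f$-characteristic is $t\mapsto -\lambda(T-t)$, which projects to $\tilde\gamma$, a geodesic of length $T$ from $p$ back to $q\in N$. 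The step I expect to require the most care is the uniqueness claim used to identify $\mu|_{[0,T]}$ with the given characteristic lift of $\gamma$; this relies on normality of $\gamma$ and the fact that Hamiltonian orbits in the smooth locus of $H_f$ are determined by their first-jet data.
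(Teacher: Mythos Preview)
Your approach is the same as the paper's: both argue that the concatenation $\gamma\cdot\tilde\gamma$ minimises the bouncing problem from $N$ to $\partial\Acal_T(N)$ and back, invoke the variational reflection law to obtain a PMP lift satisfying $\mu(T^-)-\mu(T^+)\in\Ann(T_p\partial\Acal_T(N))$, and then use regularity of the boundary distribution at $p$ to conclude normality and identify the reflected curve with the reversal. The paper's proof is a two-sentence sketch; yours supplies the details and, in particular, makes explicit the key identity $2\lambda(T)\in\Ann(T_p\partial\Acal_T(N))$.

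The concern you flag at the end is genuine and worth addressing. The assertion that the characteristic lift $\lambda\in S_f$ is the \emph{unique} PMP lift of $\gamma$ is false when $\gamma$ is simultaneously normal and abnormal: the diamond argument may return an abnormal lift $\mu\subset\Ann(\xi)$, and then your substitution $\mu|_{[0,T]}=\lambda$ is unjustified. One way around this is to note that in the abnormal case the reflection condition together with $\mu(T^\pm)\in\Ann(\xi_p)$ and regularity (which gives $\Ann(\xi_p)\cap\Ann(T_p\partial\Acal_T(N))=0$) forces $\mu(T^-)=\mu(T^+)$, so the abnormal lift is continuous across the reflection; this does not directly yield the conclusion for $\lambda$, but it does show that the reflection is trivial on abnormal momenta, so the non-degenerate symplectic reflection is governed entirely by the normal lift. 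Alternatively, you can bypass the diamond and argue directly that the endpoint momentum $\lambda(T)$ of any normal minimiser from $N$ to a smooth point of $\partial\Acal_T(N)$ is conormal to the level set $\{d(N,\cdot)=T\}$, which locally coincides with $\partial\Acal_T(N)$; this gives $\lambda(T)\in\Ann(T_p\partial\Acal_T(N))$ without appealing to uniqueness. The paper's proof, being terse, elides this same subtlety.
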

\begin{proof}
The concatenation of $\gamma$ with its reflection is a curve that minimises going from $N$ to $\partial\Acal_T(N)$ and back. Then, the smoothness and regularity assumptions imply that $\gamma$ and its reflection are normal geodesics.
\end{proof}

We can provide a more detailed analysis. According to Proposition \ref{prop:PMPboundary}, minimising curves between two submanifolds always lift to momentum curves satisfying PMP and annihilating their tangent spaces at the endpoints. From this, it follows:
\begin{lemma} \label{lem:wavefrontSubset}
Let $N$ be a smooth submanifold with induced distribution $\xi_N := \xi \cap TN$ (possibly singular). Assume $\dim(N) \geq \operatorname{codim}(\xi)$. Then: 
\[ \partial\Acal_T(N) \quad\subset\quad \wavefront_T(N) \,\cup\, \left( \bigcup_{\textrm{$q$ singular point of $\xi_N$}} \partial\Acal_T(p) \right). \]
\end{lemma}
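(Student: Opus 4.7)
The strategy is to pick a minimizer from $N$ to $p$, apply the Pontryagin Maximum Principle with endpoint conditions (Proposition \ref{prop:PMPboundary}) to obtain a lift with initial momentum in $\Ann(TN)$, and then use the dichotomy between regular and singular points of $\xi_N$ to decide whether that lift must lie in the smooth locus of $H_f$. The dimension hypothesis $\dim(N) \geq \operatorname{codim}(\xi)$ is there precisely to guarantee that the regular locus is open and dense, characterized by $T_qN + \xi_q = T_qM$.

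First I would verify that $p \in \partial\Acal_T(N)$ implies the existence of a minimizer from $N$ to $p$ of length exactly $T$. Since $\xi$ is bracket-generating, Chow--Rashevskii implies that for any $q\in M$ and $s>0$, $\Acal_s(q)$ contains a neighbourhood of $q$; hence if $d(N,p)<T$ we would get a neighbourhood of $p$ inside $\Acal_T(N)$, contradicting the boundary assumption. So $d(N,p)=T$, and Filippov's existence result (as encoded in Lemma~\ref{lem:LipschitzLimit}, applied in a compact neighbourhood) produces a minimiser $\gamma:[0,T]\to M$ with $\gamma(0)\in N$ and $\gamma(T)=p$.

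Next, I would apply Proposition~\ref{prop:PMPboundary} with $V_1 := N$ and $V_2 := \{p\}$. This yields a non-zero cotangent lift $\lambda:[0,T]\to T^*M$ satisfying PMP with $\lambda(0)\in\Ann(T_{\gamma(0)}N)$ (the transversality condition at $V_2$ is vacuous). Now split into two cases on the initial point. If $\gamma(0)$ is regular for $\xi_N$, then $T_{\gamma(0)}N+\xi_{\gamma(0)}=T_{\gamma(0)}M$, so
\[
\Ann(T_{\gamma(0)}N)\cap\Ann(\xi_{\gamma(0)})=\Ann\bigl(T_{\gamma(0)}N+\xi_{\gamma(0)}\bigr)=0.
\]
Non-vanishing of $\lambda(0)$ then forces $\lambda(0)\notin\Ann(\xi)$, i.e.\ $\lambda(0)$ lies in the smooth locus of $H_f$ by Lemma~\ref{lem:maximisedHamiltonian}. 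Since PMP gives $H_{u_t}(\lambda(t))=H_f(\lambda(t))$ and Corollary~\ref{Cor:preservedQuantity} says $H_{u_t}(\lambda(t))$ is constant, the level $H_f(\lambda(t))$ is a positive constant along the whole orbit, so $\lambda$ stays in the smooth locus and is a characteristic of $H_f$. Reparametrising so that $\lambda$ sits on $S_f$ (which just rescales $\gamma$ to unit speed and rescales $T$ to the length of $\gamma$, which is already $T$), we obtain $p=\pi\circ\phi_f^T(\lambda(0))\in\wavefront_T(N)$.

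In the remaining case, $\gamma(0)$ is a singular point of $\xi_N$. Then $p\in\Acal_T(\gamma(0))\subseteq\Acal_T(N)$, and since $p\in\partial\Acal_T(N)$ it must also lie in $\partial\Acal_T(\gamma(0))$, which is the second term of the union. Combining the two cases yields the claimed inclusion. The main obstacle I expect is the bookkeeping in the first step---confirming that $d(N,p)=T$ and that a length-$T$ minimiser exists in our setting---since it relies on a careful use of bracket-generation together with Filippov, and on the sub-Finsler identification between time and length parametrisations; once that is in place, the rest reduces to a transparent linear-algebraic dichotomy on the annihilators.
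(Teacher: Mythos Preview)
Your proposal is correct and follows essentially the same route as the paper's proof: take a minimiser from $N$ to $p$, apply Proposition~\ref{prop:PMPboundary} to get a PMP lift with $\lambda(0)\in\Ann(TN)$, and then split on whether the starting point is regular or singular for $\xi_N$. Your write-up is more explicit than the paper's---you spell out the existence of the length-$T$ minimiser, the linear-algebra identity $\Ann(TN)\cap\Ann(\xi)=\Ann(TN+\xi)$, the energy-level preservation via Corollary~\ref{Cor:preservedQuantity}, and the inclusion argument for the singular case---but the underlying logic is identical.
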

\begin{proof}
Let $\gamma$ be a minimising curve between $\gamma(0) = q \in N$ and $\gamma(T) = p \in \partial\Acal_T(N)$. Suppose $q$ is not a singular point of $\xi_N$. Then, there is a lift $\lambda$ satisfying PMP and annihilating $T_pN$. From the annihilation property, and our dimension assumptions, we deduce that $\lambda(0)$ cannot annihilate $\xi$, so $\gamma$ is thus normal and $p$ lies in the (normal) wavefront. Alternatively, $q$ is singular and $p$ then lies in the sphere at distance $T$ from it.
\end{proof}

\begin{remark}
In particular, if there are no strict abnormals or if $\xi_N$ has no singular points, we can restrict our attention to the wavefront. \hfill$\triangle$
\end{remark}
\begin{remark}
Note that $\Ann(TN) \cap S_f$  is not a smooth bundle over $N$: the rank of its fibres drops along the singularities of $\xi_N$. It follows that the topology of $\wavefront(N)$ depends on $\xi_N$. \hfill$\triangle$
\end{remark}

For hypersurfaces, $\Ann(TN) \cap S_f$ is a double cover of $N^\circ$ (i.e., $N$ minus its singularities). From the Lemma it follows: 
\begin{corollary} \label{cor:generalWaveFront}
Let $N$ be a codimension-$1$ submanifold. Let $q\in \partial\Acal_T(N)$ be a point of smoothness contained in the wavefront. Let $\gamma$ be a minimiser connecting $\gamma(0) \in N$ with $\gamma(T) = q$. Then the following conditions are equivalent:
\begin{itemize}
    \item $\gamma$ is normal.
    \item $\gamma(0)$ is a regular point of the distribution $TN \cap \xi$.
    \item $q$ is a regular point of the induced distribution $T\Acal_T(N) \cap \xi$.
\end{itemize}
And they all imply that $\gamma$ reflects to itself.
\end{corollary}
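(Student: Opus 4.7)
My plan is to extract from the minimisation data a single cotangent lift $\lambda$ of $\gamma$ that annihilates both endpoint submanifolds, and then read all three bulleted conditions off its position relative to $\Ann(\xi) = H_f^{-1}(0)$.

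First, I would observe that $\gamma$ is not merely a minimiser from $N$ to the point $q$: since $q \in \partial\Acal_T(N)$ and the boundary is smooth there, any admissible curve from $N$ to $\partial\Acal_T(N)$ must have length at least $T$, because a shorter one would, by standard reachability arguments, place its endpoint in the interior of $\Acal_T(N)$. Hence $\gamma$ minimises from $N$ to the hypersurface $\partial\Acal_T(N)$, and Proposition~\ref{prop:PMPboundary} applied with $V_1=N$ and $V_2=\partial\Acal_T(N)$ produces a lift $\lambda:[0,T]\to T^*M$ satisfying PMP with
\[
 \lambda(0) \in \Ann(T_{\gamma(0)}N), \qquad \lambda(T) \in \Ann(T_q\partial\Acal_T(N)).
\]
Since $N$ and $\partial\Acal_T(N)$ are hypersurfaces, these annihilator fibres are $1$-dimensional, so $\lambda$ is determined up to positive rescaling.

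Next, I would read off the equivalences. The line $\Ann(T_{\gamma(0)}N)$ is contained in $\Ann(\xi)$ iff $\xi \subseteq T_{\gamma(0)}N$, which is precisely the statement that $\gamma(0)$ is a singular point of $TN \cap \xi$; the analogous fact at $t=T$ characterises singularity of $q$ for $T\partial\Acal_T(N) \cap \xi$. By Corollary~\ref{Cor:preservedQuantity}, $H_f(\lambda)$ is constant, and by Lemma~\ref{lem:maximisedHamiltonian} it vanishes exactly on $\Ann(\xi)$. Combining these, $\lambda(0) \in \Ann(\xi)$ iff $\lambda(T) \in \Ann(\xi)$ iff $\lambda$ is an abnormal lift, so regularity at either endpoint is equivalent to $\lambda$ being a normal lift. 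The codimension-$1$ uniqueness of $\lambda$ (up to scale) then promotes this to the full equivalence with $\gamma$ being normal.

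Finally, assume the three regularity conditions hold. Reversibility of $f$ (standing in Subsection~4.3) implies that the time-reversed curve $\tilde\gamma(t) := \gamma(T-t)$ is lifted by $-\lambda(T-t)$, since symmetry of the $f$-unit ball interchanges maximisers and minimisers of the linear functional. Thus the concatenation $\gamma * \tilde\gamma$ has incoming momentum $\lambda(T)$ and outgoing momentum $-\lambda(T)$ at $q$, whose difference $2\lambda(T) \in \Ann(T_q\partial\Acal_T(N))$ is exactly the symplectic reflection law from Lemma~\ref{lem:sympReflectionLaw}. Hence $\gamma$ reflects at $q$ to its time-reversal, i.e., to itself. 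The main subtlety I expect is the interaction between the submanifold-constrained PMP (which produces $\lambda$) and the point-to-point definition of ``normal'' for $\gamma$: it is the codimension-$1$ hypothesis that pins the annihilator lift $\lambda$ down to a unique ray and makes the three conditions genuinely equivalent to normality of $\gamma$, rather than merely sufficient for it.
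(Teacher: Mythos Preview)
Your proof is correct and follows the same approach as the paper: obtain a conormal PMP lift $\lambda$ annihilating both endpoint hypersurfaces, use the codimension-$1$ hypothesis together with constancy of $H_f$ to equate all three conditions with $\lambda \notin \Ann(\xi)$, and deduce the self-reflection from reversibility. You are in fact more explicit than the paper in justifying why $\gamma$ minimises between the two hypersurfaces (not merely from $N$ to the point $q$); the only cosmetic difference is that the paper checks the reflection at $\gamma(0)$ first, invoking Lemma~\ref{lem:wavefrontSubset} for the uniqueness of the conormal momentum in the unit cylinder, and then remarks that the same argument works at $T$.
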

\begin{proof}
Let $\lambda$ be a conormal lift satisfying PMP. $\lambda(0)$ and $\lambda(T)$ must annihilate the corresponding tangent spaces at the endpoints, so all three conditions are equivalent to the fact that, at $0$ and $T$, the annihilators of $N$ and $\partial\Acal_T(N)$ are not contained in the annihilator of $\xi$.

For the last claim, Lemma \ref{lem:wavefrontSubset} implies that $\lambda(0)$ is the unique momentum in the unit cylinder annihilating $T_{\gamma(0)}N$. Invoking the reversibility of the Finsler norm, $\lambda(0)$ is readily seen to reflect to $-\lambda(0)$. Reasoning in the same manner at $T$ concludes the proof.
\end{proof}

\section{Billiard tables in the Heisenberg group} \label{sec:specialCase}

In this last Section we focus on the standard contact structure on $\RR^3$ endowed with the pullback of the euclidean metric on the plane. Its sub-Riemannian geodesic flow is well-understood: it corresponds to the classic Dido problem that asks to maximise the area bounded by a curve of given length.

For some concrete examples of simple billiard tables we analyse their reflection laws, which allows us to provide a (partial) description of their periodic orbits. 

\subsection{The standard contact structure}

\begin{definition}
The (radially symmetric) \textbf{standard contact structure} $\xi_\std := \ker(\alpha_\std)$ in $\RR^3$ is defined as the kernel of the standard contact form
\[ \alpha_\std := dz - \frac{1}{2}(ydx-xdy)= dz - \frac{1}{2}r^2d\phi. \]
\end{definition}
Up to diffeomorphism, this is the unique tight contact structure on $\RR^3$. 

We may then define a metric $g_\std$ on $\xi_\std$ as the pullback of the euclidean metric on the $(x,y)$-plane by the projection along the $z$-direction:
\begin{definition}
The \textbf{standard metric} on $\xi_\std$ is the restriction of the bilinear form:
\[ g_\std := dx \otimes dx + dy \otimes dy \]
to the contact structure.
\end{definition}

One then observes that $g_\std$ singles out $\alpha_\std$ as a preferred contact \emph{form} for $\xi_\std$. Indeed, it is the unique $\alpha$ such that $d\alpha|_{\xi_\std}$ is the area form on $\xi_\std$ defined by $g_\std$. Then:
\begin{lemma} \label{lem:ReebField}
The Reeb vector field of $\alpha_\std$ is $R_\std = \partial_z$. It is Killing for the resulting sub-Riemannian manifold $(\RR^3, \xi_\std, g_\std)$.
\end{lemma}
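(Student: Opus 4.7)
The plan is to verify the two defining properties of the Reeb field for $\partial_z$ directly, and then to check that the flow of $\partial_z$ preserves both the contact form and the extension of $g_\std$ used to define the sub-Riemannian metric.

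First I would compute $d\alpha_\std$. Using $d(y\,dx) = dy\wedge dx$ and $d(x\,dy) = dx\wedge dy$, one gets
\[ d\alpha_\std = -\tfrac{1}{2}(dy\wedge dx - dx\wedge dy) = dx\wedge dy. \]
Then the identifications $\alpha_\std(\partial_z) = 1$ and $\iota_{\partial_z}(dx\wedge dy) = 0$ show that $\partial_z$ satisfies the two characterising conditions $\alpha(R) = 1$ and $\iota_R d\alpha = 0$, so $R_\std = \partial_z$.

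For the Killing assertion, I would argue that $\partial_z$ preserves the whole structure $(\xi_\std, g_\std)$, which is stronger than being Killing in the usual sense and is the correct notion here since $g_\std$ is only defined on $\xi_\std$. By Cartan's magic formula,
\[ \Lcal_{\partial_z}\alpha_\std = d(\iota_{\partial_z}\alpha_\std) + \iota_{\partial_z}d\alpha_\std = d(1) + \iota_{\partial_z}(dx\wedge dy) = 0, \]
so $\xi_\std = \ker\alpha_\std$ is invariant under the flow $\phi_t(x,y,z) = (x,y,z+t)$. Since the ambient bilinear form $dx\otimes dx + dy\otimes dy$ has $\partial_z$-invariant coefficients and involves only $dx, dy$, we also have $\Lcal_{\partial_z}g_\std = 0$. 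Combining these two facts, $\phi_t$ restricts to a family of isometries of $(\xi_\std, g_\std)$, which is precisely the Killing condition.

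There is no real obstacle; the only point to be careful about is the sign in $d\alpha_\std$, and the conceptual remark that ``Killing'' here means generating a flow of sub-Riemannian isometries, which is why the invariance of $\alpha_\std$ (ensuring $\xi_\std$ is preserved) and of the tensor $dx\otimes dx + dy\otimes dy$ (ensuring the metric on $\xi_\std$ is preserved) together suffice.
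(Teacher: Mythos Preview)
Your proof is correct. The paper states this lemma without proof, treating both claims as immediate; your direct verification of the Reeb conditions and of $\Lcal_{\partial_z}\alpha_\std = 0$, $\Lcal_{\partial_z}g_\std = 0$ is exactly the natural way to supply the missing details, and there is nothing to add.
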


The coframe $\{\alpha,dx,dy\}$ induces coordinates $a\alpha + bdx + cdy$ on the cotangent bundle $T^*\RR^3$. In these coordinates, the square of maximised Hamiltonian reads 
\[ H_\std(x,y,z,a,b,c) = b^2 + c^2. \]
Noether's theorem tells us that symmetries correspond to integrals of motion of the sub-Riemannian geodesic flow. The Reeb symmetry translates into the statement:
\begin{lemma}
The Hamiltonian flow of $H_\std$ is tangent to the level sets of the dual coordinate $a$.
\end{lemma}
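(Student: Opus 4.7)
The plan is to interpret the coordinate $a$ as the moment map of the translational symmetry generated by the Reeb vector field $R_\std=\partial_z$, and then to invoke Noether's theorem.

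First, I would unpack what $a$ means as a function on $T^*\RR^3$. A covector $\lambda$ over a point is written in the excerpt as $a\alpha_\std+b\,dx+c\,dy$. Since $\alpha_\std(\partial_z)=1$ while $dx(\partial_z)=dy(\partial_z)=0$, we have the tautological identity $a(\lambda)=\lambda(\partial_z)$. In other words, $a$ is precisely the fibrewise linear Hamiltonian function $\mu_{\partial_z}:T^*\RR^3\to\RR$ whose Hamiltonian flow is the cotangent lift of the translation $\partial_z$. Equivalently, in canonical coordinates $(x,y,z,p_x,p_y,p_z)$ associated to the frame $(dx,dy,dz)$, the relation $\alpha_\std=dz-\tfrac12(y\,dx-x\,dy)$ gives $a=p_z$.

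The second step is to observe that $H_\std$ is invariant under the cotangent lift of $\partial_z$. By Lemma~\ref{lem:ReebField}, $\partial_z$ preserves both $\xi_\std$ and $g_\std$, and hence it preserves the fibrewise dual norm, which is (the square root of) $H_\std$. Consequently $\{a,H_\std\}=\{\mu_{\partial_z},H_\std\}=0$ by the general principle relating moment maps to symmetries of Hamiltonians. Equivalently, in canonical coordinates the expression
\[
H_\std=\bigl(p_x+\tfrac{y}{2}p_z\bigr)^2+\bigl(p_y-\tfrac{x}{2}p_z\bigr)^2
\]
is manifestly independent of $z$, so Hamilton's equation $\dot p_z=-\partial_z H_\std=0$ gives $\dot a=0$ directly. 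Either way, the Hamiltonian flow of $H_\std$ is tangent to the level sets $\{a=\mathrm{const}\}$.

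There is no real obstacle here; the only subtle point is making sure the identification $a=\lambda(\partial_z)$ is drawn explicitly from the coframe conventions $\{\alpha_\std,dx,dy\}$, since this is what connects the visually unusual dual coordinate $a$ to the standard moment map picture. Once that is stated, the result is a one-line consequence of the Reeb field being Killing.
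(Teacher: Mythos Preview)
Your proposal is correct and follows exactly the approach the paper takes: the paper's ``proof'' is simply the sentence preceding the lemma, which invokes Noether's theorem for the Reeb symmetry established in Lemma~\ref{lem:ReebField}. You have merely filled in the details of that invocation (the identification $a(\lambda)=\lambda(\partial_z)$ and the resulting Poisson commutation), together with an equivalent direct computation in canonical coordinates; the latter is also carried out explicitly later in the paper in Subsection~\ref{sssec:HamiltonianR3}, where the formula for $X_H$ visibly has no $\partial_a$ component.
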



\subsubsection{Symmetry group}

We now describe the isometry group $\sym := \sym(\RR^3,\xi_\std,g_\std)$; its action on the space of tables will be relevant later on. First, note that for every $v=(v_1,v_2,v_3)\in\RR^3$ there is a symmetry
\begin{align}\label{shearmap}
\varphi_v(x,y,z)&\mapsto (x+v_1, y+v_2, z+v_3+v_2x-v_1y),
\end{align}
mapping $0$ to $v$, which means that $\sym$ transitive. The subgroup consisting of the maps $\varphi_v$ is isomorphic to the three dimensional Heisenberg group $(\{\varphi_v\},\circ)=\Heis_3$. The group is even larger, since the rotations about the $z$-axis are also symmetries. It follows that:
\begin{lemma}
$\sym(\RR^3,\xi_\std,g_\std) = \Heis_3 \ltimes \SO(2)$.
\end{lemma}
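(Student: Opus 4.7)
The inclusion $\supseteq$ is essentially already in place: formula~\eqref{shearmap} verifies that $\Heis_3$ acts by sub-Riemannian isometries, and any rotation $R_\theta$ about the $z$-axis manifestly preserves both $\alpha_\std = dz - \tfrac{1}{2}r^2\,d\phi$ (rotation-invariant in polar form) and $g_\std = dx^2+dy^2$. Hence $\Heis_3 \ltimes \SO(2) \subseteq \sym$.

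For the reverse inclusion I would use the transitivity of $\Heis_3$: given $\psi\in\sym$, pre-composing with a suitable $\varphi_v \in \Heis_3$ yields an isometry fixing the origin, so it suffices to show that any isometry fixing the origin is a rotation about the $z$-axis. The crux of this reduction is to upgrade the hypothesis ``$\psi$ preserves the distribution $\xi_\std$'' to ``$\psi$ preserves the $1$-form $\alpha_\std$''. Since $\psi$ preserves $\ker\alpha_\std$, one has $\psi^*\alpha_\std = f\alpha_\std$ for some nowhere vanishing smooth $f$. Differentiating and restricting to $\xi_\std$ annihilates the $df\wedge\alpha_\std$ contribution, leaving $(\psi^* d\alpha_\std)|_{\xi_\std} = f\cdot(d\alpha_\std)|_{\xi_\std}$. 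By Lemma~\ref{lem:ReebField}, $d\alpha_\std|_{\xi_\std}$ is the metric area form of $(\xi_\std, g_\std)$, which $\psi$ preserves up to an overall sign, forcing $f\equiv \pm 1$. The $+1$ case gives the $\SO(2)$ factor, while the $-1$ case corresponds to an orientation-reversing involution reversing the Reeb field (absorbed into $\SO(2)$ under the appropriate convention, or enlarging it to $O(2)$).

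In the case $\psi^*\alpha_\std = \alpha_\std$, the characterization of the Reeb field by $\alpha_\std(R_\std)=1$ and $\iota_{R_\std}d\alpha_\std = 0$ forces $\psi_* R_\std = R_\std$. Hence $\psi$ commutes with the flow of $\partial_z$ and must take the form $\psi(x,y,z) = (\bar\psi(x,y),\; z+h(x,y))$ for an origin-fixing diffeomorphism $\bar\psi$ of the $(x,y)$-plane and a function $h$ with $h(0)=0$. The condition $\psi^*g_\std = g_\std$ reduces to $\bar\psi$ being a Euclidean isometry of $\RR^2$ fixing $0$, and orientation preservation places it in $\SO(2)$. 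Substituting this ansatz into $\psi^*\alpha_\std = \alpha_\std$ gives
\[ dh \;=\; \tfrac{1}{2}\bigl(\bar\psi_2\,d\bar\psi_1 - \bar\psi_1\,d\bar\psi_2\bigr) - \tfrac{1}{2}\bigl(y\,dx-x\,dy\bigr), \]
and the $\SO(2)$-invariance of the angular form $y\,dx-x\,dy = r^2d\phi$ makes the right-hand side vanish. Thus $h\equiv 0$ and $\psi$ is the extension of $\bar\psi\in\SO(2)$ acting trivially on the $z$-coordinate, i.e., the rotation $R_{\bar\psi}$ about the $z$-axis.

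The main obstacle is the first rigidity step: extracting the preservation of the specific $1$-form $\alpha_\std$, rather than only of the distribution $\xi_\std$, from the metric hypothesis. Lemma~\ref{lem:ReebField} is precisely what permits this upgrade; once $\psi$ is known to commute with the Reeb flow, descending to the $(x,y)$-quotient turns the remaining analysis into elementary verification.
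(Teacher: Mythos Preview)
Your argument is correct and shares the paper's opening move (upgrading ``$\psi$ preserves $\xi_\std$'' to ``$\psi$ preserves $\alpha_\std$'' via the metric area form on $\xi_\std$), but the two proofs diverge after that. You reduce to origin-fixing isometries, use Reeb-invariance to write $\psi(x,y,z) = (\bar\psi(x,y),\, z+h(x,y))$, and then solve the resulting equations directly: $g_\std$ forces $\bar\psi\in O(2)$, and the contact-form condition forces $h\equiv 0$. The paper instead argues by rigidity of the geodesic flow: once $\alpha_\std$ is preserved, an isometry is determined by the linear map it induces on a single cotangent fibre (since isometries intertwine the Hamiltonian cogeodesic flow), and $\Heis_3\ltimes\SO(2)$ already realises every such linear map. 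Your route is more elementary and self-contained; the paper's is shorter but leans on the Hamiltonian description of geodesics built up earlier in the section. Both proofs are a little casual about the orientation-reversing case $f=-1$ (which would enlarge the group to $\Heis_3\ltimes O(2)$); you at least flag this explicitly, whereas the paper silently restricts to orientation-preserving symmetries when it asserts that $\alpha_\std$ itself, rather than $\pm\alpha_\std$, is preserved.
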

\begin{proof}
It is sufficient to show that the symmetry group cannot be larger than the claimed semidirect product. First note that any symmetry of $(\RR^3,\xi_\std,g_\std)$ must preserve the contact form $\alpha_\std$ and thus the Reeb field $R_\std$, according to the discussion preceding Lemma \ref{lem:ReebField}.

Given a linear isometry 
\[ A: ((\xi_\std)_0,g_\std) \cong ((\xi_\std)_v,g_\std), \]
we observe that there is exactly one element $\varphi \in \Heis_3 \ltimes \SO(2)$ realising it. We claim that any other symmetry inducing $A$ must agree with $\varphi$. Indeed, if two symmetries restrict to $A$, we deduce, recalling the fact that $\alpha_\std$ is preserved, that they yield the same isomorphism $T^*_0\RR^3 \cong T^*_v\RR^3$. Since symmetries map geodesics to geodesics, this identification of cotangent fibres extends uniquely to a global symmetry.
\end{proof}

\subsubsection{The geodesic flow}\label{descriptionofgeodesics}

In order to study the horizontal curves of $\xi_\std$, we introduce:
\begin{definition}
The \textbf{Lagrangian projection} is the map:
\begin{align*}
\pi:\RR^3 &\quad\to\quad \RR^2 \\
(x,y,z) &\quad\mapsto\quad (x,y).
\end{align*}
\end{definition}
Since $\alpha_\std$ is non-vanishing on the fibers of $\pi$, it defines a connection on $\RR^3$, seen as as an $\RR$-bundle over the plane. Curves in the base can be lifted uniquely (up to the choice of a starting point) to horizontal curves. Indeed, the condition $\gamma^*\alpha_\std=0$ translates to
\[ dz(\dot\gamma)=\frac{1}{2}r^2d\phi \circ d\pi(\dot\gamma). \]

Combining this with the fact that $d[\frac{1}{2}r^2d\phi] = dx\wedge dy$ is the standard area form in $\RR^2$, we see by Stokes' theorem that the variation in the $z-$coordinate of $\gamma$ coincides with the signed area $\Gamma$ enclosed by $\pi\circ\gamma$. If $\pi\circ\gamma$ is not closed, then the area enclosed is understood by radially connecting the endpoints to the origin. These additional segments do not contribute to the vertical displacement. We conclude:
\begin{equation}\label{zvariation}
    \Delta z = \int_{\pi \circ\gamma} \frac12r^2d\phi = \operatorname{Area}(\Gamma)
\end{equation}

The (local) length minimizing property may thus be restated as the problem: Find a path $\pi\circ\gamma$ in $\RR^2$ connecting two prescribed points, surrounding a prescribed signed area with minimizing length. This is equivalent to the classical isoperimetric problem of Dido. Solutions are well-known:
\begin{lemma}
A horizontal curve $\gamma$ is a sub-Riemannian geodesic of $(\RR^3,\xi_\std,g_\std)$ if and only if its projection $\pi \circ \gamma$ is a circular segment in the plane.
\end{lemma}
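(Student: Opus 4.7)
The plan is to invoke the Dido isoperimetric picture already set up in equation~(\ref{zvariation}), with a cotangent cross-check via the Hamiltonian flow of $H_\std$, and to handle the one subtlety (strict abnormals) by exploiting that $\alpha_\std$ is a contact form.

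By~(\ref{zvariation}), the $g_\std$-length of a horizontal curve $\gamma:[0,T]\to\RR^3$ equals the Euclidean length of its Lagrangian projection $\pi\circ\gamma$, while the vertical displacement $z(\gamma(T))-z(\gamma(0))$ equals the signed area enclosed by $\pi\circ\gamma$ together with the chord between its endpoints. Hence, among horizontal curves with prescribed endpoints, length-minimisers project to planar curves of minimal Euclidean length subject to a prescribed signed enclosed area. By the classical Lagrange-multiplier analysis of the Dido problem, the (local) solutions are precisely the circular arcs, with straight lines arising when the constrained area vanishes (as the infinite-radius limit). Conversely, every circular arc is locally length-minimising under this area constraint, so its horizontal lift is a sub-Riemannian geodesic.

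As a cotangent cross-check I would integrate the Hamiltonian flow of $H_\std=b^2+c^2$ directly. In the coframe $\{\alpha_\std,dx,dy\}$, using $d\alpha_\std|_{\xi_\std}=dx\wedge dy$, one computes $\dot a=0$, $\dot b=2ac$, $\dot c=-2ab$, together with $\dot x=2b$ and $\dot y=2c$. Thus $a$ is constant and $(b,c)$ rotates uniformly with angular speed $2|a|$ and preserved modulus. For $a=0$ the vector $(\dot x,\dot y)$ is constant and the projection is a straight line; for $a\neq 0$, $(\dot x,\dot y)$ rotates uniformly and the projection is a circular arc, whose radius is determined by the ratio of linear to angular speed.

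The main obstacle is verifying that no strict abnormal minimisers contribute, so that Lemma~\ref{lem:maximisedHamiltonian} captures all geodesics. This follows from a short symplectic computation on $\Ann(\xi_\std)\subset T^*\RR^3$: in the fibre coordinate $a$, the restricted tautological form reads $\omega_\std|_{\Ann(\xi_\std)}=da\wedge\alpha_\std+a\,d\alpha_\std$, whose square equals $2a\,da\wedge\alpha_\std\wedge d\alpha_\std$. Since $\alpha_\std$ is contact, this is nonvanishing on $\{a\neq 0\}\subset\Ann(\xi_\std)$, so the restricted form is symplectic there, its characteristic foliation is trivial, and the only horizontal curves admitting nonzero lifts into $\Ann(\xi_\std)\setminus\RR^3$ are constants. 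Every non-constant sub-Riemannian minimiser is therefore normal, and the analysis above is exhaustive.
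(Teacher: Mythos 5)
Your proposal is correct and follows essentially the same route as the paper, which likewise restates the minimisation as Dido's isoperimetric problem via Equation~(\ref{zvariation}) and then verifies the answer by computing the Hamiltonian flow of $H_\std$ in the coframe $\{\alpha_\std,dx,dy\}$ (Subsection~\ref{sssec:HamiltonianR3}); your explicit check that $d\lambda_\std|_{\Ann(\xi_\std)}$ is symplectic away from the zero section, so that the contact structure admits no non-constant singular curves and hence no strict abnormals, is a worthwhile addition that the paper only invokes implicitly. One small slip: by Equation~(\ref{zvariation}) the vertical displacement is the signed area of the projection closed up by \emph{radial segments to the origin}, not by the chord between the endpoints; since the two conventions differ by a constant once the endpoints are fixed, your variational argument is unaffected.
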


For completeness (and in order to set up notation), we will provide a proof using the Hamiltonian viewpoint now.

\begin{remark} \label{rem:convergingToConstantExample}
We stated in Remark \ref{rem:convergingToConstant} that a sequence of geodesics, defined over the whole of $\RR$, can converge to a constant curve. Using the Lemma is easy to construct such a sequence: take geodesics $\gamma_i$ with projections $\pi \circ \gamma_i$ describing circles of radius $1/i$ centered at the origin. As $i$ goes to zero, $\gamma_i|_{[0,T]}$ has length $T$ but its projection bounds area $O(1/i)$. It follows that their $C^0_{loc}$-limit is the origin. \hfill$\triangle$ 
\end{remark}

\subsubsection{Hamiltonian description} \label{sssec:HamiltonianR3}

The tautological Liouville 1-form $\lambda$ on $T^*M$ may be written, in terms of our coframe, as $\lambda = a\alpha_\std+bdx+cdy$. We write
\[ X :=\partial_x-\frac{1}{2}y\partial_z,\quad Y := \partial_y + \frac{1}{2}x\partial_z, \quad R_\std = \partial_z, \]
for the dual frame. With some abuse of notation, we consider the framing $\{R_\std,X,Y,\partial_a,\partial_b,\partial_c\}$ of $T^*M$ and we compute:
\begin{align}
d\lambda & = da\wedge\alpha_\std + db\wedge dx + dc\wedge dy + ad\alpha,\\
H & = \frac12 (b^2+c^2),\qquad dH = bdb + cdc.
\end{align}

\begin{align*}
\iota_{\partial_a}d\lambda \quad=\quad & \alpha_\std,\\ \iota_{\partial_b}d\lambda \quad=\quad & dx,\\
\iota_{\partial_c}d\lambda \quad=\quad & dy,\\
\iota_Xd\lambda      \quad=\quad & -db + a d\alpha_\std(X,\cdot) = -db
+ a d\alpha_\std(X,Y) dy + a\underbrace{d\alpha_\std(X,R_\std)}_{=0}\alpha, \\
\iota_Yd\lambda      \quad=\quad & -dc + a d\alpha_\std(Y,\cdot) = -dc
+ a d\alpha_\std(Y,X) dx + a\underbrace{d\alpha_\std(Y,R_\std)}_{=0}\alpha, \\
\iota_{R_\std}d\lambda \quad=\quad & -da+a\underbrace{d\alpha_\std(R_\std,\cdot)}_{=0}.
\end{align*}


The preceding computations then show:
\begin{align}
X_H (x,y,z,b,c,a) = bX + cY
- a \underbrace{d\alpha(X,Y)}_{=1}(b\partial_c - c\partial_b)
\end{align}
That is, if $\gamma$ is a Hamiltonian orbit of $X_H$, the vector $(b,c) = b\partial_x + c\partial_y$ is the velocity of its projection $\pi \circ \gamma$. Similarly, $a(c,-b)$ is the acceleration. Therefore, projections of solutions are circles of radius $\frac{b^2+c^2}{a}$. If $a=0$, these are straight lines.


\subsection{Tables with vertical walls} \label{ssec:verticalWalls}

We now consider the case in which the billiard table is an infinite cylinder $Z = U\times \RR$, where $U$ is a billiard table in $\RR^2_{x,y}$.

The first observation is that the Reeb vector field foliates the boundary $\partial Z$ and, as such, the intersection $T\partial Z \cap \xi_\std$ is a non-degenerate line field. It also follows that lines parallel to $\Ann T\partial Z$ are tangent to the level sets of the integral of motion $a: T^*\RR^3 \to \RR$. We deduce:
\begin{proposition}\label{prop:lift}
Billiard trajectories $\gamma$ in $Z$ are lifts of magnetic billiard trajectories $\pi \circ \gamma$ in $U$. The momentum $a$ of $\gamma$ coincides with the magnetic curvature of $\pi \circ \gamma$.
\end{proposition}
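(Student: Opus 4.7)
The plan is to verify the two claims of the proposition separately, by working along each smooth arc and across each reflection. The lifting property is automatic: since $\gamma$ is horizontal, it is uniquely determined by $\pi\circ\gamma$ together with its initial $z$-coordinate, so ``lift'' here just means horizontal lift. What needs proving, then, is that $\pi\circ\gamma$ obeys the interior magnetic law and the specular boundary law, and that the magnetic strength coincides with the integral of motion $a$.

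Along a smooth arc, I would invoke directly the Hamiltonian computation from Subsection~\ref{sssec:HamiltonianR3}. Writing the cotangent lift as $\lambda = a\alpha_\std + b\,dx + c\,dy$ on the unit cylinder $S_f$, one reads off $\dot{(\pi\circ\gamma)} = (b,c)$ and $\ddot{(\pi\circ\gamma)} = a(c,-b)$, with $b^2+c^2 = 1$. Hence $\pi\circ\gamma$ is parametrised by arclength and has constant signed curvature $a$; this is exactly the interior law of a planar magnetic billiard of magnetic strength $a$. Since $a$ is an integral of motion of $X_{H_\std}$ (by the Reeb invariance noted just before Subsection~\ref{sssec:HamiltonianR3}), the magnetic curvature is constant along each arc.

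For reflections I would use Lemma~\ref{lem:sympReflectionLaw}, which gives $\lambda^- - \lambda^+ \in \Ann(T\partial Z)$. The key observation is that the Reeb field $\partial_z$ is everywhere tangent to $\partial Z = \partial U\times\RR$, together with the evaluations $\alpha_\std(\partial_z)=1$ and $dx(\partial_z)=dy(\partial_z)=0$. Therefore every $\eta\in\Ann(T\partial Z)$ has vanishing $\alpha_\std$-component, which forces the $a$-coordinate of $\lambda$ to be preserved through each bounce; in particular the magnetic curvature does not change from one arc to the next. The remaining annihilator is spanned by the planar conormal $\nu_1\,dx + \nu_2\,dy$ of $\partial U$, so $(b,c)$ changes by a multiple of the outward normal $\nu$ to $\partial U$. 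Combined with the energy constraint $(b^\pm)^2+(c^\pm)^2 = 1$, this uniquely yields the planar specular reflection of $(b,c)$ about the tangent line to $\partial U$.

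The one thing to check for applicability is that no boundary point is degenerate in the sense of Subsection~\ref{sssec:degenerateReflections}. But the tangency of $\partial_z$ to $\partial Z$ implies $\xi_\std\not\subseteq T\partial Z$ everywhere, so the boundary distribution $\xi_\std\cap T\partial Z$ is a non-singular line field on $\partial Z$ and Proposition~\ref{prop:ReflectionIsSymplectomorphism} applies at every bounce. I do not anticipate any real obstacle beyond this routine sanity check; once the interior and the reflection descriptions are patched together along the full trajectory, $\pi\circ\gamma$ is a concatenation of circular arcs of curvature $a$ meeting $\partial U$ specularly, which is by definition a magnetic billiard trajectory in $U$ of magnetic curvature $a$.
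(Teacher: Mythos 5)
Your proof is correct and follows essentially the same route as the paper's: the conservation of $a$ under reflection because $\Ann(T\partial Z)$ annihilates the tangent Reeb direction (hence has no $\alpha_\std$-component), combined with the Hamiltonian computation of Subsection~\ref{sssec:HamiltonianR3} identifying $a$ with the signed curvature of the projection. The paper states this in two lines; you have merely spelled out the same argument, including the routine non-degeneracy check.
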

\begin{proof}
Since $\Ann T\partial Z$ is parallel to $a$, the latter is conserved at reflection points. The claim then follows by the discussion in the previous Subsection \ref{sssec:HamiltonianR3}.
\end{proof}

\subsubsection{Reflections}

Suppose a curve $\gamma$ hits the boundary $\partial Z$ at a point $q$ with momentum $\lambda_{\rm in}$. According to the reflection law Lemma~\ref{lem:sympReflectionLaw} and surrounding discussion, there are two possibilities.

The non-degenerate case takes place when the line $L$ parallel to $\Ann T_q\partial Z$ passing through $\lambda_{\rm in}$ intersects the cylinder of energy one transversely. Then, the outgoing momentum $\lambda_{\rm out}$ is the unique other intersection.

In the degenerate case $L$ is tangent to the cylinder at $\lambda_{\rm in}$. Observe first that the magnetic curvature $a \circ \gamma$ must be smaller or equal than the curvature of $\partial U$ at $\pi(q)$. If it is strictly smaller, the trajectory can be continued as a geodesic in $Z$ with a tangency with $\partial Z$ (i.e., no reflection happens). When there is equality, the trajectory may be continued as a \emph{gliding trajectory}, as we explain below.

\begin{remark}
In magnetic billiards with given magnetic strength $a$, one says that a table is \emph{convex} if the curvature of the table is greater than $a$. In general, one would define convexity by requiring that no billiard trajectory exists that is tangent to the boundary of the table. This is never the case in the sub-Riemannian setting, precisely because one can always consider orbits of arbitrarily large momentum tangent to the boundary. \hfill$\triangle$
\end{remark}

\begin{remark}
One could consider the case when $U$ has corners and, in particular, the setting in which $U$ is a polygon. There are various possible decisions: A billiard curve hitting a corner might stop, or it might continue as long the difference of ingoing and outgoing momentum lies in the normal cone of the corner. In this article, we do not elaborate on this and we refer to possible future works. \hfill$\triangle$
\end{remark}

\subsubsection{Periodic orbits}\label{verticalperiodic}

Proposition~\ref{prop:lift} shows that every periodic sub-Riemannian billiard orbit in $Z$ is the lift of a periodic magnetic billiard orbit in $U$. The converse is not true, since the lift might not close up. To know if the lift of a periodic magnetic billiard orbit closes, it suffices to consider one period and check whether the variation $\Delta z$ vanishes, since subsequent periods lift to the same variation in $z$. Using Equation~\ref{zvariation} we are led to the criterion:
\begin{lemma}
Periodic billiard orbits in $Z$ are exactly the lifts of periodic magnetic billiard orbits in $U$ enclosing zero area.
\end{lemma}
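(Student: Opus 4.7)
The plan is to prove the two inclusions separately, both reducing to Proposition~\ref{prop:lift} together with the area--displacement formula (\ref{zvariation}).

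For the forward direction, let $\gamma: [0,T] \to Z$ be a periodic billiard orbit. By Proposition~\ref{prop:lift}, the projection $\bar\gamma := \pi\circ\gamma$ is a magnetic billiard orbit in $U$, and it is periodic with period $T$ because $\gamma$ is. Periodicity of $\gamma$ forces the $z$-variation $\Delta z$ over one period to vanish. The reflection segments in the microlocal realisation project to points in $M$ and hence do not contribute to $\Delta z$; thus the full $\Delta z$ equals the integral of $\tfrac12 r^2 d\phi$ along the smooth pieces of $\pi\circ\gamma$. By (\ref{zvariation}), this is precisely the signed area enclosed by $\bar\gamma$, which must therefore be zero.

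For the reverse direction, let $\bar\gamma: \RR\to U$ be a periodic magnetic billiard orbit of period $T$, magnetic strength $a$, enclosing zero signed area. Pick any $z_0\in\RR$ and let $\gamma$ be the unique horizontal lift of $\bar\gamma$ with $\gamma(0) = (\bar\gamma(0),z_0)$; the main step is to check that $\gamma$ is a billiard trajectory in $Z$, not just a horizontal curve. The smooth pieces of $\bar\gamma$ are circular arcs (or lines) of magnetic curvature $a$, and by the Hamiltonian computation in Subsection~\ref{sssec:HamiltonianR3} their horizontal lifts are exactly the sub-Riemannian geodesics in $Z$ with momentum coordinate equal to $a$. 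At each reflection, the $Z$-reflection law $\lambda_{\rm in}-\lambda_{\rm out}\in \Ann T\partial Z$ decomposes, in the framing $\{\alpha_\std,dx,dy\}$, into preservation of the $a$-coordinate (because $\Ann T\partial Z$ annihilates the Reeb direction $\partial_z$) together with reflection of the planar $(b,c)$-momentum in $\Ann T\partial U$; the first condition is automatic since $a$ is the same on both adjacent arcs, while the second is precisely the planar magnetic reflection law satisfied by $\bar\gamma$. Hence $\gamma$ is a billiard trajectory of $Z$.

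It remains to show $\gamma(T)=\gamma(0)$. The $(x,y)$ components return to $\bar\gamma(0)$ by assumption. For the $z$-component, reflection segments contribute nothing (they project to points), and the cumulative $z$-variation along the smooth arcs equals, by (\ref{zvariation}), the signed area enclosed by the closed curve $\bar\gamma$. By hypothesis this area vanishes, so $\Delta z = 0$ and $\gamma(T)=\gamma(0)$. By the $z$-translation invariance of the dynamics, $\gamma$ is then genuinely $T$-periodic. The only subtle point in the whole argument is verifying that the horizontal lift respects the $Z$-reflection law; this is the place where one must carefully match the two components of $\Ann T\partial Z$ with the conservation of $a$ and the magnetic reflection of $\bar\gamma$, but no estimation or limiting argument is required.
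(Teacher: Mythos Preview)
Your proof is correct and follows the same approach as the paper: both directions reduce to Proposition~\ref{prop:lift} together with the area--displacement formula~(\ref{zvariation}). The paper's argument is essentially the one-line observation preceding the lemma, while you spell out more carefully that the horizontal lift of a magnetic billiard trajectory actually satisfies the $Z$-reflection law---a point the paper leaves implicit in the statement of Proposition~\ref{prop:lift}.
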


\subsection{The standard cylinder} \label{ssec:standardCylinder}

We now focus on the setting where $U = \DD^2$ is the unit disc and $Z$ is the standard cylinder. Note that there is an $\RR\times \SS^1$-symmetry of the table $Z$ given by $z$-translation and rotation about the $z$-axis. These symmetries act transitively on the boundary $\partial Z$. We use this to characterise its periodic orbits.

\subsubsection{Periodic orbits}

Let $\gamma$ be an $n$-periodic sub-Riemanniann billiard orbit and $\pi\circ\gamma$ its projection to the plane. Reflections are uniquely determined by the angle of incidence and the magnetic momentum $a$. Since $a$ is an invariant, all the reflection points of $\gamma$ have the same angle of incidence. In particular, the circular arcs between the reflection points of $\pi\circ\gamma$ are all isometric by a rotation. As such:
\begin{lemma}
The reflection points of $\pi\circ\gamma$ lie in a regular $n$-gon. Each of its circular arcs must enclose zero area.
\end{lemma}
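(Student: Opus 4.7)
The plan is to exploit two ingredients: (i) the conservation of the magnetic momentum $a$ at reflections (Proposition~\ref{prop:lift}), which forces a uniform geometric behavior at every bounce; and (ii) the rotational symmetry of the cylinder $Z = \DD^2 \times \RR$, which will be used to identify the reflection pattern with a regular polygon and the per-arc area with the total area divided by $n$.

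I would first note that, since $\gamma$ is periodic and $a$ is a first integral, every arc of $\pi\circ\gamma$ is a circular arc of the same radius $1/|a|$ (or a straight segment when $a=0$, which can be handled as a degenerate case). Next, at each reflection the outgoing momentum is determined symplectically by translating the ingoing momentum in the direction $\Ann(T\partial Z)$; since the boundary is rotationally symmetric and $a$ is preserved, the angle between $\pi\circ\gamma$ and $\partial U$ must be the same at every reflection point. Combined with equal arc radii, this forces the chord length between consecutive reflection points on $\partial U = S^1$ to be constant. Hence the reflection points are equally spaced on the circle and thus form the vertices of a regular $n$-gon (possibly traced with winding number $k>1$, which we may absorb into the statement).

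To prove the second assertion, let $\rho \in \SO(2)$ be the rotation mapping the $i$-th reflection point to the $(i{+}1)$-th. Then $\rho$ is an isometry of $(\RR^3,\xi_\std,g_\std)$ (it lies in the symmetry group $\Heis_3\ltimes \SO(2)$) and preserves $Z$ and the magnetic momentum $a$. Consequently, $\rho$ sends the $i$-th arc of $\pi\circ\gamma$ to the $(i{+}1)$-th, and the signed area that each arc encloses (in the sense of Equation~\eqref{zvariation}, using the radial segments from the origin) is the same for all $i$, because the area form $\tfrac12 r^2 d\phi$ is $\rho$-invariant. By Proposition~\ref{prop:lift} and the periodicity criterion of the preceding lemma, the lift $\gamma$ closes up if and only if the total signed area enclosed by $\pi\circ\gamma$ vanishes. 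Since this total is $n$ times the common per-arc area, each arc must enclose zero area.

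The step I expect to be the most delicate is justifying that the appropriate rotation $\rho$ is genuinely an isometry of the reflection law, i.e., that one can consistently identify ``the same angle of incidence'' at every bounce purely from conservation of $a$, without tacitly assuming that $\gamma$ already has the symmetry one wants to prove. I would handle this by working symplectically: at a reflection point $q$, the covectors $\lambda_{\rm in},\lambda_{\rm out}$ lie on the unit cylinder $S_f$ over $q$, agree in the $a$-coordinate, and differ by an element of $\Ann(T_q\partial Z)$; rotational invariance of $S_f$ and of $\Ann(T\partial Z)$ then shows that the reflection map commutes with $\rho$, so applying $\rho$ to the entire trajectory simply reindexes its arcs, closing the argument.
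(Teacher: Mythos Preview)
Your proposal is correct and follows essentially the same approach as the paper: the paper argues (in the paragraph immediately preceding the lemma) that conservation of $a$ forces all reflections to have the same angle of incidence, hence the arcs are all isometric by a rotation about the origin, yielding the regular $n$-gon; the zero-area-per-arc claim then follows exactly as you say, by dividing the total enclosed area (which vanishes by the periodicity criterion) by $n$. Your symplectic justification of the ``delicate step'' is more explicit than the paper's, but the underlying idea---that the reflection law commutes with the rotational symmetry of the table---is the same.
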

Do note that two consecutive reflection points of $\pi \circ \gamma$ do not need to be consecutive in the $n$-gon.

Let us assume that such an arc connects two boundary points that are separated by an angle of $2\varphi$, as seen from the origin. Let $2\psi$ be the angle between the two points, as seen from the center of curvature of their arc. The radius of curvature is then $\rho=\frac{\sin\varphi}{\sin\psi}$. We depict this in Figure \ref{fig:ngon}. 

\begin{figure}[h]
\centering
\includegraphics[width=0.5\textwidth]{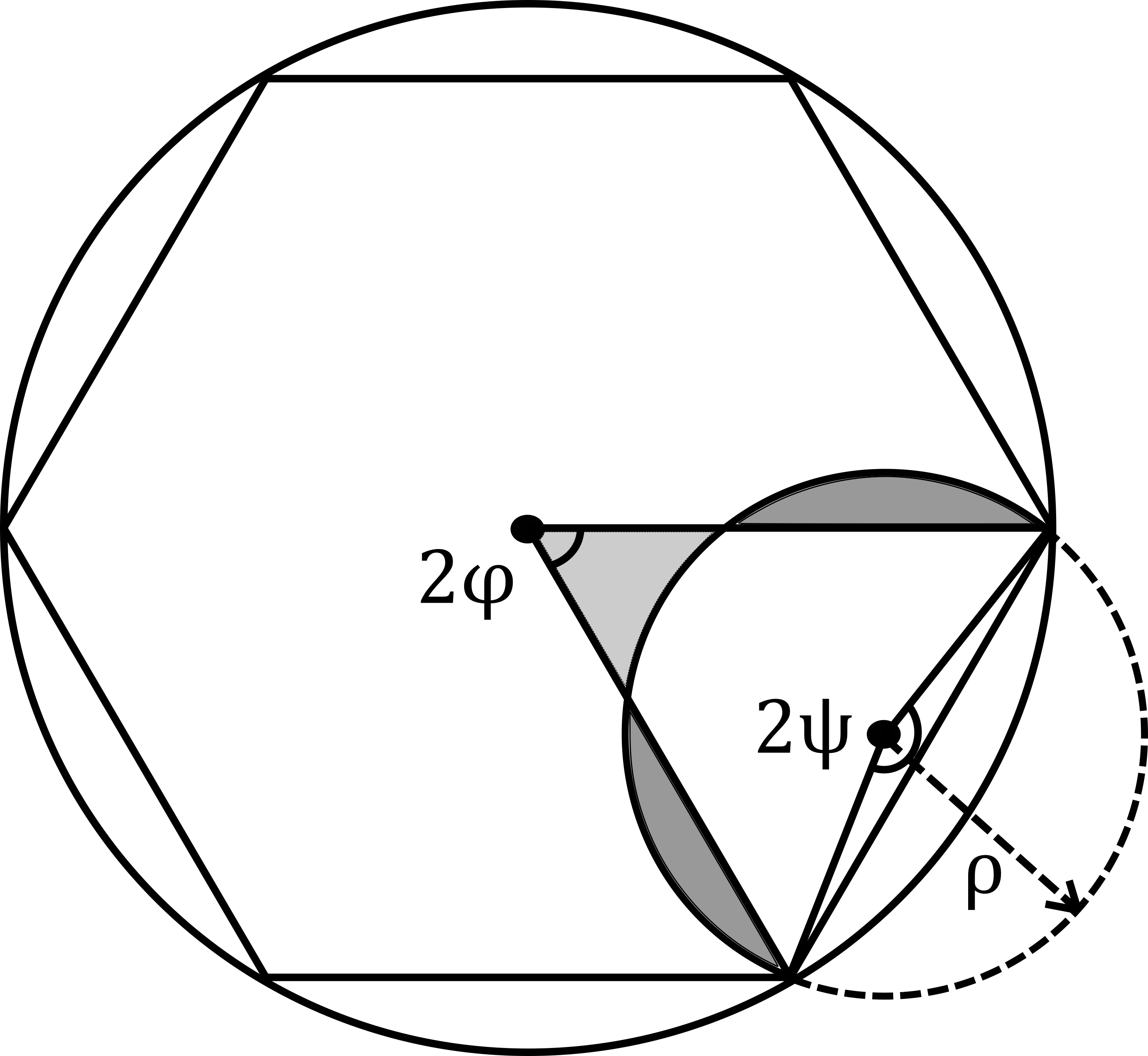}
\caption{The $n$-gon from Lemma \ref{lem:ngon}, with $n=6$, and an arc connecting two of the boundary points (in this case, adjacent also in the hexagon, so $m=1$). The lightly shaded region must have the same area as the sum of the dark regions.}\label{fig:ngon}
\end{figure}

The condition that this arc bounds zero area reads:
\begin{align}\label{deltaz}
0 = \Delta z = \sin\varphi\cos\varphi - \left(\frac{2\pi-2\psi}{2\pi}\pi\rho^2 + \sin\psi\cos\psi\rho^2\right),
\end{align}
which, after dividing by $\sin^2\varphi\neq0$, is equivalent to
$$\operatorname{cotan}\varphi=\operatorname{cotan}{\psi} + (\pi-\psi)\operatorname{cosec^2}(\psi).$$

The right-hand side is a function of $\psi$ that is a decreasing homeomorphism from $(0,\pi)$ to $\RR_{> 0}$. As such, for each value $\varphi\in(0,\frac\pi2)$, there is a unique solution. We deduce:
\begin{lemma} \label{lem:ngon}
Given any $n$-gon inscribed in $\DD^2$, and any coprime integer $m$, there exists, up to $z$-translation, a unique billiard trajectory $\gamma$ such that:
\begin{itemize}
    \item $\pi \circ \gamma$ is comprised of $n$ circular segments with vertices in the $n$-gon,
    \item consecutive vertices of $\pi \circ \gamma$ are obtained by a rotation of angle $\frac{2\pi m}n$.
\end{itemize}
\end{lemma}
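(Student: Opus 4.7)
The plan is to use Proposition~\ref{prop:lift} to reduce the statement to a planar problem about magnetic billiards, and then to construct the trajectory by prescribing the common radius of curvature of its arcs.

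First I would observe that if the reflection points lie at the vertices of the inscribed $n$-gon and are visited by consecutive rotations of $\frac{2\pi m}{n}$, then the half-angle between two consecutive reflection points as seen from the origin is $\varphi = \frac{\pi m}{n}$, and by the coprimality of $m$ and $n$ the orbit visits every one of the $n$ vertices before closing. The condition that the lift to $\RR^3$ be closed is, by Proposition~\ref{prop:lift} together with Equation~(\ref{zvariation}), the vanishing of the signed area enclosed by each arc (they all enclose the same area because the magnetic momentum $a$ is an invariant at reflections). This is exactly Equation~(\ref{deltaz}), and the monotonicity argument in the paragraph following that equation supplies a unique $\psi \in (0,\pi)$ solving it for the given $\varphi$. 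The radius of curvature $\rho = \sin\varphi/\sin\psi$, and hence the conserved magnetic momentum $a$, are thereby determined.

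Next I would construct the trajectory explicitly: place vertices $v_k = e^{2\pi i km/n}$ on $\partial \DD^2$ for $k = 0,\ldots,n-1$ and join $v_k$ to $v_{k+1}$ by the circular arc of radius $\rho$, with orientation (i.e.\ sign of $a$) chosen so that the chords are traversed in a consistent rotational sense. The $\SO(2)$-rotational symmetry of $\DD^2 \times \RR$ cyclically permutes the arcs, so the angle of incidence at every vertex is the same, and since $a$ takes the same value on the incoming and outgoing arcs, the symplectic reflection law of Lemma~\ref{lem:sympReflectionLaw} is satisfied at each bounce. Periodicity of the lift then follows because each arc contributes the same (zero) vertical increment to the $z$-coordinate.

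For uniqueness up to $z$-translation, any candidate trajectory $\gamma'$ with the stated properties has a constant magnetic momentum $a'$ by Proposition~\ref{prop:lift}, and each arc of $\pi\circ\gamma'$ must enclose zero area; uniqueness of $\psi$ forces $a' = a$, so $\pi\circ\gamma'$ agrees with $\pi\circ\gamma$ and the lifts can differ only by their initial $z$-coordinate. The main subtlety I expect is the consistent choice of arc orientation across vertices: flipping the side of the chord produces a planar curve with the same $|a|$ but realising the combinatorial type $(n, n-m)$ rather than $(n,m)$. This is handled by fixing the sign of $a$ a priori from the cyclic order of the vertices. A related point to verify is that the solution $\psi$ stays strictly inside $(0,\pi)$ for $\varphi = \pi m/n \in (0,\pi/2)$, so that $\rho$ is finite and positive and no degenerate (chord-like or infinite-radius) configurations arise; this is immediate from the monotone bijection already invoked.
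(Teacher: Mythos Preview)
Your proposal is correct and follows essentially the same route as the paper: reduce to the per-arc zero-area condition via Proposition~\ref{prop:lift} and Equation~(\ref{zvariation}), then invoke the monotonicity established after Equation~(\ref{deltaz}) to extract a unique $\psi$ (hence $\rho$ and $a$) for the given $\varphi=\pi m/n$. You are more explicit than the paper about the construction, the verification of the reflection law via the $\SO(2)$-symmetry, and the orientation/$(n,m)\leftrightarrow(n,n-m)$ ambiguity, but the core argument is identical; note only that your final clause ``$\varphi=\pi m/n\in(0,\pi/2)$'' tacitly assumes $m<n/2$, which is exactly the reduction your orientation remark provides.
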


We then observe:
\begin{proposition} 
There are no periodic sub-Riemannian billiard orbits in the interior of $Z$. In particular, all the periodic billiard trajectories of $Z$ are described by Lemma \ref{lem:ngon}.
\end{proposition}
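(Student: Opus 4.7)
The plan is to split the argument in two steps: rule out periodic orbits that never touch $\partial Z$, and then use the preceding discussion to conclude that every periodic orbit is accounted for by Lemma~\ref{lem:ngon}.

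For the first step, suppose $\gamma:\RR\to Z$ is a non-constant periodic orbit contained in the interior of $Z$. Then $\gamma$ is an (unreflected) sub-Riemannian geodesic, so by the description of the geodesic flow in Subsection~\ref{descriptionofgeodesics} its Lagrangian projection $\pi\circ\gamma$ is either a straight line or a circular arc traversed with constant speed. Since $\gamma$ is periodic, so is $\pi\circ\gamma$; a straight line is not periodic, so $\pi\circ\gamma$ must close up into a complete circle of some radius $r>0$. Applying the area identity~(\ref{zvariation}) to one period gives
\[ \Delta z \;=\; \operatorname{Area}(\pi\circ\gamma) \;=\; \pm\pi r^2 \;\neq\; 0, \]
contradicting the periodicity of the vertical coordinate of $\gamma$. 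The borderline case in which $\pi\circ\gamma$ degenerates to a point is excluded because horizontality then forces $\dot\gamma\in\xi_\std\cap\ker d\pi=\{0\}$, so $\gamma$ would be constant.

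For the second step, the previous paragraph implies that any periodic orbit in $Z$ must undergo at least one reflection at $\partial Z$. By Proposition~\ref{prop:lift} together with the $\SO(2)\times\RR$-symmetry of the table, all reflections of such a $\gamma$ share the same magnetic momentum $a$ and the same angle of incidence; hence consecutive arcs of $\pi\circ\gamma$ are congruent circular segments, all inscribed in $\partial\DD^2$, and related to one another by a fixed rotation of the plane. Consequently the reflection points of $\pi\circ\gamma$ are the vertices of a regular $n$-gon inscribed in the unit circle, jumped through with some step $m$ coprime to $n$. In order for the lift to $\RR^3$ to close up, each individual arc must contribute the same $\Delta z$, and their sum must vanish; by symmetry this forces every arc to bound zero signed area, which is precisely the condition~(\ref{deltaz}) solved uniquely in Lemma~\ref{lem:ngon}.

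Putting the two steps together, every periodic billiard orbit in $Z$ is (up to $z$-translation) one of the trajectories produced by Lemma~\ref{lem:ngon}. The only substantive content is the first step; once interior periodic orbits are eliminated, the classification is an immediate repackaging of the earlier discussion, so the main (minor) care is just in justifying that the projected orbit of a free periodic geodesic must be a full circle rather than some other limiting configuration.
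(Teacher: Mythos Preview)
Your argument is correct and follows the same approach as the paper: rule out interior periodic orbits by observing that a free geodesic projects to a full circle bounding nonzero area, hence has $\Delta z\neq 0$; then invoke the preceding discussion for the classification. Your second step spells out in more detail what the paper takes as already established in the Lemma immediately before the Proposition, but the content is the same.
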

\begin{proof}
Any billiard trajectory $\gamma$ avoiding $\partial Z$ must be one of the tilted helices described at the end of~ Subsection \ref{descriptionofgeodesics}. It projects to a closed circle $\pi \circ \gamma$ disjoint from the boundary of $U$. Since $\pi \circ \gamma$ bounds non-zero area, the claim follows.
\end{proof}

We remark that the standard cylinder $Z$ is \textbf{integrable}: The sub-Riemannian billiard orbits that hit the boundary are, up to isometry, completely described by the angle of reflection and the magnetic momentum. Similarly, all trajectories missing the boundary are uniquely determined by the magnetic momentum and the distance of the axis of the helix to the $z$-axis.

\subsubsection{Caustics}

We recall:
\begin{definition}
Given a sub-Riemannian billiard table $Z \subset (M,\xi,g)$, a \textbf{caustic} is a submanifold $C \subset Z$ such that any billiard trajectory tangent to $C$ at one point, is in fact tangent to $C$ at exactly one point in-between every two consecutive reflection points.
\end{definition}

As a result of its abundant symmetries, the standard cylinder is fibered by caustics:
\begin{proposition}
Every smaller cylinder $C$ centered around the $z$-axis is a caustic for the standard cylinder $Z$.
\end{proposition}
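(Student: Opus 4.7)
I would reduce to the planar magnetic billiard in $\DD^2$ via Proposition~\ref{prop:lift}, and then exploit the $\SO(2)$-symmetry of the disc to propagate tangency from one arc to every other. To set this up, observe the following tangency reduction between the $3$-dimensional and $2$-dimensional pictures: since $C_r = \{x^2+y^2=r^2\}$ is a vertical cylinder, every tangent plane of $C_r$ contains $\partial_z$, so a vector $(\dot x,\dot y,\dot z)$ based at a point of $C_r$ is tangent to $C_r$ if and only if its horizontal component $(\dot x,\dot y)$ is tangent to the planar circle $\partial D_r$. Consequently, a billiard trajectory $\gamma$ in $Z$ is tangent to $C_r$ at $\gamma(t)$ if and only if its Lagrangian projection $\pi\circ\gamma$ is tangent to $\partial D_r$ at $\pi\circ\gamma(t)$. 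By Proposition~\ref{prop:lift}, $\pi\circ\gamma$ is a magnetic billiard trajectory in $\DD^2$ with conserved magnetic momentum $a$, and between reflections it is a circular arc of some fixed radius $\rho$ determined by $a$ (via the Hamiltonian analysis in Subsection~\ref{sssec:HamiltonianR3}); it thus suffices to prove that $\partial D_r$ is a caustic for this planar magnetic billiard.

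Next I would use the rotational symmetry of $\DD^2$ to show that the Poincar\'e map on $\partial\DD^2$ between consecutive reflections is a rigid rotation. The $\SO(2)$-action preserves the table and commutes with the magnetic flow of strength $a$, so this Poincar\'e map is $\SO(2)$-equivariant; moreover, specular reflection at the circular boundary preserves the angle of incidence, and so this angle is itself a constant of motion along the orbit. Together these two facts force the Poincar\'e map to be a single rotation $R_\theta$ about the origin, and therefore all arcs of $\pi\circ\gamma$ are iterated $R_\theta$-images of any fixed one among them.

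Finally, the concentric circle $\partial D_r$ is $R_\theta$-invariant, so if one arc is tangent to $\partial D_r$ at an interior point $p_0$, every subsequent arc is tangent to $\partial D_r$ at the corresponding rotated point $R_\theta^k(p_0)$. Two distinct circles meet in at most two points (with a tangency counted twice), so this tangency is unique on each arc, and it must be interior since $r<1$ forces $\partial D_r\cap\partial\DD^2=\varnothing$. Lifting back via the tangency reduction, $\gamma$ is tangent to $C_r$ at exactly one point between every two consecutive reflections, which is precisely the defining property of a caustic. The main subtlety is verifying the rotation claim carefully: one must check that the state of $\pi\circ\gamma$ at a reflection is fully determined by the reflection point together with the conserved quantities (namely $a$ and the angle of incidence), so that $\SO(2)$-equivariance really forces the Poincar\'e map on $\partial\DD^2$ to act as a rigid rotation rather than merely as an $\SO(2)$-twisted map.
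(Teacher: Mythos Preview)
Your argument is correct and follows the same idea as the paper's proof, which is the one-line observation that $C$ is invariant under the symmetry group $\RR\times\SS^1$ of the table. You have simply made this explicit: projecting via Proposition~\ref{prop:lift} absorbs the $\RR$-factor, and the remaining $\SO(2)$-symmetry forces consecutive arcs of $\pi\circ\gamma$ to be rigid rotates of one another, so tangency with the invariant circle $\partial D_r$ propagates and is unique on each arc by the two-circle intersection count. Your caveat about the Poincar\'e map is not a real issue here, since the conserved angle of incidence together with the reflection point already determines the outgoing state up to the sign fixed by inward-pointingness.
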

\begin{proof}
It follows from the invariance of $C$ under the symmetry group $\RR\times \SS^1$.
\end{proof}

\begin{question}
Are there other examples of sub-Riemannian billiard tables with caustics? Conversely: does the existence of a caustic imply that the table is conformal to $Z$? \hfill$\triangle$
\end{question}

\subsubsection{Gliding and creeping trajectories}\label{subsubsec:creeping}

In Subsection \ref{sssec:glide} we showed that gliding trajectories are reparametrisations of boundary geodesics. A question that we left open is whether every such boundary geodesic is actually a glide orbit. It is easy to construct examples of classical billiards where this is not the case. We explore this in our setting, proving:
 \begin{lemma}
The $C^1$-gliding trajectories on the standard cylinder $Z$ are exactly the helices of slope $1$ tangent to $\partial Z$. I.e., the leaves of the boundary foliation $\xi_\std \cap T\partial Z$.
\end{lemma}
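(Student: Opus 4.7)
The plan is to prove both inclusions. Throughout, write $\xi_\partial := T\partial Z \cap \xi_\std$ for the boundary distribution on $\partial Z \cong \SS^1 \times \RR$, which I parametrise by $(\theta, z)$ via $(x, y) = (\cos\theta, \sin\theta)$.

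For the inclusion ``$C^1$-gliding trajectory $\Rightarrow$ leaf of $\xi_\partial$'', I would invoke Proposition~\ref{prop:boundaryGeodesic}: any $C^1$-gliding trajectory is a (possibly singular) reparametrisation of a boundary geodesic of $(\partial Z^\circ, \xi_\partial, g_\std|_{\xi_\partial})$. A direct evaluation of $\alpha_\std$ gives $\alpha_\std(\partial_\theta) = \tfrac{1}{2}$ and $\alpha_\std(\partial_z) = 1$, so $\xi_\partial$ is the nowhere-vanishing rank-$1$ distribution spanned by the $g_\std$-unit vector field $\partial_\theta - \tfrac{1}{2}\partial_z$; in particular $\partial Z^\circ = \partial Z$. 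Since $\xi_\partial$ has rank $1$, every horizontal curve is, up to reparametrisation, one of its leaves, which are exactly the helices in the statement.

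For the converse ``leaf $\Rightarrow$ $C^1$-gliding trajectory'', I would use the $\sym$-invariance of $Z$ (acting transitively on $\partial Z$ by $z$-translations and rotations about the $z$-axis) to reduce to exhibiting a $C^1$-approximating sequence for the particular leaf $\ell(s) = (\cos s, \sin s, -s/2)$ in arclength parametrisation. As approximating sequence I take the tilted-helix geodesics whose projections are concentric counterclockwise circles of radius $r_n \to 1^-$ centred at the origin,
\[
\gamma_n(s) \;=\; \bigl(r_n\cos(s/r_n),\; r_n\sin(s/r_n),\; -r_n s/2\bigr),
\]
whose vertical slope is the one forced by horizontality (Equation~\ref{zvariation}). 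Each $\gamma_n$ is a unit-speed sub-Riemannian geodesic lying strictly in the interior of $Z$, hence a reflectionless billiard trajectory, and $\gamma_n \to \ell$ in $C^0_{\mathrm{loc}}$. To upgrade to $C^1$-convergence I read off the cotangent lifts from Subsection~\ref{sssec:HamiltonianR3}: the conserved magnetic momentum is $a_n = -1/r_n$, so
\[
\lambda_n(s) \;=\; -\tfrac{1}{r_n}\,\alpha_\std \;-\; \sin(s/r_n)\,dx \;+\; \cos(s/r_n)\,dy,
\]
which converges uniformly on compact sets to $\lambda(s) = -\alpha_\std - \sin(s)\,dx + \cos(s)\,dy$. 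A short check ($\pi \circ \lambda = \ell$, $\lambda(\dot\ell) = 1$, and $\lambda \in S_f^0|_{\partial Z}$) shows that $\lambda$ is a cotangent lift of $\ell$ tangent to $\partial Z$, exhibiting $\ell$ as a $C^1$-gliding trajectory.

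The main conceptual obstacle is the $C^0$-to-$C^1$ upgrade, which for a reflecting approximating sequence would in principle require quantitative control over an infinite collection of reflection angles of shrinking size. The construction above bypasses this difficulty entirely: by approximating with reflectionless interior helices, the lifts are smooth Hamiltonian orbits in $S_f$ all along, and their smooth convergence to a tangent limit covector in $S_f^0|_{\partial Z}$ is automatic.
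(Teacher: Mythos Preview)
Your argument is correct. The direction ``$C^1$-gliding $\Rightarrow$ leaf'' is identical to the paper's: both invoke Proposition~\ref{prop:boundaryGeodesic} and observe that $\xi_\partial$ has rank~$1$, so boundary geodesics are forced to follow the foliation.

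For the converse you take a genuinely different route. The paper observes that each leaf of $\xi_\partial$ is already an \emph{ambient} sub-Riemannian geodesic (its Lagrangian projection is the unit circle), hence is itself a billiard trajectory in $U$; the constant sequence then exhibits it as a $C^1$-gliding orbit with no computation at all. You instead approximate the leaf by strictly interior helices over concentric circles of radius $r_n \to 1^-$ and verify convergence of the cotangent lifts explicitly. Your approach is longer but has the virtue of not relying on the (mildly subtle) point that a curve sitting entirely in $\partial U$ and everywhere tangent to it qualifies as a billiard trajectory; it also illustrates concretely how interior cogeodesics accumulate onto $S_f^0|_{\partial Z}$. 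The paper's shortcut, on the other hand, immediately generalises (as remarked just after the proof) to any table whose boundary is foliated by ambient geodesics, whereas your construction is specific to the rotational symmetry of the cylinder.
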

 \begin{proof}
The leaves of the characteristic foliation, suitably parametrised, are already ambient geodesics. They are approximated as Lipschitz curves by the constant sequence of geodesics. 

Conversely, we know from Proposition~\ref{prop:boundaryGeodesic} that $C^1$-gliding trajectories are boundary geodesics. Since the only geodesics of $(\partial Z, \xi_\partial, f)$ follow the boundary foliation, the claim follows.
 \end{proof}
 We remark that the same statement holds true for any billiard table whose boundary is foliated by ambient geodesics. Similarly:
 \begin{proposition}
 The creeping trajectories on $Z$ are exactly the helices tangent to $\partial Z$, including the horizontal circle and the vertical line.
 \end{proposition}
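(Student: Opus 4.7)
The plan is to prove the two inclusions separately. For the forward direction — every helix tangent to $\partial Z$ is a creeping trajectory — I will use the transitive action of $\Heis_3\ltimes \SO(2)$ on $\partial Z$ to reduce the problem to producing, through a fixed base point such as $(1,0,0)$, one trajectory for each possible slope (including the degenerate slopes $0$ and $\infty$ given by the horizontal circle and the vertical line). For the converse I will exploit the integrability of the cylinder billiard to force limits to lie on one-parameter subgroup orbits of $\Heis_3\ltimes \SO(2)$ acting on $\partial Z$.

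The construction combines three explicit cases and then interpolates. Helices in the boundary foliation direction are themselves horizontal geodesics lying on $\partial Z$, hence already creeping trajectories realised by the constant sequence. Horizontal circles appear as the $C^0$-limit, without reparametrisation, of the periodic $n$-gon orbits of Lemma~\ref{lem:ngon} with $m=1$ as $n\to\infty$: by the asymptotics of Equation~(\ref{deltaz}) each arc has $\Delta z=0$, length $\sim 2\pi/n$, and inward penetration of order $n^{-3/2}$, so the orbits collapse uniformly onto the boundary circle. Vertical lines arise in the spirit of Remark~\ref{rem:convergingToConstant}: take horizontal geodesics whose Lagrangian projection is a shrinking disc-interior circle accumulating on $(1,0)$, and reparametrise by a factor proportional to the inverse of the radius so that the $z$-speed remains finite and nonzero in the limit. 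For intermediate helices I will interpolate within the $n$-gon family by scaling the arc radius as $\rho=c\sqrt{\varphi}$; Equation~(\ref{deltaz}) then yields a ratio $\Delta z/\Delta\phi$ equal to $(1-\pi c^2)/2$, which as $c$ ranges over $(0,\infty)$ realises every helix of slope at most that of the boundary foliation. The faster helices will instead be produced by nearly closed interior cyclotron orbits that graze $\partial\DD^2$ with tunable penetration: small penetration yields arbitrarily small rotational drift per cycle while $\Delta z$ stays of order $\pi\rho^2$, which upon reparametrisation recovers every remaining slope up to and including $\infty$.

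For the converse I will use the integrability noted in Subsection~\ref{ssec:standardCylinder}. The sub-Riemannian cogeodesic flow preserves the coordinate $a$ (Subsection~\ref{sssec:HamiltonianR3}) and the angular momentum conjugate to the $\SO(2)$-action, and both are preserved at reflections because $\partial_z$ and the rotational vector field are tangent to $\partial Z$. Each $\gamma_i$ is then periodic or quasi-periodic and carries a well-defined drift $(\omega_i,v_i)$ in the $(\phi,z)$-coordinates of $\partial Z$, together with a radial oscillation bounded between some $r_{\min,i}$ and $1$. A nontrivial $C^0$-limit of $\gamma_i(a_it)$ on $\partial Z$ requires both $r_{\min,i}\to 1$ and $(a_i\omega_i,a_iv_i)\to(\omega,v)\ne 0$; the limit is then forced to be the helix $\gamma(t)=\gamma(0)+t(\omega,v)$ of slope $v/\omega$.

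The main obstacle I anticipate is securing the full slope range in the forward construction. The $n$-gon and cyclotron families together must cover every helix tangent to $\partial Z$, but they pass through qualitatively different arc geometries in the transition, and verifying that every intermediate slope is actually realised will take more than the local asymptotic estimates sketched above. A secondary obstacle on the converse side is promoting the average-drift statement to genuine uniform $C^0$-convergence: for truly periodic orbits it is immediate, but in the quasi-periodic case it requires control on how far $\gamma_i(a_it)$ oscillates from its drift line within a single primitive period.
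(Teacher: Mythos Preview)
Your plan is essentially the paper's, but the paper packages the forward construction more economically and thereby dissolves the ``main obstacle'' you flag. Rather than splitting into $n$-gon limits, a $\rho=c\sqrt{\varphi}$ family, and a separate cyclotron family, the paper fixes the target slope $\sigma$ and solves Equation~(\ref{deltaz}) for $\psi\in(0,2\pi)$ as a function of $\varphi$: the right-hand side $(\pi-\psi+\sin\psi\cos\psi)/\sin^2\psi$ is a monotone homeomorphism $(0,2\pi)\to\RR$, so a unique $\psi(\varphi)$ exists for every $\sigma$, and the asymptotics $\psi\sim\sqrt{\varphi}$, $\rho\sim\sqrt{\varphi}$ fall out directly. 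Your two families are exactly the regimes $\psi\to 0$ and $\psi\to 2\pi$ of this single construction, so the transition you worry about is automatic. (One caution on your cyclotron sketch: to land on $\partial Z$ you must also send $\rho\to 0$, not just the penetration; the paper's uniform treatment handles this.) For the converse, your drift argument via the conserved quantities $a$ and angular momentum is equivalent to the paper's, which is phrased more abstractly: each $\gamma_i$ is set-theoretically invariant under a discrete subgroup $\Gamma_i\subset\RR\times\SS^1$ (generated by the symmetry taking one reflection point to the next), and these $\Gamma_i$ converge to a one-parameter subgroup preserving the limit curve, forcing it to be a helix. This sidesteps your ``secondary obstacle'' about quasi-periodic oscillation, since every segment between consecutive reflections is congruent to every other.
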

 \begin{proof}
We first approximate the vertical lines; the reader should compare to Remark \ref{rem:convergingToConstantExample}. We consider geodesics $\gamma_\rho$ tangent to the boundary and of magnetic radius of curvature $\rho$ smaller than the radius of the cylinder $Z$; their reflections are thus all trivial and happen with a period of $2\pi\rho$. As described in Subsection \ref{shearmap}, every period they gain $z$-coordinate $\Delta z$ equal to the area $\pi\rho^2$ of the projected circle $\pi\circ\gamma_\rho$. If one lets $\rho$ go to zero, the curves will converge, when appropriately reparametrized, to a vertical line:
$$\gamma_\rho\left(\frac 1\rho t\right)\stackrel{C^0}{\longrightarrow} (x_0,y_0,z_0+\frac12t).$$ 
This convergence is not $C^1$ and the limit is not an admissible curve of the boundary distribution.

In a similar manner, we now approximate helices on the boundary of fixed slope $\sigma\neq 1$. We fix some point $(r_0,\varphi_0,z_0) \in \partial Z$, expressed in cylindrical coordinates. Recall Equation~(\ref{deltaz}) and the angles $\varphi$ and $\psi$ introduced there. We define $\gamma_{\varphi, \psi}$ as the billiard trajectory through $(r_0,\varphi_0,z_0)$ whose projection $\pi \circ \gamma_{\varphi, \psi}$ consists of circular arcs, where the angle between consecutive intersections with the boundary, measured from the centre of $U$, is $\varphi$, and measured from the centre of the arc is $\psi$. Its radius of curvature is $\rho=\frac{\sin\varphi}{\sin\psi}$ and thus the slope between reflection points is equal to
\[ \sigma=\frac{\Delta z}{\varphi}= \frac1\varphi\left( \sin\varphi\cos\varphi - \left(\frac{2\pi-2\psi}{2\pi}\pi\rho^2 + \sin\psi\cos\psi\rho^2\right)\right). \]
We fix $\varphi$ and $\sigma$ and we solve for $\psi$ in order for the equation to hold. Rearranging yields:
\[ {\cotan}(\varphi)-\frac\varphi{\sin^2\varphi}\sigma=\frac1{\sin^2\psi}\left(\pi-\psi+\sin\psi\cos\psi\right). \]
Since the left-hand side is fixed and the right-hand side is a monotone decreasing homeomorphism $(0,2\pi)\to\RR$, there is a unique solution $\psi(\varphi)$. We now take the limit $\varphi \to 0$. The left-hand side behaves like $(1-\sigma)\frac1\varphi$. The right-hand side must also be unbounded, which implies that $\sin\psi\to 0$ and thus $\psi\to 0$. The asymptotic behavior of the equality then reads:
\[ (1-\sigma)\frac1\varphi\sim \pi\frac 1{\psi^2}. \]
Therefore, up to a constant factor, we have that $\psi\sim \sqrt\varphi$ and $\rho \sim\sqrt\varphi$ as $\varphi\to 0$. In particular, the circular segments in-between reflections have length $2\rho(\pi-\psi) \sim 2\pi\sqrt\varphi$. This proves that there is an appropriate reparametrization parameter $c_\varphi \sim \frac{1}{\sqrt{\varphi}}$ such that:
\[ \gamma_{\varphi,\psi(\varphi)}(c_\varphi t) \to_{C^0} (r_0,\varphi_0+t,z_0+\sigma t). \]

We have thus proven the existence of all claimed creeping trajectories. For the converse we claim that there are no others. Recall that the symmetry group of the cylinder is $\sym(Z)=\RR\times S^1$, which is transitive. Billiard trajectories are completely determined, up to this symmetry, by the angle of reflection and the transverse momentum, both of which are preserved under reflections. Given a sequence of (reparametrised) billiard trajectories $\gamma_i$ converging to a creeping orbit $\gamma_\infty$, there is a sequence of subgroups $\Gamma_i \subset \sym(Z)$ preserving $\gamma_i$ set-theoretically; the elements of $\Gamma_i$ are in correspondence with the reflection points of $\gamma_i$. As we take the limit, the $\Gamma_i$ will converge to a $1$-parameter subgroup $\Gamma_\infty$ preserving $\gamma_\infty$. The 1-parameter subgroups of  $\sym(Z)$ generate precisely the curves described above.
\end{proof}

\begin{remark} \label{rem:diffOperators}
It is well-known that the length spectrum of a metric relates to the spectrum of the corresponding Laplacian. This has been studied in the sub-Riemannian setting as well, but our understanding is far from complete. An interesting phenomenon is that, in the contact case, periods of Reeb orbits (and not just geodesic periods) appear in spectrum calculations of certain Dirac operators associated to the sub-Riemannian structure \cite{Sav}. This can be understood as an incarnation of the fact that sub-Riemannian geodesics tend to concentrate along Reeb orbits (up to reparametrisation, just like in our definition of creeping) as we make their momenta diverge. We have seen this in our $\RR^3$ example and it has been announced in full generality in \cite[p. 10]{CVHT}.

Once we consider manifolds with boundary, one expects billiard trajectories and boundary geodesics to relate as well to the spectra of associated differential operators. It is then an intriguing open question to understand the analytical role played by creeping orbits (which may play a role analogous to that of Reeb orbits). \hfill$\triangle$
\end{remark}

\subsection{Vertical planes}

We now consider a vertical half-space as table, with boundary a vertical plane. The same analysis as in the cylinder setting shows that:
\begin{lemma}
Let $A$ be a half-space table in $(\RR^3,\xi_\std,g_\std)$ with boundary a vertical plane at distance $v$ from the origin. Then: \begin{itemize}
    \item Gliding orbits correspond to straight lines of slope $v$.
    \item Creeping trajectories correspond to straight lines.
\end{itemize}
\end{lemma}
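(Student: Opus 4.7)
The plan is to mirror the analysis of Subsection~\ref{subsubsec:creeping} for the standard cylinder, using the fact that a vertical plane table has (like the cylinder) a transitive connected symmetry group of rank $2$.

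First I set up coordinates. By the $SO(2)$-action inside $\sym(\RR^3,\xi_\std,g_\std)$, I may assume $\partial A=\{x=v\}$. Restricting $\alpha_\std$ to $\partial A$ yields $\alpha_\std|_{\partial A}=dz+\tfrac{v}{2}\,dy$, so the boundary distribution $\xi_\partial$ is the rank-$1$ line field spanned by $\partial_y-\tfrac{v}{2}\partial_z$, whose integral curves are straight lines in the $(y,z)$-plane with slope prescribed by $v$. The subgroup of $\sym(\RR^3,\xi_\std,g_\std)$ preserving $\partial A$ is the two-dimensional abelian subgroup $\{\varphi_{(0,v_2,v_3)}\mid (v_2,v_3)\in\RR^2\}\subset\Heis_3$, which acts freely and transitively on $\partial A$ by $(y,z)\mapsto(y+v_2,z+v_3+v\cdot v_2)$; its $1$-parameter subgroups integrate to arbitrary straight lines in $\partial A$.

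For the gliding claim, Proposition~\ref{prop:boundaryGeodesic} identifies $C^1$-gliding trajectories with reparametrisations of boundary geodesics, and since $\xi_\partial$ has rank $1$ these are exactly its integral lines. For the converse I would adapt the cylinder construction: fix an integral line of $\xi_\partial$ and consider a sequence of billiard trajectories whose planar projections consist of circular arcs of small magnetic radius $\rho$ with the centre of curvature on the $A$-side of $\{x=v\}$, reflecting repeatedly across the boundary. Parametrising by the half-angle $\psi$ subtended at the arc centre, the signed area bounded by each arc (together with its radial closure segments) and thus the $\Delta z$-increment per reflection is computed explicitly from Equation~(\ref{zvariation}); tuning $(\rho,\psi)\to 0$ with an appropriate ratio makes the per-reflection slope $\Delta z/\Delta y$ converge to the slope of $\xi_\partial$ and yields $C^1$-convergence to the prescribed boundary line.

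For the creeping claim, necessity follows by the symmetry argument of the cylinder case: any creeping limit is invariant under a $1$-parameter subgroup of the $\RR^2$ translation symmetry of $\partial A$, and every such subgroup integrates to a straight line. For sufficiency I build approximating sequences for each straight line in $\partial A$. Vertical lines arise as in Remark~\ref{rem:convergingToConstantExample}, using magnetic circles of radius $\rho\to 0$ tangent to $\partial A$ with the reparametrisation $t\mapsto t/\rho$; gliding lines are handled by the previous paragraph; lines of arbitrary intermediate slope $\sigma$ are produced by the same bouncing-arc construction, now solving the area-balance equation so that $\Delta z=\sigma\Delta y$ per reflection.

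The main technical obstacle will be the asymptotic analysis of the area-balance equation. Because $\partial A$ is flat rather than curved (as in the disc table), the analogue of Equation~(\ref{deltaz}) reduces to an explicit closed form for $\Delta z$ in terms of $\rho$ and $\psi$; one must then verify that for every target slope $\sigma$ in the admissible range the equation $\Delta z=\sigma\Delta y$ admits a family of solutions $(\rho,\psi)$ with $\psi\to 0$, and determine the correct reparametrisation constant producing a nontrivial $C^0$-limit on $\partial A$.
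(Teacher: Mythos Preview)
Your approach is correct and matches the paper's: both identify gliding orbits with the boundary foliation, build creeping approximations from shrinking circular arcs, and use the transitive $\RR^2$-symmetry of $\partial A$ to conclude that every creeping limit is a straight line. One simplification the paper exploits that you could adopt: the integral lines of $\xi_\partial$ on a vertical plane are already ambient geodesics (their Lagrangian projections are straight lines, i.e.\ magnetic momentum $a=0$), so the constant sequence realises them as $C^1$-gliding orbits and no bouncing-arc construction is needed for that direction; the paper also flags, as a caveat you may want to record, that convergence to creeping orbits is not uniform in $z$ because the mid-arc vertical displacement grows with distance to the origin.
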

\begin{proof}
First note that $v$ is the only invariant of vertical planes up to symmetry; we may assume that $\partial A = \{y=v\}$. Gliding orbits correspond to integral lines of the boundary foliation. If $v=0$, these are horizontal lines. Otherwise we use the symmetry $\varphi_v(x,y,z) = (x,y+v,z+vx)$ mapping the plane $\{y=0\}$ to $\{y=v\}$ (recall Equation \ref{shearmap}). This shows that glide orbits are the lines of slope $v$.

Now we deal with creeping orbits. Given any straight line $L$ in $\{y=0\}$, we produce a sequence of billiard trajectories $\gamma_i$ such that $\pi \circ \gamma_i$ consists of many copies of the same circular arc shifted in the $y$-direction with separation $1/i$. The radius of the arcs is determined uniquely from the slope of $L$, as in the previous Subsection. A subtlety now is that the convergence to creeping orbits is not uniform in the $z$--coordinate: even though all circular arcs look the same in the projection, their lifts have a variation in $z$, at the half-point, that is proportional to the distance to the origin.

To show that all creeping orbits must be straight lines, we use again that the symmetries of the half-space act transitively on its boundary. Lastly, we transfer the statement to $A$ using the map $\varphi_v$.
\end{proof}

\begin{question}
For billiard tables whose symmetry group does not act transitively on the boundary, what do the creeping orbits look like? \hfill$\triangle$
\end{question}

\subsection{A horizontal half-space}\label{sec:horizontalPlane}

 In the previous Subsection we studied vertical planes. Observe then that the shear isometry given in Equation \ref{shearmap} sends horizontal planes to planes with slope $|v|$. It follows that, up to isometry, there are only two distinct types of half-space tables: vertical ones, and horizontal ones. We therefore focus on a table of the form $\RR^2\times \RR_{\geq0}$ with $H := \partial(\RR^2\times \RR_{\geq0})=\RR^2\times \{0\}$. We leave it to the reader to adapt the discussion to the table having $H$ as upper boundary instead.

\subsubsection{Self-reflecting curves}

We use cylindrical coordinates $(r,\theta,z)$. The boundary foliation $\xi_\partial$ along $H$ has a unique singular point in $(0,0,0)$. Everywhere else we have that $\xi_\partial = \langle\partial_r\rangle$, with $TH$ and $\xi_\std$ forming an angle of $\frac r2$. We deduce: 
\begin{lemma}
Let $\gamma$ be a germ of billiard trajectory with $\gamma(0) = (r,\theta,0) \in H$. Let $a \in \RR$ be the ingoing magnetic momentum at the reflection point. Let $\phi \in (0,\pi)$ be the ingoing angle of incidence of $\pi\circ \gamma$ with respect to the radial line. Then:
\begin{itemize}
    \item The outgoing angle of incidence of $\pi\circ \gamma$ with respect to the radial line is $\pi-\phi$.
    \item The outgoing magnetic momentum is $a+4\frac{\sin\phi}r$.
\end{itemize}
\end{lemma}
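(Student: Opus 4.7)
The plan is to unpack the symplectic reflection law of Lemma~\ref{lem:sympReflectionLaw} in the coframe $(\alpha_\std, dx, dy)$ from Subsection~\ref{sssec:HamiltonianR3}, use energy conservation to fix the reflection parameter, and then read off the change in angle and magnetic momentum.

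First I compute $\Ann(TH)$. Since $TH = \langle\partial_x,\partial_y\rangle$, the annihilator is $\langle dz\rangle$. Rewriting $dz = \alpha_\std + \tfrac{y}{2}dx - \tfrac{x}{2}dy$, this covector has $(a,b,c)$-coordinates $(1, y/2, -x/2)$. The reflection law $\lambda_{\rm in} - \lambda_{\rm out} = k\cdot dz$ for some $k\in\RR$ thus unfolds componentwise to
\[
a_{\rm out} = a_{\rm in} - k,\qquad b_{\rm out} = b_{\rm in} - \tfrac{k y_0}{2},\qquad c_{\rm out} = c_{\rm in} + \tfrac{k x_0}{2},
\]
at the reflection point $(x_0, y_0) = (r\cos\theta, r\sin\theta)$. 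By Lemma~\ref{lem:correspondingVector} and the Hamiltonian computation in Subsection~\ref{sssec:HamiltonianR3}, the projected velocity of $\gamma$ is $(b,c)$, so the change in projected velocity is
\[
v_{\rm out} - v_{\rm in} = \tfrac{k}{2}(-y_0, x_0) = \tfrac{kr}{2}\,\hat\theta,
\]
purely along the angular direction.

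Writing $v_{\rm in} = \cos\phi\,\hat r + \sin\phi\,\hat\theta$ with $\phi\in(0,\pi)$ (the range being forced by the fact that the ingoing geodesic enters the table from its interior, which a direct computation of the vertical component of $bX+cY$ at $(x_0,y_0,0)$ shows is equivalent to $\sin\phi>0$), the unit-speed constraint $|v_{\rm out}|^2 = 1$ reduces to
\[
\cos^2\phi + \bigl(\sin\phi + \tfrac{kr}{2}\bigr)^2 = 1,
\]
with non-trivial root $k = -\tfrac{4\sin\phi}{r}$. Substituting back gives $v_{\rm out} = \cos\phi\,\hat r - \sin\phi\,\hat\theta$, which is precisely the reflection of $v_{\rm in}$ about the radial direction $\hat r$. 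In the convention where the angle of incidence measures the angle from $\hat r$ to the tangent half-line pointing \emph{toward} the reflection point (i.e.\ $v_{\rm in}$ for the incoming ray and $-v_{\rm out}$ for the outgoing ray, both of which have positive $\hat\theta$-component equal to $\sin\phi$), this yields the outgoing angle $\pi-\phi$. Finally, $a_{\rm out} = a_{\rm in} - k = a + \tfrac{4\sin\phi}{r}$ is the claimed magnetic momentum.

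The only real subtlety is fixing the angle convention consistently with the statement; once one observes that the outgoing angle $\pi-\phi$ refers to the angle between $\hat r$ and the reversed outgoing velocity $-v_{\rm out}$ (rather than $v_{\rm out}$ itself), the algebra is entirely routine and falls out of the two linear constraints (the reflection law and $|v|=1$).
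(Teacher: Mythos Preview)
Your proof is correct and takes essentially the same approach as the paper's: both identify $\Ann(TH)=\langle dz\rangle$, intersect the affine line $\lambda_{\rm in}+\RR\,dz$ with the unit cylinder $S_f=\{b^2+c^2=1\}$, and read off the reflection of $(b,c)$ about the radial direction together with the jump in $a$. The only difference is presentation---the paper phrases the chord computation geometrically (the ``length $4\sin\phi/r$'' of the segment in $D_f$) while you solve the quadratic $|v_{\rm out}|=1$ explicitly.
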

\begin{proof}
 The cylinder field $S_f$ is parallel to $\alpha_\std := dz-\frac{1}{2}r^2d\theta$. The annihilator of $H$ is spanned by $dz$. We study how the reflection segments parallel to $\Ann(TH)$ intersect $S_f$. In doing so we observe that the angle of incidence changes in the usual manner by reflection with the radial line. The change in magnetic momentum is given by the displacement in $dz$.

The angle between $S_f$ and $\Ann(TH)$ is $\frac{2}{r}$. The longest segment parallel to $\Ann(TH)$ contained in $D_f$ has then length $\frac{4}{r}$. The segment passing through the momentum corresponding to angle of incidence $\phi$ has length $4\frac{\sin\phi}r$.
\end{proof}
That is, magnetic curvature \emph{increases} every time there is a reflection. The opposite is true at walls from above.

\begin{corollary} \label{cor:selfReflectionHorizontalPlane}
Let $\gamma$ be a germ of billiard trajectory at a reflection point with $H$. Then, the following claims are equivalent:
\begin{itemize}
    \item $\gamma$ is invariant under the reflection.
    \item $\gamma$ hits $H$ at radius $r$, with ingoing angle $\phi=\frac\pi2$, and ingoing magnetic momentum $a=-\frac2r$.
\end{itemize}
\end{corollary}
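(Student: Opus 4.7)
The Corollary follows immediately from the preceding Lemma, once we unpack what ``$\gamma$ is invariant under the reflection'' means dynamically. The plan is to interpret this condition as the statement that the outgoing branch of $\gamma$ retraces the ingoing branch with reversed time. Equivalently, at the reflection time $t_1$, the cotangent lift satisfies $\lambda(t_1^+) = -\lambda(t_1^-)$: this is because time-reversal in the Hamiltonian picture negates the covector, and in particular it negates both the planar velocity $(b,c)$ and the conserved quantity $a$ (which is precisely the signed magnetic curvature of the projected arc, and hence flips sign when the arc is traversed backwards).

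From here, the forward direction of the equivalence follows by plugging the two conditions $(b_{\text{out}},c_{\text{out}}) = -(b_{\text{in}},c_{\text{in}})$ and $a_{\text{out}} = -a_{\text{in}}$ into the Lemma's reflection formulas $\phi_{\text{out}} = \pi - \phi$ and $a_{\text{out}} = a + 4\sin\phi/r$. The angle condition (interpreted in the Lemma's convention, where the ``angle of incidence'' of the outgoing ray is measured as the angle of its reversal with the radial) forces $\phi = \pi - \phi$, hence $\phi = \pi/2$. The momentum condition then gives $-a = a + 4/r$, that is, $a = -2/r$. The converse is a direct substitution: for $\phi = \pi/2$ and $a = -2/r$, the Lemma outputs outgoing angle $\pi/2$ and outgoing momentum $-2/r + 4/r = 2/r = -a$, so $\lambda(t_1^+) = -\lambda(t_1^-)$ and $\gamma$ retraces itself.

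The only subtle step is translating ``invariance under reflection'' into $\lambda(t_1^+) = -\lambda(t_1^-)$, and the cleanest way is to bypass the angle convention altogether using the symplectic reflection law $\lambda(t_1^-) - \lambda(t_1^+) \in \Ann(TH) = \langle dz \rangle$. Self-reflection then reads $2\lambda(t_1^-) \in \langle dz \rangle$, so $\lambda(t_1^-) = \mu\, dz$ for some scalar $\mu$. A short computation shows that, at the reflection point $(r\cos\theta_0, r\sin\theta_0, 0) \in H$, one has
\[ dz = \alpha_\std + \tfrac{r}{2}\bigl(\sin\theta_0\, dx - \cos\theta_0\, dy\bigr), \]
so in the coframe coordinates $dz$ has components $(1, \tfrac{r}{2}\sin\theta_0, -\tfrac{r}{2}\cos\theta_0)$. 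Imposing that $\lambda(t_1^-) = \mu\, dz$ lie on the unit cylinder $b^2 + c^2 = 1$ yields $\mu^2 r^2/4 = 1$, i.e.\ $\mu = \pm 2/r$; the chosen sign convention (ingoing angle $\phi \in (0,\pi)$) singles out $\mu = -2/r$. This simultaneously produces $a = -2/r$ and a velocity $(b,c) = (-\sin\theta_0, \cos\theta_0) = \partial_\theta/r$ that is purely tangential, i.e.\ $\phi = \pi/2$. The main potential obstacle is the bookkeeping around the angle convention, but the cotangent route above sidesteps it entirely.
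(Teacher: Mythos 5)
Your proof is correct and follows essentially the route the paper intends: the paper states this Corollary without proof as an immediate consequence of the preceding Lemma, and your second paragraph (translate self-reflection into $\lambda(t_1^+)=-\lambda(t_1^-)$, i.e.\ $\phi_{\rm out}=\phi$ and $a_{\rm out}=-a$, then solve $\phi=\pi-\phi$ and $-a=a+4\sin\phi/r$) is exactly that implicit argument, with the sign conventions handled correctly. Your third paragraph, computing directly that $2\lambda(t_1^-)\in\Ann(TH)=\langle dz\rangle$ forces $\lambda(t_1^-)=\pm\frac{2}{r}\,dz$ and reading off $a$ and the tangential velocity from the coframe components of $dz$, is a worthwhile independent check that also pins down why the convention $\phi\in(0,\pi)$ selects the sign $a=-2/r$.
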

I.e., the only curves that reflect to themselves are those whose projections have radius $1/a = r/2$ and center at distance $r/2$ from the origin.

\subsubsection{Attainable sets}
We can use the previous computations to study the attainable set of $H$ at time $T$. We do this by carefully analysing the wavefront. As one expects, the latter accumulates to $H$ at the origin, the singular point of $\xi_\partial$.
\begin{proposition}
The two components of $\partial \Acal^T(H)$ are graphical over $H$ and smooth away from the points lying vertically over the origin. Additionally:
\[ \partial \Acal^T(H) \quad=\quad \pi \circ \phi_T(\Ann(TH)|_{\{r \,\geq\, 2T/\pi\}}\cap S_f), \]
where $\phi_T$ is the time-$T$ flow of the maximised Hamiltonian.
\end{proposition}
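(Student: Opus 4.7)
The plan is to combine Lemma~\ref{lem:wavefrontSubset} with an explicit parametrization of the wavefront map and a competing-geodesic comparison. Using the Hamiltonian description of Subsection~\ref{sssec:HamiltonianR3}, the unit-speed normal geodesic from $(r\cos\theta,r\sin\theta,0)\in H$ projects to a circle of radius $r/2$ passing through the $z$-axis. A direct integration yields, at time $T$, a cylindrical endpoint at distance $r\,|\cos(T/r)|$ from the axis and height $\pm z_T(r)$, where
\[ z_T(r) \;=\; \tfrac{r^2}{8}\sin(2T/r) + \tfrac{rT}{4}, \]
the two signs giving the two components of the wavefront.

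First I would establish graphicality. For $r\geq 2T/\pi$ we have $T/r\leq\pi/2$ and $\cos(T/r)\geq 0$, so $\rho(r):=r\cos(T/r)$ has derivative $\cos(T/r)+(T/r)\sin(T/r)>0$. Hence $r\mapsto\rho(r)$ is a diffeomorphism $[2T/\pi,\infty)\to[0,\infty)$, and composing $z_T$ with its inverse exhibits each component of $\pi\circ\phi_T(\Ann(TH)|_{\{r\geq 2T/\pi\}}\cap S_f)$ as a smooth graph over $H$. Smoothness fails precisely at $\rho=0$, where all angles $\theta$ collapse to the points $(0,0,\pm T^2/(2\pi))$ over the origin.

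For the set-theoretic equality, Lemma~\ref{lem:wavefrontSubset} gives $\partial\Acal^T(H)\subseteq\wavefront_T(H)\cup\partial\Acal^T(0,0,0)$ since the only singular point of $\xi_\partial$ is the origin. The contribution from $\partial\Acal^T(0,0,0)$ is handled by a Heisenberg-translation argument: any point at distance exactly $T$ from the origin admits some nearby $h\in H$ reaching it in strictly shorter time, placing it in the interior. The main task is then to decide which wavefront points belong to $\partial\Acal^T(H)$. Setting $s:=T/r$, the wavefront relations read $\rho_q=r|\cos s|$ and $z_q/\rho_q^2=H(s)$, where
\[ H(s) \;:=\; \frac{s+\sin s\cos s}{4\cos^2 s}. \]
The identity $(s+\sin s\cos s)'=2\cos^2 s$, together with $H(0^+)=0$ and $H(\pi/2^-)=+\infty$, shows that $H$ restricts to a diffeomorphism from $(0,\pi/2)$ onto $(0,\infty)$; consequently every value $H(s)$ for $s>\pi/2$ is also attained by a unique $s^*\in(0,\pi/2)$, yielding a competing normal geodesic to the same endpoint, with parameters $r^*=\rho_q/\cos s^*$ and $T^*=s^*\rho_q/\cos s^*$, automatically satisfying $r^*\geq 2T^*/\pi$.

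The crux, and the main obstacle I anticipate, is verifying $T^*<T$—equivalently, $s^*/\cos s^*<s/|\cos s|$ whenever $H(s^*)=H(s)$ with $s^*<\pi/2<s$. Granted this, wavefront points coming from $r<2T/\pi$ are reached from $H$ in strictly less time $T^*<T$ via the outer-sheet competitor, placing them in the interior of $\Acal^T(H)$. Conversely, for $r\geq 2T/\pi$ no shorter path from $H$ to $q$ exists: a shorter outer-sheet geodesic is ruled out by the diffeomorphism property of $H|_{(0,\pi/2)}$, and a shorter inner-sheet geodesic is ruled out because the same monotonicity forces any inner-sheet competitor to take strictly longer time. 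The inequality itself can be verified by explicit differentiation using the clean identity above, or more geometrically by the Heisenberg scaling symmetry $(x,y,z)\mapsto(\lambda x,\lambda y,\lambda^2 z)$ reducing all times to a single normalized value.
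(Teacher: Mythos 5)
Your setup --- the reduction to the normal wavefront via Lemma~\ref{lem:wavefrontSubset} (with the singular point of $\xi_\partial$ at the origin discarded because contact structures carry no nonconstant abnormals), the explicit formulas $\rho(r)=r\,\lvert\cos(T/r)\rvert$ and $z_T(r)=\tfrac{rT}{4}+\tfrac{r^2}{8}\sin(2T/r)$, and the graphicality and smoothness of the outer sheet via monotonicity of $\rho$ on $[2T/\pi,\infty)$ --- is the same as the paper's. Where you diverge is in how the sheets are compared: the paper fixes the time $T$ and the horizontal position $R_1$ and compares the \emph{heights} of the finitely many wavefront points over $(x_1,y_1)$, whereas you fix the endpoint $q$ and compare the \emph{arrival times} of the competing conormal geodesics. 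Both dualizations are legitimate in principle, but they lead to genuinely different inequalities.

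The gap is exactly where you flag it: the inequality $s^*/\cos s^* < s/\lvert\cos s\rvert$ whenever $H(s^*)=H(s)$ with $s^*<\pi/2<s$ carries the entire content of the proposition, and you do not prove it. ``Explicit differentiation using the clean identity'' does not obviously deliver it: the level sets of your function $H(s)$ are discrete, so there is nothing to differentiate along, and the equivalent statement --- that the curve $s\mapsto\bigl(H(s),\,s/\lvert\cos s\rvert\bigr)$ for $s>\pi/2$ lies strictly above the graph of $\bigl(s/\cos s\bigr)\circ\bigl(H|_{(0,\pi/2)}\bigr)^{-1}$ --- is a global comparison. (Substituting $p=\tan s$ one finds $s=(4h-p)/(1+p^2)$ and $s^2/\cos^2 s=(4h-p)^2/(1+p^2)$ on the level set $H=h$, and one must still sort out which roots $p$ correspond to $s>\pi/2$; this is not a one-line monotonicity check.) The Heisenberg scaling normalizes only one of the parameters and does not reduce the claim to a single value. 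The paper avoids the issue entirely with the fixed-time comparison: writing
\[ z_1 \;=\; \frac{rT}{4}+\sgn\bigl(\sin(2T/r)\bigr)\,\frac14\sqrt{(r^2-R_1^2)R_1^2}, \]
the right-hand side with sign $+$ is strictly increasing in $r$ at fixed $(T,R_1)$, it bounds every preimage from above, and it is attained precisely by the outermost preimage (the one with $T/r\le\pi/2$); hence that preimage maximises the height and gives the boundary. I would either import that argument into your time-based formulation, or supply an actual proof of your inequality; as written, the crux is asserted rather than established.
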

\begin{proof}
By symmetry, we are only concerned with the component with positive $z$-coordinate. According to Lemma \ref{lem:wavefrontSubset}, $\partial \Acal^T(H)$ is a subset of the wavefront of $H$ at time $T$, since contact structures have no singular horizontal curves. We thus restrict our attention to geodesics $[0,T] \to \RR^3$ with initial momentum in $\Ann(TH) \cap S_f$. We abuse notation and write 
\[ \phi_t: H \setminus 0 \quad\to\quad \RR^3 \]
for the time $t$-flow of the unique geodesic of this form starting at the given point in $H$ minus the singular point.

Given $(x_0,y_0,0)\in H$, we want to compute the endpoint $\phi_T(x_0,y_0,0) = (x_1,y_1,z_1)\in\partial\Acal^T(H)$. Corollary~\ref{cor:generalWaveFront} states that the trajectory $\gamma(t) = \phi_t(x_0,y_0,0)$ must reflect to itself at $t=0$, so Corollary \ref{cor:selfReflectionHorizontalPlane} implies that $\pi\circ\gamma$ is a circular segment with diameter $r:=\|(x_0,y_0)\|$ and center $M:=\frac{1}{2}(x_0,y_0)$, containing $(x_1,y_1)$. We write $\psi$ for the total angle described by $\pi\circ\gamma([0,T])$ with respect to $M$; modulo $2\pi$, it agrees with $\measuredangle((x_0,y_0),M,(x_1,y_1)))$.

We first observe that $\psi r/2=T$. The $z$-coordinate is then determined by the signed area bounded by the circular arc:
\[ z_1(r) = \operatorname{Area}(\pi\circ\gamma|_{[0,T]})  = \dfrac{r^2}{8}(\psi + \sin(\psi)) = \dfrac{rT}{4} + \dfrac{r^2}{8}\sin(2T/r). \]
Using that $\measuredangle((x_0,y_0),(0,0),(x_1,y_1))=\psi/2$, we can compute the projected distance to the origin at the upper reflection point:
\[ R(r) :=\|(x_1,y_1)\| = r \|\cos(\psi/2)\| = r \|\cos(T/r)\|.   \]
That is, due to the radial symmetry, we can treat both $R$ and $z_1$ as functions in $r$.

This shows that if $T/r\in \pi\ZZ +\pi/2$, then $R=0$. In other words, the circle in $H$ of radius $r_k :=\frac{2T}{(2k+1)\pi}$, $k \in \ZZ^{\geq0}$, is mapped by $\phi_T$ to the point $(0,0,\frac{T^2}{2(2k+1)\pi})$, which is thus part of the cut locus of the distance function to $H$. We claim that $\partial \Acal^T(H)$ is the image of the set $H_0 := \{r_0 \leq r\} \subset H$ under $\phi_T$. The consecutive bands $H_k := \{r_k \leq r \leq  r_{k-1}\}$, $k>0$, will lie below $H_0$. In order to prove this, we revert the problem: we list the points $(x_0,y_0)$ that are mapped by $\phi_T$ to the vertical line over some given $(x_1,y_1)$ and see which one is highest. We write $R_1 = ||(x_1,y_1)||$.

First, note that $R(r)\leq r$ and thus $R^{-1}(R_1)$ is always a finite set. If $R_1 >r_0=2T/\pi$, then there is only one preimage and there is nothing to prove. For $R_1 \in (r_{k_0},r_{k_0-1})$, there is no preimage in $H_k$ for $k>k_0$, there are 0, 1 or 2 preimages in $H_{k_0}$, there are 2 preimages in $H_k$ for $0<k<k_0$ and there is one preimage in $H_0$.

Using 
\[ r^2\sin(2T/r) = 2r^2\sin(T/r)\cos(T/r) = \sgn(\sin \psi) 2\sqrt{(r^2-R^2_1) R_1^2}, \]
we find a new expression for $z_1$:
\[z_1 = \frac{rT}4  + \sgn(\sin\psi)  \frac 14 \sqrt{(r^2-R_1^2) R_1^2},\]
which we must maximise among the preimages of $R_1$. Note that $z_1\leq \frac {rT}4  + \frac 14 \sqrt{(r^2-R_1^2) R_1^2}$, which is strictly monotone and thus maximised by the largest $r$. But this is the preimage in $H_0$, which corresponds to $\psi\in(0,\pi)$, where $\sgn(\sin\psi)=+$. Therefore, the preimage in $H_0$ also maximises $z_1$, which proves the claim.  

The number $r$ found is then the unique preimage in $R^{-1}(R_1)$ that is larger or equal than $2T/\pi$. By the inverse function theorem we can then define $r = R^{-1}$, away from $R_1=0$, as a smooth function of $R_1$. We expand it at the origin:
\[ r(R_1)= \frac {2T}\pi + \frac\pi{2T} R_1+O(R_1^2), \]
which in turn means that 
\[ z_1 =  \frac {T^2}{2\pi} + \left(\frac \pi8 + \frac T{2\pi}\right) R_1 + o(R_1), \]
and thus $R_1=0$ is a non-smooth point.
\end{proof}

\begin{remark}
After the first version of this paper was uploaded to arXiv, the authors found out that a similar computation had already appeared in the literature \cite[Theorem 1.8]{RR} recently. The result, by Rizzi and Rossi, is dual to ours: it provides instead an explicit formula for the distance function to $H$.\hfill$\triangle$
\end{remark}

Using the Lemma and Corollary \ref{cor:generalWaveFront} we deduce:
\begin{corollary}
Consider the table with boundaries $H$ and $\partial \Acal^T(H)$. Then:
\begin{itemize}
    \item $\partial \Acal^T(H)$ is, away from its singularity over the origin, transverse to the contact structure.
    \item Each geodesic starting at $\partial \Acal^T(H)$ minus the singularity, with unit conormal momentum, extends to a $2$-bounce orbit whose other endpoint is in $\{r \,>\, 2T/\pi\} \subset H$.
\end{itemize}
\end{corollary}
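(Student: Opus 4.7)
The plan is to deduce both assertions from Corollary~\ref{cor:generalWaveFront}, applied in turn to the two hypersurfaces $H$ and $\partial\Acal^T(H)$, using the explicit parametrisation of the wavefront furnished by the preceding Proposition.

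First I would address the transversality claim. Given a smooth point $q\in\partial\Acal^T(H)$, the Proposition lets me write $q=\phi_T(p,\lambda_0)$ for a unique $p\in H$ with $r(p)>2T/\pi$ and a unit conormal momentum $\lambda_0\in\Ann(T_pH)\cap S_f$, with the associated geodesic $\gamma:[0,T]\to\RR^3$ a normal minimiser realising the distance from $H$ to $q$. Since the only singular point of the boundary distribution $\xi_\std\cap TH$ is the origin, $p$ is automatically regular for it. Corollary~\ref{cor:generalWaveFront} then propagates this regularity to $q$, meaning that $\xi_\std\cap T\partial\Acal^T(H)$ has the expected rank $1$ at $q$ rather than the jump to rank $2$ that would occur at a singularity; this is precisely the desired transversality statement.

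Next I would derive the 2-bounce orbit. The same Corollary gives reflection-self-invariance of $\gamma$ at $q$, so by reversibility of the sub-Riemannian norm the geodesic starting at $q$ with the prescribed unit conormal momentum is the time-reversal $\tilde\gamma(t):=\gamma(T-t)$. It arrives at $p\in\{r>2T/\pi\}\subset H$ after time $T$ with ingoing momentum conormal to $H$, and a second invocation of Corollary~\ref{cor:generalWaveFront}, now with $N=H$, delivers self-reflection at $p$ as well. Concatenating $\tilde\gamma$ with its reversal (and iterating) yields a periodic orbit with exactly two reflection points, located at $q$ and at $p$: this is the announced 2-bounce orbit, whose ``other endpoint'' is the point $p\in\{r>2T/\pi\}\subset H$.

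The only delicate point I anticipate is ensuring the strict inequality $r(p)>2T/\pi$, rather than the weaker $\geq$ written in the statement of the Proposition. This is already embedded in the Proposition's proof: the circle $\{r=2T/\pi\}\subset H$ is precisely the locus that $\phi_T$ collapses to the non-smooth point of $\partial\Acal^T(H)$ on the $z$-axis, so smoothness of $q$ forces the corresponding preimage to have $r>2T/\pi$. Beyond this observation, the argument is a clean two-step application of Corollary~\ref{cor:generalWaveFront} combined with reversibility, with no further calculation needed.
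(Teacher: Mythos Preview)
Your argument is correct and follows precisely the route the paper indicates: the paper's proof consists solely of the line ``Using the Lemma and Corollary~\ref{cor:generalWaveFront} we deduce,'' and you have spelled out exactly this deduction. One minor efficiency: a single application of Corollary~\ref{cor:generalWaveFront} with $N=H$ already yields self-reflection at \emph{both} endpoints (see its proof), so the second invocation is not strictly needed.
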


\subsubsection{Geodesics approaching the critical point}\label{sssec:critical}

At the critical point, the reflection law is not well-defined. However, we can study the behavior of a sequence of billiard trajectories as their bouncing points approach it. This is depicted in Figure \ref{fig:critical}.

\begin{figure}[h]
\centering
\includegraphics[width=0.9\textwidth]{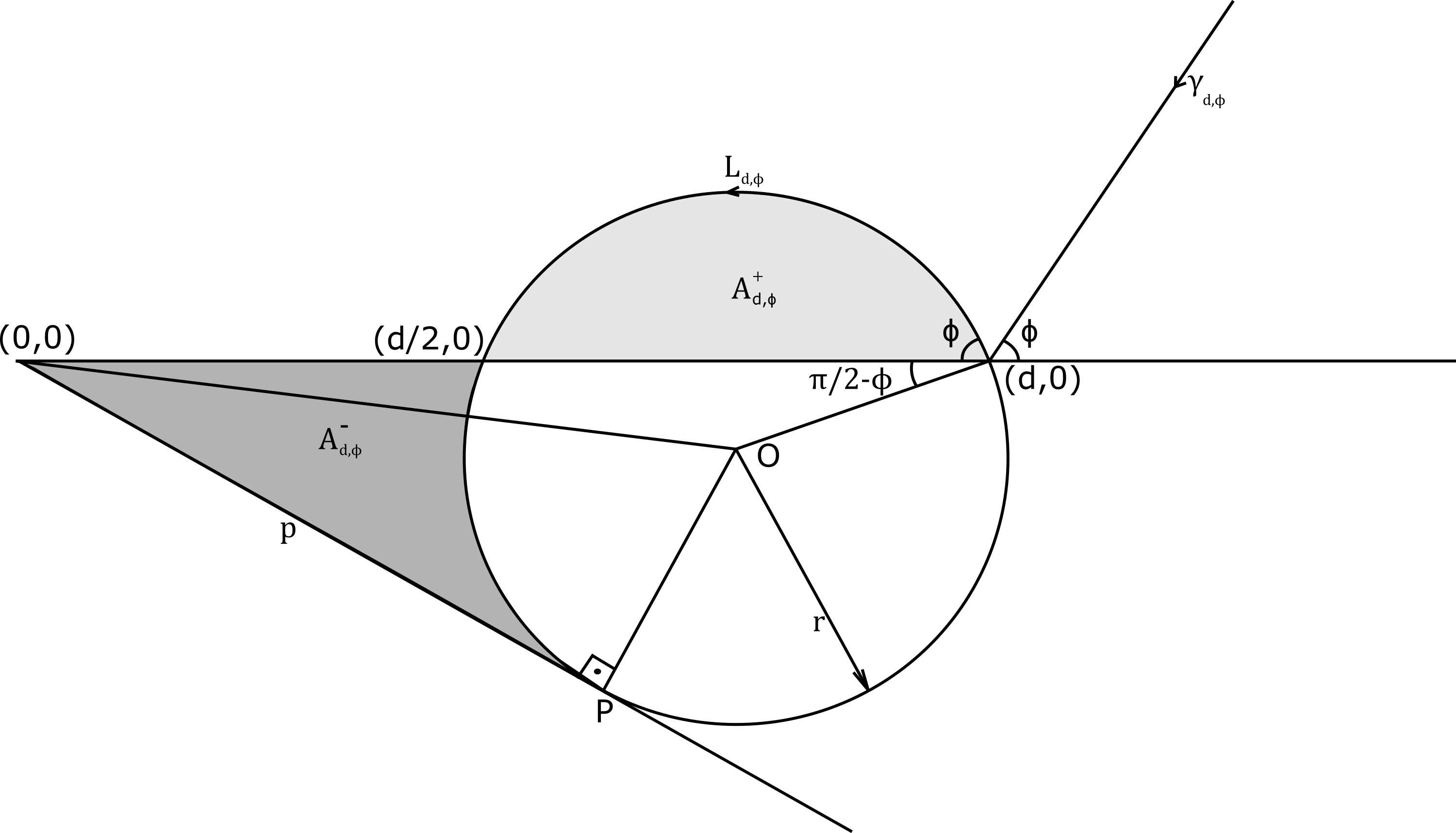}
\caption{The situation described in Subsection \ref{sssec:critical}. A projection of a half-line that reflects at a point in $H$, and its continuation after the reflection point.}\label{fig:critical}
\end{figure}

Consider first a sequence of horizontal curves $\gamma_{d,\phi}: \RR^- \to \RR^3$ lifting a half-line and hitting $H$ at the point $(d,0,0)$, where they make an angle $\phi$ with the radial half-line. Their magnetic momentum is $a^- =0$ which, after the reflection, increases to $a^+ = 4\frac{\sin(\phi)}{d}$. It follows that the trajectory $\gamma_{d,\phi}$ continues after the reflection as the lift of a circular segment with radius $r := \frac{d}{4\sin(\phi)}$. This subsequent segment $L_{d,\phi}$ may hit $H$ again after finite time. Our goal now is to determine whether this is the case. We remark that this will depend only on $\phi$, but not on $d$: the transformation $(x,y,z) \to (\lambda x,\lambda y, \lambda^2 z)$ is conformal and preserves the table and the billiard trajectories (up to reparametrisation). In particular, $L_{d,\phi}$ is identified with $L_{\lambda d,\phi}$, showing that either both of them intersect again or $H$, or none do. 

We can readily compute that the center of $\pi \circ L_{d,\phi}$ is the point 
\[O := \left(\frac{3d}{4},-\cotan(\phi)\frac d 4\right) =(3\sin(\phi)r,-\cos(\phi)r),\] which is at distance $o := ||O|| = r\sqrt{1+8\sin^2(\phi)}$ from the origin. This implies that the other intersection of $L_{d,\phi}$ with the radial line through $(d,0)$ is exactly in $(d/2,0)$. Do note that the area bounded by $\pi \circ L_{d,\phi}$ between these two points 
\[ A^+_{d,\phi} := (\phi - \sin(\phi)\cos(\phi))r^2  = \left(\dfrac{\phi}{\sin^2(\phi)} -\cotan(\phi)\right)\dfrac{d^2}{16} \]
is positive. This implies that any subsequent intersection of $L_{d,\phi}$ with $H$ lies beyond $(d/2,0)$.

The $z$ coordinate along $L_{d,\phi}$ is decreasing in-between $(d/2,0)$ and $P$, the first tangency between $\pi\circ L_{d,\phi}$ and a line through the origin. I.e., any potential intersection with $H$ must take place between $(d/2,0)$ and $P$. One can compute the radius of $P$ by the power of the point $(0,0)$ with respect to the circle $\pi \circ L_{d,\phi}$, which is $d^2/2$ as we see from the $x-$axis:
\[p := ||P|| = \sqrt{d^2/2} = d/\sqrt 2 = 2\sqrt{2}\sin(\phi)r ,\]
i.e., independent of the angle of incidence. Consider now the area $A^-_{d,\phi}$ swept by $\pi \circ L_{d,\phi}$ between $(d/2,0)$ and $P$. It is bounded from above by the area swept by $\pi \circ L_{d,\phi}$ between the two tangents passing through $(0,0)$. Denoting $\psi=\measuredangle((0,0),O,P)$ we get 
\begin{align*}
    A^-_{d,\phi}\leq p r - \psi r^2 & = \left(2\sqrt{2}\sin(\phi)  - \arctan\left(2\sqrt2\sin(\phi)\right) \right)r^2;
\end{align*}
note that $\arctan$ takes values in $[0,\pi/2]$. We conclude:

\begin{proposition} \label{prop:critical}
For any $\phi$ sufficiently close to $\pi$, $L_{d,\phi}: \RR^+ \to \RR^3$ does not intersect $H$ again. That is, it is a half-helix projecting down to a circle of center $O$ and radius $r$. We then write $\gamma_{d,\phi}: \RR \to \RR^3$ for the entire billiard trajectory.

Furthermore: Fix $\phi$ sufficiently close to $\pi$. Then, as we take $d$ to zero, the sequence $L_{d,\phi}$ converges in $C^0_{loc}$ to the constant curve at the origin. I.e. the billiard trajectories $\gamma_{d,\phi}$ converge in $C^0_{loc}$ to a curve $\gamma$ that is a half-line through the origin over $\RR^-$ and the constant curve over $\RR^+$.

Numerical computation shows that ``sufficiently close to $\pi$'' includes the interval $\phi \in [1.584 \approx 0.5042\pi,\pi)$.
\end{proposition}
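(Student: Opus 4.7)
The plan is to control the $z$-coordinate along the helix $L_{d,\phi}$ on the whole of $\RR^+$ by analysing one revolution of its circular projection, and then to establish the $C^0_{loc}$-convergence as $d\to 0$ by a direct size estimate. First I would parametrise $\pi\circ L_{d,\phi}$ by the angular variable $\theta$ around the centre $O$ and use horizontality to obtain the explicit formula
\[ \dot z \;=\; \tfrac{1}{2}\bigl(O\cdot(\cos\theta,\sin\theta) + r\bigr), \]
which is positive on the \emph{far arc} of the circle and negative on the \emph{close arc}, the two being separated by the tangent points $P_1, P_2$ from the origin. A direct computation at the angles $\theta_0=\pi/2-\phi$ and $\theta_1=\pi/2+\phi$ corresponding to the distinguished points $(d,0)$ and $(d/2,0)$ gives $\dot z(\theta_0)=\tfrac{d\sin\phi}{2}>0$ and $\dot z(\theta_1)=-r\sin^2\phi<0$ for every $\phi\in(0,\pi)$. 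This places $(d,0)$ in the far arc and $(d/2,0)$ in the close arc, so the counterclockwise order on the first revolution is $\theta_0\to P_1\to\theta_1\to P_2$, with $z$ attaining its first-revolution minimum $z_{\min,1}$ at $P_2$.

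Next I would bound $z_{\min,1}$ from below. Writing $A^+=r^2(\phi-\sin\phi\cos\phi)$ for the area swept from $(d,0)$ to $(d/2,0)$, already computed in the excerpt, and $A^-$ for the area swept from $(d/2,0)$ to $P_2$, one has $z_{\min,1}=A^+-A^-$. Replacing $A^-$ by the full close-arc area $r^2\bigl(2\sqrt{2}\sin\phi-\arctan(2\sqrt{2}\sin\phi)\bigr)$ gives the sufficient inequality
\[ \phi-\sin\phi\cos\phi \;>\; 2\sqrt{2}\sin\phi - \arctan(2\sqrt{2}\sin\phi), \]
whose left-hand side tends to $\pi$ and right-hand side to $0$ as $\phi\to\pi$; hence the inequality holds for $\phi$ sufficiently close to $\pi$, and numerical root-finding pins down the threshold $\phi\approx 1.584$. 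Since $z$ increases by $\pi r^2$ per completed revolution, positivity of $z_{\min,1}$ forces $z>0$ on all of $L_{d,\phi}|_{\RR^+}$, so no further intersection with $H$ occurs and $\pi\circ L_{d,\phi}$ is a full circle.

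For the $C^0_{loc}$-convergence as $d\to 0$ with $\phi$ fixed, I would fix $T>0$ and observe that the projected circle has radius $r=d/(4\sin\phi)$ and centre at distance $O(d)$ from the origin, so $\pi\circ L_{d,\phi}|_{[0,T]}$ is contained in a disk of radius $O(d)$ about the origin. For the vertical coordinate, over arclength $T$ the trajectory completes at most $T/(2\pi r)+1$ revolutions, each contributing $O(r^2)$ of oscillation and $\pi r^2$ of net growth, hence $|z(t)|=O(r)+O(r^2)\to 0$. The incoming portion is treated analogously: its projection tends uniformly to the ray through the origin determined by the fixed angle $\phi$, and its $z$-coordinate, being linear in $t$ with slope $\tfrac{1}{2}d\sin\phi$, vanishes uniformly on compact sets. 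Combining these, $\gamma_{d,\phi}$ converges in $C^0_{loc}$ to the curve described in the statement.

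The main obstacle is the ordering argument in the first step: beyond checking the signs $\dot z(\theta_0)>0$ and $\dot z(\theta_1)<0$, one needs to verify that $(d/2,0)$ lies strictly between the tangent points $P_1$ and $P_2$ for every $\phi$ in the stated range, so that bounding $A^-$ by the entire close-arc area is legitimate. Once the geometric ordering is established, the rest of the argument reduces to a comparison of areas swept by the circular projection, together with the observation that the per-revolution increment $\pi r^2$ prevents $z$ from returning to zero on subsequent revolutions.
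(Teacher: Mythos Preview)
Your approach is essentially identical to the paper's: both reduce the non-intersection claim to the inequality $A^+>A^-$, bound $A^-$ above by the full close-arc area $r^2\bigl(2\sqrt2\sin\phi-\arctan(2\sqrt2\sin\phi)\bigr)$, and compare with $A^+=r^2(\phi-\sin\phi\cos\phi)$; the paper additionally notes that the difference is monotone on $[\pi/2,\pi]$, while you rely on the limits at $\phi\to\pi$ and numerics. For the $C^0_{\mathrm{loc}}$-convergence you give a direct size estimate, whereas the paper simply invokes Remark~\ref{rem:convergingToConstantExample}; both are equivalent.

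One remark on your stated ``main obstacle'': it is already settled by your own sign computation. Once you know $\dot z(\theta_0)>0$ and $\dot z(\theta_1)<0$, the point $(d,0)$ lies on the far arc and $(d/2,0)$ on the close arc; since the counterclockwise arc from $\theta_0$ to $\theta_1$ has length $2\phi\in(0,2\pi)$, a tangent point must be crossed in between, and the remaining tangent point must lie after $\theta_1$. Hence the ordering $\theta_0\to P_1\to\theta_1\to P_2$ is automatic, and bounding $A^-$ by the whole close-arc area is legitimate without further verification. The only point you leave implicit is that the traversal is counterclockwise, which follows from $\dot z(\theta_0)>0$ together with the requirement that $z$ increase into the upper half-space immediately after reflection.
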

\begin{proof}
For the first claim we observe that the condition of non-intersection of $L_{d,\phi}$ is equivalent to $A^+_{d,\phi} \geq A^-_{d,\phi}$. Due to our upper bound for $A^-_{d,\phi}$, it is sufficient that 
\[ (\phi - \sin(\phi)\cos(\phi))r^2 \quad\geq\quad  \left(2\sqrt{2}\sin(\phi)  - \arctan\left(2\sqrt2\sin(\phi)\right) \right)r^2, \]
equivalently:
\[ \left[\phi + \arctan\left(2\sqrt2\sin(\phi)\right)\right] - \left[\sin(\phi)\cos(\phi) + 2\sqrt{2}\sin(\phi)\right] \quad \geq 0. \]
The first bracket can be seen to be increasing, whereas the second one is decreasing in the interval $[\pi/2,\pi]$, making the entire expression positively monotone in $[\pi/2,\pi]$. Then, numerical inspection shows that there is a zero in $\psi \cong 1.584$.

The other two claims are then immediate. Our formulas state that, once $\phi$ is frozen, the values $r$ and $o$ go to zero as $d \to 0$. That is: the $L_{d,\phi}$ are lifts, defined over $\RR^+$, of circular trajectories whose radius and center go to zero. Remark \ref{rem:convergingToConstantExample} says that such a sequence must converge to the constant curve. On the other hand, the rest of $\gamma_{d,\phi}$, which is defined over $\RR^-$, has the usual parametrisation as a lift of a half-line. This is preserved in the limit, allowing us to conclude the proof.
\end{proof}

One can show that the opposite statement is true: as $\phi \to 0$, intersections with $H$ will appear. It would be interesting to understand this behavior and thus determine the subsequent reflections in terms of $\phi$. One could go further and attempt to analyse the situation where the ingoing radius of curvature is non-zero.

\begin{question}
Understanding the boundary foliation plays a central role in $3$-dimensional Contact Topology. It is known that its dynamics (not just singular points, but also periodic orbits) encode meaningful topological information of the ambient contact structure \cite{Giroux,Geiges}. An interesting open question is whether the type of a critical point can be read from the billiard reflections around it.\hfill$\triangle$
\end{question}

\begin{question}
The boundary foliation is now being studied from a sub-Riemannian perspective as well \cite{BBCH,BBC}, in an attempt to understand its metric properties (not unlike what we propose here). A particularly intriguing situation happens at elliptic points: Any two points in a neighbourhood can be connected by a boundary geodesic that is piecewise (one first goes along the distribution to the elliptic point, and then switches to another leaf of the distribution). Is it possible to produce such a piecewise geodesic as a gliding orbit? \hfill$\triangle$
\end{question}


\subsection{A band between two planes}\label{ssec:horizontalBand}

Now we concentrate on the case in which the boundary of the table consists of two parallel planes, i.e. $U=\RR^2\times [0,H]$, where $H>0$. We can show:
\begin{lemma} \label{lem:horizontalBand}
$2$-bounce orbits in the band can be obtained as follows: Fix a positive integer $n$ and a radius $r_0=\sqrt{\frac{H}{\pi n}}$. Then, there is a unique $2$-bounce trajectory $\gamma$ satisfying:
\begin{itemize}
    \item Its reflection points are $(r_0,\theta_0,0)$ and $(r_0,\theta_0,H)$.
    \item Both segments inbetween reflections are reversals of one another.
    \item Both segments lift a curve consisting of $n$ loops along the circle of radius $\rho=\frac{r_0}2$ and center $(r_0/2,\theta)$.
\end{itemize}
\end{lemma}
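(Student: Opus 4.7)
The strategy is to use Corollary~\ref{cor:selfReflectionHorizontalPlane} together with the area formula~\eqref{zvariation} to pin down the geometry of both segments, and then to solve for $r_0$ by imposing the vertical displacement condition. I will first argue that under the reversal hypothesis each reflection is a self-reflection in the sense of Corollary~\ref{cor:selfReflectionHorizontalPlane}. Let $\gamma:[0,2T]\to U$ be a periodic $2$-bounce orbit with $p_1=\gamma(0)=\gamma(2T)$ and $p_2=\gamma(T)$, and suppose its two segments are reversals of each other, i.e.\ $\gamma(T+s)=\gamma(T-s)$. Differentiating gives $\dot\gamma(T^+)=-\dot\gamma(T^-)$, and periodicity yields the analogous identity at $p_1$. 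Passing to a cotangent lift one obtains $\lambda_{\rm out}=-\lambda_{\rm in}$ at each reflection, and the reflection law $\lambda_{\rm out}-\lambda_{\rm in}\in\Ann(T\partial U)=\langle dz\rangle$ then forces $\lambda_{\rm in}$ to be a multiple of $dz$ at both reflection points.

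Next, since segment $2$ is the reverse of segment $1$, their projections coincide as unoriented subsets of $\RR^2$; in particular $\pi(p_1)=\pi(p_2)=:(r_0,\theta_0)$, so $p_1=(r_0,\theta_0,0)$ and $p_2=(r_0,\theta_0,H)$. Applying Corollary~\ref{cor:selfReflectionHorizontalPlane} at $p_1$, the osculating circle of $\pi\circ\gamma$ at this point is the circle of radius $\rho=r_0/2$ with center $(r_0/2,\theta_0)$, passing through the origin. Because sub-Riemannian geodesics in $(\RR^3,\xi_\std,g_\std)$ project to curves of constant magnetic curvature (Subsection~\ref{sssec:HamiltonianR3}), $\pi\circ\gamma_1$ is globally this circle, necessarily traversed an integer number $n\geq 1$ of times since its endpoints coincide.

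The parameters $r_0$ and $n$ are then linked by the vertical displacement constraint. By~\eqref{zvariation}, the $z$-variation along $\gamma_1$ equals the signed area enclosed by $\pi\circ\gamma_1$. Each traversal of the circle encloses a signed area $\pm\pi\rho^2=\pm\pi r_0^2/4$, and the orientation is fixed by the requirement that $\gamma_1$ ascends from $z=0$ to $z=H$. Summing over the $n$ loops yields an equation of the form $H=c\cdot n\pi r_0^2$ with a universal constant $c$, which I then invert to obtain $r_0$ in terms of $H$ and $n$. Uniqueness up to $\theta_0$ follows at once: the trajectory is determined by $(r_0,\theta_0)$, the value of $n$, the orientation, and the initial $z$-coordinate $0$; and conversely, any choice of these data yields, by reversing the construction, a bona fide $2$-bounce orbit (using that the endpoint velocities of each projected circle match, so that the self-reflection hypothesis of Corollary~\ref{cor:selfReflectionHorizontalPlane} holds at both $p_1$ and $p_2$).

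The main technical obstacle I anticipate is the first step, where one must go from the \emph{geometric} reversal of the projected trajectory to the \emph{cotangent} identity $\lambda_{\rm out}=-\lambda_{\rm in}$, and thereby to $\lambda_{\rm in}\in\Ann(T\partial U)$. The reflection law only determines $\lambda_{\rm out}$ modulo $\Ann(T\partial U)$, so it is not a priori obvious that reversal of the projection forces the full cotangent covector to reverse; one has to use Lemma~\ref{lem:correspondingVector} and the strict convexity of the unit cylinder $S_f$ to promote the velocity reversal to a momentum reversal. Once that is in place, everything else reduces to the Dido area computation already used throughout Subsections~\ref{ssec:standardCylinder} and~\ref{sec:horizontalPlane}.
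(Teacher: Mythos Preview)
Your proposal is correct and follows essentially the same route as the paper: invoke Corollary~\ref{cor:selfReflectionHorizontalPlane} to pin down the projected circle (radius $r_0/2$, center $(r_0/2,\theta_0)$), then use the area formula~\eqref{zvariation} over $n$ full loops to solve for $r_0$ in terms of $H$ and $n$. The paper's proof is purely constructive and terser---it simply writes down the self-reflecting helix at $(r_0,\theta_0,0)$, records the arclength values at which $\pi\circ\gamma$ returns to $(r_0,\theta_0)$, and reads off $r_0$---whereas you spend an extra paragraph justifying why the reversal hypothesis forces $\lambda_{\rm in}\in\langle dz\rangle$ before invoking the Corollary. That extra step is sound (and your identification of Lemma~\ref{lem:correspondingVector} plus reversibility of $g_\std$ as the mechanism is correct), but the paper simply takes it for granted. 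One small remark: you leave the constant in $H=c\,n\pi r_0^2$ unspecified; carrying it through with $\rho=r_0/2$ gives $H=n\pi r_0^2/4$, so you should check this against the value stated in the Lemma.
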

\begin{proof}
According to the analysis in Corollary \ref{cor:selfReflectionHorizontalPlane}, there is a tilted helix segment $\gamma$ reflecting to its reversal at $(r_0,\theta_0,0)$ and lifting a circular arc of radius $\rho=\frac{r_0}2$ and center $(r_0/2,\theta)$. This curve passes over $\pi \circ \gamma(t) = (0,0)$ if and only if $t \in \pi r_0(2\NN+1)$. Similarly, it satisfies $\pi\circ\gamma(t)=(r_0,\theta_0)$ if and only if $t\in2\pi r_0\NN$. This allows us to solve $r_0$ in terms of $H$, as claimed. The number $n$ corresponds to the number of loops in the projection in-between reflection points.
\end{proof}

\begin{remark}
It is unclear to the authors whether other 2-bounce orbits exist in this setting. If they do, the two smooth segments defining them cannot be reflections of one another. \hfill$\triangle$
\end{remark}

\subsection{The finite cylinder}\label{ssec:finiteCylinder}

Lastly, we consider a finite cylinder, combining the situations from Subsections \ref{ssec:standardCylinder} and \ref{ssec:horizontalBand}. Do note that this is a table with piecewise boundary. We write $D_R(0,0)\times [0,H]$ for the cylinder of height $H$ and base a disc of radius $R$ centered at the origin. We restrict ourselves to the search for periodic orbits.

Let us make the preliminary observation that some of the periodic orbits found in Lemmas \ref{lem:ngon} and \ref{lem:horizontalBand} are still there, for appropriate values of $R$ and $H$. Indeed, for the former we must check that the total height variation is small enough to avoid reflections of the trajectories with the top and bottom boundaries. For the second Lemma to apply, and using the same notation as there, we must consider $r_0 < R$. We leave the details to the reader.

\subsubsection{A tilted cylinder}

More interestingly, we want to produce examples of periodic trajectories reflecting at both boundaries. At first sight, it seems hard to combine the two constructions, since the trajectories found in Lemma \ref{lem:horizontalBand} pass through the origin and reflect at their furthest point from it, whereas those in Lemma \ref{lem:ngon} consist of circular segments that do not enclose the origin. We circumvent this by choosing a different table. Namely, we work on $D_R(d,0)\times [0,H]$, a cylinder centered now at $(d,0,0)$ with $d > R$. We could, equivalently by the skew symmetry~(\ref{shearmap}), choose a cylinder centered at the $z$-axis but with bottom and top lids of slope $d$.

To construct our periodic orbit $\gamma$ in $D_R(d,0)\times [0,H]$, we begin with a magnetic orbit $\nu$ in the base $D_R(d,0)$. It consists of several circular segments, all equal to one another by rotations around $(d,0)$. We denote one of them by $\beta$; we choose its radius $r$ to satisfy $2r \in (d-R,d+R)$ and its centre to be $(r,0)$. We write $q_0, q_1$ for the two intersection points between $\beta$ and the boundary circle $\partial D_R(d,0)$. We also denote $2\varphi := \measuredangle(q_0,(d,0),q_1)$. Then, $\varphi: (d-R,d+R) \to (0,\pi)$, sending the chosen $r$ to the angle, is an increasing homeomorphism.

Let us assume $2\varphi= 2\pi\frac{a}{b}$, for some rational $a/b$ with $a$ and $b$ coprime. Then, just like in Lemma \ref{lem:ngon}, $\beta$ can be continued to a periodic magnetic billiard orbit with $b$ reflections. We let $\nu$ be the $c$-fold cover of such a trajectory, with $c$ some positive integer. We choose $\nu(0) = (2r,0)$; this starting point, once lifted, is meant to be the reflection point of $\gamma$ with the lower boundary. Similarly, letting $T$ be the arclength of $\nu$, we have that $\nu(T) = (2r,0)$ will lift to a reflection point of $\gamma$ with the upper boundary.

Write then $\gamma: [0,T]\to D_R(d,0)\times \RR$ for the lift with prescribed initial point $\gamma(0)=(2r,0,0)$. Assuming $\gamma(T)=(2r,0,H)$ and that no other intersections with the upper and lower boundaries exist, we will have, by Corollary \ref{cor:selfReflectionHorizontalPlane}, that $\gamma$ reflects to itself at both $0$ and $T$, producing a periodic orbit with $2bc$ reflections with the vertical boundary, one reflection with the top, and one with the bottom.

We then note that the first assumption $\gamma(T)=(2r,0,H)$ holds as long as the area bounded by $\nu$ is $H$. I.e., given $d$, $R$, $a/b$, and $c$, there is a single value $H$ with this property.

\begin{figure}[h]
\centering
\includegraphics[width=0.9\textwidth]{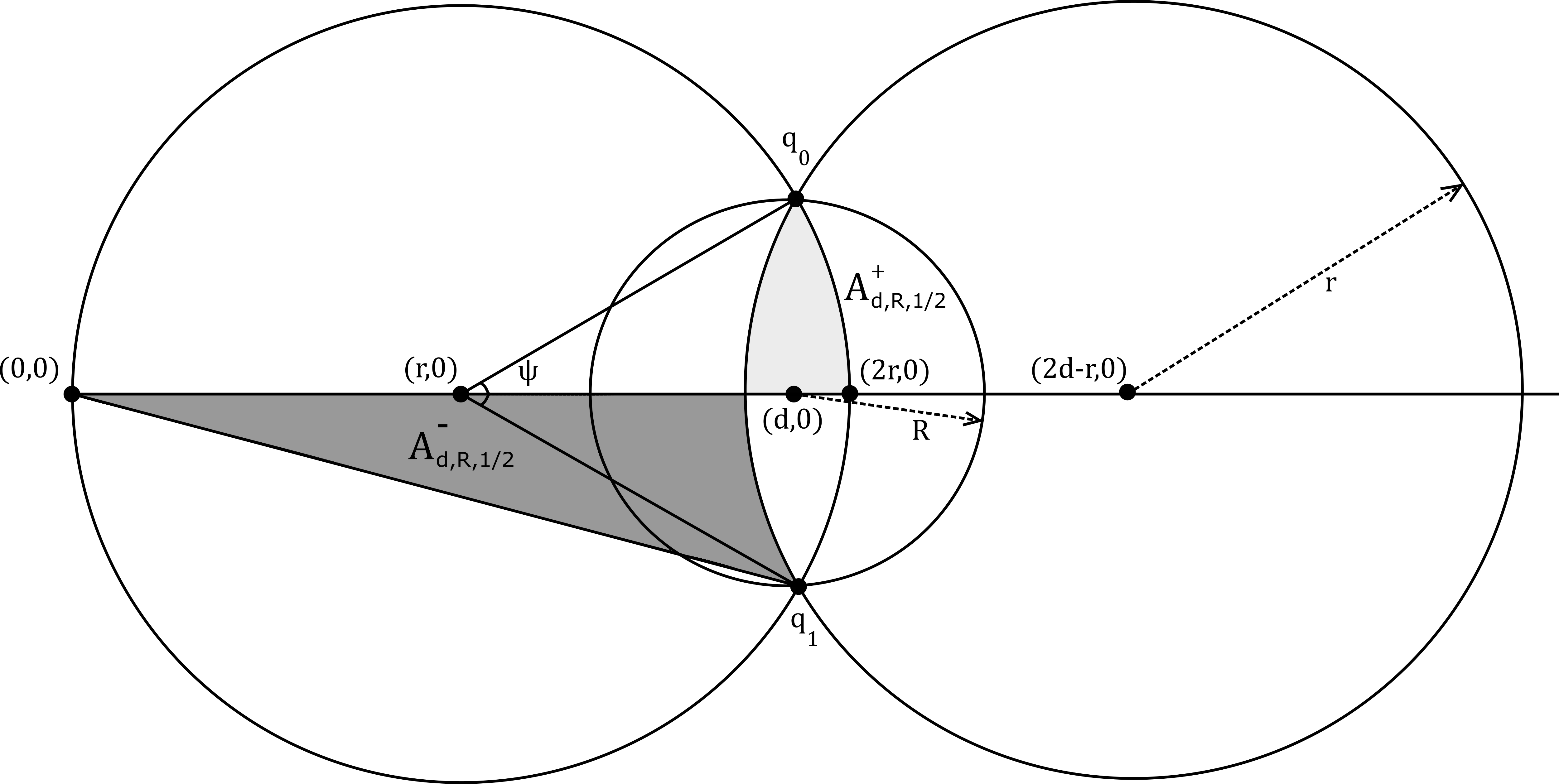}
\caption{The bigon from Lemma \ref{lem:finiteCylinder} under the assumption that $q_1$ appears in the arc before the tangency with respect to the origin. The lightly shaded area is positive and the dark area is negative. In this case, the latter is larger and therefore the trajectory hits the bottom boundary; that is, $\psi$ is not in the range described in the Lemma.}\label{fig:finiteCylinderA}
\end{figure}

\begin{figure}[h]
\centering
\includegraphics[width=0.9\textwidth]{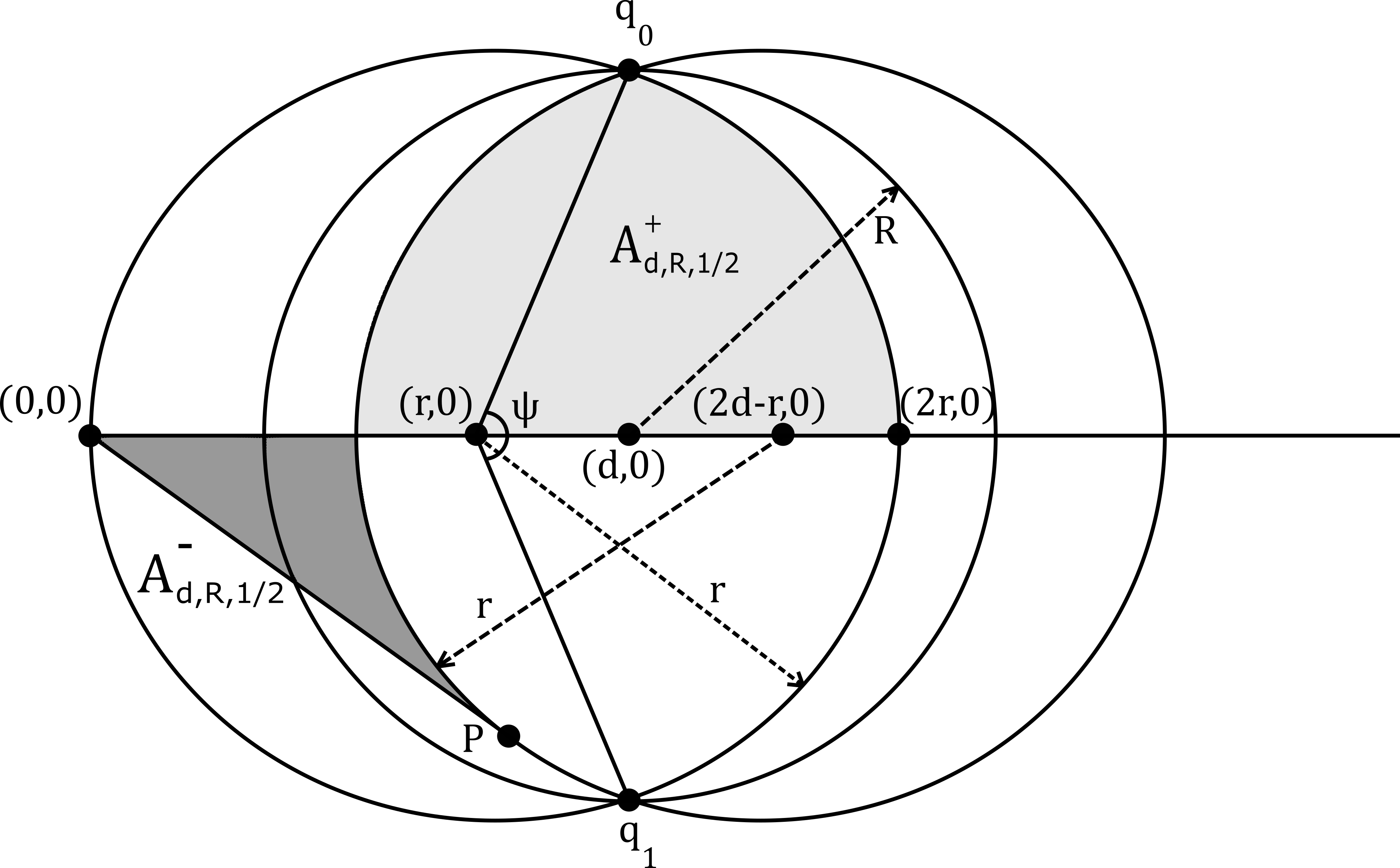}
\caption{The bigon from Lemma \ref{lem:finiteCylinder} under the assumption that $q_1$ lies beyond the tangency point from the origin. Here the positive area is larger than the negative area, so we obtain one of the trajectories described in the Lemma.}\label{fig:finiteCylinderB}
\end{figure}

We then explore whether the second assumption holds for $a/b=1/2$; we leave it to the reader to explore other values of $a/b$. The argument resembles that of Subsection \ref{sssec:critical} and amounts to controlling whether the $z$-coordinate of the lift remains in the interval $[0,H]$. Since $a/b=1/2$, the two points $q_i$ are $(d,\pm R)$ and $\gamma$ projects to a bigon with circular sides of radius $r$ and centers $C_0=(r,0)$ and $C_1=(2d-r,0)$. Clearly $2r>d$ and, since $(d-r)^2+R^2=r^2$, the value $r(R) = \frac{R^2+d^2}{2d}$ is uniquely determined from $d$ and $R$.

We depict the situation in Figures~\ref{fig:finiteCylinderA} and \ref{fig:finiteCylinderB} (we shall see shortly that there are two possible cases depending on the ratio $R/d$). The condition that the $z$-coordinate remains positive throughout the lift is equivalent to:
\begin{align*}
     A^+_{d,R,\frac12} \geq & A^-_{d,R,\frac12},
\end{align*}
where the $A^\pm_{d,R,\frac12}$ are as depicted in the figures.

In both situations we have that the positive area is given by the expression:
\[ A^+_{d,R,\frac12} = \area\left( D_r(C_0) \,\cap\, D_r(C_1) \,\cap\, \{y>0\}\right).\]
However, for the negative area $A^-_{d,R,\frac12}$ we need to distinguish two cases:
\begin{itemize}
    \item If the angle $\measuredangle( (0,0), q_1, (2d-r,0))$ is larger than $\pi/2$, as shown in Figure~\ref{fig:finiteCylinderA}, it holds that: 
\[ A^-_{d,R,\frac12} =\area\left(  D_r(C_1)^c \,\cap\, \{y<0\} \,\cap\, \{(x,y)\mbox{ lies above } \overline{(0,0)q_1}\}\right). \]
\item Otherwise, if $\measuredangle( (0,0), q_1, (2d-r,0)) $ is smaller than $\pi/2$ , as shown in Figure~\ref{fig:finiteCylinderB}), we define $P$ to be the tangency to the circle $S_r((2d/r,0))$ through $(0,0)$, and then:
\[ A^-_{d,R,\frac12} =\area\left(  D_r(C_1)^c \,\cap\, \{y<0\} \,\cap\, \{(x,y)\mbox{ lies above }  \overline{(0,0)P} \}\right). \]
\end{itemize}

Following the Figures, it is convenient to define the angle $2\psi=\measuredangle(q_0,(r,0),q_1)$. It reads:
\[ \psi(R) = \arccos\left(\dfrac{d^2-R^2}{d^2+R^2}\right) \quad\in\quad [0,\pi/2], \]
which is an increasing function of $R$. This implies that we can regard $(d,\psi)$ as the data defining the table, and solve for $R$. Then, the situation can be summarised as:
\begin{lemma} \label{lem:finiteCylinder}
Let $d$, $\psi$ and $c$ be given. Fix $a/b =1/2$ and carry out the previous construction, yielding constants $R$, $r$ and $H$, and a curve $\gamma_{d,R,c}$. Then, for $\psi$ sufficiently close to $\pi/2$, the curve $\gamma_{d,R,c}$ is a periodic orbit in $D_R(d,0)\times [0,H]$ with $2+2c$ bounces. 

Numerical computation shows that ``sufficiently close to $\pi/2$'' contains the interval
\[ \phi \in [0.909324 \approx 0.289447\pi,\pi/2). \]
\end{lemma}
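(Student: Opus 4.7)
The plan is to verify that $\gamma_{d,R,c}$ is a genuine periodic billiard orbit by showing its $z$-coordinate stays in $[0,H]$ along the full lift, so that no spurious encounters with the top or bottom planes arise. By the construction preceding the statement, $\pi\circ\gamma_{d,R,c}$ is already a closed magnetic billiard orbit in $D_R(d,0)$ with $2c$ side reflections, and the vertical displacement per bigon traversal is exactly $H/c$. At the two planned encounters $(2r,0,0)$ and $(2r,0,H)$ the ingoing angle of incidence is $\pi/2$ and the magnetic momentum has absolute value $1/r$, so Corollary~\ref{cor:selfReflectionHorizontalPlane} guarantees self-reflections at each. The first step is to observe that, since successive bigon traversals differ only by a translation of $H/c$ in $z$, the condition $z\in[0,H]$ on the whole curve is equivalent to $z$ being monotone non-decreasing on a single bigon: any undershoot below the starting height fails the first bigon, and any overshoot above the ending height fails the last.

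Next I would compute $z$ along one counterclockwise bigon traversal starting at $(2r,0,0)$, using Equation~\eqref{zvariation}. Parametrising the two arcs by their angles $\theta$ from the centres $C_0=(r,0)$ and $C_1=(2d-r,0)$, direct integration yields $Z_1 := z(q_1) = \tfrac{r^2}{2}(\psi+\sin\psi)$ and $z(P_L) = A^+ = r^2(\psi - \sin\psi\cos\psi)$, the upper half of the bigon area; the lower half values follow from the $y\mapsto -y$ mirror symmetry, which sends $z(Q)$ to $2A^+ - z(Q)$. The net gain per bigon is $2A^+ = H/c$, matching the construction. Monotonicity of $z$ then reduces to controlling the polar angle $\phi$ from the origin along $\beta_1$: a short computation identifies its critical values at $\theta = \pi\pm\alpha$ with $\cos\alpha = r/(2-r) = 1/(1+2\cos\psi)$, and these tangent points land on arc $\beta_1$ precisely when $\alpha<\psi$, equivalently when $\psi > \pi/3$. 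This is exactly the dichotomy displayed in Figures~\ref{fig:finiteCylinderA} and~\ref{fig:finiteCylinderB}.

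In the case $\psi\leq\pi/3$, the polar angle is monotone on $\beta_1$ and the $z$-extrema are $Z_1$ and $2A^+-Z_1$, so monotonicity collapses to $Z_1\leq 2A^+$, i.e.,
\begin{equation*}
\sin\psi\,(1+4\cos\psi)\;\leq\;3\psi.
\end{equation*}
The left-hand side has Taylor expansion $5\psi+O(\psi^3)$ while the right-hand side is $3\psi$, so the inequality fails near $\psi=0$; real-analytic comparison locates a unique transition $\psi_0\in(0,\pi/3)$, numerically $\psi_0\approx 0.909324\approx 0.289447\pi$, beyond which the inequality persists through $\psi=\pi/3$. In the case $\psi>\pi/3$, integration up to the tangent point $\theta=\pi-\alpha$, using the identity $r(2-r)\sin\alpha = 2r\sqrt{1-r}$, gives
\[
z_{\tan} \;=\; A^+ + r\sqrt{1-r} - \tfrac12 r^2\alpha,
\]
and the required $z_{\tan}\leq 2A^+$ is straightforward to verify on $[\pi/3,\pi/2)$, matching the Case~1 inequality continuously at $\psi=\pi/3$ where $\alpha=\psi$ forces $z_{\tan}=Z_1$.

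The main obstacle is the two-case structure driven by the tangency dichotomy: the extremum of the signed pie-slice area migrates from a bigon vertex to an interior tangent point as $\psi$ crosses $\pi/3$, requiring the split above. A secondary mild issue is that no closed form is available for $\psi_0$, forcing the numerical statement in the lemma; careful sign bookkeeping in Equation~\eqref{zvariation} is also needed to align the counterclockwise traversal with the positive sense of $H$ fixed by the construction.
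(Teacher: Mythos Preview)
Your approach is essentially the paper's: both split at $\psi=\pi/3$ according to whether the tangent lines from the origin to the $C_1$-circle meet the arc $\beta_1$, and both arrive at exactly the same inequalities. In Case~1 your condition $Z_1\le 2A^+$ rearranges to $\tfrac{r^2}{2}(3\psi-4\sin\psi\cos\psi-\sin\psi)\ge 0$, which is the paper's $\triangle_{d,R}\ge 0$; in Case~2 your $z_{\tan}\le 2A^+$ matches the paper's expression once one uses $\arctan\!\big(2\sqrt{\cos\psi(1+\cos\psi)}\big)=\arccos\!\big(1/(1+2\cos\psi)\big)=\alpha$. The only difference is packaging: the paper phrases the obstruction as a comparison of two signed areas $A^\pm$, while you track $z$ directly along the lift; these are the same quantity.

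There is one genuine slip in your write-up: the reduction is \emph{not} to monotonicity of $z$ on a bigon. On the $C_0$-arc the polar angle $\phi$ (hence $z$) is increasing, but on the $C_1$-arc one computes $\rho^2\,d\phi/d\theta=r(2d-r)\cos\theta+r^2$, which is negative at $\theta=\pi$ for every $\psi\in(0,\pi/2)$; so $z$ always dips on $\beta_1$ and the bigon traversal is never monotone. The correct reduction, which your own justification (``undershoot fails the first bigon, overshoot fails the last'') actually proves, is that $z$ must stay in $[0,2A^+]$ on a single bigon, i.e.\ $\max z\le 2A^+$ (equivalently $\min z\ge 0$ by the $y\mapsto -y$ symmetry). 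Your subsequent identifications of the interior extrema---$Z_1$ (or, in Case~2, $z_{\tan}$) as the maximum and its mirror $2A^+-Z_1$ as the minimum---are correct, and the inequalities you check are exactly the right ones. Just replace ``monotone non-decreasing'' by ``contained in $[0,2A^+]$'' and the argument goes through.
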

\begin{proof}
We fix $d$ and vary $\psi$ (equivalently, we vary $R$). We observe that the formulas $(d-r)=r\cos(\psi)$ and $R=r\sin(\psi)$ hold.

We first compute the transition between both situations. This happens when $\measuredangle( (0,0), q_1, (2d-r,0)) = \pi/2$, which is equivalent to any of the conditions:
\begin{align*}
\overline{(0,0)q_1}^2+\overline{q_1(2d-r,0)}^2 &= \overline{(0,0)(2d-r,0)}^2, \\
R^2 + d^2 + r^2 &  = (2d-r)^2, \\
R^2 & = 3d^2- 4dr = 3(d-r)^2 + 2(d-r)r - r^2 \\
0 & = 4\cos(\psi)^2 + 2\cos(\psi) - 2.
\end{align*}
Using the last one we solve for $\psi$ and see that the transition happens at $\psi = \frac \pi3$. This is equivalent to $R = d/\sqrt 3$ and $r = 2d/3$. 

In the first case $\psi\leq\pi/3$, the difference of the two areas of interest reads:
\begin{align*}
 \triangle_{d,R} &:=A^+_{d,R,\frac12}-A^-_{d,R,\frac12} = \frac32\psi r^2 - \frac{3}{2}R(d-r) - \frac12 dR \\
 & = \frac32\psi r^2 - 2R(d-r) -\frac{1}{2}Rr \\
  &= \frac12 r^2(3\psi -4\sin(\psi)\cos(\psi) - \sin(\psi)),\\
\end{align*}
which is positive if and only if $3\psi -4\sin(\psi)\cos(\psi) - \sin(\psi)$ is positive. For this, note that for $\psi\in[\pi/4,\pi/3]$ we have $\sin(\psi)^2\geq\cos(\psi)^2$ and thus $\triangle_{d,R}'(\psi) = 4 \sin^2(\psi) - 4 \cos^2(\psi) - \cos(\psi) + 3>0$. The function $3\psi -4\sin(\psi)\cos(\psi) - \sin(\psi)$ has a zero that we numerically determined to be at $0.909324 \approx 0.289447\pi$. This proves the claim for $\phi\in [0.909324 \approx 0.289447\pi,\pi/3]$.

In the second case $\psi\geq\pi/3$, the difference between the two areas of interest reads:
\begin{align*}
 \triangle_{d,R} :=A^+_{d,R,\frac12}-A^-_{d,R,\frac12} & = \psi r^2 - R(d-r) -\frac12r\sqrt{2(d-r)2d} + \frac12\arcsin\left(\frac{r}{2d-r}\right)r^2\\
 &= r^2\left(\psi-\cos\psi\sin\psi-\sqrt{\cos\psi(\cos\psi+1)}+\frac 12\arctan\left(2\sqrt{\cos\psi(\cos\psi+1)}\right)\right).
\end{align*}
We then compute the derivative of $\triangle_{d,R}(\psi)/r^2$:
\[ 2 \sin^2(x) + \frac{2\sin(\psi)(2\cos(\psi)+1)\sqrt{\cos\psi(\cos\psi+1)}}{1+4\cos\psi(\cos\psi+1)}, \]
which is positive in the interval $\psi \in [\pi/3,\pi/2]$. Together with $\triangle_{d,R}(\pi/3) = \pi/2 - 3 \sqrt3/4>0$, this concludes the proof.
\end{proof}

\begin{remark}
If we take $R=d$, the cylinder of interest contains the vertical line through the origin. Then, as $R \to d$, the corresponding sequence of curves $\gamma_{d,R,c}$ converges to one of the curves described in Lemma \ref{lem:horizontalBand}, which is a boundary geodesic with two reflection points. I.e., the $2c$ bounces with the vertical component of the boundary smooth out in the limit, and the two reflections that survive take place in the corner of the table. \hfill$\triangle$
\end{remark}

\section{Appendix: Non-autonomous settings}\label{appendix:nonAutonomous}

In order to maximise generality, let us also discuss non-autonomous (i.e., time-dependent) settings. We give the relevant definitions and point out what changes need to be made.

\subsection{Time-dependent control systems}

The following generalises the autonomous Definition \ref{def:controlSystem}:
\begin{definition} 
Given a smooth manifold $M$ and a time interval $I$, a \textbf{non-autonomous control system} over $M \times I$ is:
\begin{itemize}
\item A locally trivial fibre bundle $C \to M \times I$ whose fibres are manifolds (possibly with boundary or corners).
\item A fibrewise map $\rho: C \to TM$, called the \textbf{anchor}, that is smooth in $M$ and essentially bounded in $I$.
\end{itemize}
\end{definition}
Controls are defined exactly in the same manner as in the standard setting. We remark that this definition was used in several proofs within Section \ref{sec:controlTheory}, for instance Lemma \ref{lem:infEndpointMap} and Propositions \ref{prop:PSP} and \ref{prop:PMP}. In particular, those statements, as well as Filippov's theorem, translate immediately to the time-dependent world.

\subsection{Time dependent sub-Finsler}

Now we adapt the definitions from Section~\ref{sec:sub-FinslerSystems} to allow the distribution $\xi_t$ and the Finsler norm $f_t$ to vary smoothly with time. This makes the sub-Finsler control system from Definition \ref{def:sub-FinslerControl} non-autonomous:
\[ \rho_t: C_t:=\{v \in \xi_t \mid f_t(x) \leq 1\} \quad\to\quad TM. \]
We make the additional assumptions that $\xi_t$ is at all times bracket generating and that, for some auxiliary autonomous Finsler norm $f_0$ on $TM$, we have for all $v\in \xi \neq 0$ that $\frac{f_t(v)}{f_0(v)}$ is bounded away from 0 uniformly in space and time.

As remarked previously, the Pontryagin maximum principle Proposition \ref{prop:PMP} still holds. However, the maximised Hamiltonian depends on time
\begin{align*}
H_{f_t}: T^*M \quad\to\quad& \RR \\
H_{f_t}(\lambda) \quad := \quad & \max_{v \in \xi_t, f_t(v) \leq 1} \lambda(v)
\end{align*}
and therefore, given a Hamiltonian curve $\dot\lambda = X_{H_{f_t}}$, the quantity $\dot H_{u_t}(\lambda(t))$ may not be zero. This implies that Corollary~\ref{Cor:preservedQuantity} does not hold in general. This makes it harder to use the sub-Finsler cogeodesic flow from Definition~\ref{def:cogeodesicFlow}. In the following, we work with a varying energy level. This leads us to strengthen the assumption of normality:
\begin{assumption}
We restrict our attention to those Hamiltonian trajectories of the maximised Hamiltonian that are normal, in the sense that $H_{f_t}(\lambda(t))\neq 0 \quad \forall t$. \hfill$\triangle$
\end{assumption}
It is important to remark that one could weaken our assumption that each $\xi_t$ is bracket-generating and, assuming smoothness in $t$, instead ask for $\xi_t \oplus \langle \partial_t \rangle$ to be a bracket-generating distribution in $M \times I$. We will not explore this in the current article.

\subsection{Non-autonomous diamond control systems}

In order to define a billiard reflection, our original approach of the diamond control system needs to be modified. We choose to do this by adding drift in an auxiliary two dimensional time.

\begin{definition}
We define the \textbf{non-autonomous diamond control system} associated to $\rho: C_t \to TM$ to be 
\[ \rho_{M \times M}: C_{M \times M} \to TM \times TM\times T\RR\times T\RR \]
where
\begin{itemize}
\item $C_{M \times M}(q_1,q_2,t_1,t_2) := C_{t_1} \times C_{t_2} \times [0,1]$,
\item $\rho_{M \times M}(u_1,u_2,t_1,t_2,s) := (s\rho_{t_1}(u_1),-(1-s)\rho_{t_2}(u_2),s,-(1-s))$.
\end{itemize}
Its image can be expressed as follows:
\[ \{(s v, -(1-s)w) \in TM \times TM \mid s \in [0,1], v,w \in \rho(C) \}. \]
\end{definition}

Consider $\gamma: [0,t_1+t_2] \to M$, an admissible curve from $\gamma(0) = q_1$ to $\gamma(t_1+t_2) = q_2$ with $\gamma(t_1) \in \partial M$ a point in the boundary. Passing to the diamond control system, we can define a curve $\gamma_{M \times M}: [0,t_1+t_2] \to M \times M\times\RR\times\RR$:
\[ \gamma_{M \times M}(t) := \left(\gamma|_{[0,t_1]}\left(t\dfrac{t_1}{t_1+t_2}\right),\gamma|_{[t_1,t_1+t_2]}\left((t_1+t_2-t)\dfrac{t_2}{t_1+t_2}\right),t\dfrac{t_1}{t_1+t_2},(t_1+t_2-t)\dfrac{t_2}{t_1+t_2} \right)\]
connecting the line $\{(q_1,q_2,0)\}\times\RR$ with the contemporary diagonal of the boundary $\Delta_{\partial M}\times\Delta_{\RR^2} := \{ (q,q,t,t)\mid q\in\partial M, t\in\RR \}$. By construction, $\gamma_{M \times M}$ is admissible for the diamond system. Lemma~\ref{rectangle} about the families $\Fcal_\gamma$ and Lemma~\ref{lem:diamondMinimizer} on the minimizing conditions still hold true in this setup. Note that, assuming $\dot\gamma\in\partial \rho(C)$, the speed of the two respective times and spaces are coupled: 
\[f_{\gamma_3}(\dot\gamma_1)=\|\partial\gamma_3\|,\]
\[f_{\gamma_4}(\dot\gamma_2)=\|\partial\gamma_4\|.\]
This means that the length of the total curve $\gamma_{M\times M}$ coincides with the position in the starting line $\gamma_4(0)=length(\gamma_{M\times M})$.

\subsubsection{Reflection law}
According to Proposition~\ref{prop:PMPboundary}, the starting momentum $\lambda_{M\times M}$ of $\gamma_{M\times M}$ annihilates $T\{(q_1,q_2,0)\}\times\RR$. This is in harmony with our assumption that $\gamma_{M\times M}$ optimizes not only the length from $(q_1,q_2,0,t_1+t_2)$ to $\Delta_{\partial M}\times\Delta_{\RR^2}$, but also the total time taken by the trip. We deduce:
\begin{proposition}\label{prop:NONAUTreflectionLaw}
Let $\gamma: [0,t_1+t_2] \to U$ be an admissible curve of the non-autonomous sub-Finsler setting, connecting $\gamma(0) = q_1$ to $\gamma(t_1+t_2) = q_2$, and with $\gamma(t_1) \in \partial U$ the only point in the boundary.

Then, $\gamma$ is a minimiser of length among such curves if and only if there is a cotangent lift $\lambda: [0,t_1+t_2] \to T^*U$ such that the pieces $\lambda|_{[0,t_1]}$ and $\lambda|_{[t_1,t_1+t_2]}$ satisfy PMP and such that
\begin{equation} \label{eq:nonAutonomousReflectionLaw}
\lambda(t_1^-) - \lambda(t_1^+) \in \Ann(\partial U),
\end{equation}
where $t_1^-$ denotes the limit as we approach $t_1$ from the left and $t_1^+$ is the limit from the right.
\end{proposition}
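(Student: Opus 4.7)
The plan is to mirror the proof of Proposition~\ref{prop:reflectionLaw}, but working with the non-autonomous diamond system. The essential new feature is that the auxiliary times $t_1,t_2$ are now control variables, so the constraint submanifolds in $M\times M\times\RR\times\RR$ are a line at the start and the contemporary diagonal at the end, rather than points. I first translate the minimisation problem for $\gamma$ into a minimisation problem for $\gamma_{M\times M}$ using the non-autonomous versions of Lemma~\ref{rectangle} and Lemma~\ref{lem:diamondMinimizer} (which, as the excerpt remarks, carry over unchanged). Concretely, $\gamma$ is a length minimiser among admissible curves from $q_1$ to $q_2$ touching $\partial U$ at an unspecified moment if and only if $\gamma_{M\times M}$ is a minimiser of the diamond system going from $V_1 := \{(q_1,q_2,0)\}\times\RR$ to $V_2 := \Delta_{\partial M}\times\Delta_{\RR^2}$, with the convention that the length in the total diamond curve matches the distance travelled in the auxiliary time slot (so arrival time and length coincide).

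Next I apply Proposition~\ref{prop:PMPboundary} to obtain a cotangent lift $\lambda_{M\times M}=(\lambda_1,\lambda_2,\mu_1,\mu_2)$ of $\gamma_{M\times M}$ satisfying PMP, with initial covector annihilating $TV_1=\{0\}\times\{0\}\times\{0\}\times \RR$ and terminal covector annihilating $TV_2=\{(v,v,s,s):v\in T\partial M, s\in\RR\}$. The initial condition forces $\mu_2(0)=0$, and the terminal condition forces $\lambda_1(t_1)+\lambda_2(t_2)\in\Ann(T\partial U)$ together with $\mu_1(t_1)+\mu_2(t_2)=0$. Now I extract from $\lambda_{M\times M}$ two lifts on $M$ by evaluating on vectors tangent to each factor and reparametrising: $\lambda_1$ lifts $\gamma|_{[0,t_1]}$ and, after reversing, $\lambda_2$ lifts $\gamma|_{[t_1,t_1+t_2]}$. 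Because PMP of $\lambda_{M\times M}$ for the product anchor splits into PMP in each factor (strict convexity of the sub-Finsler unit ball in each slot guarantees that maximisers decouple), both pieces satisfy PMP for the original non-autonomous system.

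The reflection identity $\lambda(t_1^-)-\lambda(t_1^+)\in\Ann(\partial U)$ then comes directly from the annihilation condition on the spatial factor of $TV_2$; the minus sign is the standard artefact of parametrising the second branch backwards in $\gamma_{M\times M}$, exactly as in the autonomous proof. Conversely, given a piecewise PMP lift $\lambda$ on $U$ satisfying the reflection identity, I build $\lambda_{M\times M}$ by concatenation, using the extra freedom in the auxiliary momenta $\mu_i$ to enforce $\mu_2(0)=0$ and $\mu_1(t_1)+\mu_2(t_2)=0$ (this is always possible because $H_{f_t}$ is conserved along each Hamiltonian piece of $\lambda$ only up to the $t$-dependence, and the $\mu_i$ absorb the time-dependence of the Hamiltonian); this lift certifies that $\gamma_{M\times M}$ is a minimiser, hence so is $\gamma$.

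The main technical obstacle, and the reason the auxiliary times were introduced, is precisely this last bookkeeping: in the autonomous case $H_f$ is constant along Hamiltonian orbits and the matching $H_f(\lambda(t_1^-))=H_f(\lambda(t_1^+))$ was automatic, whereas here $H_{f_t}$ can drift in time on each branch and only the sum $H_{f_{t_1}}+H_{f_{t_2}}$ balancing with the auxiliary momenta delivers a well-defined reflection. I would thus spend most of the care on showing that the drift terms $\mu_i$ match at the reflection point so that the extracted cotangent pieces glue into a genuine lift $\lambda$ on $[0,t_1+t_2]$ satisfying Equation~(\ref{eq:nonAutonomousReflectionLaw}).
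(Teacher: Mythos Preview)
Your proposal follows the same route as the paper: reduce to the non-autonomous diamond system, apply Proposition~\ref{prop:PMPboundary} with $V_1=\{(q_1,q_2,0)\}\times\RR$ and $V_2=\Delta_{\partial M}\times\Delta_{\RR^2}$, and then project the resulting cotangent lift onto the two $M$-factors exactly as in the proof of Proposition~\ref{prop:reflectionLaw}. The paper's own argument is extremely terse here (it is essentially the single sentence preceding the proposition, invoking Proposition~\ref{prop:PMPboundary} and the carry-over of Lemmas~\ref{rectangle} and~\ref{lem:diamondMinimizer}), so your elaboration on the role of the auxiliary momenta $\mu_i$ is a welcome expansion rather than a deviation.

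One small imprecision: you justify the decoupling of PMP into the two factors by ``strict convexity of the sub-Finsler unit ball in each slot'', but the diamond itself is \emph{not} strictly convex (the paper remarks on this right after Definition~\ref{def:diamond}). The decoupling works instead because the image of the diamond anchor is the join of the two unit balls, so maximising a product covector over it forces the maximum in each factor separately; in the autonomous case the paper handles this via Lemma~\ref{rectangle} by choosing a representative in $\Fcal_\gamma$ whose terminal velocity is interior to the $s$-segment (cf.\ the proof of Proposition~\ref{prop:sympIsVariational}). You should invoke the same device here rather than strict convexity.
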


As mentioned in the original argument, Equation~\ref{eq:nonAutonomousReflectionLaw} is not enough to conclude a reflection law. We have to additionally assume that a reflection at time $t$ preserves $H_{f_t}$. That is, from a symplectic perspective, whenever a reflection is going to happen at time $t$, we consider the microlocal realisation of the current energy level of $H_{f_t}$, and perform the usual reflection. This is consistent with the idea that reflections are instantaneous, and therefore they only depend on the current time. One can then perform the usual analysis of degeneracies of the reflection by considering the interaction between $\xi_t$ and the boundary of the table.


\end{document}